\newtheorem{theorem}{Theorem}[section]
\newtheorem{lemma}[theorem]{Lemma}
\newtheorem*{lemma*}{Lemma}
\newtheorem{corollary}[theorem]{Corollary}
\newtheorem{proposition}[theorem]{Proposition}
\newtheorem{remark}[theorem]{Remark}
\newtheorem{definition}[theorem]{Definition}
\numberwithin{equation}{section}
\newcommand{\labitem}[2]{%
\def\@itemlabel{\textbf{#1}}
\item
\def\@currentlabel{#1}\label{#2}}
\newcommand{\norm}[1]{\left\|{#1}\right\|}
\newcommand{\abs}[1]{\left|{#1}\right|}
\newcommand{\rkla}[1]{{\left(#1\right)}}
\newcommand{\trkla}[1]{{(#1)}}
\newcommand{\gkla}[1]{{\left\{#1\right\}}}
\newcommand{\tgkla}[1]{{\{#1\}}}
\newcommand{\skla}[1]{{\left\langle#1\right\rangle}}
\newcommand{\ekla}[1]{{\left[#1\right]}}
\newcommand{\tekla}[1]{{[#1]}}
\newcommand{\tabs}[1]{|{#1}|}
\newcommand{\bs}[1]{\boldsymbol{#1}}
\newcommand{\iOmega}{\int_{\Omega}}
\newcommand{\Om}{\mathcal{O}}
\newcommand{\Ox}{\Om^x}
\newcommand{\Oy}{\Om^y}
\newcommand{\Lx}{L_x}
\newcommand{\Ly}{L_y}
\newcommand{\iO}{\int_{\Om}}
\newcommand{\iOx}{\int_{\Ox}}
\newcommand{\iOy}{\int_{\Oy}}
\newcommand{\itTh}{\int_{0}^{t\wedge T\h}}
\newcommand{\itTht}{\int_{0}^{t\wedge \tilde{T}\h}}
\newcommand{\iOOTTh}{\int_{0}^{T\wedge T\h} \!\!\! \iO}
\newcommand{\Tmax}{T_{\text{max}}}
\newcommand{\iTmaxTh}{\int_{0}^{\Tmax\wedge T\h}}
\newcommand{\chiTh}{\chi_{T\h}}
\newcommand{\chiTth}{\chi_{\tilde{T}\h}}
\newcommand{\para}[1]{\partial _{#1}}
\newcommand{\dx}{\, \mathrm{d}{x}}
\newcommand{\dy}{\, \mathrm{d}{y}}
\newcommand{\dt}{\, \mathrm{d}t}
\newcommand{\ds}{\, \mathrm{d}s}
\newcommand{\dtau}{\, \mathrm{d}\tau}
\newcommand{\parx}{\partial_x}
\newcommand{\pary}{\partial_y}
\renewcommand{\div}{\operatorname{div}}
\newcommand{\ids}{I}
\newcommand{\h}{_{h}}
\newcommand{\weak}{\rightharpoonup}
\newcommand{\weakstar}{\stackrel{*}{\rightharpoonup}}
\newcommand{\Uhx}{U_h^x}
\newcommand{\Uhy}{U_h^{y}}
\newcommand{\Uhxy}{U_{h}}
\newcommand{\Thx}{\mathcal{T}_h^x}
\newcommand{\Thy}{\mathcal{T}_h^y}
\newcommand{\Qh}{\mathcal{Q}_h}
\newcommand{\R}{\mathds{R}}
\newcommand{\N}{\mathds{N}}
\newcommand{\Z}{\mathds{Z}}
\newcommand{\Kx}{{K^x}}
\newcommand{\Ky}{{K^y}}
\newcommand{\Q}{Q}
\newcommand{\hx}{h_x}
\newcommand{\hy}{h_y}
\newcommand{\Ihxop}{\mathcal{I}_h^x}
\newcommand{\Ihyop}{\mathcal{I}_h^y}
\newcommand{\Ihxyop}{\mathcal{I}_h^{xy}}
\newcommand{\Ihx}[1]{\Ihxop\gkla{#1}}
\newcommand{\Ihy}[1]{\Ihyop\gkla{#1}}
\newcommand{\Ihxy}[1]{\Ihxyop\gkla{#1}}
\newcommand{\Ihxlocop}{\mathcal{I}_{h,\operatorname{loc}}^x}
\newcommand{\Ihylocop}{\mathcal{I}_{h,\operatorname{loc}}^y}
\newcommand{\Ihxloc}[1]{\Ihxlocop\gkla{#1}}
\newcommand{\Ihyloc}[1]{\Ihylocop\gkla{#1}}
\newcommand{\restr}[2]{\ensuremath{% we make the whole thing an ordinary symbol
  \left.\kern-\nulldelimiterspace % automatically resize the bar with \right
  #1 % the function
  \vphantom{\big|} % pretend it's a little taller at normal size
  \right|_{#2} % this is the delimiter
  }}
\newcommand{\extend}[2]{\ensuremath{% we make the whole thing an ordinary symbol
  \left.\kern-\nulldelimiterspace % automatically resize the bar with \right
  #1 % the function
  \vphantom{\big|} % pretend it's a little taller at normal size
  \right|^{#2} % this is the delimiter
  }}
\newcommand{\diam}{\operatorname{diam}}
\newcommand{\lx}[1]{{\lambda^x_{#1}}}
\newcommand{\ly}[1]{{\lambda^y_{#1}}}
\newcommand{\lalpha}[1]{{\lambda^{\alpha}_{#1}}}
\newcommand{\g}[1]{\mathfrak{g}_{#1}}
\newcommand{\gt}[1]{\tilde{\mathfrak{g}}_{h,#1}}
\newcommand{\meanGinvX}[1]{\ekla{{G^{\prime\prime}\trkla{#1}}}^{-1}_x}
\newcommand{\meanGinvY}[1]{\ekla{{G^{\prime\prime}\trkla{#1}}}^{-1}_y}
\newcommand{\meanFX}[1]{\ekla{F^{\prime\prime}\trkla{#1}}_x}
\newcommand{\meanFY}[1]{\ekla{F^{\prime\prime}\trkla{#1}}_y}
\newcommand{\meanspX}[1]{\ekla{\tabs{#1}^{-p-2}}_x}
\newcommand{\meanspY}[1]{\ekla{\tabs{#1}^{-p-2}}_y}
\newcommand{\Zx}{I^x_h}
\newcommand{\Zy}{I^y_h}
\newcommand{\bx}[1]{\beta_{#1}^x}
\newcommand{\by}[1]{\beta_{#1}^y}
\newcommand{\balpha}[1]{\beta_{#1}^{\alpha}}
\newcommand{\bthx}[1]{\tilde{\beta}_{h,#1}^x}
\newcommand{\bthy}[1]{\tilde{\beta}_{h,#1}^y}
\newcommand{\bthalpha}[1]{\tilde{\beta}_{h,#1}^{\alpha}}
\newcommand{\btx}[1]{\tilde{\beta}_{#1}^x}
\newcommand{\bty}[1]{\tilde{\beta}_{#1}^y}
\newcommand{\btalpha}[1]{\tilde{\beta}_{#1}^{\alpha}}
\newcommand{\dbx}[1]{\,\operatorname{d}\!\bx{#1}}
\newcommand{\dby}[1]{\,\operatorname{d}\!\by{#1}}
\newcommand{\dbalpha}[1]{\,\operatorname{d}\!\balpha{#1}}
\newcommand{\dbthx}[1]{\,\operatorname{d}\!\bthx{#1}}
\newcommand{\dbthy}[1]{\,\operatorname{d}\!\bthy{#1}}
\newcommand{\dbtx}[1]{\,\operatorname{d}\!\btx{#1}}
\newcommand{\dbty}[1]{\,\operatorname{d}\!\bty{#1}}
\newcommand{\dbtalpha}[1]{\,\operatorname{d}\!\btalpha{#1}}
\newcommand{\ex}[1]{\mathfrak{e}^x_{#1}}
\newcommand{\ey}[1]{\mathfrak{e}^y_{#1}}
\newcommand{\kbalpha}{\bs{b}_{\alpha}}
\newcommand{\p}{{\overline{p}}}
\newcommand{\Emax}{\mathcal{E}_{\text{max},h}}
\newcommand{\dxp}{\partial_x^{+\hx}}
\newcommand{\dxm}{\partial_x^{-\hx}}
\newcommand{\dyp}{\partial_y^{+\hy}}
\newcommand{\dym}{\partial_y^{-\hy}}
\newcommand{\expected}[1]{\mathds{E}\ekla{#1}}
\newcommand{\expectedt}[1]{\tilde{\mathds{E}}\ekla{#1}}
\newcommand{\Prob}{\mathds{P}}
\newcommand{\prob}[1]{\Prob\ekla{#1}}
\newcommand{\probt}[1]{\tilde{\Prob}\ekla{#1}}
\newcommand{\Xu}{\mathcal{X}_u}
\newcommand{\Xddu}{\mathcal{X}_{\Delta u}}
\newcommand{\XJx}{\mathcal{X}_{J^x}}
\newcommand{\XJy}{\mathcal{X}_{J^y}}
\newcommand{\XW}{\mathcal{X}_{\bs{W}}}
\newcommand{\XuInit}{\mathcal{X}_{u_0}}
\newcommand{\X}{\mathcal{X}}
\newcommand{\projop}{\mathcal{P}_{\Uhxy}}
\newcommand{\proj}[1]{\projop\gkla{#1}}
\newcommand{\per}{{\operatorname{per}}}
\newcommand{\charac}[1]{\chi_{\tekla{#1}}}
\newcommand{\dist}{\operatorname{dist}}
\newcommand{\marth}{\mathcal{M}_{h,v}}
\newcommand{\martht}{\tilde{\mathcal{M}}_{h,v}}
\newcommand{\martt}{\tilde{\mathcal{M}}_{v}}
\newcommand{\qvar}[1]{\skla{\!\skla{#1}\!}}
\newcommand{\crossvar}[2]{\skla{\!\skla{#1,\,#2}\!}}
\newcommand{\Ritzop}{\mathcal{R}}
\newcommand{\Ritz}[1]{\Ritzop\gkla{#1}}
\newcommand{\esssup}{\operatorname{ess}\,\sup}
\newcommand{\Ito}{It\^{o}}
\definecolor{m1}{rgb}{0.1,0.5,1}
\definecolor{m2}{rgb}{0.9,0.7,0.1}
\definecolor{m3}{rgb}{.1,1,1}
\definecolor{m4}{rgb}{.9,.2,.1}
\definecolor{m5}{rgb}{.0,.4,.0}
\definecolor{m6}{rgb}{0.3,0.,0.1}
\definecolor{g1}{rgb}{0.1,0.5,1}
\newcommand{\revx}[1]{#1}
\newcommand{\revy}[1]{#1}
\begin{document}
\title[]{Existence of nonnegative solutions to stochastic thin-film equations in two space dimensions}
\date{\today}
\author[S.~Metzger]{Stefan Metzger}
\address[S.~Metzger, (corresponding author)]{Friedrich--Alexander Universität Erlangen--Nürnberg,~Cauerstraße 11,~91058~Erlangen,~Germany}
\email{stefan.metzger@fau.de}

\author[G.~Gr\"un]{G\"unther Gr\"un}
\address[G.~Gr\"un]{Friedrich--Alexander Universität Erlangen--Nürnberg,~Cauerstraße 11,~91058~Erlangen,~Germany}
\email{gruen@math.fau.de}

%\author[S.~Metzger,~G.~Gr\"un]{Stefan Metzger and G\"unther Gr\"un}
%\address{Friedrich--Alexander Universit\"at Erlangen--N\"urnberg --- Cauerstrasse 11 --- 91058 Erlangen --- Germany}
%\email{stefan.metzger@fau.de, gruen@math.fau.de}

\keywords{stochastic thin-film equation, thermal noise, nonnegativity preserving scheme, stochastic partial differential equations, finite element method}
\subjclass[2010]{Primary 60H15, 76D08, 35K65; Secondary 65M12, 35K25, 35Q35}
%Primary
% 	60H15   	Stochastic partial differential equations (aspects of stochastic analysis)
%   76D08   	Lubrication theory
%   35K65   	Degenerate parabolic equations
%Secondary
%   65M12   	Stability and convergence of numerical methods for initial value and initial-boundary value problems involving PDEs
%   35K25   	Higher-order parabolic equation
%   35Q35   	PDEs in connection with fluid mechanics

%
\selectlanguage{english}

\allowdisplaybreaks

\begin{abstract}
We prove the existence of martingale solutions to stochastic thin-film equations in the physically relevant space dimension $d=2$. Conceptually, we rely on a stochastic Faedo-Galerkin approach using tensor-product linear finite elements in space. Augmenting the physical energy on the approximate level by a curvature term weighted by positive powers of the spatial discretization parameter $h$, we combine \Ito's formula with inverse estimates and appropriate stopping time arguments to derive stochastic counterparts of the energy and entropy estimates known from the deterministic setting. 
In the limit $h\searrow 0$, we prove our strictly positive finite element solutions to converge towards  nonnegative martingale solutions --- making use of compactness arguments based on Jakubowski's generalization of Skorokhod's theorem and subtle exhaustion arguments to identify third-order spatial derivatives in the flux terms. 
\end{abstract}
\maketitle

\section{Introduction}\label{sec:introduction}
%\CommentM{
%TODOs:
%\begin{itemize}
%\item Energieabschätzung nach Grenzwert,
%\item Stratonovich,
%\item Anfangsdaten: $u^0\in H^1_\per\trkla{\Om}$ und $u^0\h:=\proj{u^0}$.
%\end{itemize}
%}
We are concerned with stochastic thin-film equations of the generic form
\begin{align}\label{eq:stfe}
\text{d}u=-\div\tgkla{m\trkla{u}\nabla\trkla{\Delta u-F^\prime\trkla{u}}}\dt+\div\tgkla{\sqrt{m\trkla{u}}\text{d}\bs{W}}
\end{align}
on a space-time cylinder $\mathcal O\times (0,T]$ where $\mathcal O$ is a bounded rectangular domain in $\R^2.$
Such kind of equations have been introduced to model dewetting of unstable liquid films under the influence of thermal fluctuations.
Here, the mobility $m(\cdot)$ may be chosen as $m(u)=u^3+\beta u^2$,  a prototypical example for the effective interface potential $F(u)$ is $F(u)=u^{-8} - u^{-2} +1.$ It is based on a $6-12$ Lennard--Jones pair potential and it models disjoining/conjoining van der Waals interactions.
Numerical simulations in 1D have shown (see \cite{DORKP2019, Grun2006}) that discrepancies with respect to time scales of dewetting between physical experiment and deterministic numerical simulation can be overcome if appropriately scaled noise terms are considered.
The equations used for those models come along with a number of intrinsic difficulties. First, the degeneracy of the mobility $m(\cdot)$ in $u=0$, secondly the singular behaviour of the effective interface potential at $u=0$, and thirdly the fact that even in the deterministic case it is  still an open problem whether solutions are continuous or bounded if the spatial dimension is at least $d=2$. 
This is in sharp contrast to the one-dimensional case where Sobolev embedding results are the key to establish H\"older continuity  in space and time.

\revx{The scope of the present work is threefold. First, we wish to establish the existence of martingale solutions for a model problem with quadratic mobility $m(u)=u^2$. Secondly, we shall address the case that the stochastic integral in \eqref{eq:stfe} is to be understood in the sense of Stratonovich. 
Finally, as the third perspective of the techniques developed in the present paper, we recall that Cahn--Hilliard equations with degenerate mobility are intimately related to thin-film equations \cite{GarckeCahnHilliard, ZAAGruen}.  Hence, we expect only slight modifications to be required to obtain existence results for Cahn--Hilliard equations with Stratonovich conservative noise -- for results on stochastic degenerate Cahn--Hilliard equations with non-conservative noise, we refer to the recent paper \cite{Scarpa2021}.

With respect to the second goal, it turns out that the correction term necessary to rewrite the Stratonovich integral as an \Ito~integral can be included into the frame of the generic form \eqref{eq:stfe}  just as a modification of the singular potential $F\trkla{\cdot}$. In particular, this modification does not affect the structural conditions formulated on $F$ in the \Ito-case,  provided some natural hypotheses on the decay parameters of the corresponding Wiener processes are met.
Therefore, the arguments presented hereafter carry over to the case of Stratonovich noise via slight changes in the singular potential $F$.
}

Growing interest in stochastic thin-film equations with Stratonovich noise arose with the work of Gess and Gnann  \cite{GessGnann2020} who proved existence of solutions to stochastic versions of thin-film equations driven only by surface tension, i.e.~with $F(\cdot)\equiv 0.$ Using a novel approximation scheme, Dareiotis, Gess, Gnann, and the second author of this paper \cite{DareiotisGGG} extended this result to stochastic thin-film equations with conservative nonlinear multiplicative noise, covering in particular the case of a no-slip condition at the liquid-solid interface which corresponds to $m(u)=u^3.$ 
\revx{In contrast to \cite{DareiotisGGG} which covers the case $m(u)=u^n$ with $n\in [8/3, 4)$, \cite{GessGnann2020} allows for initial data with compact support, however, at the prize of reduced regularity in space. Another approach to construct nonnegative solutions to initial data with compact support is presented in \cite{GrunKlein2021}. Here, the 1D-predecessor of the current work (see \cite{FischerGrun2018}) is used as the starting point of the analysis. So called $\alpha$-entropy estimates are derived which guarantee almost surely that almost everywhere in time the solution is of class $C^1$ in space, this way providing more regularity than \cite{GessGnann2020}. In this spirit, the present paper may similarly serve as a starting point to investigate the case $F\trkla{\cdot}\equiv0$ and to obtain solutions to equations with compactly supported initial data in the 2D setting.
Two months after the current work had been submitted, Sauerbrey \cite{Sauerbrey2021} suggested another approach, combining $\alpha$-entropy estimates with the method of \cite{GessGnann2020} to establish existence of solutions in 2D for quadratic mobilities  in the case of Stratonovich noise with $F\equiv 0.$}

Note that Davidovitch et al.~\cite{DMS2005} derived stochastic thin-film equations for surface tension driven flow to study numerically the influence of thermal noise on the spreading behaviour. For mathematically rigorous results on the noise impact on free boundary propagation, see \cite{fischergruen2015, DirrGrillmeierGruen2021, Grillmeier2020, gess2013, barbuRoeckner2012} and the references therein which are preliminary studies devoted to stochastic second order degenerate parabolic equations of porous-media and of parabolic $p$-Laplace-type. However, it is worth mentioning that the techniques of \cite{fischergruen2015, Grillmeier2020} are expected to be universal in the sense that they are based on energy methods developed in \cite{WaitingTime, Grun2001b, DoublyNonlinear, ChipotSideris, UpperBoundsThinFilmWeakSlippage, SupportPropagationThinFilm} which work for general classes of degenerate parabolic equations of second and higher order.

In \cite{FischerGrun2018}, it has already been argued that space-time white noise is not compatible with the finiteness of the physical energies encountered in thin-film flow. Therefore, we consider $Q$-Wiener processes. To guarantee conservation of mass, we work on rectangular physical domains $\mathcal O:=\mathcal O^x\times \mathcal O^y:=(0,L_x)\times (0,L_y)$, and we prescribe periodic boundary conditions. 
We may consider an ON-basis  $(\g{kl})_{k,l\in\N}$ where the $\g{kl}$ are given as the product $\g{kl}(x,y):=g_k^x(x)g_l^y(y)$ of appropriately scaled eigen-functions of the one-dimensional Laplace operator on $\Ox$ and $\Oy$, respectively (cf.~Remark \ref{rem:basisfunctions}). 

We consider driving noise $\mathbf W$  given by 
\begin{equation}\label{eq:noisevec}
\begin{pmatrix}
\sum_{k,l\in\Z}\lambda^x_{kl}\g{kl}\beta_{kl}^x\\
\sum_{k,l\in\Z} \lambda_{kl}^y \g{kl}\beta_{kl}^y 
\end{pmatrix}
\end{equation} 
where
\begin{itemize}
\item the $\beta_{kl}^\alpha, $ $ \alpha\in\{x,y\}$, $k,l\in\Z$ constitute a family of i.i.d. Brownian motions, 
\item the $\lambda^\alpha_{kl}$, $\alpha\in\{x,y\}$, $k,l\in\Z$ are a family of nonnegative real numbers converging sufficiently fast to zero -- see Hypothesis \ref{item:stochbasis:bounds} for more details.
\end{itemize}
Therefore, we are interested in global existence of a.s. nonnegative martingale solutions to the stochastic thin-film equation
\begin{align}\label{eq:stfe2}
\text{d}u=-\div\tgkla{m\trkla{u}\nabla\trkla{\Delta u-F^\prime\trkla{u}}}\dt+\sum_{\alpha\in\tgkla{x,y}}\sum_{k,l\in\Z}\para{\alpha}\trkla{\sqrt{m\trkla{u}} \lalpha{kl}\g{kl}}\dbalpha{kl}
\end{align}
on $\Om\times[0,\infty)$ subject to periodic boundary conditions.

As the analysis of stochastic thin-film equations is influenced by the deterministic theory, we give a brief account on the literature, following here the exposition in \cite{DareiotisGGG}.

A theory of existence of weak solutions for the deterministic thin-film equation in space dimension $d=1$  has been developed in \cite{BF1990,BBDP1995,BP1996} and \cite{Otto1998,BGK2005,Mellet2015} for zero and nonzero contact angles at the intersection of the liquid-gas and liquid-solid interfaces, respectively, while the multi-dimensional version  with $F \equiv 0$ in $\mathcal{O}\times (0,T] $ and zero contact angles has been the subject of \cite{Dal1998,Grun2004}. For these solutions, a number of quantitative results has been obtained -- including optimal estimates on spreading rates of free boundaries, i.e.~the triple lines separating liquid, gas, and solid, see \cite{HulshofShishkov98, Bertsch1998, Gruen2002, SupportPropagationThinFilm}, optimal conditions on the occurrence of waiting time phenomena \cite{WaitingTime}, as well as scaling laws for the size of waiting times \cite{LowerBounds, UpperBoundsThinFilmWeakSlippage}. For the deterministic case with $F\neq 0$, we refer to \cite{Grun2003} for an existence result based on numerical analysis.

A corresponding theory of classical solutions, giving the existence and uniqueness for initial data close to generic solutions or short times, has been developed in \cite{GKO2008,GK2010,GGKO2014,Gnann2015,Gnann2016,GIM2019} for zero contact angles and in \cite{Knuepfer2011,KM2013,KM2015,Knuepfer2015,Esselborn2016} for nonzero contact angles in one space dimension, while the higher-dimensional version has been the subject of \cite{John2015,Seis2018,GP2018} and \cite{Degtyarev2017} for zero and nonzero contact angles, respectively.

\revy{It is worth recalling that the thin-film equation is one of the very few examples of (degenerate) parabolic fourth-order equations which allow for globally nonnegative solutions. To retain this property also in the stochastic case, neither additive noise nor multiplicative noise not degenerating for $u(x)=0$ seems to be appropriate. The special structure with the noise term in \eqref{eq:stfe} given as the square root of the mobility $m(\cdot)$ has been suggested by the derivation of stochastic thin-film equations, see \cite{DMS2005, Grun2006}.}

The outline of our paper is as follows.
Conceptually, our existence result is based on stochastic counterparts of integral estimates known from the deterministic setting which we combine with Jakubowski/Skorokhod-type methods to construct martingale solutions -- \revx{see  \cite{BrzezniakOndrejat, HofmanovaSeidler} for the basic ideas of this approach and \cite{HofmanovaRoegerRenesse, BreitFeireislHofmanova} for applications to other problems.}
More precisely, we will control the energy 
\begin{align}\label{eq:energy}
\mathcal{E}\trkla{u}:=\iO \tfrac12\tabs{\nabla u}^2\dx\dy +\iO F\trkla{u}\dx\dy
\end{align}
and the so called mathematical entropy
\begin{align}
\mathcal{S}\trkla{u}:=\iO G\trkla{u}\dx\dy
\end{align}
where 
\begin{align}
G\trkla{u}:=\int_1^u\int_1^s \tfrac{1}{m\trkla{r}}\text{d}r\,\text{d}s
\end{align}
is a second primitive of the reciprocal mobility. 

\revy{Considering the case $m\trkla{u}:=u^2$, we mimic the 1d strategy used in \cite{FischerGrun2018}, i.e.~we perform discretization in space and apply \Ito's formula to the resulting system of SDEs to derive those integral estimates.
Taking a step towards the derivation of tractable, fully discrete finite element schemes, we use a basis of finite element functions to perform the spatial discretization.
Although this choice introduces additional mathematical difficulties, we strongly believe that it will serve as a cornerstone for fully discrete schemes.}
Extending this 1D approach (in particular the treatment of the additional terms arising from \Ito's formula) to the two-dimensional setting requires the use of tensor product finite elements.\\
We adapt the stopping-time-approach of \cite{FischerGrun2018} to comply with the singularities in the effective interface potential $F(\cdot)$. In contrast to the spatially one-dimensional setting of \cite{FischerGrun2018}, boundedness of the physical energy $\mathcal E(u)$ does no longer imply strict positivity in our case. Therefore, in the discrete setting, $\mathcal E(u)$ is augmented by the square of the $L^2$-norm of the discrete Laplacian of discrete solutions $u_h$ weighted by a factor which vanishes in the limit $h\to 0$.
For the details, see \eqref{eq:discenergy} and \eqref{eq:defDiscLapl}.

Section~\ref{sec:notation} is devoted to notation and the large number of technical preliminaries which come along with our discretization and the energy regularization mentioned before. Moreover, the assumptions on initial data, growth behavior of the effective interface potential $F$ and on the driving noise $\mathbf W$ are specified in Section~\ref{sec:notation}, too.

Section~\ref{sec:scheme} is devoted to the discussion of the semi-discrete scheme which is formulated in such a way that it may be used for practical numerical simulations of stochastic thin-film equations as well. Section~\ref{sec:scheme} contains also the main existence result together with the applied solution concept. Moreover, we present a lemma which permits the control of the oscillation of discrete solutions on single finite elements.

In Section~\ref{sec:apriori}, we present the core result of the analysis in this paper -- a discrete combined energy-entropy estimate. It is based on a combination of \Ito's formula with inverse estimates for finite-element functions, and with error estimates for interpolation operators which are collected in a synopsis in Appendix \ref{sec:appendix}.
Moreover, Section~\ref{sec:apriori} contains results on compactness in time of discrete solutions which -- in combination with the aforementioned energy-entropy estimate -- are the key to apply the Skorokhod-Jakubowski-method (cf.~\cite{BrzezniakOndrejat,HofmanovaSeidler,Jakubowski1998}) to pass to the limit $h\to 0$ which is the topic of Section~\ref{sec:limit}.

It is worth mentioning that the passage to the limit in the deterministic terms poses new intricacies due to the lack of strict positivity results. We base our arguments on appropriate exhaustion arguments combined with generalizations of Egorov's theorem for Bochner-integrable functions.

\paragraph*{\underline{Notation:}} Throughout the paper, we use the standard notation for Sobolev spaces, i.e.~for a spatial domain $\Om\subset\R^2$, we denote the space of $k$-times weakly differentiable functions with weak derivatives in $L^p\trkla{\Om}$ by $W^{k,p}\trkla{\Om}$. 
For $p=2$, we denote the Hilbert spaces $W^{k,2}\trkla{\Om}$ by $H^k\trkla{\Om}$.
The corresponding subspaces of $\Om$-periodic functions will be denoted by the subscript `per'.
The subspace of $\Om$-periodic $H^1\trkla{\Om}$-functions with mean-value zero will be denoted by $H^1_*\trkla{\Om}$.
Furthermore, we denote the dual space of $H^1_\per\trkla{\Om}$ by $\trkla{H^1_\per\trkla{\Om}}^\prime$.
The space of continuous $\Om$-periodic functions is denoted by $C_\per\trkla{\overline{\Om}}$ and $C_\per^\gamma\trkla{\overline{\Om}}$ is the space of $\Om$-periodic, Hölder continuous functions with Hölder exponent $\gamma$.\\
For a time interval $I$ and a Banach space $X$, the space of $L^p$-integrable functions with values in $X$ is denoted by $L^p\trkla{I; X}$.
Similarly, we denote the space of $k$-times weakly differentiable functions from $I$ to $X$ with weak derivatives in $L^p\trkla{I; X}$ by $W^{k,p}\trkla{I;X}$ and the Hölder continuous functions from $I$ to $X$ with Hölder exponent $\gamma$ by $C^\gamma\trkla{I;X}$.

We shall also use some standard notation from stochastic analysis:
The notation $a\wedge b$ stands for the minimum of $a$ and $b$, and $L_2\trkla{X,Y}$ denotes the set of Hilbert-Schmidt operators from $X$ to $Y$.
For a stopping time $T$, we write $\chi_T$ to denote the ($\omega$-dependent) characteristic function of the time interval $\tekla{0,T}$.

Further notation related to the semi-discrete scheme is introduced in Section \ref{sec:notation}.
%For given interval $I:=\trkla{0,T}$ and a Banach space $E$, we define the Nikolskii-space $\mathcal{N}^{\alpha,p}\trkla{I;E}$ for $\alpha\in\trkla{0,1}$ and $1\leq p\leq\infty$ by
%\begin{align}
%\mathcal{N}^{\alpha,p}\trkla{I;E}:=\gkla{f\in L^p\trkla{I;E}\,:\,\sup_{\tau>0} \norm{f\trkla{\cdot+\tau}-f\trkla{\cdot}}_{L^p\trkla{\trkla{0,T-\tau};E}}\leq \tau^\alpha}\,.
%\end{align}
\section{Notation, technical preliminaries, and basic assumptions on the data}\label{sec:notation}
We consider the torus $\Om:=\Ox\times\Oy:=\rkla{0,\Lx}\times\rkla{0,\Ly}$. 
We introduce partitions $\Thx$ and $\Thy$ of $\Ox$ and $\Oy$ satisfying the following assumption:
\begin{itemize}
\labitem{\textbf{(S)}}{item:S} $\tgkla{\Thx}_{h>0}$ and $\tgkla{\Thy}_{h>0}$ are families of equidistant partitions of $\Ox$ and $\Oy$, respectively, into disjoint, open intervals such that
\begin{align*}
\overline{\Ox}\equiv\bigcup_{\Kx\in\Thx}\overline{\Kx}&&\text{and}&&\overline{\Oy}\equiv\bigcup_{\Ky\in\Thy}\overline{\Ky}\,.
\end{align*}
In particular, there exist positive constants $\hat{c}_1$, $\hat{C}_2$ such that
\begin{align*}
\hat{c}_1 h\leq \hx,\,\hy\leq\hat{C}_2h
\end{align*}
with $\hx:=\diam \Kx$ ($\Kx\in\Thx$), $\hy:=\diam\Ky$ ($\Ky\in\Thy$), and $h\in\trkla{0,1}$.
\end{itemize}
Combining $\tgkla{\Thx}\h$ and $\tgkla{\Thy}\h$, we obtain a family of partitions $\tgkla{\Qh}\h$ of $\Om$ which is defined via
\begin{align}
\Qh=\tgkla{\Q=\Kx\times\Ky\,:\,\Kx\in\Thx\text{~and~}\Ky\in\Thy}\,.
\end{align}
Based on these partitions, we introduce the following spaces of continuous, piecewise linear finite element functions.
\begin{subequations}\label{eq:def:fespace}
\begin{align}
\Uhx:=&\tgkla{v\in C_{\text{per}}\trkla{\overline{\Ox}}\,:\,\restr{v}{\Kx}\in\mathcal{P}_1\trkla{\Kx}\quad\forall\Kx\in\Thx}\,,\\
\Uhy:=&\tgkla{v\in C_{\text{per}}\trkla{\overline{\Oy}}\,:\,\restr{v}{\Ky}\in\mathcal{P}_1\trkla{\Ky}\quad\forall\Ky\in\Thy}\,,\\
\Uhxy:=&\Uhx\otimes\Uhy\label{eq:def:Uhxy}\,.
%\Uhxy:=&\tgkla{v\in C_{\text{per}}\trkla{\overline{\Om}}\,:\,\restr{v\trkla{.,y}}{\Kx}\in\mathcal{P}_1\trkla{\Kx}~\forall\Kx\in\Thx\text{~and~}\restr{v\trkla{x,.}}{\Ky}\in\mathcal{P}_1\trkla{\Ky}~\forall\Ky\in\Thy}\,.
\end{align}
\end{subequations}
Imposing periodic boundary conditions, we denote the vertices of $\Thx$ by $\tgkla{x_i}_{i=1,\ldots\dim\Uhx+1}\!=\tgkla{\trkla{i-1}\hx}_{i=1,\ldots\dim\Uhx+1}$ and identify $x_{\dim\Uhx+1}=\Lx$ with $x_1=0$.
Furthermore, we denote the dual basis to these vertices by $\tgkla{\ex{i}}_{i=1,\ldots,\dim\Uhx}$.
Similarly,  we denote the vertices of $\Thy$ by $\tgkla{y_j}_{j=1,\ldots,\dim\Uhy+1}$, identify $y_{\dim\Uhy+1}=\Ly$ with $y_1=0$, and consider their dual basis $\tgkla{\ey{j}}_{j=1,\ldots,\dim\Uhy}$.
Throughout this work, we will also identify $x_0$ with $x_{\dim\Uhx}$ and $y_0$ with $y_{\dim\Uhy}$.
In the same spirit, we shall identify $\ex{\dim\Uhx+1}$ with $\ex{1}$, $\ex{0}$ with $\ex{\dim\Uhx}$, $\ey{\dim\Uhy+1}$ with $\ey{1}$, and $\ey{0}$ with $\ey{\dim\Uhy}$.
For the spaces introduced  in \eqref{eq:def:fespace}, we define the interpolation operators
\begin{align}
\Ihxop\,&:\,C_{\text{per}}\trkla{\overline{\Ox}}\rightarrow\Uhx&a&\mapsto \sum_{i=1}^{\dim\Uhx}a\trkla{x_i}\ex{i}\,,\\
\Ihyop\,&:\,C_{\text{per}}\trkla{\overline{\Oy}}\rightarrow\Uhy&a&\mapsto \sum_{j=1}^{\dim\Uhy}a\trkla{y_j}\ey{j}\,,\\
\Ihxyop\,&:\, C_{\text{per}}\trkla{\overline{\Om}}\rightarrow\Uhxy&a&\mapsto\Ihx{\Ihy{a}}=\Ihy{\Ihx{a}}\,.
\end{align}
For future reference, we state the following norm equivalence for $p\in[1,\infty)$ and $u\h\in\Uhxy$:
\begin{align}\label{eq:normequivalence}
c\rkla{\iO\trkla{u\h}^p\dx\dy}^{1/p}\leq \rkla{\iO\Ihxy{\trkla{u\h}^p}\dx\dy}^{1/p}\leq C\rkla{\iO\trkla{u\h}^p\dx\dy}^{1/p}
\end{align}
with $c,C>0$ independent of $h$.
%\begin{lemma}\label{lem:normequivalence}\todo{Nicht mehr als eigenes Lemma}
%Let $\Qh$ be a quasi-uniform family of partitions of $\Om$ and let $p\in[1,\infty)$. Then the norms given by
%\begin{align}
%\rkla{\iO\Ihxy{\trkla{\cdot}^p}\dx\dy}^{1/p}&&\text{and}&&\rkla{\iO\trkla{\cdot}^p\dx\dy}^{1/p}
%\end{align}
%are equivalent on $\Uhxy$ with $h$-independent constants.
%\end{lemma}
Similar (lower dimensional) results also hold true for $\Ihxop$ on $\Uhx$ and $\Ihyop$ on $\Uhy$.
These nodal interpolation operators satisfy error estimates similar to the ones established in \cite{Metzger2020} for simplicial elements. 
For the reader's convenience, we collect these estimates in Lemma \ref{lem:Ih:error} in the appendix.
%In addition, Lemma 2.1 in \cite{Metzger2020} can be adapted to the case of bilinear elements in the following sense.
%For the reader's convenience, we will use this lemma also to cite the most important estimates from Lemma 2.1 in \cite{Metzger2020}.

With these interpolation operators, we define the discrete Laplacian \revy{$\Delta\h\,:\,\Uhxy\rightarrow\Uhxy\cap H^1_*\trkla{\Om}$} as follows:
\begin{align}\label{eq:defDiscLapl}
\begin{split}
\iO\Ihxy{-\Delta_h u\h\psi\h}\dx\dy:&=\iO\Ihy{\parx u\h\parx\psi\h}\dx\dy+\iO\Ihx{\pary u\h\pary \psi\h}\dx\dy\\
&=:-\iO\Ihxy{\Delta\h^x u\h \psi\h}\dx\dy-\iO\Ihxy{\Delta\h^y u\h\psi\h}\dx\dy\,
\end{split}
\end{align}
for all $\psi\in \Uhxy$.
\revy{Thereby, the operator $\Delta\h^x$ can be interpreted pointwise in $y$ as the one-dimensional discrete Laplacian w.r.t.~$x$ mapping $\Uhx$ onto $\Uhx\cap H^1_*\trkla{\Ox}$  and $\Delta\h^y$ can be interpreted pointwise in $x$ as the one-dimensional discrete Laplacian w.r.t.~$y$ mapping $\Uhy$ onto $\Uhy\cap H^1_*\trkla{\Oy}$.}\\ \noindent
We denote the forward and backward difference quotients w.r.t.~the spatial coordinates $x$ and $y$ by $\dxp$, $\dxm$, $\dyp$, and $\dym$, i.e.
\begin{subequations}\label{eq:def:dq}
\begin{align}
\dxp f\trkla{x,y}&:=\rkla{f\trkla{x+\hx,y}-f\trkla{x,y}}/\hx\,,\label{eq:def:dq:xp}\\
\dxm f\trkla{x,y}&:=\rkla{f\trkla{x,y}-f\trkla{x-\hx,y}}/\hx\,,\label{eq:def:dq:xm}\\
\dyp f\trkla{x,y}&:=\rkla{f\trkla{x,y+\hy}-f\trkla{x,y}}/\hy\,,\label{eq:def:dq:yp}\\
\dym f\trkla{x,y}&:=\rkla{f\trkla{x,y}-f\trkla{x,y-\hy}}/\hy\,\label{eq:def:dq:ym}
\end{align}
\end{subequations}
(with $f$ extended outside of $\Om$ by periodicity).
Assuming equidistant partitions w.r.t.~$x$ and $y$, the identities
\begin{subequations}\label{eq:dqLap}
\begin{align}
\Delta\h^x v\h&=\dxp\trkla{\dxm v\h}=\dxm\trkla{\dxp v\h}\,,\\
\Delta\h^y v\h&=\dyp\trkla{\dym v\h}=\dym\trkla{\dyp v\h}\,
\end{align}
hold true for $v\h\in\Uhxy$.
\end{subequations}

In addition, we introduce similar local interpolation operators as follows.
We consider the spaces
\begin{align}
C_{\text{per},\Thx}&:=\tgkla{v\in L_{\text{per}}^\infty\trkla{\Ox}\,:\,\restr{v}{\Kx}\in C\trkla{\Kx}\quad\forall\Kx\in\Thx}\,,\\
C_{\text{per},\Thy}&:=\tgkla{v\in L_{\text{per}}^\infty\trkla{\Oy}\,:\,\restr{v}{\Ky}\in C\trkla{\Ky}\quad\forall\Ky\in\Thy}\,
\end{align}
of bounded, piecewise continuous, periodic functions.
As we can extend a continuous function on an open interval to a continuous function on the closure of this interval, we may apply $\Ihxop$ and $\Ihyop$ locally on each element to obtain
\begin{subequations}
\begin{align}
\Ihxlocop\,&:\,C_{\text{per},\Thx}\rightarrow \tgkla{v\in C_{\text{per},\Thx}\,:\,\restr{v}{\Kx}\in\mathcal{P}_1\trkla{\Kx}\quad\forall\Kx\in\Thx}\,,\\
\Ihylocop\,&:\,C_{\text{per},\Thy}\rightarrow \tgkla{v\in C_{\text{per},\Thy}\,:\,\restr{v}{\Ky}\in\mathcal{P}_1\trkla{\Ky}\quad\forall\Ky\in\Thy}\,.
\end{align}
\end{subequations}
Obviously, these local interpolation operators satisfy the identities
\begin{subequations}\label{eq:Ihlocprop}
\begin{align}
\Ihxloc{\parx a^x\h v}&=\parx a^x\h\Ihxloc{v}\,,&\text{and}&&\Ihyloc{\pary a^y\h \hat{v}}&=\pary a^y\h\Ihyloc{\hat{v}}\label{eq:Ihlocpar}\,,\\
\Ihx{\tilde{v}}&=\Ihxloc{\tilde{v}}\,,&\text{and}&&\Ihy{\bar{v}}&=\Ihyloc{\bar{v}}\label{eq:Ih=Ihloc}
\end{align}
\end{subequations}
for $v\in C_{\text{per},\Thx}$, $\hat{v}\in C_{\text{per},\Thy}$, $\tilde{v}\in C_\per\trkla{\overline{\Ox}}$, and $\bar{v}\in C_\per\trkla{\overline{\Oy}}$.
Here, the first identity follows directly from the definition of $\Ihxlocop$, as $\parx a\h$ is constant w.r.t.~$x$ on every element.

In order to allow for a discrete version of the chain rule, we introduce for a continuous function $f\,:\,\R\rightarrow\R$ and $u\in C_\per\trkla{\overline{\Om}}$ for every $y\in\Oy$ and $x\in\trkla{ih,\trkla{i+1}h}=:\Kx\in\Thx$ the function
\begin{align}
\ekla{f\trkla{u}}_x\trkla{x,y}:=\fint_{u\trkla{ih,y}}^{u\trkla{\trkla{i+1}h,y}} f\trkla{s}\ds\,.
\end{align}
Similarly, we define for $x\in\Ox$ and $y\in\trkla{jh,\trkla{j+1}h}=:\Ky\in\Thy$
\begin{align}
\ekla{f\trkla{u}}_y\trkla{x,y}:=\fint_{u\trkla{x,jh}}^{u\trkla{x,\trkla{j+1}h}} f\trkla{s}\ds\,.
\end{align}
Obviously, these definitions provide for $u\h\in\Uhxy$
\begin{align}\label{eq:chainrule}
\parx\Ihxy{f\trkla{u\h}}&=\Ihy{\ekla{f^\prime\trkla{u\h}}_x\parx u\h}\,,&\pary\Ihxy{f\trkla{u\h}}&=\Ihx{\ekla{f^\prime\trkla{u\h}}_y\pary u\h}\,.
\end{align}
On the periodic domain $\Om$, we define the Ritz projection operator $\Ritzop\,:\,H^1_\per\trkla{\Om}\rightarrow \Uhxy$ via
\begin{align}\label{eq:def:Ritz}
\iO\nabla\Ritz{u}\cdot\nabla v\h\dx\dy=\iO\nabla u\cdot\nabla v\dx\dy &&\text{for all~}v\h\in\Uhxy
\end{align}
with the additional constraint $\iO\Ritz{u}\dx\dy=\iO u\dx\dy$.
\newline
\revy{In this publication, we consider the case of a quadratic mobility, i.e.~$m\trkla{u}:=u^2$.}
Let us specify our assumptions on initial data, effective interface potential, and the noise.
\begin{itemize}
\labitem{\textbf{(I)}}{item:initial} Let $\Lambda$ be a probability measure on $H^2_\per\trkla{\Om}$ equipped with the Borel $\sigma$-algebra which is supported on the subset of strictly positive functions such that there is a positive constant $C$ with the property
\begin{align}
\operatorname{esssup}_{v\in\operatorname{supp}\Lambda}\rkla{\mathcal{E}\h\trkla{\Ihxy{v}}+\iO \Ihxy{v}\dx\dy+\rkla{\iO \Ihxy{v}\dx\dy}^{-1}}\leq C
\end{align}
for any $h>0$ with $\mathcal{E}\h$ being a discrete version of the energy \eqref{eq:energy}, which we will define in \eqref{eq:discenergy}.
\end{itemize}
%\revy{\begin{itemize}
%\labitem{\textbf{(M)}}{item:mobility} The mobility $m$ is quadratic, i.e.~$m\trkla{u}:=u^2$.
%\end{itemize}}
\begin{itemize}
\labitem{\textbf{(P)}}{item:potential}The effective interface potential $F$ has continuous second-order derivatives on $\R^+$ and satisfies for some $p>2$ and $u>0$ the following estimates with appropriate positive constants:
\begin{align*}
F\trkla{u}\geq& c_1 u^{-p}\,,\\
\, \tabs{F^\prime\trkla{u}}\leq&\,\hat{C} u^{-p-1}+\hat{C}\,,\\
\tilde{c}_1 u^{-p-2}-\tilde{c}_2\leq F^{\prime\prime}\trkla{u}\leq&\, \tilde{C} u^{-p-2}+\tilde{C}\,.
\end{align*}
For nonpositive $u$, we define $F(u):=+\infty.$
\end{itemize}
\begin{itemize}
\labitem{\textbf{(B)}}{item:stoch}Let $\trkla{\Omega,\,\mathcal{F},\,\trkla{\mathcal{F}_t}_{t\geq0},\,\Prob}$ be a stochastic basis with a complete, right-continuous filtration such that
\begin{itemize}
\labitem{\textbf{(B1)}}{item:stochbasis:Q} $\bs{W}$ is a $Q$-Wiener process on $\Omega$ adapted to $\trkla{\mathcal{F}_t}_{t\geq0}$ which admits a decomposition of the form $\bs{W}=\sum_{\alpha\in\tgkla{x,y}}\sum_{k,l\in\Z}\lalpha{kl}\g{kl}\kbalpha\balpha{kl}$ for independent sequences of i.i.d.~Brownian motions $\balpha{kl}$ ($\alpha\in\tgkla{x,y}$) and a sequence of sufficiently smooth basis functions $\g{kl}$. 
Here, $\bs{b}_x$ and $\bs{b}_y$ denote the standard Cartesian basis vectors in $\R^2$.
Furthermore, we will denote its components by $W_\alpha:=\sum_{k,l\in\Z}\lalpha{kl}\g{kl}\balpha{kl}$ ($\alpha\in\tgkla{x,y}$). The corresponding components of $Q$ will be denoted by $Q_x$ and $Q_y$.
\labitem{\textbf{(B2)}}{item:stochbasis:initial} there exists a $\mathcal{F}_0$-measurable random variable $u^0$ such that $\Lambda=\Prob\circ \trkla{u^0}^{-1}$.
\labitem{\textbf{(B3)}}{item:stochbasis:bounds} the noise $\bs{W}$ is colored in the sense that $\sum_{k,l=1}^\infty \rkla{\lx{kl}^2+\ly{kl}^2}\norm{\g{kl}}_{W^{2,\infty}\trkla{\Om}}^2\leq C$ for a positive constant $C$.
\end{itemize}
\end{itemize}
\begin{remark}\label{rem:Strato}
  (1) Under natural assumptions on the decay parameters $\lalpha{kl}$ and the basis functions $\g{kl}$, $k,l\in\Z$, Hypothesis~\ref{item:potential} covers in fact also the case that the stochastic integral is to be understood in the sense of Stratonovich.
  Assuming
  \begin{itemize}
  \item the basis functions to be given by $\g{kl}(x,y)=g_k^x(x)g_l^y(y)$ with $g_k^x(\cdot)$ and $g_l^y(\cdot)$   as in \eqref{eq:basisfunctions},
  \item decay parameters $\lalpha{kl}$, $k,l\in\Z$, $\alpha\in\{x,y\}$ to satisfy
  \begin{align}
  \lx{kl}&=\ly{kl}\,, & \lalpha{(-k)l}&=\lalpha{kl}\,, &  \lalpha{k(-l)}&=\lalpha{kl}
  \end{align}
  for all $k,l\in\Z$ and $\alpha\in\tgkla{x,y}$,
  %\begin{subequations}
  %  \begin{align}
  %    \lx{kl}&=\ly{kl} \qquad \forall k,l\in\Z, \label{eq:dec1}\\
  %    \lalpha{(-k)l}&=\lalpha{kl} \qquad \forall k,l\in\Z,\quad \alpha\in\{x,y\}, \label{eq:dec2}\\
  %    \lalpha{k(-l)}&=\lalpha{kl} \qquad \forall k,l\in\Z, \quad \alpha\in\{x,y\}, \label{eq:dec3}
   % \end{align}
   % \end{subequations}
    \end{itemize}
    the \Ito~correction of the Stratonovich term
    $$ \sum_{\alpha\in\{x,y\}}\sum_{k,l\in\Z} \partial_\alpha\left(u\lalpha{kl}\g{kl}\right)\circ \dbalpha{kl}$$
    becomes
    \begin{equation}
      \label{eq:itocorrect}
      C_{Strat}\Delta u \dt + \sum_{\alpha\in\{x,y\}}\sum_{k,l\in\Z} \partial_\alpha\left(u\lalpha{kl}\g{kl}\right) \dbalpha{kl}.
    \end{equation}
    Here, the positive constant $C_{Strat} $ is given by
    \begin{equation}
      \label{eq:Cstrat}
      C_{Strat}:=\tfrac{1}{L_xL_y}\left(\lambda_{00}^2+4\sum_{k,l\in\Z\setminus\{0\}}\lambda_{kl}^2+
      2 \sum_{k\in\Z\setminus\{0\}} (\lambda_{k0}^2+\lambda_{0k}^2)\right),
      \end{equation}
    where we omitted the superscript $\alpha$ as the decay parameters were chosen to be independent of $\alpha.$ This allows to write the stochastic thin-film equation with Stratonovich noise in the form
    \begin{equation}
      \label{eq:Stratono}
      du=-\div\left(u^2\nabla\left(\Delta u - F_{Strat}'(u)\right)\right)\dt + \sum_{\alpha\in\{x,y\}}\sum_{k,l\in\Z}\partial_\alpha\left(u\lambda_{kl}\g{kl}\right)\dbalpha{kl}
    \end{equation}
    with the energy $F_{Strat}$ given by
    \begin{equation}
      \label{eq:Stratenergy}
      F_{Strat}(u):= \begin{cases} F(u)+C_{Strat}\left(u-\log u\right) +const. & \text{ if } u>0\\+\infty & \text{ if }u\leq 0\end{cases}
    \end{equation}
    where the constant can be chosen in such a way that $F_{Strat}$ satisfies \ref{item:potential} if \ref{item:potential} is satisfied by $F$ itself. Hence, the analysis presented in this paper applies to the Stratonovich interpretation, too.
    \newline\newline
    \noindent
    (2) The approximation of initial data is based on the nodal interpolation operator to cope with the requirement of strictly positive discrete initial data. Therefore, we need the space of initial data to be continuously embedded in $C(\overline{\mathcal O})$. The specification in \ref{item:initial} that initial data should have $H^2$-regularity is presumably not the optimal one. It is, however, consistent with our regularization procedure -- see \eqref{eq:semidiscreteSTFE:2} -- of augmenting the pressure $p_h$ by a discrete Bi-Laplacian. As the energy estimate formulated for \eqref{eq:stfe2} (see \eqref{eq:gg100}) does not require more than $H^1$-regularity for initial data, it should have been possible to focus on $H^1$- initial data and to apply nodal interpolation operators to appropriate $H^2$-regularizations, e.g. by convolution. For the ease of presentation, we prefer to avoid those technicalities.
\end{remark}

\section{The semi-discrete scheme}\label{sec:scheme}
In order to control the oscillation of the discrete solution $u\h$ on each element, we  regularize the  energy under consideration.
Introducing a regularization parameter $2>\varepsilon>0$, we define the regularized discrete energy and the discrete entropy as
\begin{align}\label{eq:discenergy}
\begin{split}
\mathcal{E}\h\trkla{u\h}:=&\tfrac12\iO \Ihy{\tabs{\parx u\h}^2}+\Ihx{\tabs{\pary u\h}^2}\dx\dy +\iO\Ihxy{F\trkla{u\h}}\dx\dy\\
&+\tfrac12 h^\varepsilon\iO\Ihxy{\tabs{\Delta\h u\h}^2}\dx\dy\,,\end{split}\\
\mathcal{S}\h\trkla{u\h}:=&\iO\Ihxy{G\trkla{u\h}}\dx\dy\qquad\text{with}\qquad G\trkla{s}:=\int_1^s\int_1^r\tfrac{1}{m\trkla{\tau}}\text{d}\tau\text{d}r\,.
\end{align}
As it will be shown in Lemma \ref{lem:oscillation}, we  assume that 
\begin{itemize}
\labitem{\textbf{(R)}}{item:regularization} the regularization parameter $\varepsilon$ is small enough such that there exists a constant $\rho>0$ such that
\begin{align}
1>\frac2p+\frac\varepsilon2+\frac\rho{2p}\,,
\end{align}
where $p$ is the exponent associated with the growth of $F$ (cf.~Assumption \ref{item:potential}).
\end{itemize}
\begin{remark}
For every exponent $p>2$ in the effective interface potential, positive parameters $\varepsilon$ and $\rho$ exist, such that \ref{item:regularization} holds true.
%For any $\varepsilon<2$ the exponent $p$ of the potential can be chosen sufficiently large to allow for the existence of an appropriate $\rho$.
%This is not possible for $\varepsilon=2$. 
%In this case, however, the regularization introduce in \eqref{eq:discenergy} does not provide any additional information:
%Due to \eqref{eq:dqLap}, we have
%\begin{align}
%h^2\iO\Ihxy{\tabs{\Delta\h u\h}^2}\dx\dy\leq C\iO \Ihy{\tabs{\parx u\h}^2}+\Ihx{\tabs{\pary u\h}^2}\dx\dy\,.
%\end{align}
\end{remark}
Given a positive time $\Tmax$, we introduce a threshold energy $\Emax:=\hat{C} h^{-\rho/(2+p)}$ for given $\hat{C}>0$ and $0<\rho<\!\!\!<1$ satisfying \ref{item:regularization}. 
Similarly as in \cite{FischerGrun2018}, we consider associated stopping times $T\h:=\Tmax\wedge\inf\tgkla{t\geq0\,:\,\mathcal{E}\trkla{u\h}\geq \Emax}$.
We approximate the infinite dimensional Wiener process by a finite dimensional noise term.
In particular, we introduce the sets $\Zx\subset\Z$ and $\Zy\subset\Z$ satisfying
\begin{itemize}
\labitem{\textbf{(B3$\bs{^*}$)}}{item:stochbasis:boundthirdderivative} $\sum_{k\in\Zx}\sum_{l\in\Zy}\rkla{\lx{kl}^2+\ly{kl}^2}h^\varepsilon\norm{\g{kl}}_{W^{3,\infty}\trkla{\Om}}^2\leq C$ for $h\searrow0$,
\labitem{\textbf{(B4)}}{item:stochbasis:convergence}$\Zx\subseteq I^x_{\hat{h}}$, $\Zy\subseteq I^y_{\hat{h}}$ for $h\geq\hat{h}$ and $\bigcup_{h>0}\Zx=\Z$ and $\bigcup_{h>0}\Zy=\Z$.
\end{itemize} 
\begin{remark}\label{rem:basisfunctions}
Often the basis functions $\g{kl}$ are assumed to be eigenfunctions of the negative Laplacian on $\Om$ under periodic boundary conditions. In particular, the functions $\g{kl}$ are assumed to be the product of eigenfunctions $g_k^x$ and $g_l^y$ of the one-dimensional Laplacian on $\Ox$ and $\Oy$, respectively, i.e.
\begin{align}\label{eq:basisfunctions}\begin{split}
g_k^x\trkla{x}:=\sqrt{\frac{2}{\Lx}}\left\{\begin{array}{cl}
\cos\rkla{\tfrac{2\pi k x}{\Lx}} & \text{for~} k\geq1\,,\\
\tfrac{1}{\sqrt{2}}&\text{for~}k=0\,,\\
\sin\rkla{\tfrac{2\pi kx}{\Lx}}&\text{for~}k\leq-1\,,
\end{array}\right.
g_l^y\trkla{y}:=\sqrt{\frac{2}{\Ly}}\left\{\begin{array}{cl}
\cos\rkla{\tfrac{2\pi ly}{\Ly}} & \text{for~} l\geq1\,,\\
\tfrac{1}{\sqrt{2}}&\text{for~}l=0\,,\\
\sin\rkla{\tfrac{2\pi ly}{\Ly}}&\text{for~}l\leq-1\,.
\end{array}\right.
\end{split}
\end{align}
In this case, we have $\norm{\g{kl}}_{W^{2,\infty}\trkla{\Om}}\sim \trkla{k^2+l^2}$ and $\norm{\g{kl}}_{W^{3,\infty}\trkla{\Om}}\leq C\trkla{k^3+l^3}$.
Therefore, one may choose $\Zx=\Zy=\tgkla{z\in\Z\,:\, \tabs{z}\leq \hat{C} h^{-\varepsilon/2}}$ for given $\hat{C}>0$ to satisfy Assumptions \ref{item:stochbasis:boundthirdderivative} and \ref{item:stochbasis:convergence}, i.e.~the additional restrictions on the noise term imposed in \ref{item:stochbasis:boundthirdderivative} vanish when passing to the limit $h\searrow0$.
\end{remark}
With the perspective to simplify the implementation in a practical numerical scheme, we approximate the basis functions $\g{kl}$ by $\gt{kl}:=\Ihxy{\g{kl}}$.
%To allow for better implementation, we approximate the basis functions $\g{kl}$ by $\gt{kl}:=\Ihxy{\g{kl}}$.

In this work, we consider solutions
\begin{subequations}
\begin{align}
u\h&\,\in L^2\trkla{\Omega;C\trkla{\tekla{0,\Tmax};\Uhxy}}\,,\\
p\h&\,\in L^2\trkla{\Omega;L^\infty\trkla{0,\Tmax;\Uhxy}}
\end{align}
\end{subequations} 
to the following regularized, semi-discrete version of \eqref{eq:stfe2}:
\begin{subequations}\label{eq:semidiscreteSTFE}
\begin{align}\label{eq:semidiscreteSTFE:1}
\begin{split}
\iO&\,\Ihxy{u\h\trkla{T}\psi\h}\dx\dy -\iO\Ihxy{u\h\trkla{0}\psi\h}\dx\dy\\
&\qquad+\iOOTTh\Ihy{\meanGinvX{u\h}\parx p\h\parx\psi\h}\dx\dy\dt\\
&\qquad +\iOOTTh\Ihx{\meanGinvY{u\h}\pary p\h\pary\psi\h}\dx\dy\dt\\
=&\,\sum_{k\in \Zx}\sum_{k\in\Zy}\lx{kl}\iOOTTh\Ihy{\Ihxloc{\parx\trkla{u\h\gt{kl}}\psi\h}}\dx\dy\dbx{kl}\\
&\qquad+\sum_{k\in \Zx}\sum_{k\in\Zy}\ly{kl}\iOOTTh\Ihx{\Ihyloc{\pary\trkla{u\h\gt{kl}}\psi\h}}\dx\dy\dby{kl}
\end{split}
\end{align}
\begin{align}\label{eq:semidiscreteSTFE:2}
\begin{split}
\iO\Ihxy{p\h\psi\h}\dx\dy=\chiTh\iO\Ihy{\parx u\h\parx\psi\h}\dx\dy +\chiTh\iO\Ihx{\pary u\h\pary\psi\h}\dx\dy\\
+\chiTh\iO\Ihxy{F^\prime\trkla{u\h}\psi\h}\dx\dy+\chiTh h^\varepsilon\iO\Ihxy{\Delta\h u\h\Delta\h\psi\h}\dx\dy\,.
\end{split}
\end{align}
\end{subequations}
\begin{remark}\label{rem:initialdata}
Note that for discrete solutions of \eqref{eq:semidiscreteSTFE:1} the mass of discrete solutions, i.e. $\int_{\mathcal O} u_h(t)\dx\dy = \int_{\mathcal O} \Ihxy{u_h(t)}\dx\dy, $ is constant in time. Of course, it is natural to choose $\psi_h\equiv 1$ as the test function in \eqref{eq:semidiscreteSTFE:1}. Obviously, the contribution by the elliptic terms vanishes. So
 let us briefly prove that also the stochastic terms become zero. Using \eqref{eq:Ihlocpar} and both $u_h$ and $\gt{kl}$ to be contained in $U_h$, we find
\begin{align*}
&\int_{\mathcal O}\Ihy{\Ihxloc{\partial_x\left(u_h\gt{kl}\right)}}\dx\dy \\
&\qquad =\int_{\mathcal O}\Ihy{\partial_x u_h\Ihxloc{\gt{kl}}+\partial_x\gt{kl}\Ihxloc{u_h}}\dx \dy\\
& \qquad =\int_{\mathcal O}\Ihy{\partial_x\left(u_h\gt{kl}\right)}\dx\dy = 0
\end{align*}
due to integration by parts.
\end{remark}
\begin{definition}\label{def:wms}
Let $\Lambda$ be a probability measure on $H^2_\per\trkla{\Om}$ satisfying \ref{item:initial}.
A triple $\trkla{\trkla{\tilde{\Omega},\tilde{\mathcal{F}},\trkla{\tilde{\mathcal{F}}_t}_{t\geq0},\tilde{\Prob}},\tilde{u},\tilde{\bs{W}}}$ is called a weak martingale solution to the stochastic thin-film equation \eqref{eq:stfe2} with initial data $\Lambda$ on the time interval $\tekla{0,\Tmax}$ provided
\begin{enumerate}
\item $\trkla{\tilde{\Omega},\tilde{\mathcal{F}},\trkla{\tilde{\mathcal{F}}_t}_{t\geq0},\tilde{\Prob}}$ is a stochastic basis with a complete, right-continuous filtration,
\item $\tilde{\bs{W}}$ satisfies Assumption \ref{item:stoch} with respect to $\trkla{\tilde{\Omega},\tilde{\mathcal{F}},\trkla{\tilde{\mathcal{F}}_t}_{t\geq0},\tilde{\Prob}}$,
\item the solution $\tilde{u}$ is element of
\begin{align}
\begin{split}
 L^q\trkla{\tilde{\Omega};L^\infty\trkla{0,\Tmax;H^1_\per\trkla{\Om}}}\cap&\, L^2\trkla{\tilde{\Omega};L^2\trkla{0,\Tmax;H^2_\per\trkla{\Om}}}\\
 &\cap L^\sigma\trkla{\tilde{\Omega};C^{1/4}\trkla{\tekla{0,\Tmax};\trkla{H^1_\per\trkla{\Om}}^\prime}}
 \end{split}
 \end{align} 
 for all $q<\infty$ and $\sigma<8/5$ such that $\sqrt{m\trkla{\tilde{u}}}\nabla\trkla{\Delta\tilde{u}-F^\prime\trkla{\tilde{u}}}\in L^2\trkla{\tekla{\tilde{u}>0}}$,
\item there exists an $\tilde{F}_0$-measurable $H^2_\per\trkla{\Om; \R^+}$-valued random variable $\tilde{u}^0$ such that $\Lambda=\tilde{\Prob}\circ\trkla{\tilde{u}^0}^{-1}$, and the equation
\begin{align}
\begin{split}
\iO \tilde{u}\trkla{t}\phi\dx\dy=&\,\iO\tilde{u}^0\phi\dx\dy+ \int\!\!\int_{\tekla{\tilde{u}>0}} m\trkla{\tilde{u}}\nabla\trkla{\Delta\tilde{u} -F^\prime\trkla{\tilde{u}}}\cdot\nabla\phi\dx\dy\ds\\
&-\sum_{\alpha\in\tgkla{x,y}}\sum_{k,l\in\Z}\lalpha{kl}\int_0^t\iO\sqrt{m\trkla{\tilde{u}}}\g{kl}\para{\alpha}\phi\dx\dy\dbtalpha{kl}
\end{split}
\end{align}
holds true $\tilde{\Prob}$-almost surely for all $t\in\tekla{0,\Tmax}$ and all $\phi\in W^{1,q^*}_\per\trkla{\Om}$ with $q^*>2$.
\end{enumerate}
\end{definition}
The aim of this work is to establish the existence of weak martingale solutions starting from semi-discrete solutions to \eqref{eq:semidiscreteSTFE}.
In particular, we shall prove the following theorem.
\begin{theorem}\label{th:existence}
Let Assumptions \ref{item:S}, \ref{item:initial}, \ref{item:potential}, \ref{item:stoch}, \ref{item:regularization}, \ref{item:stochbasis:boundthirdderivative}, and \ref{item:stochbasis:convergence} be satisfied and let $\Tmax>0$ be given.
Furthermore, let $\trkla{u\h,p\h}_{h\searrow0}$ be a sequence of solutions to the regularized Faedo-Galerkin scheme \eqref{eq:semidiscreteSTFE} for the stochastic thin-film equation \eqref{eq:stfe2} with $\Emax=\hat{C} h^{-\rho/(2+p)}$ for some given $\hat{C}>0$.

Then there exist a stochastic basis $\rkla{\tilde{\Omega},\tilde{\mathcal{F}},\trkla{\tilde{\mathcal{F}}_t}_{t\geq0},\tilde{\Prob}}$ as well as processes $\tilde{u}\h$, $\tilde{J}^x\h$, $\tilde{J}^y\h$, and $\tilde{u}$ such that the following holds:
The processes $\tilde{u}\h$, $\tilde{J}^x\h$, and $\tilde{J}^y\h$ have the same law as the processes $u\h$, $J^x\h:=\Ihy{\sqrt{\meanGinvX{u\h}}\parx p\h}$, and $J^y\h:=\Ihx{\sqrt{\meanGinvY{u\h}}\pary p\h}$ and for a subsequence we $\tilde{\Prob}$-almost surely have the convergences $\tilde{u}\h\rightarrow\tilde{u}$ strongly in $C\trkla{\tekla{0,\Tmax};L^q\trkla{\Om}}\cap L^2\trkla{0,\Tmax;W_\per^{1,q}\trkla{\Om}}$ ($q<\infty$), $\tilde{J}\h^x\weak \tilde{J}^x$ weakly in $L^2\trkla{0,\Tmax;L^2\trkla{\Om}}$, which can be identified with $-\tilde{u}\parx\trkla{\Delta \tilde{u}-F^\prime\trkla{\tilde{u}}}$ on $\tekla{\tilde{u}>0}$, and $\tilde{J}\h^y\weak \tilde{J}^y$ weakly in $L^2\trkla{0,\Tmax;L^2\trkla{\Om}}$, which can be identified with $-\tilde{u}\pary\trkla{\Delta \tilde{u}-F^\prime\trkla{\tilde{u}}}$ on $\tekla{\tilde{u}>0}$.
Furthermore, $\tilde{u}$ is a weak martingale solution to the stochastic thin-film equation in the sense of Definition \ref{def:wms} satisfying the additional bound
\begin{align}
\expectedt{\sup_{t\in\tekla{0,\Tmax}}\trkla{\mathcal{E}\trkla{\tilde{u}}}^\p} +\expectedt{\int\!\!\int_{\tekla{\tilde{u}>0}} m\trkla{\tilde{u}}\tabs{\nabla\trkla{\Delta\tilde{u}-F^\prime\trkla{\tilde{u}}}}^2\dx\dy\dt}\leq C\trkla{u^0,\,\p,\,\Tmax}\label{eq:gg100}
\end{align}
with $\p<\infty$.
In particular, $\tilde \Prob$-almost surely, $\tilde u(\cdot,t)$ is strictly positive for almost all $t\in[0,\Tmax]$.

\end{theorem}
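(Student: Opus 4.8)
The construction proceeds through the standard Faedo--Galerkin-compactness pipeline adapted to this degenerate fourth-order stochastic problem. First I would establish the crucial \emph{a priori} estimates on the semi-discrete solutions $(u_h, p_h)$: apply \Ito's formula to the regularized discrete energy $\mathcal E_h(u_h)$ and to the discrete entropy $\mathcal S_h(u_h)$, exploiting the specific tensor-product structure of $\Uhxy$ so that the chain-rule identities \eqref{eq:chainrule} and the difference-quotient representations \eqref{eq:dqLap} of $\Delta_h^x$, $\Delta_h^y$ can be used to control the \Ito-correction terms by inverse estimates, with the $h^\varepsilon$-weighted curvature term in \eqref{eq:discenergy} absorbing the worst contributions (this is exactly the "core result" announced as the discrete combined energy-entropy estimate of Section~\ref{sec:apriori}). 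The stopping time $T_h = \Tmax\wedge\inf\{t: \mathcal E(u_h)\ge \Emax\}$ with $\Emax = \hat C h^{-\rho/(2+p)}$, together with Assumption~\ref{item:regularization}, guarantees via Lemma~\ref{lem:oscillation} that the oscillation of $u_h$ on each element stays controlled, so that strict positivity of the discrete solutions is preserved up to $T_h$; a separate argument (Borel--Cantelli / moment bounds on $\mathcal E_h$) shows $T_h \to \Tmax$ in probability, so the stopping does not survive in the limit.

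Second, from these uniform-in-$h$ moment bounds I would read off tightness of the laws of $(u_h, J^x_h, J^y_h, \bs W)$ on a suitable path space: $u_h$ bounded in $L^q(\Omega; L^\infty(0,\Tmax; H^1_\per))\cap L^2(\Omega; L^2(0,\Tmax; H^2_\per))$ by the energy estimate, with temporal Hölder regularity $C^{1/4}([0,\Tmax]; (H^1_\per)')$ coming from the compactness-in-time results of Section~\ref{sec:apriori} (bounding the drift via the entropy estimate and the stochastic increments via the Burkholder--Davis--Gundy inequality), and the fluxes $J^\alpha_h$ bounded in $L^2(\Omega; L^2(0,\Tmax; L^2))$. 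Since $H^2_\per \compemb H^1_\per \emb (H^1_\per)'$, Aubin--Lions--Simon gives compactness for $u_h$; the flux sequences are only weakly compact. I would then invoke Jakubowski's generalization of Skorokhod's theorem (the path space is not Polish because of the weak-$L^2$ topology on the fluxes) to obtain a new probability space $(\tilde\Omega,\tilde{\mathcal F},(\tilde{\mathcal F}_t),\tilde\Prob)$ and random variables $(\tilde u_h, \tilde J^x_h, \tilde J^y_h, \tilde{\bs W}_h)$ with the same laws, converging $\tilde\Prob$-a.s.: $\tilde u_h \to \tilde u$ strongly in $C([0,\Tmax]; L^q)\cap L^2(0,\Tmax; W^{1,q}_\per)$ and $\tilde J^\alpha_h \weak \tilde J^\alpha$ weakly in $L^2L^2$. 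Identifying the limiting filtration as the one generated by $\tilde u$ and $\tilde{\bs W}$, a routine martingale-representation argument (showing the limiting process is a square-integrable martingale with the right quadratic variation) recovers that $\tilde{\bs W}$ is a $Q$-Wiener process and that the limit equation of Definition~\ref{def:wms}(4) holds, with the lower-semicontinuity bound \eqref{eq:gg100} following from Fatou applied to the a priori estimates.

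Third --- and this is where the new intricacies announced in the introduction lie --- I would identify the limiting fluxes $\tilde J^\alpha$ with $-\tilde u\,\partial_\alpha(\Delta\tilde u - F'(\tilde u))$ on the set $\{\tilde u > 0\}$. The difficulty is that without strict positivity of $\tilde u$ one cannot pass to the limit in the third-order term $\meanGinvX{u_h}\partial_x p_h$ globally; on any region where $\tilde u$ degenerates the mobility $G''(\tilde u)^{-1} = \tilde u^2$ vanishes and $p_h$ (which involves $F'(u_h) \sim u_h^{-p-1}$) blows up, so only the product has a chance of being controlled. The remedy is an exhaustion argument: fix $\delta > 0$, work on the (open, a.s.) set $\{\tilde u > \delta\}$, where by the strong convergence $\tilde u_h \to \tilde u$ in $C L^q$ one has $\tilde u_h > \delta/2$ eventually on slightly smaller sets, so $G''(\tilde u_h)^{-1}$, $F'(\tilde u_h)$, $F''(\tilde u_h)$ are uniformly bounded there; combined with the $H^2$-bound and the $h^\varepsilon$-weighted bound on $\Delta_h u_h$ (which lets one discard the discrete-Bi-Laplacian regularization term and recover $\Delta \tilde u$ weakly in $L^2$ on $\{\tilde u>\delta\}$), standard weak-strong convergence pairing identifies $\tilde J^\alpha = -\tilde u\,\partial_\alpha(\Delta\tilde u - F'(\tilde u))$ on $\{\tilde u > \delta\}$. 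Letting $\delta \searrow 0$ exhausts $\{\tilde u > 0\}$. To handle measurability and convergence of the interpolation operators and the approximated basis functions $\tilde{\mathfrak g}_{h,kl} = \Ihxy{\mathfrak g_{kl}}$ uniformly, I would use the interpolation error estimates of Lemma~\ref{lem:Ih:error} together with a generalization of Egorov's theorem for Bochner-integrable functions (to upgrade a.e. convergence to almost-uniform convergence on the exhausting sets). I expect the main obstacle to be precisely this flux-identification step: bookkeeping the interplay between the vanishing mobility, the singular potential, the element-wise interpolation/chain-rule machinery, and the exhaustion, while keeping all stochastic integrals well-defined and convergent in probability.
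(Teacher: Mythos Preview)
Your proposal is correct and follows essentially the same architecture as the paper: Proposition~\ref{prop:energyentropyestimate} for the combined energy--entropy estimate, Lemma~\ref{lem:hoelderInTime} for $C^{1/4}$-regularity in time, Lemma~\ref{lem:tightness} and Proposition~\ref{prop:convergence1} for tightness and the Jakubowski--Skorokhod step, Lemma~\ref{lem:convergence2} for the exhaustion/Egorov argument on superlevel sets $S_\delta$, and Lemmas~\ref{lem:mart1}--\ref{lem:identification:stochint} for the martingale-identification of the stochastic integral. The one point you underweight is the passage from $\esssup_{t}$ to $\sup_{t}$ in the final energy estimate \eqref{eq:gg100}: the paper spends a separate step (Step~3 in the proof of Theorem~\ref{th:existence}) showing, via a contradiction argument with Egorov on the spatial domain, that for \emph{every} $t_0\in[0,\Tmax]$ the zero set of $\tilde u(\tilde\omega,t_0,\cdot)$ has Lebesgue measure zero, so that Fatou along a sequence $s_n\to t_0$ upgrades the essential supremum to a true supremum --- this is also what yields the final strict-positivity statement.
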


\begin{lemma}\label{lem:oscillation}
Let $u\h\in\Uhxy$ be strictly positive and let $1>\gamma\geq\tfrac2p+\tfrac\varepsilon2+\tfrac\rho{2p}$ and let 
\begin{align*}
\mathcal{E}\h\trkla{u\h} \leq C h^{-\rho/\trkla{2+p}}\,.
\end{align*}
Then, there exists an $h$-independent constant $C_{\text{osc}}>0$ such that the estimate
\begin{align}\label{eq:oscillation}
\frac{u\h\trkla{x_i,y_j}}{u\h\trkla{x_{\hat{i}},y_{\hat{j}}}}\leq C_{\text{osc}}
\end{align}
holds true for all $i\in\tgkla{1,\ldots,\dim\Uhx}$, $j\in\tgkla{1,\ldots,\dim\Uhy}$, $\hat{i}\in\tgkla{i-1,i,i+1}$, and $\hat{j}\in\tgkla{j-1,j,j+1}$.
\end{lemma}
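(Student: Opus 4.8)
The plan is to first extract, from the energy bound $\mathcal{E}\h(u\h)\leq Ch^{-\rho/(2+p)}$, two pieces of information: a lower bound on the singular-potential term $\iO\Ihxy{F(u\h)}\dx\dy$ forcing $u\h$ not to be too small somewhere, and an upper bound on $h^\varepsilon\iO\Ihxy{|\Delta\h u\h|^2}\dx\dy$ controlling the curvature. The first observation is that, since $\int_\Om\Ihxy{u\h}\dx\dy$ is bounded below (this follows in our setting from \ref{item:initial}, or we just track it through the energy via \ref{item:potential}, $F(u)\geq c_1 u^{-p}$, together with Jensen), there exists at least one vertex $(x_{i_0},y_{j_0})$ with $u\h(x_{i_0},y_{j_0})\geq \delta>0$ for an $h$-independent $\delta$. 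Then I would use a discrete Sobolev / Gagliardo--Nirenberg-type inverse estimate: on tensor-product linear finite elements the $L^\infty$-norm of $u\h$ is controlled by $\|u\h\|_{H^1}^{1-\theta}\|\Delta\h u\h\|_{L^2}^{\theta}$ up to lower-order terms, and the maximum over vertices of $u\h$ is therefore bounded by $C h^{-?}$; combined with the energy bound $\mathcal{E}\h(u\h)\leq C h^{-\rho/(2+p)}$ this gives $\max_{i,j} u\h(x_i,y_j)\leq C h^{-a}$ with $a$ a small explicit power, and similarly $\min$-control from below via $F$ gives $\min_{i,j} u\h(x_i,y_j)\geq c h^{b}$.

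The heart of the argument is then a \emph{one-element} estimate controlling $|u\h(x_{i+1},y_j)-u\h(x_i,y_j)|$ (and the $y$-analogue, and the diagonal combination) in terms of $\mathcal{E}\h(u\h)$. On a single interval $\Kx=(x_i,x_{i+1})$ the slope $\dxp u\h(\cdot,y_j)$ is constant, and
\[
|u\h(x_{i+1},y_j)-u\h(x_i,y_j)| = \hx\,|\dxp u\h(x_i,y_j)|.
\]
I would bound $\|\dxp u\h\|_{L^\infty}$ using the inverse estimate $\|\dxp u\h\|_{L^\infty(\Om)}\leq C h^{-1}\|\parx u\h\|_{L^2(\Om)} + C\|\Delta\h u\h\|_{L^2(\Om)}$ or, more sharply, a discrete $H^2\hookrightarrow W^{1,\infty}$-type estimate with the $h$-scaling made explicit, so that
\[
\hx\,\|\dxp u\h\|_{L^\infty} \leq C h^{1/2}\,\bigl(\|\nabla u\h\|_{L^2}+\|\Delta\h u\h\|_{L^2}\bigr)\cdot h^{-\text{(small)}}.
\]
Feeding in $\|\nabla u\h\|_{L^2}^2\leq C\mathcal E\h\leq Ch^{-\rho/(2+p)}$ and $h^\varepsilon\|\Delta\h u\h\|_{L^2}^2\leq C\mathcal E\h\leq Ch^{-\rho/(2+p)}$, i.e. $\|\Delta\h u\h\|_{L^2}\leq C h^{-\varepsilon/2-\rho/(2(2+p))}$, the oscillation on one element is at most of order $h^{1-\frac1p-\frac\varepsilon2-\frac{\rho}{2p}}$ up to the discrete Sobolev loss; assumption \ref{item:regularization} (the exponent $1-\frac2p-\frac\varepsilon2-\frac{\rho}{2p}>0$, and the hypothesis $\gamma\geq\frac2p+\frac\varepsilon2+\frac\rho{2p}$) is precisely what makes this exponent positive, so the single-element oscillation tends to $0$ as $h\searrow0$. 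Combining with the lower bound $\min u\h(x_i,y_j)\geq c h^{b}$ with $b$ strictly smaller than that positive exponent (again guaranteed by \ref{item:regularization}), we get $u\h(x_{\hat i},y_{\hat j})\geq u\h(x_i,y_j) - (\text{oscillation}) \geq \tfrac12 u\h(x_i,y_j)$ for $h$ small, hence the ratio bound \eqref{eq:oscillation} with $C_{\text{osc}}=2$ (adjusting for finitely many large $h$ by a compactness/continuity argument, or absorbing into the constant). The diagonal case $\hat i=i\pm1$, $\hat j=j\pm1$ follows by applying the $x$- and $y$-estimates successively.

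The main obstacle I expect is making the discrete Sobolev embedding quantitative with the correct $h$-power: one needs $\|u\h\|_{L^\infty}$ or $\|\dxp u\h\|_{L^\infty}$ bounded by a fractional power of $\|\Delta\h u\h\|_{L^2}$ (not the full power), because a naive estimate $\|\dxp u\h\|_{L^\infty}\leq Ch^{-1/2}\|\Delta\h u\h\|_{L^2}$ would cost $h^{1/2}$ but pair with $\|\Delta\h u\h\|_{L^2}\leq Ch^{-\varepsilon/2-\dots}$, giving exponent $\frac12-\frac\varepsilon2-\dots$, which is \emph{not} obviously positive and does not match the sharp condition \ref{item:regularization}. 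The resolution is to interpolate: $\|\dxp u\h\|_{L^\infty}\lesssim \|u\h\|_{H^1}^{1/2}\|\Delta\h u\h\|_{L^2}^{1/2}$ type bounds (valid for tensor-product $\mathbb P_1$ elements via the one-dimensional embeddings in $x$ and $y$ separately, using \eqref{eq:dqLap} and the norm equivalences in the appendix), which balances the $h^{-\rho/(2+p)}$ from the $H^1$-part against the $h^{-\varepsilon/2-\rho/(2(2+p))}$ from the Laplacian-part and produces exactly the exponent appearing in \ref{item:regularization}. Getting the bookkeeping of these $h$-powers right — and confirming that the lower bound on $\min_{i,j}u\h(x_i,y_j)$ decays strictly slower than the oscillation — is the delicate part; everything else is routine finite-element interpolation theory from the appendix.
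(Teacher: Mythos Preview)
Your overall strategy---bound the single-element oscillation and divide by a lower bound on $u\h$---is the structure of the paper's proof, but the technical ingredient you propose for the oscillation bound does not hold in dimension two. The interpolation $\|\dxp u\h\|_{L^\infty(\Om)}\lesssim\|u\h\|_{H^1}^{1/2}\|\Delta\h u\h\|_{L^2}^{1/2}$ is false: in two space dimensions $H^2$ is the borderline case and does \emph{not} embed into $W^{1,\infty}$ (a simple scaling argument shows the exponents cannot balance, and the one-dimensional embeddings in $x$ and $y$ separately do not combine to give an $L^\infty_{x,y}$ bound on the gradient). Consequently you cannot control the discrete gradient in $L^\infty$ from $\|\Delta\h u\h\|_{L^2}$, and the exponent bookkeeping you sketch never closes. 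The remark ``there exists at least one vertex with $u\h\geq\delta$'' is also not what is needed; the argument requires a lower bound on the \emph{global} minimum of $u\h$.

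The paper avoids the Lipschitz route entirely. It uses Morrey's embedding $W^{1,q}_{\per}(\Om)\hookrightarrow C^\gamma_{\per}(\overline{\Om})$ with $\gamma=1-2/q<1$ for large finite $q$, together with the discrete Gagliardo--Nirenberg inequality of Corollary~\ref{cor:embedding} (valid for all $q<\infty$ in 2D), to obtain
\[
\|u\h\|_{C^\gamma(\overline{\Om})}\leq C\|\nabla u\h\|_{L^q(\Om)}\leq C\bigl(\|u\h\|_{H^1(\Om)}+\|\Delta\h u\h\|_{L^2(\Om)}\bigr)\leq C\sqrt{h^{-\varepsilon}\mathcal E\h(u\h)}\,,
\]
so the oscillation on one element is at most $Ch^\gamma\sqrt{h^{-\varepsilon}\mathcal E\h(u\h)}$. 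For the denominator, the potential term gives the global lower bound directly: since $F(u)\geq c_1 u^{-p}$ and the lumped integral dominates any single nodal value times $h^2$,
\[
\sup_{\Om} u\h^{-1}=\bigl(\sup_{\Om} u\h^{-p}\bigr)^{1/p}\leq C\Bigl(h^{-2}\iO\Ihxy{F(u\h)}\dx\dy\Bigr)^{1/p}\leq Ch^{-2/p}\mathcal E\h(u\h)^{1/p}\,.
\]
Multiplying yields $\bigl|u\h(x_i,y_j)/u\h(x_{\hat{i}},y_{\hat{j}})-1\bigr|\leq Ch^{\gamma-2/p-\varepsilon/2}\,\mathcal E\h(u\h)^{1/2+1/p}$. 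Since $\tfrac12+\tfrac1p=\tfrac{2+p}{2p}$, inserting $\mathcal E\h\leq Ch^{-\rho/(2+p)}$ produces the exponent $\gamma-\tfrac2p-\tfrac\varepsilon2-\tfrac{\rho}{2p}$, which is nonnegative precisely by the hypothesis on $\gamma$. This is where the condition in \ref{item:regularization} enters, and why $\gamma<1$ (rather than $\gamma=1$) is both sufficient and necessary for the argument to close.
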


\begin{proof}
Using the standard embedding theorems for Hölder continuous functions and the discrete embedding proven in Corollary \ref{cor:embedding}, we obtain
\begin{align*}
\norm{u\h}_{C^{\gamma}\trkla{\overline{\Om}}}\leq C\norm{\nabla u\h}_{L^q\trkla{\Om}}\leq C \rkla{\norm{u\h}_{H^1\trkla{\Om}} + \norm{\Delta\h u\h}_{L^2\trkla{\Om}}}\leq C \sqrt{h^{-\varepsilon}\mathcal{E}\h\trkla{u\h}}
\end{align*}
with $q<\infty$ large enough.
Furthermore, we have
\begin{align}
\sup_{\trkla{x,y}\in\Om} u\h^{-1}= \trkla{\sup_{\trkla{x,y}\in\Om} u\h^{-p}}^{1/p}\leq C\rkla{h^{-2}\iO\Ihxy{F\trkla{u\h}}\dx\dy}^{1/p}\leq C h^{-2/p}\mathcal{E}\h\trkla{u\h}^{1/p}\,.
\end{align}
Since there exists an element $\Q\in\Qh$ including the vertices $\trkla{x_i,y_j}$ and $\trkla{x_{\hat{i}},y_{\hat{j}}}$ by assumption, we combine the  estimates above and obtain
\begin{align}
\begin{split}
\abs{\frac{u\h\trkla{x_i,y_j}}{u\h\trkla{x_{\hat{i}},y_{\hat{j}}}}-1}&=\abs{\frac{u\trkla{x_i,y_j}-u\h\trkla{x_{\hat{i}},y_{\hat{j}}}}{u\h\trkla{x_{\hat{i}},y_{\hat{j}}}}}\leq C\sup_{\trkla{x,y}\in\Om} u\h^{-1}~ h^\gamma\norm{u\h}_{C^{\gamma}\trkla{\overline{\Om}}}\\
&\leq C h^{-2/p}\mathcal{E}\h\trkla{u\h}^{1/p} h^\gamma h^{-\varepsilon/2}\mathcal{E}\h\trkla{u\h}^{1/2}\\
&=C h^{-2/p-\varepsilon/2 +\gamma} \mathcal{E}\h\trkla{u\h}^{1/p+1/2}\leq C=:C_{\text{osc}}\,,
\end{split}
\end{align}
which completes the proof.
\end{proof}
We will start analyzing scheme \eqref{eq:semidiscreteSTFE} by showing that it admits a solution.
\begin{lemma}\label{lem:existence}
Let $\Tmax$ be a positive real number and $\Emax=\hat{C}h^{-\rho/\trkla{2+p}}$.
Then there exist  stochastic processes $u\h\in L^2\trkla{\Omega;C\trkla{\tekla{0,\Tmax};\Uhxy}}$ and $p\h\in L^2\trkla{\Omega;L^\infty\trkla{0,\Tmax;\Uhxy}}$ as well as associated stopping times $T_h$ such that:
\begin{itemize}
\item Almost surely, we have $T\h=\Tmax\wedge \inf\tgkla{t\in[0,\infty)\,:\,\mathcal{E}\h\trkla{u\h\trkla{\cdot,t}}\geq \Emax}$.
\item Almost surely, the process $p\h$ solves \eqref{eq:semidiscreteSTFE:2} for $t\leq \Tmax$ and is contained in $C\trkla{\tekla{0,T\h};\Uhxy}$.
\item Almost surely, the process $u\h$ solves \eqref{eq:semidiscreteSTFE:1} for $t\leq\Tmax$ and is constant for $t\in\tekla{T\h,\Tmax}$.
\end{itemize}
\end{lemma}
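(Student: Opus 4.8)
The plan is to construct the solution via a fixed-point/stopping-time iteration on the finite-dimensional SDE system obtained by expanding $u\h$ and $p\h$ in the nodal basis of $\Uhxy$. First I would rewrite \eqref{eq:semidiscreteSTFE} as a system of It\^o SDEs for the coefficient vectors: testing \eqref{eq:semidiscreteSTFE:1} with each basis function $\ex{i}\ey{j}$ turns the equation into $\mathrm{d}\bs{U} = \bs{A}(\bs{U})\dt + \sum_{\alpha,k,l}\bs{B}^\alpha_{kl}(\bs{U})\dbalpha{kl}$, where the mass matrix from $\iO\Ihxy{u\h\psi\h}$ is invertible (it is the lumped-mass matrix, positive definite by \eqref{eq:normequivalence}), and $\bs{A}$ involves solving \eqref{eq:semidiscreteSTFE:2} for $p\h$ in terms of $u\h$ — which is again the inversion of the lumped-mass matrix, hence $p\h$ depends smoothly (indeed locally Lipschitz) on $u\h$ as long as $u\h$ stays strictly positive, since $F'$, $F''$ and $\meanGinvX{\cdot}$, $\meanGinvY{\cdot}$ are smooth on $\R^+$. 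Thus on the open set $\mathcal{U}:=\{\bs{U}: u\h>0 \text{ at all nodes}\}$ the coefficients $\bs{A}$ and $\bs{B}^\alpha_{kl}$ are locally Lipschitz, so by the standard existence-and-uniqueness theory for SDEs with locally Lipschitz coefficients there is a unique maximal strong solution up to an explosion/exit time $\tau_{\mathrm{exit}}$ at which either $\bs{U}$ blows up or $u\h$ touches $0$ at some node.

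Next I would introduce the stopping time $T\h := \Tmax \wedge \inf\{t\ge 0 : \mathcal{E}\h(u\h(\cdot,t)) \ge \Emax\}$ and run the SDE only on $\tekla{0,T\h}$, freezing the solution ($u\h$ constant, $p\h\equiv 0$ via $\chiTh$) on $\tekla{T\h,\Tmax}$ — note that the cutoff $\chiTh$ already appears in \eqref{eq:semidiscreteSTFE:2}, so the frozen extension is consistent with the written scheme. The key point is that $\{t : \mathcal{E}\h(u\h(t)) < \Emax\}$ is a relatively open subset of $[0,\tau_{\mathrm{exit}})$ on which, by Lemma \ref{lem:oscillation}, the nodal values of $u\h$ satisfy the two-sided oscillation bound \eqref{eq:oscillation}; combined with the pointwise lower bound $\sup u\h^{-1}\le C h^{-2/p}\mathcal{E}\h(u\h)^{1/p}\le C h^{-2/p}\Emax^{1/p}$ derived in the proof of Lemma \ref{lem:oscillation}, this forces $u\h$ to stay bounded away from $0$ (by an $h$-dependent but deterministic constant) and bounded above for $t < T\h$. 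Hence $T\h < \tau_{\mathrm{exit}}$ whenever $T\h<\Tmax$, so the solution does not explode before $T\h$ and the stopped process is well-defined on all of $\tekla{0,\Tmax}$; this gives the first and third bullet points, with $u\h\in L^2(\Omega;C(\tekla{0,\Tmax};\Uhxy))$ following from continuity of SDE paths plus the deterministic $h$-dependent a priori bound on $\tekla{0,T\h}$ and the frozen extension beyond. The second bullet — $p\h\in C(\tekla{0,T\h};\Uhxy)$ and $p\h\in L^2(\Omega;L^\infty(0,\Tmax;\Uhxy))$ — then follows by reading off $p\h$ from \eqref{eq:semidiscreteSTFE:2}: it is a continuous (lumped-mass-inverse composed with continuous Nemytskii) function of $u\h$ on $\tekla{0,T\h}$, and zero afterwards, with the $L^\infty$-in-time norm controlled again by $\Emax$ and the lower bound on $u\h$.

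The main obstacle I expect is the interplay between the stopping time and the strict-positivity constraint: one must verify carefully that $T\h$ is indeed a genuine stopping time (which follows from path-continuity of $t\mapsto\mathcal{E}\h(u\h(t))$ and right-continuity of the filtration) and, more importantly, that the energy threshold $\Emax=\hat C h^{-\rho/(2+p)}$ is low enough that Lemma \ref{lem:oscillation} applies and keeps $u\h$ uniformly positive strictly before the exit time $\tau_{\mathrm{exit}}$ — i.e.\ that the only way the maximal SDE solution can fail to reach time $T\h$ is ruled out. This is exactly where Assumption \ref{item:regularization} (via $\gamma\ge 2/p+\varepsilon/2+\rho/(2p)<1$) enters. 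A secondary technical point is the measurability/adaptedness of the frozen extension and the verification that the stopped-and-frozen pair still satisfies the integral identities \eqref{eq:semidiscreteSTFE} for all $t\le\Tmax$ (not just $t\le T\h$), which is immediate from the presence of $\chiTh$ in \eqref{eq:semidiscreteSTFE:2} and the fact that the stochastic integrands in \eqref{eq:semidiscreteSTFE:1} are proportional to $\chiTh$ through the $T\wedge T\h$ upper limit of integration.
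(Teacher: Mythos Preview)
Your proposal is correct and follows essentially the same route as the paper, which simply observes that the Lipschitz continuity of $\mathcal{E}\h$ and of the map $u\h\mapsto p\h$ persist under the added regularization term and then defers to Lemma~4.2 of \cite{FischerGrun2018}. The one minor technical difference is that the paper's references to the thresholds $2\Emax$ and $3\Emax$ indicate a smooth-cutoff globalization of the SDE coefficients (multiply by a cutoff in $\mathcal{E}\h(u\h)/\Emax$ so that the coefficients become globally Lipschitz, apply global existence, then observe the cutoff is inactive for $t<T\h$), whereas you argue directly via the maximal local solution and compare $T\h$ with the exit time $\tau_{\mathrm{exit}}$; both arguments are standard and equivalent here.
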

\begin{proof}
As the additional regularization term changes neither the Lipschitz continuity of $\mathcal{E}\h\trkla{u\h}$ w.r.t.~$u\h$ when $\mathcal{E}\h\trkla{u}\leq 2\Emax$ nor the Lipschitz continuous dependence of $p\h$ on $u\h$ when $\mathcal{E}\h\trkla{u\h}\leq 3\Emax$, the result follows along the lines of proof of Lemma 4.2 in \cite{FischerGrun2018}.
\end{proof}

As the solutions $u\h$ are continuous in space and time for $h>0$, the positivity of the initial data immediately provides the positivity of the semidiscrete solutions.
\begin{corollary}
The solutions constructed in Lemma \ref{lem:existence} are strictly positive for all $h>0$.
\end{corollary}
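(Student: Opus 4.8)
The plan is to argue by contradiction, exploiting that for fixed $h>0$ the solution $u\h$ is continuous in space and time, that the singular potential enters the regularized energy $\mathcal{E}\h$ from \eqref{eq:discenergy} through the nodal term $\iO\Ihxy{F\trkla{u\h}}\dx\dy$ with $F\equiv+\infty$ on the nonpositive half-line, and that by Lemma \ref{lem:existence} the process $u\h$ is frozen on $\tekla{T\h,\Tmax}$ once $\mathcal{E}\h\trkla{u\h}$ reaches the threshold $\Emax$. First I would record that the discrete initial datum is strictly positive: by \ref{item:initial} the law $\Lambda$ is supported on strictly positive functions and $H^2_\per\trkla{\Om}\emb C_\per\trkla{\overline{\Om}}$ in two space dimensions, so the $\tilde{\mathcal{F}}_0$-measurable datum $u^0$ (cf.~\ref{item:stochbasis:initial}) is almost surely a strictly positive continuous function, whence $\min_{\overline{\Om}}u^0>0$; since on each element $\Q\in\Qh$ the interpolant $\Ihxy{u^0}$ is a convex combination of the nodal values $u^0\trkla{x_i,y_j}\ge\min_{\overline{\Om}}u^0$, the discrete initial datum $u\h\trkla{0}=\Ihxy{u^0}$ is strictly positive almost surely.

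Next, fixing such a realization, I would suppose for contradiction that $A:=\tgkla{t\in\tekla{0,\Tmax}\,:\,u\h\trkla{t}\text{ is not strictly positive}}$ is nonempty. Since $u\h$ is jointly continuous and $\overline{\Om}$ is compact, $t\mapsto\min_{\overline{\Om}}u\h\trkla{t,\cdot}$ is continuous, so $A=\tgkla{t\,:\,\min_{\overline{\Om}}u\h\trkla{t,\cdot}\le0}$ is closed; set $t^*:=\min A$. Strict positivity of $u\h\trkla{0}$ and continuity give $t^*>0$, and $u\h\trkla{t}$ is strictly positive for every $t<t^*$. As $u\h\trkla{t^*}\in\Uhxy$ is piecewise multilinear, its minimum over $\overline{\Om}$ is attained at some vertex $\trkla{x_i,y_j}$; the value $u\h\trkla{t^*,x_i,y_j}$ cannot be negative, for otherwise $t\mapsto u\h\trkla{t,x_i,y_j}$ would vanish at some $t_1\in\trkla{0,t^*}$ by the intermediate value theorem, forcing $t_1\in A$ and contradicting $t^*=\min A$. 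Hence $u\h\trkla{t^*,x_i,y_j}=0$.

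Finally, I would show that the energy blows up as $t\nearrow t^*$. For continuous $f$ one has $\iO\Ihxy{f}\dx\dy=\sum_{k,l}w_{kl}\,f\trkla{x_k,y_l}$ with strictly positive weights $w_{kl}:=\iO\ex{k}\ey{l}\dx\dy$. Since $F$ is positive on $\R^+$ by \ref{item:potential}, all summands of $\iO\Ihxy{F\trkla{u\h\trkla{t}}}\dx\dy$ are nonnegative for $t<t^*$, so retaining only the $\trkla{i,j}$-term and using $F\trkla{u}\ge c_1 u^{-p}$ gives
\begin{align*}
\mathcal{E}\h\trkla{u\h\trkla{t}}\ge\iO\Ihxy{F\trkla{u\h\trkla{t}}}\dx\dy\ge w_{ij}\,F\trkla{u\h\trkla{t,x_i,y_j}}\ge c_1\,w_{ij}\,u\h\trkla{t,x_i,y_j}^{-p}\,,
\end{align*}
and the right-hand side tends to $+\infty$ as $t\nearrow t^*$ because $u\h\trkla{t,x_i,y_j}\to u\h\trkla{t^*,x_i,y_j}=0$ while remaining positive. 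Consequently there is $t_1<t^*$ with $\mathcal{E}\h\trkla{u\h\trkla{t_1}}\ge\Emax$, hence $T\h\le t_1<t^*\le\Tmax$ by the definition of $T\h$. As $T\h<t^*=\min A$, the value $u\h\trkla{T\h}$ is strictly positive, and by Lemma \ref{lem:existence} the process $u\h$ is constant on $\tekla{T\h,\Tmax}$, so $u\h\trkla{t}=u\h\trkla{T\h}$ is strictly positive there; together with strict positivity on $\tekla{0,T\h}$ (valid since $T\h<t^*$) this makes $u\h$ strictly positive on all of $\tekla{0,\Tmax}$, contradicting $t^*\in A$. Therefore $A=\emptyset$, which is the assertion. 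The only delicate point is this last bookkeeping step -- linking the continuous-time divergence of $t\mapsto\mathcal{E}\h\trkla{u\h\trkla{t}}$ to the stopping time $T\h$ so as to guarantee that $T\h$ occurs strictly before $u\h$ can reach zero; the remaining ingredients follow immediately from finite-dimensionality, continuity, and \ref{item:potential}.
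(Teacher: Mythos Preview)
Your argument is correct and follows the same mechanism the paper has in mind: continuity of $u\h$ in space and time together with the singular potential in $\mathcal{E}\h$ and the stopping-time cutoff forces strict positivity. The paper condenses this into a single sentence preceding the corollary (``As the solutions $u\h$ are continuous in space and time for $h>0$, the positivity of the initial data immediately provides the positivity of the semidiscrete solutions''), whereas you have carefully unpacked the contradiction argument and made the role of $T\h$ explicit; your version is a rigorous elaboration of the same idea rather than a different approach.
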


\section{A priori estimates}\label{sec:apriori}
In this section, we shall establish uniform a priori estimates for the semi-discrete solution established in Lemma \ref{lem:existence}.
These results will be used in the next section to pass to the limit $h\searrow0$ and this way to  prove Theorem \ref{th:existence}.
\subsection{The combined energy-entropy estimate}
We start this section by demonstrating that our spatial semidiscretization \eqref{eq:semidiscreteSTFE} satisfies a combined energy-entropy estimate as long as the energy remains below the critical threshold energy $\Emax$ which becomes infinite for $h\searrow0$.
Due to the cut-off mechanism implemented in \eqref{eq:semidiscreteSTFE}, it is possible to extend the results to $\tekla{0,\Tmax}$.
\\
Writing $u\h\trkla{x,y,t}$ as $\sum_{i=1}^{\dim\Uhx}\sum_{j=1}^{\dim\Uhy} u_{ij}\trkla{t}\ex{i}\trkla{x}\ey{j}\trkla{y}$  and choosing $\psi\h\trkla{x,y}=\ex{i}\trkla{x}\ey{j}\trkla{y}$ in \eqref{eq:semidiscreteSTFE:1} provides
\begin{align}\label{eq:semi2}
\begin{split}
\text{d}u_{ij} &+ \chiTh M_{ij}^{-1}\iO\Ihy{\meanGinvX{u\h}\parx p\h\parx\trkla{\ex{i}\trkla{x}\ey{j}\trkla{y}}}\dx\dy\dt\\
&+ \chiTh M_{ij}^{-1}\iO\Ihx{\meanGinvY{u\h}\pary p\h\pary\trkla{\ex{i}\trkla{x}\ey{j}\trkla{y}}}\dx\dy\dt\\
&-\chiTh M_{ij}^{-1}\sum_{k\in\Zx,\,l\in\Zy}\lx{kl}\iO\Ihy{\Ihxloc{\parx\trkla{u\h\gt{kl}}\ex{i}\trkla{x}\ey{j}\trkla{y}}}\dx\dy\dbx{kl}\\
&-\chiTh M_{ij}^{-1}\sum_{k\in\Zx,\,l\in\Zy}\ly{kl}\iO\Ihx{\Ihyloc{\pary\trkla{u\h\gt{kl}}\ex{i}\trkla{x}\ey{j}\trkla{y}}}\dx\dy\dby{kl}\,\revy{=0}
\end{split}
\end{align}
with $M_{ij}=\iO\ex{i}\trkla{x}\ey{j}\trkla{y}\dx\dy$.
As we assume the subdivision to be equidistant, we have $M_{ij}=\hx\hy$ for all $i\in\tgkla{1,\ldots,\dim\Uhx}$ and $j\in\tgkla{1,\ldots,\dim\Uhy}$.

Furthermore, we define for $i\in\tgkla{1,\ldots,\dim\Uhx}$ and $j\in\tgkla{1,\ldots,\dim\Uhy}$
\begin{align}
\begin{split}\label{eq:def:L}
L_{ij}\trkla{t}:=&-\chiTh M_{ij}^{-1}\iO\Ihy{\meanGinvX{u\h}\parx p\h\parx\trkla{\ex{i}\trkla{x}\ey{j}\trkla{y}}}\dx\dy\\
& -\chiTh M_{ij}^{-1}\iO\Ihx{\meanGinvY{u\h}\pary p\h\pary\trkla{\ex{i}\trkla{x}\ey{j}\trkla{y}}}\dx\dy \,,
\end{split}\\
Z_{ij}^x\trkla{\omega}:=&\chiTh M_{ij}^{-1}\sum_{k,l\in\Z} \iO\Ihy{\Ihxloc{\parx\trkla{\skla{\g{kl},\omega}_{L^2} u\h\gt{kl}}\ex{i}\trkla{x}\ey{j}\trkla{y}}}\dx\dy\,,\label{eq:def:Zx}\\
Z_{ij}^y\trkla{\omega}:=&\chiTh M_{ij}^{-1}\sum_{k,l\in\Z} \iO\Ihx{\Ihyloc{\pary\trkla{\skla{\g{kl},\omega}_{L^2} u\h\gt{kl}}\ex{i}\trkla{x}\ey{j}\trkla{y}}}\dx\dy\,.\label{eq:def:Zy}
\end{align}
Here, $\skla{\cdot,\cdot}_{L^2}$ denotes the standard $L^2\trkla{\Om}$ inner product.
With this notation we may rewrite \eqref{eq:semi2} as
\begin{align}
\text{d}u_{ij}=L_{ij}\trkla{t}\dt+\sum_{k\in\Zx,\,l\in\Zy}\rkla{Z_{ij}^x\trkla{\lx{kl}\g{kl}}\dbx{kl} +Z_{ij}^y\trkla{\ly{kl}\g{kl}}\dby{kl}}\,.
\end{align}

For given positive parameters $\alpha$ and $\kappa$, we consider the integral quantity
\revy{\begin{align}
R_{\alpha,\kappa,h}\trkla{u\h\trkla{t}}:=\alpha+\mathcal{E}\h\trkla{u\h\trkla{t}}+\kappa\mathcal{S}\h\trkla{u\h\trkla{t}}\,.
\end{align}
For the ease of presentation, we will often drop the explicit dependence on $u\h$ and abbreviate $R\trkla{t}:=R\trkla{u\h\trkla{t}}:=R_{\alpha,\kappa,h}\trkla{u\h\trkla{t}}$.}
\begin{lemma}\label{lem:ableitungen}
Let $\p\geq1$ be given. The first and second variations of $R\trkla{s}^\p$ \revy{w.r.t.~$u\h$} are given by
\begin{subequations}
\begin{align}
\revy{D\trkla{R\trkla{u\h\trkla{s}}^\p}}=\p R\trkla{s}^{\p-1}\trkla{D\mathcal{E}\h\trkla{u\h\trkla{s}}+\kappa D\mathcal{S}\h\trkla{u\h\trkla{s}}}
\end{align}
and 
\begin{multline}
\revy{D^2\trkla{R\trkla{u\h\trkla{s}}^\p}}=\,\p R\trkla{s}^{\p-1}\trkla{D^2\mathcal{E}\h\trkla{u\h\trkla{s}}+\kappa D^2\mathcal{S}\h\trkla{u\h\trkla{s}}}\\
+\p\trkla{\p-1}R\trkla{s}^{\p-2}\trkla{D\mathcal{E}\h\trkla{u\h\trkla{s}}+\kappa D\mathcal{S}\h\trkla{u\h\trkla{s}}}\otimes\trkla{D\mathcal{E}\h\trkla{u\h\trkla{s}}+\kappa D\mathcal{S}\h\trkla{u\h\trkla{s}}}
\end{multline}
\end{subequations}
with
\begin{align}
D\mathcal{E}\h\trkla{u\h\trkla{s}}\psi\h=&\,\iO\Ihy{\parx u\h\parx\psi\h} +\Ihx{\pary u\h\pary\psi\h}\dx\dy\\& +\iO\Ihxy{F^\prime\trkla{u\h}\psi\h}\dx\dy+h^\varepsilon \iO\Ihxy{\Delta\h u\h\Delta\h\psi\h}\dx\dy\,,\\
\begin{split}
D^2\mathcal{E}\h\trkla{u\h\trkla{s}}\trkla{\phi\h,\psi\h}=&\,\iO \Ihy{\parx\phi\h\parx\psi\h} +\Ihx{\pary\phi\h\pary\psi\h}\dx\dy\\
& +\iO \Ihxy{F^{\prime\prime}\trkla{u\h}\phi\h\psi\h}\dx\dy+h^\varepsilon \iO\Ihxy{\Delta\h\phi\h\Delta\h\psi\h}\dx\dy\,,
\end{split}\\
D\mathcal{S}\h\trkla{u\h\trkla{s}}\psi\h=&\,\iO\Ihxy{G^\prime\trkla{u\h}\psi\h}\dx\dy\,,\\
D^2\mathcal{S}\h\trkla{u\h\trkla{s}}\trkla{\phi\h,\psi\h}=&\,\iO \Ihxy{G^{\prime\prime}\trkla{u\h}\phi\h\psi\h}\dx\dy\,.
\end{align}
\end{lemma}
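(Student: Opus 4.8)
The plan is to reduce the claim to the one-variable chain rule applied to $r\mapsto r^{\p}$, together with a routine term-by-term computation of the first and second variations of $\mathcal{E}\h$ and $\mathcal{S}\h$; since $u\h$ ranges over the finite-dimensional space $\Uhxy$, ``variation w.r.t.~$u\h$'' simply means the ordinary differential on $\Uhxy$. First I would record that $R_{\alpha,\kappa,h}\trkla{u\h}=\alpha+\mathcal{E}\h\trkla{u\h}+\kappa\mathcal{S}\h\trkla{u\h}$ is affine in $\trkla{\mathcal{E}\h,\mathcal{S}\h}$ and the constant $\alpha$ has vanishing variation, so $DR=D\mathcal{E}\h+\kappa D\mathcal{S}\h$ and $D^2R=D^2\mathcal{E}\h+\kappa D^2\mathcal{S}\h$. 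Moreover $\mathcal{E}\h\geq0$ (each summand in \eqref{eq:discenergy} is an integral of a nodal interpolant of a nonnegative function, using $F>0$ on $\R^+$ from \ref{item:potential}) and $\mathcal{S}\h\geq0$ (as $G$ is the second primitive of $\tau\mapsto\tau^{-2}$ vanishing together with its first derivative at $1$, whence $G\geq0$ on $\R^+$), so $R\trkla{s}\geq\alpha>0$ and $r\mapsto r^{\p}$ is $C^2$ on the range of $R$; the scalar chain and product rules then give precisely the stated expressions for $D\trkla{R^{\p}}$ and $D^2\trkla{R^{\p}}$, with $\otimes$ the outer product of the covector $D\mathcal{E}\h+\kappa D\mathcal{S}\h$ with itself.

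Next I would compute the four building blocks. The Dirichlet-type contributions $\tfrac12\iO\Ihy{\tabs{\parx u\h}^2}+\Ihx{\tabs{\pary u\h}^2}\dx\dy$ and the curvature contribution $\tfrac12 h^\varepsilon\iO\Ihxy{\tabs{\Delta\h u\h}^2}\dx\dy$ are quadratic forms on $\Uhxy$: the maps $u\h\mapsto\parx u\h$, $u\h\mapsto\pary u\h$ and, by \eqref{eq:defDiscLapl}, $u\h\mapsto\Delta\h u\h$ are linear, and $\Ihxop,\Ihyop,\Ihxyop$ are linear. Writing such a term as $\tfrac12 a\trkla{u\h,u\h}$ with $a$ symmetric bilinear, its first variation in a direction $\psi\h$ is $a\trkla{u\h,\psi\h}$ and its Hessian is $a$ itself; this produces the first two summands of $D\mathcal{E}\h\trkla{u\h}\psi\h$, the first and last summands of $D^2\mathcal{E}\h\trkla{u\h}\trkla{\phi\h,\psi\h}$, and (with the $h^\varepsilon$ weight) the curvature terms.

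For the nonlinear contributions $\iO\Ihxy{F\trkla{u\h}}\dx\dy$ and $\iO\Ihxy{G\trkla{u\h}}\dx\dy$ the key point is that the nodal interpolation operator commutes with differentiation along $\Uhxy$: $\Ihxy{F\trkla{u\h+t\psi\h}}$ is by definition the finite-element function with value $F\trkla{u\h\trkla{x_i,y_j}+t\psi\h\trkla{x_i,y_j}}$ at the node $\trkla{x_i,y_j}$, so differentiating in $t$ at $t=0$ node by node (the scalar chain rule) gives $\tfrac{\mathrm{d}}{\mathrm{d}t}\big|_{t=0}\Ihxy{F\trkla{u\h+t\psi\h}}=\Ihxy{F^\prime\trkla{u\h}\psi\h}$, and a further differentiation in a direction $\phi\h$ gives $\Ihxy{F^{\prime\prime}\trkla{u\h}\phi\h\psi\h}$; integrating over $\Om$ yields the $F$-terms of $D\mathcal{E}\h$ and $D^2\mathcal{E}\h$, and the identical computation for $G$ (with $G^\prime\trkla{s}=1-s^{-1}$, $G^{\prime\prime}\trkla{s}=s^{-2}$ since $m\trkla{u}=u^2$) gives $D\mathcal{S}\h$ and $D^2\mathcal{S}\h$. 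This step is legitimate because the semidiscrete solutions are strictly positive (the corollary following Lemma~\ref{lem:existence}), so all nodal values of $u\h$ lie in $\R^+$, where $F\in C^2$ by \ref{item:potential} and $G$ is smooth; since $\Uhxy$ is finite-dimensional, strict positivity persists along sufficiently small variations, so the derivatives are computed on an open set on which $F$ and $G$ are $C^2$.

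I expect no genuine obstacle beyond bookkeeping: the only delicate point is the commutation of the nodal interpolation $\Ihxyop$ with the directional derivative in the nonlinear terms, together with the accompanying verification that differentiability of $F\trkla{u\h}$ and $G\trkla{u\h}$ is preserved along small variations, and both reduce to the scalar chain rule at the finitely many nodes. The formulas are of course only meant to be used along the process $u\h\trkla{s}$, which up to the stopping time $T\h$ stays strictly positive, so the subsequent application of \Ito's formula is justified.
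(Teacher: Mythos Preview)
Your proposal is correct and follows the only natural route: the paper does not give any proof of this lemma, treating it as a direct computation, and what you have written is precisely the expected verification via the scalar chain rule combined with term-by-term differentiation of the building blocks of $\mathcal{E}\h$ and $\mathcal{S}\h$. The one observation worth keeping explicit, which you do make, is that nodal interpolation $\Ihxyop$ commutes with directional differentiation in $\Uhxy$ because it acts only on nodal values; this is what allows the $F$- and $G$-terms to be handled by the one-variable chain rule at each node.
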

Applying \Ito's formula, we are able to show the following combined energy-entropy estimate.
\begin{proposition}\label{prop:energyentropyestimate}
Let $\p\geq 1$ be arbitrary and let $\trkla{u\h,p\h}$ be a solution to \eqref{eq:semidiscreteSTFE} for a parameter $h\in\trkla{0,1}$. 
Furthermore, let the Assumptions 
\ref{item:stoch},  \ref{item:stochbasis:boundthirdderivative},  \ref{item:initial}, \ref{item:potential}, \ref{item:regularization}, and  \ref{item:S} hold true.
Then, for sufficiently large $\alpha$ and $\kappa$ depending only on $\trkla{\lx{kl}}_{kl}$, $\trkla{\ly{kl}}_{kl}$, $\p$, and $\Tmax$, there exists a positive, $h$-independent constant $C$ such that
\begin{align}
\begin{split}
&\expected{\sup_{t\in\tekla{0,\Tmax}} R\trkla{t}^\p} + \expected{\int_0^{T\h} R\trkla{s}^{\p-1}\iO\Ihy{\meanGinvX{u\h}\tabs{\parx p\h}^2}\dx\dy\ds}\\
&\quad+\expected{\int_0^{T\h} R\trkla{s}^{\p-1}\iO\Ihx{\meanGinvY{u\h}\revy{\tabs{\pary p\h}^2}}\dx\dy\ds}\\
&\quad +\expected{\int_0^{T\h} R\trkla{s}^{\p-1}\norm{\Delta\h u\h}\h^2\ds} +\expected{\int_0^{T\h} R\trkla{s}^{\p-1}h^\varepsilon\iO\Ihy{\tabs{\parx\Delta\h u\h}^2}\dx\dy\ds}\\
&\quad+\expected{\int_0^{T\h} R\trkla{s}^{\p-1}h^\varepsilon\iO\Ihx{\tabs{\pary\Delta\h u\h}^2}\dx\dy\ds} \\
&\quad+ \expected{\int_0^{T\h} R\trkla{s}^{\p-1}\iO\Ihy{\meanspX{u\h}\tabs{\parx u\h}^2}\dx\dy\ds}\\
&\quad+ \expected{\int_0^{T\h} R\trkla{s}^{\p-1}\iO\Ihx{\meanspY{u\h}\tabs{\pary u\h}^2}\dx\dy\ds}
 \leq C\,.
 \end{split}
\end{align}
\end{proposition}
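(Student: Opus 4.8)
The plan is to apply \Ito's formula to the process $s \mapsto R(s)^{\p}$, using the system of SDEs $\mathrm{d}u_{ij} = L_{ij}(t)\dt + \sum_{k\in\Zx, l\in\Zy}(Z_{ij}^x(\lx{kl}\g{kl})\dbx{kl} + Z_{ij}^y(\ly{kl}\g{kl})\dby{kl})$ together with the expressions for the first and second variations of $R^{\p}$ collected in Lemma \ref{lem:ableitungen}. This produces three contributions: a drift term $\int_0^{t\wedge T\h} D(R^{\p})(L)\ds$ which, by the structure of the scheme \eqref{eq:semidiscreteSTFE:2}, reproduces (up to sign) the energy dissipation terms $R^{\p-1}\iO\Ihy{\meanGinvX{u\h}\tabs{\parx p\h}^2}$ and $R^{\p-1}\iO\Ihx{\meanGinvY{u\h}\tabs{\pary p\h}^2}$ appearing on the left-hand side, plus the entropy-dissipation terms carrying $\meanspX{u\h}$, $\meanspY{u\h}$ and the $h^\varepsilon$-weighted third-derivative terms $\iO\Ihy{\tabs{\parx\Delta\h u\h}^2}$ etc.; a martingale term with vanishing expectation; and an \Ito~correction term $\tfrac12\sum_{k,l}\int_0^{t\wedge T\h} D^2(R^{\p})(Z^x(\lx{kl}\g{kl}),Z^x(\lx{kl}\g{kl}))\ds$ (and likewise for $y$) which must be absorbed.

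The first key step is therefore the careful bookkeeping of the drift term: inserting $\psi\h = \ex{i}\ey{j}$ into \eqref{eq:semidiscreteSTFE:2} and summing against $L_{ij}$ shows that $D\mathcal{E}\h(u\h)L = -\chiTh(\iO\Ihy{\meanGinvX{u\h}\tabs{\parx p\h}^2} + \iO\Ihx{\meanGinvY{u\h}\tabs{\pary p\h}^2})$, so the energy part of the drift is purely dissipative; the entropy part $\kappa D\mathcal{S}\h(u\h)L$ produces the $\meanspX{u\h}$/$\meanspY{u\h}$ terms together with a cross term involving $\meanGinvX{u\h}[\ldots]_x$ against $[G^{\prime\prime}]_x = [1/m]_x$, which by the choice $m(u)=u^2$ telescopes favourably (this is exactly where the 1d computation of \cite{FischerGrun2018} is mimicked and where the tensor-product discrete chain rule \eqref{eq:chainrule} and the local interpolation identities \eqref{eq:Ihlocprop} are used). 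The second key step is to estimate the \Ito~correction: here one expands $D^2(R^{\p})$ into its $R^{\p-1}D^2(\cdots)$ part and its $R^{\p-2}(D(\cdots)\otimes D(\cdots))$ part, evaluated on the discrete noise increments $Z^x, Z^y$, and bounds everything using the inverse estimates and interpolation error estimates of Appendix \ref{sec:appendix} together with Hypothesis \ref{item:stochbasis:boundthirdderivative} (which controls $h^\varepsilon\norm{\g{kl}}_{W^{3,\infty}}^2$ summed over $\Zx\times\Zy$). The crucial structural point is that the $h^\varepsilon\iO\Ihxy{\tabs{\Delta\h u\h}^2}$ term in $\mathcal{E}\h$ was precisely designed so that the worst contributions of the \Ito~correction — those where two spatial derivatives fall on $u\h$ and one on $\g{kl}$, producing a term morally like $h^\varepsilon\norm{\nabla\Delta\h u\h}^2$ times the $\g{kl}$-weights — can be absorbed into the favourable $h^\varepsilon\iO\Ihy{\tabs{\parx\Delta\h u\h}^2}$ and $h^\varepsilon\iO\Ihx{\tabs{\pary\Delta\h u\h}^2}$ dissipation terms coming from the drift, after summation over the (finitely many, $h$-dependent) noise modes; the remaining lower-order pieces are bounded by $C(1 + R(s))$ and hence by $C(1 + R(s)^{\p})$ via Young's inequality, which is integrable in expectation.

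The third step is to handle the supremum. After moving the absorbed terms to the left-hand side, one has $R(t)^{\p} + (\text{nonnegative dissipation terms}) \leq R(0)^{\p} + C\int_0^{t} (1 + \mathds{E}_{\text{loc}}\ldots)\ds + M(t)$ with $M$ a martingale; taking $\sup_{t\le \Tmax}$, applying the Burkholder–Davis–Gundy inequality to $\sup_t|M(t)|$ — whose quadratic variation is, again by \ref{item:stochbasis:boundthirdderivative} and the inverse/interpolation estimates, controlled by $\int_0^{T\h} R(s)^{2\p-2}(1+R(s))\ds \le \varepsilon\sup_{t}R(t)^{\p}\cdot\int_0^{T\h}R(s)^{\p-1}(\ldots)\ds$ plus lower-order terms — lets us absorb $\tfrac12\expected{\sup_t R(t)^{\p}}$ into the left, and then Gronwall's lemma closes the estimate. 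The bound $R(0)^{\p} \le C$ uniformly in $h$ comes from Assumption \ref{item:initial}. The main obstacle I expect is the \Ito~correction estimate in the second step: one must show that every term produced by $D^2(R^{\p})$ acting on the discrete noise — in particular the terms where all the regularity is forced onto $u\h$ through the discrete Laplacian and those mixing $\meanGinvX{u\h}$-type nonlinearities with the singular $G^{\prime\prime}$, $F^{\prime\prime}$ — is genuinely dominated, after using the discrete chain rule, norm equivalence \eqref{eq:normequivalence}, the inverse estimates, and the smallness of $\varepsilon$ via \ref{item:regularization}, by the dissipation already available plus an $R(s)^{\p}$-controllable remainder, with constants independent of $h$; making the powers of $h$ and of $R$ match up correctly is the delicate part.
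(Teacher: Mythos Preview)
Your proposal is correct and follows essentially the same approach as the paper: apply \Ito's formula to $R^{\p}$, split the drift into the energy part $I_a+\ldots+I_d$ (yielding the flux dissipation) and the entropy part $I_e$ (yielding $\norm{\Delta\h u\h}\h^2$, the $h^\varepsilon$-weighted third-derivative terms, and the $\meanspX{u\h}$/$\meanspY{u\h}$ terms via \ref{item:potential}), bound the \Ito~correction terms $III$--$VI$ using the discrete integration-by-parts identities of Lemma \ref{lem:disc:pi}, the oscillation Lemma \ref{lem:oscillation}, and Hypotheses \ref{item:stochbasis:bounds}/\ref{item:stochbasis:boundthirdderivative}, absorb into the drift dissipation for $\kappa$ large enough, then handle $\sup_t$ of the martingale via Burkholder--Davis--Gundy and close with Gronwall. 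One small inconsistency: you first say the martingale term has ``vanishing expectation'', but since you need the estimate for $\sup_t R(t)^{\p}$ this is not enough --- as you correctly note in your Step 3, the BDG argument (with the quadratic-variation bound absorbed via Young's inequality into $\tfrac12\expected{\sup_t R^{\p}}$ plus dissipation) is what is actually required.
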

\begin{proof}
Using the notation
\begin{align}
\varphi\h\trkla{t}:=\varphi\h\trkla{x,y,t}:=\sum_{i=1}^{\dim\Uhx}\sum_{j=1}^{\dim\Uhy} L_{ij}\trkla{t}\ex{i}\trkla{x}\ey{j}\trkla{y}
\end{align}
and
\begin{align}
\Phi\h\trkla{t}\trkla{\trkla{\omega_x,\,\omega_y}^T}:=\Phi\h\trkla{x,y,t}\trkla{\trkla{\omega_x,\,\omega_y}^T}:=\sum_{i=1}^{\dim\Uhx}\sum_{j=1}^{\dim\Uhy}\trkla{Z_{ij}^x\trkla{\omega_x}+Z_{ij}^y\trkla{\omega_y}}\ex{i}\trkla{x}\ey{j}\trkla{y}\,,
\end{align}
we may rewrite \eqref{eq:semidiscreteSTFE} as
\begin{align}
\text{d}u\h=\varphi\h\trkla{t}\dt+\Phi\h\trkla{t}\trkla{\text{d}\bs{W}_{Q,h}}
\end{align}
with \begin{align}
\bs{W}_{Q,h}:=\sum_{\alpha\in\tgkla{x,\,y}}\sum_{k\in\Zx,\,l\in\Zy}\lalpha{kl}\g{kl}\balpha{kl}\kbalpha\,,
\end{align}
where $\bs{b}_x$, $\bs{b}_y$ denote the standard Cartesian basis vectors of $\R^2$. 
Applying \Ito's formula, we compute
\begin{align}
&R\trkla{t\wedge T\h}^\p=R\trkla{0}^\p+\itTh\p R\trkla{s}^{\p-1}\trkla{D\mathcal{E}\h+\kappa D\mathcal{S}\h}\varphi\h\trkla{s}\ds\nonumber\\
&\quad+\itTh\p R\trkla{s}^{\p-1}\trkla{D\mathcal{E}\h+\kappa D\mathcal{S}\h}\Phi\h\trkla{s}\text{d}\boldsymbol{W}_{Q,h}\nonumber\\
&\quad+\tfrac12 \sum_{k\in\Zx,\,l\in\Zy}\itTh\p R\trkla{s}^{\p-1}\trkla{D^2\mathcal{E}\h+\kappa D^2\mathcal{S}\h}\trkla{\Phi_{h,kl}^x,\Phi_{h,kl}^x}\ds\nonumber\\
&\quad+\tfrac12 \sum_{k\in\Zx,\,l\in\Zy}\itTh\p R\trkla{s}^{\p-1}\trkla{D^2\mathcal{E}\h+\kappa D^2\mathcal{S}\h}\trkla{\Phi_{h,kl}^y,\Phi_{h,kl}^y}\ds\nonumber\\
&\quad+\tfrac12\sum_{k\in\Zx,\,l\in\Zy}\itTh\p\trkla{\p-1}R\trkla{s}^{\p-2}\trkla{D\mathcal{E}\h+\kappa D\mathcal{S}\h}\otimes\trkla{D\mathcal{E}\h+\kappa D\mathcal{S}\h}\trkla{\Phi_{h,kl}^x,\Phi_{h,kl}^x}\ds\nonumber\\
&\quad+\tfrac12\sum_{k\in\Zx,\,l\in\Zy}\itTh\p\trkla{\p-1}R\trkla{s}^{\p-2}\trkla{D\mathcal{E}\h+\kappa D\mathcal{S}\h}\otimes\trkla{D\mathcal{E}\h+\kappa D\mathcal{S}\h}\trkla{\Phi_{h,kl}^y,\Phi_{h,kl}^y}\ds\nonumber\\
&=:\, R\trkla{0}^\p+ I +II +III +IV+V+VI\,,
\end{align}
where we used the abbreviations 
\begin{subequations}
\begin{align}
&&\Phi_{h,kl}^x\trkla{s}&:=\Phi\h\trkla{s}\trkla{\trkla{\lx{kl}\g{kl},0}^T}=\sum_{i=1}^{\dim\Uhx}\sum_{j=1}^{\dim\Uhy} Z_{ij}^x\trkla{\lx{kl}\g{kl}}\ex{i}\trkla{x}\ey{j}\trkla{y}\\
\text{and}&&\Phi_{h,kl}^y\trkla{s}&:=\Phi\h\trkla{s}\trkla{\trkla{0,\ly{kl}\g{kl}}^T}=\sum_{i=1}^{\dim\Uhx}\sum_{j=1}^{\dim\Uhy} Z_{ij}^y\trkla{\ly{kl}\g{kl}}\ex{i}\trkla{x}\ey{j}\trkla{y}\,.
\end{align}
\end{subequations}
Using Lemma \ref{lem:ableitungen}, we compute
\begin{align}
I=&\,\itTh\p R\trkla{s}^{\p-1}\sum_{i=1}^{\dim\Uhx}\sum_{j=1}^{\dim\Uhy}\iO\Ihy{\parx u\h\parx\ex{i}\trkla{x}\ey{j}\trkla{y}}\dx\dy L_{ij}\trkla{s}\ds\nonumber\\
&+\itTh\p R\trkla{s}^{\p-1}\sum_{i=1}^{\dim\Uhx}\sum_{j=1}^{\dim\Uhy}\iO\Ihx{\pary u\h\pary\ey{j}\trkla{y}\ex{i}\trkla{x}}\dx\dy L_{ij}\trkla{s}\ds\nonumber\\
&+\itTh\p R\trkla{s}^{\p-1} \sum_{i=1}^{\dim\Uhx}\sum_{j=1}^{\dim\Uhy}\iO\Ihxy{F^\prime\trkla{u\h}\ex{i}\trkla{x}\ey{j}\trkla{y}}\dx\dy L_{ij}\trkla{s}\ds\nonumber\\
&+h^\varepsilon\itTh \p R\trkla{s}^{\p-1}\sum_{i=1}^{\dim\Uhx}\sum_{j=1}^{\dim\Uhy}\iO\Ihxy{\Delta\h u\h \Delta\h \trkla{\ex{i}\trkla{x}\ey{j}\trkla{y}}}\dx\dy L_{ij}\trkla{s}\ds\nonumber\\
&+\kappa\itTh\p R\trkla{s}^{\p-1}\sum_{i=1}^{\dim\Uhx}\sum_{j=1}^{\dim\Uhy}\iO\Ihxy{G^\prime\trkla{u\h}\ex{i}\trkla{x}\ey{j}\trkla{y}}\dx\dy L_{ij}\trkla{s}\ds\nonumber\\
=:&\, I_a+I_b+I_c+I_d + I_e\label{eq:energyentropy:decomposition}\,.
\end{align}
From \eqref{eq:def:L} we obtain by straightforward computations
\begin{multline}\label{eq:tmp:Lij}
\iO\Ihxy{w\sum_{i=1}^{\dim\Uhx}\sum_{j=1}^{\dim\Uhy} L_{ij}\trkla{s}\ex{i}\trkla{x}\ey{j}\trkla{y}}\dx\dy\\
=\sum_{i=1}^{\dim\Uhx}\sum_{j=1}^{\dim\Uhy}M_{ij} L_{ij} w\trkla{\trkla{i-1}\hx,\trkla{j-1}\hy}\\
=-\chiTh \iO\Ihy{\meanGinvX{u\h}\parx p\h\parx\Ihxy{w}}+\Ihx{\meanGinvY{u\h}\pary p\h\pary\Ihxy{w}}\dx\dy
\end{multline}
for $w\in C_\per\trkla{\overline{\Om}}$ and therefore after integration by parts and using \eqref{eq:semidiscreteSTFE:2}
\begin{align}
\begin{split}
&I_a+I_b+I_c+I_d \\
&\quad=\itTh \p R\trkla{s}^{\p-1}\sum_{i=1}^{\dim\Uhx}\sum_{j=1}^{\dim\Uhy}\iO\Ihxyop\{\rkla{-\Delta\h u\h + F^\prime\trkla{u\h} +h^\varepsilon\Delta\h\Delta\h u\h}\\
&\qquad\qquad\qquad\qquad\qquad\qquad\qquad\qquad\qquad\qquad\qquad\times L_{ij}\trkla{s}\ex{i}\trkla{x}\ey{j}\trkla{y}\}\dx\dy\ds\\
&\quad=\itTh \p R\trkla{s}^{\p-1}\sum_{i=1}^{\dim\Uhx}\sum_{j=1}^{\dim\Uhy}\iO\Ihxy{p\h L_{ij}\trkla{s}\ex{i}\trkla{x}\ey{j}\trkla{y}}\dx\dy\ds\\
&\quad=-\p\itTh\!\! R\trkla{s}^{\p-1}\!\iO\!\Ihy{\meanGinvX{u\h}\tabs{\parx p\h}^2}\!+\!\Ihx{\meanGinvY{u\h}\tabs{\pary p\h}^2}\dx\dy\ds\,.
\end{split}
\end{align}
Similarly, we use \eqref{eq:tmp:Lij} with $w=G^\prime\trkla{u\h}$ and \eqref{eq:chainrule} to compute
\begin{align}
\begin{split}
\iO&\Ihxy{G^\prime\trkla{u\h}\sum_{i=1}^{\dim\Uhx}\sum_{j=1}^{\dim\Uhy} L_{ij}\trkla{s}\ex{i}\trkla{x}\ey{j}\trkla{y}}\dx\dy\\
&=-\chiTh\iO\Ihy{\meanGinvX{u\h}\parx p\h\parx \Ihxy{G^\prime\trkla{u\h}}} \\
&\quad+\Ihx{\meanGinvY{u\h}\pary u\h\pary \Ihxy{G^\prime\trkla{u\h}}}\dx\dy\\
&=-\chiTh \iO\Ihy{\parx u\h\parx p\h}+\Ihx{\pary u\h\pary p\h}\dx\dy\\
&=\chiTh\iO\Ihxy{\Delta_h u\h p\h}\dx\dy\\
&=-\chiTh\norm{\Delta\h u\h}^2\h-\chiTh\iO\Ihy{\parx u\h\parx \Ihxy{F^\prime\trkla{u\h}}}\dx\dy\\
&\quad+\chiTh\iO\Ihx{\pary u\h\pary \Ihxy{F^\prime\trkla{u\h}}}\dx\dy+\chiTh h^\varepsilon \iO\Ihxy{\Delta\h u\h\Delta\h\Delta\h u\h}\dx\dy\\
&=-\chiTh \norm{\Delta\h u\h}^2\h -\chiTh\iO\Ihy{\meanFX{u\h}\tabs{\parx u\h}^2}+\Ihx{\meanFY{u\h}\tabs{\pary u\h}^2}\dx\dy\\
&\quad-\chiTh h^\varepsilon\iO\Ihy{\tabs{\parx\Delta\h u\h}^2} +\Ihx{\tabs{\pary\Delta\h u\h}^2}\dx\dy\,.
\end{split}
\end{align}
From Assumption \ref{item:potential}, we obtain the estimate $\meanFX{u\h}\geq \tilde{c}_1 \meanspX{u\h} -\tilde{c}_2$ and \linebreak$\meanFY{u\h}\geq \tilde{c}_1 \meanspY{u\h} -\tilde{c}_2$.
Therefore, combining the above estimates we conclude
\begin{align}
\begin{split}
I_e\leq&\, -\kappa\p\itTh R\trkla{s}^{\p-1}\norm{\Delta\h u\h}\h^2\ds \\
&-\kappa\p h^\varepsilon\itTh R\trkla{s}^{\p-1} \iO\Ihy{\tabs{\parx\Delta\h u\h}^2} + \Ihx{\tabs{\pary\Delta\h u\h}^2}\dx\dy\\
&-\tilde{c}_1\kappa\p\itTh R\trkla{s}^{\p-1}\iO\Ihy{\meanspX{u\h}\tabs{\parx u\h}^2}+\Ihx{\meanspY{u\h}\tabs{\pary u\h}^2}\dx\dy\ds\\
&+\tilde{c}_2\kappa\p\itTh R\trkla{s}^\p\ds\,.
\end{split}
\end{align}
Noting
\begin{align}
\begin{split}
D^2\mathcal{E}\h\trkla{\Phi^x_{h,kl}, \Phi^x_{h,kl}}=&\,\iO\Ihy{\abs{\sum_{i=1}^{\dim\Uhx}\sum_{j=1}^{\dim\Uhy} Z_{ij}^x\trkla{\lx{kl}\g{kl}}\parx\ex{i}\trkla{x}\ey{j}\trkla{y}}^2}\dx\dy\\
&+\iO\Ihx{\abs{\sum_{i=1}^{\dim\Uhx}\sum_{j=1}^{\dim\Uhy} Z_{ij}^x\trkla{\lx{kl}\g{kl}}\ex{i}\trkla{x}\pary\ey{j}\trkla{y}}^2}\dx\dy\\
&+\iO\Ihxy{F^{\prime\prime}\trkla{u\h}\sum_{i=1}^{\dim\Uhx}\sum_{j=1}^{\dim\Uhy}\tabs{Z_{ij}^x\trkla{\lx{kl}\g{kl}}}^2\ex{i}\trkla{x}\ey{j}\trkla{y}}\dx\dy\\
&+h^\varepsilon\iO\Ihxy{\abs{\Delta\h\sum_{i=1}^{\dim\Uhx}\sum_{j=1}^{\dim\Uhy} Z_{ij}^x\trkla{\lx{kl}\g{kl}}\ex{i}\trkla{x}\ey{j}\trkla{y}}^2}\dx\dy\,,
\end{split}\\
D^2\mathcal{S}\h\trkla{\Phi^x_{h,kl},\Phi^x_{h,kl}}=&\,\iO\Ihxy{G^{\prime\prime}\trkla{u\h}\sum_{i=1}^{\dim\Uhx}\sum_{j=1}^{\dim\Uhy}\tabs{Z^x_{ij}\trkla{\lx{kl}\g{kl}}}^2\ex{i}\trkla{x}\ey{j}\trkla{y}}\dx\dy\,,
\end{align}
and similar identities for $D^2\mathcal{E}\h\trkla{\Phi^y_{h,kl}, \Phi^y_{h,kl}}$ and $D^2\mathcal{S}\h\trkla{\Phi^y_{h,kl},\Phi^y_{h,kl}}$, we combine $III$ and $IV$ and obtain after reordering
\begin{align}
&\tfrac12\!\!\!\sum_{k\in\Zx,\,l\in\Zy}\itTh\!\!\p R\trkla{s}^{\p-1}\iO\Ihy{\abs{\sum_{i=1}^{\dim\Uhx}\sum_{j=1}^{\dim\Uhy} Z_{ij}^x\trkla{\lx{kl}\g{kl}}\parx\ex{i}\trkla{x}\ey{j}\trkla{y}}^2}\dx\dy\ds\nonumber\\
&+\tfrac12\!\!\!\sum_{k\in\Zx,\,l\in\Zy}\itTh\!\!\p R\trkla{s}^{\p-1}\iO\Ihx{\abs{\sum_{i=1}^{\dim\Uhx}\sum_{j=1}^{\dim\Uhy} Z_{ij}^x\trkla{\lx{kl}\g{kl}}\pary\ey{j}\trkla{y}\ex{i}\trkla{x}}^2}\dx\dy\ds\nonumber\\
&+\tfrac12\!\!\!\sum_{k\in\Zx,\,l\in\Zy}\itTh\!\!\p R\trkla{s}^{\p-1}\iO\Ihy{\abs{\sum_{i=1}^{\dim\Uhx}\sum_{j=1}^{\dim\Uhy} Z_{ij}^y\trkla{\ly{kl}\g{kl}}\parx\ex{i}\trkla{x}\ey{j}\trkla{y}}^2}\dx\dy\ds\nonumber\\
&+\tfrac12\!\!\!\sum_{k\in\Zx,\,l\in\Zy}\itTh\!\!\p R\trkla{s}^{\p-1}\iO\Ihx{\abs{\sum_{i=1}^{\dim\Uhx}\sum_{j=1}^{\dim\Uhy} Z_{ij}^y\trkla{\ly{kl}\g{kl}}\pary\ey{j}\trkla{y}\ex{i}\trkla{x}}^2}\dx\dy\ds\nonumber\\
&+\tfrac12\!\!\!\sum_{k\in\Zx,\,l\in\Zy}\itTh\!\!\p R\trkla{s}^{\p-1}\iO\Ihxy{F^{\prime\prime}\trkla{u\h}\sum_{i=1}^{\dim\Uhx}\sum_{j=1}^{\dim\Uhy}\abs{Z_{ij}^x\trkla{\lx{kl}\g{kl}}}^2\ex{i}\trkla{x}\ey{j}\trkla{y}}\dx\dy\ds\nonumber\\
&+\tfrac12\!\!\!\sum_{k\in\Zx,\,l\in\Zy}\itTh\!\!\p R\trkla{s}^{\p-1}\iO\Ihxy{F^{\prime\prime}\trkla{u\h}\sum_{i=1}^{\dim\Uhx}\sum_{j=1}^{\dim\Uhy}\abs{Z_{ij}^y\trkla{\ly{kl}\g{kl}}}^2\ex{i}\trkla{x}\ey{j}\trkla{y}}\dx\dy\ds\nonumber\\
&+\tfrac12h^\varepsilon\!\!\!\sum_{k\in\Zx,\,l\in\Zy}\itTh\!\!\p R\trkla{s}^{\p-1}\iO\Ihxy{\abs{\Delta\h\sum_{i=1}^{\dim\Uhx}\sum_{j=1}^{\dim\Uhy} Z_{ij}^x\trkla{\lx{kl}\g{kl}}\ex{i}\trkla{x}\ey{j}\trkla{y}}^2}\dx\dy\ds\nonumber\\
&+\tfrac12h^\varepsilon\!\!\!\sum_{k\in\Zx,\,l\in\Zy}\itTh\!\!\p R\trkla{s}^{\p-1}\iO\Ihxy{\abs{\Delta\h\sum_{i=1}^{\dim\Uhx}\sum_{j=1}^{\dim\Uhy} Z_{ij}^y\trkla{\ly{kl}\g{kl}}\ex{i}\trkla{x}\ey{j}\trkla{y}}^2}\dx\dy\ds\nonumber\\
&+\tfrac12\kappa\!\!\!\sum_{k\in\Zx,\,l\in\Zy}\itTh\!\!\p R\trkla{s}^{\p-1}\iO\Ihxy{G^{\prime\prime}\trkla{u\h}\sum_{i=1}^{\dim\Uhx}\sum_{j=1}^{\dim\Uhy}\abs{Z_{ij}^x\trkla{\lx{kl}\g{kl}}}^2\ex{i}\trkla{x}\ey{j}\trkla{y}}\dx\dy\ds\nonumber\\
&+\tfrac12\kappa\!\!\!\sum_{k\in\Zx,\,l\in\Zy}\itTh\!\!\p R\trkla{s}^{\p-1}\iO\Ihxy{G^{\prime\prime}\trkla{u\h}\sum_{i=1}^{\dim\Uhx}\sum_{j=1}^{\dim\Uhy}\abs{Z_{ij}^y\trkla{\ly{kl}\g{kl}}}^2\ex{i}\trkla{x}\ey{j}\trkla{y}}\dx\dy\ds\nonumber\\
&= III_a+III_b+III_c+III_d+III_e+III_f+III_g+III_h+III_i+III_j\,.
\end{align}
To derive an estimate for $III_a$, we adapt the ideas of \cite{FischerGrun2018}.
Using the periodicity, the specific form of the one dimensional stiffness matrix on equidistant meshes and $M_{ij}=\hx\hy$ for all $i=1,\ldots,\dim\Uhx$ and $j=1,\ldots,\dim\Uhy$, we compute using \eqref{eq:Ihlocpar} and \eqref{eq:Ih=Ihloc}
\begin{align}
\iO&\Ihy{\abs{\sum_{i=1}^{\dim\Uhx}\sum_{j=1}^{\dim\Uhy} Z_{ij}^x\trkla{\lx{kl}\g{kl}}\parx\ex{i}\trkla{x}\ey{j}\trkla{y}}^2}\dx\dy\nonumber\\
&=\int_{\Oy}\sum_{j=1}^{\dim\Uhy}\int_{\Ox}\abs{\sum_{i=1}^{\dim\Uhx} Z_{ij}^x\trkla{\lx{kl}\g{kl}}\parx\ex{i}\trkla{x}}^2\dx\,\ey{j}\trkla{y}\dy\nonumber\\
&=\sum_{i=1}^{\dim\Uhx}\sum_{j=1}^{\dim\Uhy}\int_{\Oy}\trkla{2\trkla{Z_{ij}^x}^2-Z_{i-1,j}^xZ_{ij}^x-Z_{ij}^xZ_{i+1,j}^x}\trkla{\lx{kl}\g{kl}}\ey{j}\trkla{y}\dy \hx^{-1}\nonumber\\
&=\sum_{i=1}^{\dim\Uhx}\sum_{j=1}^{\dim\Uhy}\int_{\Oy}\trkla{Z_{i+1,j}^x-Z_{ij}^x}^2\trkla{\lx{kl}\g{kl}} \ey{j}\trkla{y}\dy\,\hx^{-1}\nonumber\\
&=\chiTh\sum_{i=1}^{\dim\Uhx}\sum_{j=1}^{\dim\Uhy} \rkla{\iO\Ihy{\Ihxloc{\parx\trkla{\lx{kl} u\h\gt{kl}} \tfrac{\ex{i+1}\trkla{x}-\ex{i}\trkla{x}}{\hx} }\ey{j}\trkla{y}}\dx\dy}^2 \hx^{-1}\hy^{-1}\nonumber\\
&\leq2\chiTh\sum_{i=1}^{\dim\Uhx}\sum_{j=1}^{\dim\Uhy}\rkla{\iO\Ihy{\lx{kl}\parx u\h\Ihx{\gt{kl}\tfrac{\ex{i+1}\trkla{x}-\ex{i}\trkla{x}}{\hx}}\ey{j}\trkla{y}}\dx\dy}^2\hx^{-1}\hy^{-1}\nonumber\\
&\quad+2\chiTh\sum_{i=1}^{\dim\Uhx}\sum_{j=1}^{\dim\Uhy}\rkla{\iO\Ihy{\lx{kl}\parx \gt{kl}\Ihx{u\h\tfrac{\ex{i+1}\trkla{x}-\ex{i}\trkla{x}}{\hx}}\ey{j}\trkla{y}}\dx\dy}^2\hx^{-1}\hy^{-1}\nonumber\\
&=\trkla{*}\,.
\end{align}
Recalling $\ex{i+1}\trkla{x}=\ex{i}\trkla{x-\hx}$ and performing a discrete integration by parts (cf.~Lemma \ref{lem:disc:pi} in the appendix), we continue with the estimate
\begin{align}
\begin{split}
\trkla{*}&\leq\chiTh C\sum_{i=1}^{\dim\Uhx}\sum_{j=1}^{\dim\Uhy}\rkla{\lx{kl}\iO\Ihy{\parx u\h\Ihx{\dxm \gt{kl}\ex{i+1}\trkla{x}}\ey{j}\trkla{y}}\dx\dy}^2\hx^{-1}\hy^{-1}\\
&+\chiTh C\sum_{i=1}^{\dim\Uhx}\sum_{j=1}^{\dim\Uhy}\rkla{\lx{kl}\iO\Ihy{\dxp \parx u\h\Ihx{\gt{kl}\ex{i}\trkla{x}}\ey{j}\trkla{y}}\dx\dy}^2\hx^{-1}\hy^{-1}\\
&+\chiTh C\sum_{i=1}^{\dim\Uhx}\sum_{j=1}^{\dim\Uhy}\rkla{\lx{kl}\iO\Ihy{\parx \gt{kl}\Ihx{\dxm u\h\ex{i+1}\trkla{x}}\ey{j}\trkla{y}}\dx\dy}^2\hx^{-1}\hy^{-1}\\
&+\chiTh C\sum_{i=1}^{\dim\Uhx}\sum_{j=1}^{\dim\Uhy}\rkla{\lx{kl}\iO\Ihy{\dxp \parx \gt{kl}\Ihx{u\h\ex{i}\trkla{x}}\ey{j}\trkla{y}}\dx\dy}^2\hx^{-1}\hy^{-1}\\
\leq&\,\chiTh C\lx{kl}^2\norm{\parx\g{kl}}_{L^\infty\trkla{\Om}}^2 \iO\Ihy{\tabs{\parx u\h}^2}\dx\dy\\
&+\chiTh C\lx{kl}^2\norm{\g{kl}}_{L^\infty\trkla{\Om}}^2 \iO\Ihy{\tabs{\Delta\h^x u\h}^2}\dx\dy\\
&+\chiTh C\lx{kl}^2\norm{\parx\g{kl}}_{L^\infty\trkla{\Om}}^2 \iO\Ihxy{\tabs{\dxm u\h}^2}\dx\dy\\
&+\chiTh C\lx{kl}^2\norm{\dxp \parx\gt{kl}}_{L^\infty\trkla{\Om}}^2 \iO\Ihxy{u\h^2}\dx\dy\\
\leq&\, \chiTh C \lx{kl}^2\norm{\parx\parx\g{kl}}_{L^\infty\trkla{\Om}}^2 R\trkla{s} +\chiTh C\lx{kl}^2\norm{\g{kl}}_{L^\infty\trkla{\Om}}^2 \iO\Ihxy{\tabs{\Delta\h^x u\h}^2}\dx\dy\,.
\end{split}
\end{align}
In the last step, we used Poincar\'e's inequality and the pathwise conservation of $\iO u\h\dx\dy$ (see Remark~\ref{rem:initialdata},  Assumption \ref{item:initial}, and the norm equivalence \eqref{eq:normequivalence}), as well as
\begin{multline}\label{eq:estimate:g:W2}
\norm{\dxp\parx\Ihx{\g{kl}}}_{L^\infty\trkla{\Om}}=\norm{\parx\Ihx{\dxp\g{kl}}}_{L^\infty\trkla{\Om}}\leq \norm{\parx\dxp\g{kl}}_{L^\infty\trkla{\Om}}\\
\leq \norm{\parx\parx\g{kl}}_{L^\infty\trkla{\Om}}\leq\norm{\g{kl}}_{W^{2,\infty}\trkla{\Om}}\,,
\end{multline}
which follows from \ref{item:S} and the stability of the nodal interpolation operator.
%\begin{multline}
%\norm{\dxp \parx \Ihx{\g{kl}}}_{L^\infty\trkla{\Om}}\leq \norm{\parx \dxp\trkla{\Ihx{\g{kl}}-\g{kl}}}_{L^\infty\trkla{\Om}} + \norm{\parx\dxp\g{kl}}_{L^\infty\trkla{\Om}}\\
%\leq C \hx^{-1}\norm{\parx \trkla{\Ihx{\g{kl}}-\g{kl}}}_{L^\infty\trkla{\Om}} +\norm{\parx\parx\g{kl}}_{L^\infty\trkla{\Om}} \leq C \norm{\parx\parx\g{kl}}_{L^\infty\trkla{\Om}}\,,
%\end{multline}
%which is a consequence of standard error estimates for $\Ihxop$ (cf. Theorem 4.4.20 in \cite{BrennerScott}).
This provides
\begin{align}
\begin{split}
III_a\leq&\, C\p\sum_{k\in\Zx,\,l\in\Zy}\lx{kl}^2\norm{\g{kl}}_{W^{2,\infty}\trkla{\Om}}^2\itTh R\trkla{s}^\p \ds\\
&+C\p\sum_{k\in\Zx,\,l\in\Zy}\lx{kl}^2\norm{\g{kl}}_{L^\infty\trkla{\Om}}^2\itTh R\trkla{s}^{\p-1} \iO\Ihxy{\tabs{\Delta\h^x u\h}^2}\dx\dy \ds\,.
\end{split}
\end{align}
Similar computations show
\begin{align}
\begin{split}
III_d\leq&\, C\p\sum_{k\in\Zx,\,l\in\Zy}\ly{kl}^2\norm{\g{kl}}_{W^{2,\infty}\trkla{\Om}}^2\itTh R\trkla{s}^\p \ds\\
&+C\p\sum_{k\in\Zx,\,l\in\Zy}\ly{kl}^2\norm{\g{kl}}_{L^\infty\trkla{\Om}}^2\itTh R\trkla{s}^{\p-1} \iO\Ihxy{\tabs{\Delta\h^y u\h}^2}\dx\dy \ds\,.
\end{split}
\end{align}
To control $III_b$, we compute
\begin{align}
\iO&\Ihx{\abs{\sum_{i=1}^{\dim\Uhx}\sum_{j=1}^{\dim\Uhy} Z_{ij}^x\trkla{\lx{kl}\g{kl}}\pary\ey{j}\trkla{y}\ex{i}\trkla{x}}^2}\dx\dy\nonumber\\
&=\int_{\Ox}\Ihx{\sum_{i=1}^{\dim\Uhx}\int_{\Oy}\abs{\sum_{j=1}^{\dim\Uhy} Z_{ij}^x\trkla{\lx{kl}\g{kl}}\pary\ey{j}\trkla{y}}^2\dy\,\ex{i}\trkla{x}}\dx \nonumber\\
&=\sum_{i=1}^{\dim\Uhx}\sum_{j=1}^{\dim\Uhy}\trkla{Z_{i,j+1}^x-Z_{i,j}^x}^2\trkla{\lx{kl}\g{kl}}\hx\hy^{-1}\nonumber\\
&=\chiTh\sum_{i=1}^{\dim\Uhx}\sum_{j=1}^{\dim\Uhy}\rkla{\iO\Ihy{\Ihxloc{\parx\trkla{\lx{kl}\gt{kl} u\h}\ex{i}\trkla{x}\tfrac{\ey{j+1}\trkla{y}-\ey{j}\trkla{y}}{\hy}}}\dx\dy}^2\hx^{-1}\hy^{-1}\nonumber\\
&\leq 2\chiTh\sum_{i=1}^{\dim\Uhx}\sum_{j=1}^{\dim\Uhy}\rkla{\lx{kl}\iO\Ihy{\parx\gt{kl} \Ihx{u\h\ex{i}\trkla{x}}\tfrac{\ey{j+1}\trkla{y}-\ey{j}\trkla{y}}{\hy}}\dx\dy}^2\hx^{-1}\hy^{-1}\nonumber\\
&\quad+2\chiTh\sum_{i=1}^{\dim\Uhx}\sum_{j=1}^{\dim\Uhy}\rkla{\lx{kl}\iO\Ihy{\parx u\h \Ihx{\gt{kl}\ex{i}\trkla{x}}\tfrac{\ey{j+1}\trkla{y}-\ey{j}\trkla{y}}{\hy}}\dx\dy}^2\hx^{-1}\hy^{-1}\nonumber\\
&\leq\chiTh C\sum_{i=1}^{\dim\Uhx}\sum_{j=1}^{\dim\Uhy}\rkla{\lx{kl}\iO\Ihy{\parx u\h\Ihx{\dym \gt{kl}\ex{i}\trkla{x}}\ey{j+1}\trkla{y}}\dx\dy}^2\hx^{-1}\hy^{-1}\nonumber\\
&\quad+\chiTh C\sum_{i=1}^{\dim\Uhx}\sum_{j=1}^{\dim\Uhy}\rkla{\lx{kl}\iO\Ihy{\dyp\parx u\h\Ihx{\gt{kl}\ex{i}\trkla{x}}\ey{j}\trkla{y}}\dx\dy}^2\hx^{-1}\hy^{-1}\nonumber\\
&\quad+\chiTh C\sum_{i=1}^{\dim\Uhx}\sum_{j=1}^{\dim\Uhy}\rkla{\lx{kl}\iO\Ihy{\parx\gt{kl}\Ihx{\dym u\h\ex{i}\trkla{x}}\ey{j+1}\trkla{y}}\dx\dy}^2\hx^{-1}\hy^{-1}\nonumber\\
&\quad+\chiTh C\sum_{i=1}^{\dim\Uhx}\sum_{j=1}^{\dim\Uhy}\rkla{\lx{kl}\iO\Ihy{\dyp\parx\gt{kl}\Ihx{u\h\ex{i}\trkla{x}}\ey{j}\trkla{y}}\dx\dy}^2\hx^{-1}\hy^{-1}\nonumber\\
&\leq\chiTh C\lx{kl}^2\norm{\g{kl}}_{W^{1,\infty}\trkla{\Om}}^2 \iO\Ihy{\tabs{\parx u\h}^2}\dx\dy\nonumber\\
&\quad+\chiTh C\lx{kl}^2\norm{\g{kl}}_{L^\infty\trkla{\Om}}^2 \iO\Ihy{\tabs{\dyp\parx u\h}^2}\dx\dy\nonumber\\
&\quad+\chiTh C\lx{kl}^2\norm{\g{kl}}_{W^{1,\infty}\trkla{\Om}}^2 \iO\Ihxy{\tabs{\dym u\h}^2}\dx\dy\nonumber\\
&\quad+\chiTh C\lx{kl}^2\norm{\g{kl}}_{W^{1,\infty}\trkla{\Om}}^2\iO\Ihxy{\tabs{u\h}^2}\dx\dy\,,
\end{align}
where we again used \eqref{eq:Ihlocpar} and \eqref{eq:Ih=Ihloc}.
Noting
\begin{multline}
\iO\Ihy{\tabs{\dyp\parx u\h}^2}\dx\dy =\iO\Ihxy{\Delta\h^x u\h \Delta\h^y u\h}\dy\dy \\
\leq\tfrac12\iO\Ihxy{\tabs{\Delta\h^x u\h}^2}\dx\dy +\tfrac12\iO\Ihxy{\tabs{\Delta\h^y u\h}^2}\dx\dy=\tfrac12\iO\Ihxy{\tabs{\Delta\h u\h}^2}\dx\dy\,,
\end{multline}
we obtain
\begin{align}
\begin{split}
III_b\leq & C\p\sum_{k\in\Zx,\,l\in\Zy}\lx{kl}^2 \norm{\g{kl}}_{W^{1,\infty}\trkla{\Om}}^2\itTh R\trkla{s}^\p\ds\\
&+C\p\sum_{k\in\Zx,\,l\in\Zy}\lx{kl}^2\norm{\g{kl}}_{L^\infty\trkla{\Om}}^2\itTh R\trkla{s}^{\p-1}\iO\Ihxy{\tabs{\Delta\h u\h}^2}\dx\dy\ds\,
\end{split}
\end{align}
and analogously
\begin{align}
\begin{split}
III_c\leq & C\p\sum_{k\in\Zx,\,l\in\Zy}\ly{kl}^2 \norm{\g{kl}}_{W^{1,\infty}\trkla{\Om}}^2\itTh R\trkla{s}^\p\ds\\
&+C\p\sum_{k\in\Zx,\,l\in\Zy}\ly{kl}^2\norm{\g{kl}}_{L^\infty\trkla{\Om}}^2\itTh R\trkla{s}^{\p-1}\iO\Ihxy{\tabs{\Delta\h u\h}^2}\dx\dy\ds\,
\end{split}
\end{align}
We shall apply a similar strategy  to deal with $III_g$ and $III_h$.
We start with the decomposition
\begin{multline}
\iO\Ihxy{\abs{\Delta\h\sum_{i=1}^{\dim\Uhx}\sum_{j=1}^{\dim\Uhy} Z_{ij}^x\trkla{\lx{kl}\g{kl}}\ex{i}\trkla{x}\ey{j}\trkla{y}}^2}\dx\dy\\
\leq 2\iO\Ihxy{\sum_{j=1}^{\dim\Uhy}\abs{\sum_{i=1}^{\dim\Uhx} Z_{ij}^x\trkla{\lx{kl}\g{kl}}\Delta\h^x\ex{i}\trkla{x}}^2\ey{j}\trkla{y}}\dx\dy \\
+2\iO\Ihxy{\sum_{i=1}^{\dim\Uhx}\abs{\sum_{j=1}^{\dim\Uhy} Z_{ij}^x\trkla{\lx{kl}\g{kl}}\Delta\h^y\ey{j}\trkla{y}}^2\ex{i}\trkla{x}}\dx\dy\\
=:III_{g_1}+III_{g_2}\,.
\end{multline}
In order to control the first term, we use $\ex{i-1}\trkla{x}=\ex{i}\trkla{x+\hx}$ and $\ex{i+1}\trkla{x}=\ex{i}\trkla{x-\hx}$, apply Lemma \ref{lem:disc:pi}, and compute using \eqref{eq:Ihlocpar} and \eqref{eq:Ih=Ihloc}
\begin{multline}\label{eq:energyentropie:tmp:dpi}
\sum_{i=1}^{\dim\Uhy}Z_{ij}^x\trkla{\lx{kl}\g{kl}}\Delta\h^x\ex{i}\trkla{x}=\sum_{i=1}^{\dim\Uhx} Z_{ij}^x\trkla{\lx{kl}\g{kl}}\hx^{-2}\trkla{\ex{i-1}\trkla{x}-2\ex{i}\trkla{x}+\ex{i+1}\trkla{x}}\\
=\sum_{i=1}^{\dim\Uhx}\hx^{-2}\ex{i}\trkla{x}\trkla{-2Z_{ij}^x+Z_{i-1,j}^x+Z_{i+1,j}^x}\trkla{\lx{kl}\g{kl}}\\
=\chiTh\sum_{i=1}^{\dim\Uhx}\ex{i}\trkla{x}\hx^{-1}\hy^{-1}\iO\Ihy{\lx{kl}\parx\gt{kl}\Ihx{ u\h\tfrac{\ex{i+1}\trkla{x}-2\ex{i}\trkla{x}+\ex{i-1}\trkla{x}}{\hx^2}}\ey{j}\trkla{y}}\dx\dy\\
+\chiTh\sum_{i=1}^{\dim\Uhx}\ex{i}\trkla{x}\hx^{-1}\hy^{-1}\iO\Ihy{\lx{kl}\parx u\h\Ihx{\gt{kl} \tfrac{\ex{i+1}\trkla{x}-2\ex{i}\trkla{x}+\ex{i-1}\trkla{x}}{\hx^2}}\ey{j}\trkla{y}}\dx\dy\\
=\chiTh\sum_{i=1}^{\dim\Uhx}\ex{i}\trkla{x}\hx^{-1}\hy^{-1}\iO\Ihy{\lx{kl}\parx\Delta\h^x\gt{kl}\Ihx{u\h\trkla{x-\hx,y}\ex{i}\trkla{x}}\ey{j}\trkla{y}}\dx\dy\\
+\chiTh\sum_{i=1}^{\dim\Uhx}\ex{i}\trkla{x}\hx^{-1}\hy^{-1}\iO\Ihy{\lx{kl}\parx \gt{kl}\trkla{x+\hx,y}\Ihx{\Delta\h^x u\h\ex{i}\trkla{x}}\ey{j}\trkla{y}}\dx\dy\\
+\chiTh2\sum_{i=1}^{\dim\Uhx}\ex{i}\trkla{x}\hx^{-1}\hy^{-1}\iO\Ihy{\lx{kl}\parx\dxp\gt{kl}\Ihx{\dxm u\h\ex{i}\trkla{x}}\ey{j}\trkla{y}}\dx\dy\\
+\chiTh\sum_{i=1}^{\dim\Uhx}\ex{i}\trkla{x}\hx^{-1}\hy^{-1}\iO\Ihy{\lx{kl}\parx\Delta\h^x u\h\Ihx{\gt{kl}\trkla{x-\hx,y}\ex{i}\trkla{x}}\ey{j}\trkla{y}}\dx\dy\\
+\chiTh\sum_{i=1}^{\dim\Uhx}\ex{i}\trkla{x}\hx^{-1}\hy^{-1}\iO\Ihy{\lx{kl}\parx u\h\trkla{x+\hx,y}\Ihx{\Delta\h^x \gt{kl}\ex{i}\trkla{x}}\ey{j}\trkla{y}}\dx\dy\\
+\chiTh2\sum_{i=1}^{\dim\Uhx}\ex{i}\trkla{x}\hx^{-1}\hy^{-1}\iO\Ihy{\lx{kl}\parx\dxp u\h\Ihx{\dxm \gt{kl}\ex{i}\trkla{x}}\ey{j}\trkla{y}}\dx\dy\,.
\end{multline}
Similar to \eqref{eq:estimate:g:W2}, we use \ref{item:S} and the stability of the nodal interpolation operator to compute
\begin{multline}
\norm{\parx\Delta\h^x\Ihxy{\g{kl}}}_{L^\infty\trkla{\Om}}=\norm{\parx\Ihxy{\dxp\dxm\g{kl}}}_{L^\infty\trkla{\Om}}\\
\leq \norm{\parx\dxp\dxm\g{kl}}_{L^\infty\trkla{\Om}}\leq \norm{\parx\parx\parx\g{kl}}_{L^\infty\trkla{\Om}}\leq\norm{\g{kl}}_{W^{3,\infty}\trkla{\Om}}\,.
\end{multline}
Therefore, we have
\begin{align}
\begin{split}
III_{g_1}\leq&\, \chiTh C\lx{kl}^2\norm{\g{kl}}_{W^{3,\infty}\trkla{\Om}}^2 \iO\Ihxy{\tabs{u\h}^2}\dx\dy \\
&+ \chiTh C\lx{kl} \norm{\g{kl}}_{W^{1,\infty}\trkla{\Om}}^2 \iO\Ihxy{\tabs{\Delta\h^xu\h}^2}\dx\dy\\
&+\chiTh C\lx{kl}^2\norm{\g{kl}}_{W^{2,\infty}\trkla{\Om}}^2 \iO\Ihy{\tabs{\parx u\h}^2}\dx\dy \\
&+\chiTh C\lx{kl}^2\norm{\g{kl}}^2_{L^\infty\trkla{\Om}}\iO\Ihy{\tabs{\parx\Delta\h^x u\h}^2}\dx\dy\\
&+\chiTh C\lx{kl}^2\norm{\g{kl}}_{W^{2,\infty}\trkla{\Om}}^2\iO\Ihy{\tabs{\parx u\h}^2}\dx\dy \\
&+\chiTh C\lx{kl}^2\norm{\g{kl}}_{W^{1,\infty}\trkla{\Om}}^2\iO\Ihxy{\tabs{\Delta\h^x u\h}^2}\dx\dy\,.
\end{split}
\end{align}
Similar computations show
\begin{align}
\begin{split}
III_{g_2}\leq &\,\chiTh C\lx{kl}^2\norm{\g{kl}}_{W^{3,\infty}\trkla{\Om}}^2\iO\Ihxy{\tabs{u\h}^2}\dx\dy \\
&+\chiTh C\lx{kl}^2\norm{\g{kl}}_{W^{1,\infty}\trkla{\Om}}^2\iO\Ihxy{\tabs{\Delta\h^yu\h}^2}\dx\dy\\
&+\chiTh C\lx{kl}^2\norm{\g{kl}}_{W^{2,\infty}\trkla{\Om}}^2\iO\Ihx{\tabs{\pary u\h}^2}\dx\dy \\
&+\chiTh C\lx{kl}^2\norm{\g{kl}}_{L^\infty\trkla{\Om}}^2\iO\Ihy{\tabs{\parx\Delta\h^yu\h}^2}\dx\dy\\
&+\chiTh C\lx{kl}^2\norm{\g{kl}}_{W^{2,\infty}\trkla{\Om}}^2\iO\Ihy{\tabs{\parx u\h}^2}\dx\dy \\
&+\chiTh C\lx{kl}^2\norm{\g{kl}}_{W^{1,\infty}\trkla{\Om}}^2\iO\Ihxy{\Delta\h^xu\h\Delta\h^yu\h}\dx\dy\,.
\end{split}
\end{align}
Therefore,
\begin{align}
\begin{split}
III_g\leq&\, C\lx{kl}^2\norm{\g{kl}}_{W^{3,\infty}\trkla{\Om}}^2h^\varepsilon\sum_{k\in\Zx,\,l\in\Zy}\itTh \p R\trkla{s}^\p\ds\\
&+C\lx{kl}^2\norm{\g{kl}}_{W^{1,\infty}\trkla{\Om}}^2 h^\varepsilon\sum_{k\in\Zx,\,l\in\Zy}\itTh\p R\trkla{s}^{\p-1}\iO\Ihxy{\tabs{\Delta\h u\h}^2}\dx\dy\ds\\
&+C\lx{kl}^2\norm{\g{kl}}_{L^\infty\trkla{\Om}}^2 h^\varepsilon\sum_{k\in\Zx,\,l\in\Zy}\itTh\p R\trkla{s}^{\p-1}\iO\Ihy{\tabs{\parx\Delta\h u\h}^2}\dx\dy\ds
\end{split}
\end{align}
holds true.
Analogous computations provide
\begin{align}
\begin{split}
III_h\leq&\, C\ly{kl}^2\norm{\g{kl}}_{W^{3,\infty}\trkla{\Om}}^2h^\varepsilon\sum_{k\in\Zx,\,l\in\Zy}\itTh \p R\trkla{s}^\p\ds\\
&+C\ly{kl}^2\norm{\g{kl}}_{W^{1,\infty}\trkla{\Om}}^2 h^\varepsilon\sum_{k\in\Zx,\,l\in\Zy}\itTh\p R\trkla{s}^{\p-1}\iO\Ihxy{\tabs{\Delta\h u\h}^2}\dx\dy\ds\\
&+C\ly{kl}^2\norm{\g{kl}}_{L^\infty\trkla{\Om}}^2 h^\varepsilon\sum_{k\in\Zx,\,l\in\Zy}\itTh\p R\trkla{s}^{\p-1}\iO\Ihx{\tabs{\pary\Delta\h u\h}^2}\dx\dy\ds\,.
\end{split}
\end{align}
To control $III_e+III_f$ we compute for all $i\in\tgkla{1,\ldots,\dim\Uhx}$, $j\in\tgkla{1,\ldots,\dim\Uhy}$, $k\in\Zx$, and $l\in\Zy$
\begin{align}
\begin{split}
&\iOmega\Ihxy{F^{\prime\prime}\trkla{u\h}\rkla{\tabs{Z_{ij}^x\trkla{\lx{kl}\g{kl}}}^2+\tabs{Z_{ij}^y\trkla{\ly{kl}\g{kl}}}^2}\ex{i}\trkla{x}\ey{j}\trkla{y}}\dx\dy\\
&\qquad=\chiTh F^{\prime\prime}\trkla{u_{ij}}M_{ij}^{-1}\left(\rkla{\iO\Ihy{\Ihxloc{\lx{kl} \parx\trkla{u\h\gt{kl}}\ex{i}\trkla{x}\ey{j}\trkla{y}}}\dx\dy}^2\right.\\
&\qquad\qquad+\left.\rkla{\iO\Ihx{\Ihyloc{\ly{kl} \pary\trkla{u\h\gt{kl}}\ex{i}\trkla{x}\ey{j}\trkla{y}}}\dx\dy}^2\right)\\
&\qquad=:\chiTh F^{\prime\prime}\trkla{u_{ij}}M_{ij}^{-1}\rkla{\lx{kl}^2A_{ijkl}^2 + \ly{kl}^2B_{ijkl}^2}\,.
\end{split}
\end{align}
Using the definition of $\Ihxlocop$, we obtain for the first term
\begin{align}
\begin{split}\label{eq:energyentropy:tmp:1}
A_{ijkl}^2=&\rkla{\iO\Ihy{\Ihxloc{\parx\trkla{u\h\gt{kl}}\ex{i}\trkla{x}\ey{j}\trkla{y}}}\dx\dy}^2\\
=&\rkla{\sum_{Q\in\Qh}\int_Q\Ihxy{\parx\trkla{u\h\gt{kl}}\ex{i}\trkla{x}\ey{j}\trkla{y}}\dx\dy}^2\\
\leq&\,M_{ij}\sum_{Q\in\Qh}\int_Q\Ihxy{\tabs{\parx\trkla{u\h\gt{kl}}}^2\ex{i}\trkla{x}\ey{j}\trkla{y}}\dx\dy \\
\leq&\,2M_{ij}\sum_{Q\in\Qh}\int_Q\Ihxy{\trkla{\tabs{\parx u\h}^2\tabs{\gt{kl}}^2+\tabs{ u\h}^2\tabs{\parx\gt{kl}}^2}\ex{i}\trkla{x}\ey{j}\trkla{y}}\dx\dy\\
\leq&\,C\norm{\gt{k}}_{W^{1,\infty}\trkla{\Om}}M_{ij}\sum_{Q\in\Qh}\int_Q\Ihxy{\tabs{\parx u\h}^2\ex{i}\trkla{x}\ey{j}\trkla{y}}\dx\dy\\
&+C\norm{\gt{k}}_{W^{1,\infty}\trkla{\Om}}M_{ij}\sum_{Q\in\Qh} \int_Q\Ihxy{\tabs{u\h}^2\ex{i}\trkla{x}\ey{j}\trkla{y}}\dx\dy\\
=&\,C\norm{\gt{kl}}_{W^{1,\infty}\trkla{\Om}}M_{ij}\iO\Ihy{\tabs{\parx u\h}^2\ex{i}\trkla{x}\ey{j}\trkla{y}}\dx\dy\\
&+C\norm{\gt{kl}}_{W^{1,\infty}\trkla{\Om}}M_{ij}\iO\Ihxy{\tabs{u\h}^2\ex{i}\trkla{x}\ey{j}\trkla{y}}\dx\dy\,.
\end{split}
\end{align}
Similar computations for $B_{ijkl}$ provide
\begin{align}
III_e\!+\!III_f\leq &\,C\!\itTh\!\!\!\p R\trkla{s}^{\p-1}\!\!\iO \!\Ihy{\Ihx{F^{\prime\prime}\trkla{u\h}}\tabs{\parx u\h}^2}\!+\!\Ihx{\Ihy{F^{\prime\prime}\trkla{u\h}}\tabs{\pary u\h}^2}\!\dx\!\dy\!\ds\nonumber\\
&+C\!\itTh\!\!\!\p R\trkla{s}^{\p-1}\!\!\iO\!\Ihxy{\tabs{u\h}^2 F^{\prime\prime}\trkla{u\h}}\!\dx\!\dy\!\ds\,.
\end{align}
Analogously, we compute
\begin{align}
III_i\!+\!III_j\leq &\,C\kappa\!\!\itTh\!\!\!\p R\trkla{s}^{\p-1}\!\!\iO\! \Ihy{\Ihx{G^{\prime\prime}\trkla{u\h}}\tabs{\parx u\h}^2}\!\!+\!\Ihx{\Ihy{G^{\prime\prime}\trkla{u\h}}\tabs{\pary u\h}^2}\!\dx\!\dy\!\ds\nonumber\\
&+C\kappa\!\itTh\!\!\!\p R\trkla{s}^{\p-1}\!\!\iO\!\Ihxy{\tabs{u\h}^2 G^{\prime\prime}\trkla{u\h}}\!\dx\!\dy\!\ds\,.
\end{align}
Collecting the above estimates and applying Assumptions \ref{item:stochbasis:bounds} and \ref{item:stochbasis:boundthirdderivative}, we obtain
\begin{align}
\begin{split}
III+IV\leq&\,C\itTh R\trkla{s}^{\p}\ds +C\itTh R\trkla{s}^{\p-1}\norm{\Delta\h u\h}\h^2\ds\\
&+C\itTh R\trkla{s}^{\p-1}h^\varepsilon\iO\Ihy{\tabs{\parx \Delta\h u\h}^2}\dx\dy\ds\\
&+C\itTh R\trkla{s}^{\p-1}h^\varepsilon\iO\Ihx{\tabs{\pary \Delta\h u\h}^2}\dx\dy\ds\\
&+C\itTh R\trkla{s}^{\p-1}\iO\Ihy{\Ihx{F^{\prime\prime}\trkla{u\h}}\tabs{\parx u\h}^2}\dx\dy\ds\\
&+C\itTh R\trkla{s}^{\p-1}\iO\Ihx{\Ihy{F^{\prime\prime}\trkla{u\h}}\tabs{\pary u\h}^2}\dx\dy\ds\\
&+C\itTh R\trkla{s}^{\p-1}\iO\Ihxy{\tabs{u\h}^2 F^{\prime\prime}\trkla{u\h}}\dx\dy\ds\\
&+C\kappa\itTh R\trkla{s}^{\p-1}\iO\Ihy{\Ihx{G^{\prime\prime}\trkla{u\h}}\tabs{\parx u\h}^2}\dx\dy\ds\\
&+C\kappa\itTh R\trkla{s}^{\p-1}\iO\Ihx{\Ihy{G^{\prime\prime}\trkla{u\h}}\tabs{\pary u\h}^2}\dx\dy\ds\\
&+C\kappa\itTh R\trkla{s}^{\p-1}\iO\Ihxy{\tabs{u\h}^2 G^{\prime\prime}\trkla{u\h}}\dx\dy\ds\,.
\end{split}\label{eq:tmp:III+IV}
\end{align}
While the first term on the right-hand side is a Gronwall term, the remaining terms need to be absorbed in the negative terms provided by $I$. 
For this reason, we need the following estimates:
Due to Assumption \ref{item:potential}, we have $F^{\prime\prime}\trkla{u\h}\leq C u\h^{-p-2} +C$.
Recalling the oscillation lemma \ref{lem:oscillation}, we obtain the estimates
\begin{align}
\iO\Ihy{\Ihx{F^{\prime\prime}\trkla{u\h}}\tabs{\parx u\h}^2}\dx\dy\leq C\iO\Ihy{\meanspX{u_h}\tabs{\parx u\h}^2}\dx\dy + C R\trkla{s}\,,\\
\iO\Ihx{\Ihy{F^{\prime\prime}\trkla{u\h}}\tabs{\pary u\h}^2}\dx\dy\leq C\iO\Ihx{\meanspY{u\h}\tabs{\pary u\h}^2}\dx\dy + C R\trkla{s}\,.
\end{align}
Furthermore, Assumption \ref{item:potential}, Poincar\'e's inequality and our uniform control of the mass of discrete solutions (see Remark~\ref{rem:initialdata}, Assumption \ref{item:initial}, and the norm equivalence \eqref{eq:normequivalence}) provide
\begin{multline}
\iO\Ihxy{\tabs{u\h}^2 F^{\prime\prime}\trkla{u\h}}\dx\dy\\
\leq C\iO\Ihxy{F\trkla{u\h}}\dx\dy + C\iO\Ihy{\tabs{\parx u\h}^2}\dx\dy +C\leq C R\trkla{s}
\end{multline}for $\alpha>0$.
\revy{As Lemma \ref{lem:oscillation} provides the estimate $\restr{\Ihx{u\h^{-2}}^2}{\Kx}\leq \trkla{\max_{\Kx} \Ihx{u\h^{-2}}}^2\leq  \trkla{C_{osc}\min_{\Kx} \Ihx{u\h^{-2}}}^2\leq C_{osc}^2\restr{\Ihx{u\h^{-4}}}{\Kx}$, we can apply Young's inequality and use $p>2$ and $\kappa>1$ to compute
\begin{align}\label{eq:tmp:est:gprimeprime}
\begin{split}
\kappa\iO&\Ihy{\Ihx{G^{\prime\prime}\trkla{u\h}}\tabs{\parx u\h}^2}\dx\dy\\
 &\leq C_{osc}^{-2}\iO\Ihy{\abs{\Ihx{G^{\prime\prime}\trkla{u\h}}}^2\tabs{\parx u\h}^2}\dx\dy +C\kappa^2\iO\Ihy{\tabs{\parx u\h}^2}\dx\dy\\
 &\leq \iO\Ihy{\Ihx{\tabs{u\h}^{-4}}\tabs{\parx u\h}^2}\dx\dy +C\kappa^2\iO\Ihy{\tabs{\parx u\h}^2}\dx\dy\\
 &\leq \iO\Ihy{\Ihx{\tabs{u\h}^{-p-2}+1}\tabs{\parx u\h}^2}\dx\dy +C\kappa^2\iO\Ihy{\tabs{\parx u\h}^2}\dx\dy\\
&\leq \iO\Ihy{\Ihx{\tabs{u\h}^{-p-2}}\tabs{\parx u\h}^2}\dx\dy +C\kappa^2\iO\Ihy{\tabs{\parx u\h}^2}\dx\dy\\
&\leq C \iO\Ihy{\meanspX{u\h}\tabs{\parx u\h}^2}\dx\dy +C\kappa^2 R\trkla{s}
\end{split}
\end{align}
and }
\begin{align}
\kappa\iO\Ihx{\Ihy{G^{\prime\prime}\trkla{u\h}}\tabs{\pary u\h}^2}\dx\dy\leq C \iO\Ihx{\meanspY{u\h}\tabs{\pary u\h}^2}\dx\dy +C\kappa^2 R\trkla{s}\,.
\end{align}
\revy{In the last line of \eqref{eq:tmp:est:gprimeprime}, we used that Lemma \ref{lem:oscillation} allows us control $\Ihx{\tabs{u\h}^{p-2}}$ by its mean value.}
Noting the definition of $G^{\prime\prime}\trkla{u\h}$, we obtain 
\begin{align}
\iO\Ihxy{\tabs{u\h}^2 G^{\prime\prime}\trkla{u\h}}\dx\dy\leq \iO 1\dx\dy\leq C\,.
\end{align}
Therefore, the last term in \eqref{eq:tmp:III+IV} can be controlled by $C\kappa\itTh R\trkla{s}^{\p-1}\ds$ for $\alpha>0$.
This allows us to rewrite \eqref{eq:tmp:III+IV} for $\kappa>1$ as
\begin{align}
III+IV\leq&\, C\kappa^2 \itTh R\trkla{s}^\p\ds + C\itTh R\trkla{s}^{\p-1}\norm{\Delta\h u\h}\h^2\ds\nonumber\\
&+C\itTh R\trkla{s}^{\p-1}h^\varepsilon\iO\Ihy{\tabs{\parx\Delta\h u\h}^2}\dx\dy\ds\nonumber\\
&+C\itTh R\trkla{s}^{\p-1}h^{\varepsilon}\iO\Ihx{\tabs{\pary\Delta\h u\h}^2}\dx\dy\ds\\
&+C\itTh R\trkla{s}^{\p-1}\iO\Ihy{\meanspX{u\h}\tabs{\parx u\h}^2}\dx\dy\ds\nonumber\\
&+C\itTh R\trkla{s}^{\p-1}\iO\Ihx{\meanspY{u\h}\tabs{\pary u\h}^2}\dx\dy\ds\,.\nonumber
\end{align}
Using \eqref{eq:semidiscreteSTFE:2}, we compute
\begin{align}\label{eq:energyentropy:proof:tmp1}
\begin{split}
&\trkla{D\mathcal{E}\h+\kappa D\mathcal{S}\h}\otimes\trkla{D\mathcal{E}\h+\kappa D\mathcal{S}\h}\trkla{\Phi_{h,kl}^x,\Phi_{h,kl}^x}\\
\leq&\,2 \left(\iO\Ihy{\parx u\h\sum_{i=1}^{\dim\Uhx}\sum_{j=1}^{\dim\Uhy}Z_{ij}^x\trkla{\lx{kl}\g{kl}}\parx\ex{i}\trkla{x}\ey{j}\trkla{y}}\dx\dy \right.\\
&+\iO\Ihx{\pary u\h\sum_{i=1}^{\dim\Uhx}\sum_{j=1}^{\dim\Uhy}Z_{ij}^x\trkla{\lx{kl}\g{kl}}\ex{i}\trkla{x}\pary\ey{j}\trkla{y}}\dx\dy\\
&+\iO\Ihxy{F^\prime\trkla{u\h}\sum_{i=1}^{\dim\Uhx}\sum_{j=1}^{\dim\Uhy} Z_{ij}^x\trkla{\lx{kl}\g{kl}}\ex{i}\trkla{x}\ey{j}\trkla{y}}\dx\dy\\
&+\left.h^\varepsilon\iO\Ihxy{\Delta\h u\h\sum_{i=1}^{\dim\Uhx}\sum_{j=1}^{\dim\Uhy}Z_{ij}^x\trkla{\lx{kl}\g{kl}}\Delta\h\trkla{\ex{i}\trkla{x}\ey{j}\trkla{y}}}\dx\dy\right)^2\\
&+2\kappa^2 \rkla{\iO\Ihxy{G^\prime\trkla{u\h}\sum_{i=1}^{\dim\Uhx}\sum_{j=1}^{\dim\Uhy} Z_{ij}^x\trkla{\lx{kl}\g{kl}}\ex{i}\trkla{x}\ey{j}\trkla{y}}\dx\dy}^2\\
=&\,2\rkla{\iO\Ihxy{p\h\sum_{i=1}^{\dim\Uhx}\sum_{j=1}^{\dim\Uhy} Z_{ij}^x\trkla{\lx{kl}\g{kl}}\ex{i}\trkla{x}\ey{j}\trkla{y}}\dx\dy}^2 \\
&+2\kappa^2 \rkla{\iO\Ihxy{G^\prime\trkla{u\h}\sum_{i=1}^{\dim\Uhx}\sum_{j=1}^{\dim\Uhy} Z_{ij}^x\trkla{\lx{kl}\g{kl}}\ex{i}\trkla{x}\ey{j}\trkla{y}}\dx\dy}^2\\
=&\chiTh2\lx{kl}^2\rkla{\iO\Ihy{\Ihxloc{\parx\trkla{u\h\gt{kl} }p\h}}\dx\dy}^2 \\
&+\chiTh 2\kappa^2\lx{kl}^2\rkla{\iO\Ihy{\Ihxloc{\parx\trkla{u\h\gt{kl}} G^{\prime}\trkla{u\h}}}\dx\dy}^2\\
=:&\,\chiTh2\lx{kl}^2A_I+\chiTh2\kappa^2\lx{kl}^2A_{II}\,,
\end{split}
\end{align}
where we used \eqref{eq:def:Zx} in the last step. 
To control the first term on the right-hand side of \eqref{eq:energyentropy:proof:tmp1}, we use \eqref{eq:Ihlocpar} and \eqref{eq:Ih=Ihloc} to compute
\begin{multline}
A_I\leq 2\rkla{\iO\Ihy{\parx u\h\Ihx{\gt{kl} p\h}}\dx\dy}^2 +2 \rkla{\iO\Ihy{\parx\gt{kl} \Ihx{u\h p\h}}}^2=:A_{I_a}+A_{I_b}\,.
\end{multline}
Recalling \eqref{eq:semidiscreteSTFE:2}, \ref{item:potential}, the integration by parts formula used in \eqref{eq:energyentropie:tmp:dpi}, Hölder's inequality, and the positivity of $u\h$, we obtain\pagebreak
\begin{align}
\begin{split}
A_{I_a}\leq&\,C\rkla{\iO \Ihy{\parx u\h\Ihx{\gt{kl} \Delta_h u\h}}\dx\dy}^2\\
&+C\rkla{\iO\Ihy{\tabs{\parx u\h}\Ihx{\tabs{\gt{kl}} \trkla{u\h^{-p-1}+1}}}\dx\dy}^2\\
&+C\rkla{h^\varepsilon\iO\Ihy{\parx u\h\Ihx{\gt{kl}\Delta\h\Delta\h u\h}}\dx\dy}^2\\
\leq&\,C\rkla{\iO\Ihy{\tabs{\parx u\h}^2}\dx\dy}\norm{\gt{kl}}_{L^\infty\trkla{\Om}}^2 \norm{\Delta\h u\h}\h^2\\
&+C\rkla{\iO\Ihy{\tabs{\parx u\h}\Ihx{\tabs{\gt{kl}} u\h^{-p/2}u\h^{-p/2-1}}}\dx\dy}^2\\
&+C\rkla{\iO\Ihy{\tabs{\parx u\h}\Ihx{\tabs{\gt{kl}} }}\dx\dy}^2\\
&+C\norm{\g{kl}}_{L^\infty\trkla{\Om}}^2h^\varepsilon\iO\Ihy{\tabs{\parx\Delta\h^x u\h}^2}\dx\dy\, h^\varepsilon\iO\Ihxy{\tabs{\Delta\h u\h}^2}\dx\dy \\
&+C\norm{\parx\parx\g{kl}}_{L^\infty\trkla{\Om}}^2 h^\varepsilon\iO\Ihy{\tabs{\parx u\h}^2}\dx\dy \,h^\varepsilon\iO\Ihxy{\tabs{\Delta\h u\h}^2}\dx\dy\\
&+C\norm{\parx\g{kl}}_{L^\infty\trkla{\Om}}^2h^\varepsilon\iO\Ihxy{\tabs{\Delta\h^x u\h}^2}\dx\dy\,h^\varepsilon\iO\Ihxy{\tabs{\Delta\h u\h}^2}\dx\dy\\
&+C\norm{\g{kl}}_{L^\infty\trkla{\Om}}^2 h^\varepsilon\iO\Ihy{\tabs{\parx \Delta\h^y u\h}^2}\dx\dy\,h^\varepsilon\iO\Ihxy{\tabs{\Delta\h u\h}^2}\dx\dy\\
&+C\norm{\pary\pary\g{kl}}_{L^\infty\trkla{\Om}}^2 h^\varepsilon\iO\Ihy{\tabs{\parx u\h}^2}\dx\dy\,h^\varepsilon\iO\Ihxy{\tabs{\Delta\h u\h}^2}\dx\dy\\
&+C\norm{\pary\g{kl}}_{L^\infty\trkla{\Om}}^2 h^\varepsilon\iO\Ihxy{\Delta\h^x u\h\Delta\h^y u\h}\dx\dy\,h^\varepsilon\iO\Ihxy{\tabs{\Delta\h u\h}^2}\dx\dy\,,
\end{split}
\end{align}
where we used $\iO\Ihy{\tabs{\parx\dyp u\h}^2}\dx\dy=\iO\Ihxy{\Delta\h^x u\h \Delta\h^y u\h}\dx\dy$. 
Using a discrete version of Hölder's inequality, we obtain 
\begin{align}
\iO\Ihxy{\Delta\h^xu\h\Delta\h^yu\h}\dx\dy\leq \norm{\Delta\h^xu\h}\h\norm{\Delta\h^yu\h}\h\leq\norm{\Delta\h u\h}\h^2\,.
\end{align}
Therefore, we have
\begin{align}
A_{I_a}\leq&\,C R\trkla{s}\norm{\g{kl}}_{L^\infty\trkla{\Om}}^2\norm{\Delta\h u\h}\h^2\nonumber\\
&+ C \norm{\g{kl}}_{L^\infty\trkla{\Om}}^2\rkla{\iO\Ihy{\tabs{\parx u\h}^2\Ihx{u\h^{-p-2}}}\dx\dy} \rkla{\iO\Ihxy{u\h^{-p}}\dx\dy}\nonumber\\
&+ C \norm{\g{kl}}_{L^\infty\trkla{\Om}}^2 \iO\Ihy{\tabs{\parx u\h}^2}\dx\dy\nonumber\\
&+C\norm{\g{kl}}_{W^{2,\infty}\trkla{\Om}}^2 R\trkla{s}\rkla{h^\varepsilon\iO\Ihy{\tabs{\parx\Delta\h u\h}^2}\dx\dy + R\trkla{s}}\nonumber\\
\begin{split}
\leq&\,C R\trkla{s}\norm{\g{kl}}_{W^{2,\infty}\trkla{\Om}}^2\left(\norm{\Delta\h u\h}\h^2 +\iO\Ihy{\tabs{\parx u\h}^2\Ihx{u\h^{-p-2}}}\dx\dy\right.\\
&\left.\qquad\qquad\qquad\qquad\qquad\qquad\qquad +h^\varepsilon\iO\Ihy{\tabs{\parx\Delta\h u\h}^2}\dx\dy + R\trkla{s}\right)\,.\end{split}
\label{eq:energyentropy:tmp:hilbertschmidt1}
\end{align}
Using \eqref{eq:semidiscreteSTFE:2}, Hölder's inequality, \ref{item:potential}, the computations used in \eqref{eq:energyentropie:tmp:dpi}, Poincar\'e's inequality, and $R\trkla{s}\geq\alpha$, we obtain the estimate
\begin{align}
\begin{split}\label{eq:energyentropy:tmp:hilberschmidt2}
A_{I_b}\leq&\,C\rkla{\iO\Ihy{\parx\gt{kl}\Ihx{u\h\Delta\h u\h}}\dx\dy}^2 + C\rkla{\iO\Ihy{\parx\gt{kl}\Ihx{u\h F^\prime\trkla{u\h}}}\dx\dy}^2\\
&+C\rkla{h^\varepsilon\iO\Ihy{\parx \gt{kl}\Ihx{u\h\Delta\h\Delta\h u\h}}\dx\dy}^2\\
\leq&\,C\norm{\parx\g{kl}}_{L^\infty\trkla{\Om}}^2\norm{u\h}\h^2\norm{\Delta\h u\h}\h^2\\
&+C\norm{\parx\g{kl}}_{L^\infty\trkla{\Om}}^2\rkla{\iO\Ihxy{u\h^{-p}}\dx\dy}^2+C\norm{\parx\g{kl}}_{L^\infty\trkla{\Om}}^2\rkla{\iO u\h\dx\dy}^2\\
&+C h^\varepsilon\norm{\parx\parx\parx\g{kl}}_{L^\infty\trkla{\Om}}^2\norm{u\h}\h^2 h^\varepsilon\norm{\Delta\h u\h}\h^2\\
&+C\norm{\parx\g{kl}}_{L^\infty\trkla{\Om}}^2h^\varepsilon\norm{\Delta\h^xu\h}\h^2 h^\varepsilon\norm{\Delta\h u\h}\h^2\\
&+C\norm{\parx\parx\g{kl}}_{L^\infty\trkla{\Om}}^2 h^\varepsilon \iO\Ihy{\tabs{\parx u\h}^2}\dx\dy\, h^\varepsilon\norm{\Delta\h u\h}\h^2\\
&+Ch^\varepsilon\norm{\parx\pary\pary\g{kl}}_{L^\infty\trkla{\Om}}^2\norm{u\h}\h^2 h^\varepsilon\norm{\Delta\h u}\h^2\\
&+C\norm{\parx\g{kl}}_{L^\infty\trkla{\Om}}^2 h^\varepsilon\norm{\Delta\h^yu\h}\h^2h^\varepsilon\norm{\Delta\h u\h}\h^2\\
&+C\norm{\parx\pary\g{kl}}_{L^\infty\trkla{\Om}}^2 h^\varepsilon\iO\Ihx{\tabs{\pary u\h}^2}\dx\dy\,h^\varepsilon\norm{\Delta\h u\h}\h^2\\
\leq&\,C\rkla{\norm{\g{kl}}_{W^{2,\infty}\trkla{\Om}}^2 +h^\varepsilon\norm{\g{kl}}_{W^{3,\infty}\trkla{\Om}}^2} R\trkla{s} \rkla{\norm{\Delta\h u\h}\h^2+R\trkla{s}}\,.
\end{split}
\end{align}
Here, we used that $\iO u\h\dx\dy=\iO u\h^0\dx\dy$ is uniformly bounded $\Prob$-almost surely (cf.~Assumption \ref{item:initial} and Remark~\ref{rem:initialdata}).
Using \eqref{eq:Ihlocpar} and \eqref{eq:Ih=Ihloc} and  $G^\prime(u)=u^{-1}$, we obtain
\begin{align}\label{eq:energyentropy:tmp:hilberschmidt3}
A_{II}\leq C\norm{\g{kl}}_{L^\infty\trkla{\Om}}^2\iO\Ihy{\Ihx{G^{\prime\prime}\trkla{u\h}}\tabs{\parx u\h}^2}\dx\dy + C\norm{\parx\g{kl}}_{L^\infty\trkla{\Om}}^2\,.
\end{align}
Therefore, we have
\begin{align}
V\leq&\,C\sum_{k\in\Zx,\,l\in\Zy} \lx{kl}^2\rkla{\norm{\g{kl}}_{W^{2,\infty}\trkla{\Om}}^2+h^\varepsilon\norm{\g{kl}}_{W^{3,\infty}\trkla{\Om}}^2}\itTh R\trkla{s}^\p\ds\nonumber\\
&+C\sum_{k\in\Zx,\,l\in\Zy} \lx{kl}^2\rkla{\norm{\g{kl}}_{W^{2,\infty}\trkla{\Om}}^2+h^\varepsilon\norm{\g{kl}}_{W^{3,\infty}\trkla{\Om}}^2}\itTh R\trkla{s}^{\p-1}\norm{\Delta\h u\h}\h^2\ds\nonumber\\
&+C\sum_{k\in\Zx,\,l\in\Zy} \lx{kl}^2\norm{\g{kl}}_{W^{2,\infty}\trkla{\Om}}^2\itTh R\trkla{s}^{\p-1}\iO\Ihy{\tabs{\parx u\h}^2\Ihx{u\h^{-p-2}}}\dx\dy\ds\nonumber\\
&+C\sum_{k\in\Zx,\,l\in\Zy} \lx{kl}^2\norm{\g{kl}}_{W^{2,\infty}\trkla{\Om}}^2\itTh R\trkla{s}^{\p-1}h^\varepsilon\iO\Ihy{\tabs{\parx\Delta\h u\h}^2}\dx\dy\ds\nonumber\\
&+C\kappa^2\sum_{k\in\Zx,\,l\in\Zy} \lx{kl}^2\norm{\g{kl}}_{L^\infty\trkla{\Om}}^2\itTh R\trkla{s}^{\p-1}\iO\Ihy{\Ihx{G^{\prime\prime}\trkla{u\h}}\tabs{\parx u\h}^2}\dx\dy\ds\,.
\end{align}
Recalling Assumptions \ref{item:stochbasis:bounds} and \ref{item:stochbasis:boundthirdderivative} and Lemma \ref{lem:oscillation} and mimicking the computations in \eqref{eq:tmp:est:gprimeprime}, we obtain for $\kappa>1$
\begin{align}
\begin{split}
V\leq &\,C\kappa^4\itTh R\trkla{s}^\p \ds+ C\itTh R\trkla{s}^{\p-1}\norm{\Delta\h u\h}\h^2\ds\\
&+ C\itTh R\trkla{s}^{\p-1}\iO\Ihy{\meanspX{u\h}\tabs{\parx u\h}^2}\dx\dy\ds\\
&+ C\itTh R\trkla{s}^{\p-1}\iO\Ihx{\meanspY{u\h}\tabs{\pary u\h}^2}\dx\dy\ds\\
&+C\itTh R\trkla{s}^{\p-1} h^\varepsilon\iO\Ihy{\tabs{\parx\Delta\h u\h}^2}\dx\dy\ds\,.
\end{split}
\end{align}
Analogously, we compute
\begin{align}
\begin{split}
VI\leq &\,C\kappa^4\itTh R\trkla{s}^\p \ds+ C\itTh R\trkla{s}^{\p-1}\norm{\Delta\h u\h}\h^2\ds\\
&+ C\itTh R\trkla{s}^{\p-1}\iO\Ihy{\meanspX{u\h}\tabs{\parx u\h}^2}\dx\dy\ds\\
&+ C\itTh R\trkla{s}^{\p-1}\iO\Ihx{\meanspY{u\h}\tabs{\pary u\h}^2}\dx\dy\ds\\
&+C\itTh R\trkla{s}^{\p-1} h^\varepsilon\iO\Ihx{\tabs{\pary\Delta\h u\h}^2}\dx\dy\ds\,.
\end{split}
\end{align}
%Analogously, we compute
%\begin{align}
%\begin{split}
%VI\leq&\,C\sum_{k\in\Zx,\,l\in\Zy} \ly{kl}^2\rkla{\norm{\g{kl}}_{W^{2,\infty}\trkla{\Om}}^2+h^\varepsilon\norm{\g{kl}}_{W^{3,\infty}\trkla{\Om}}^2}\itTh R\trkla{s}^\p\ds\\
%&+C\sum_{k\in\Zx,\,l\in\Zy} \ly{kl}^2\rkla{\norm{\g{kl}}_{W^{2,\infty}\trkla{\Om}}^2+h^\varepsilon\norm{\g{kl}}_{W^{3,\infty}\trkla{\Om}}^2}\itTh R\trkla{s}^{\p-1}\norm{\Delta\h u\h}\h^2\ds\\
%&+C\sum_{k\in\Zx,\,l\in\Zy} \lx{kl}^2\norm{\g{kl}}_{W^{2,\infty}\trkla{\Om}}^2\itTh R\trkla{s}^{\p-1}\iO\Ihx{\tabs{\pary u\h}^2\Ihy{u\h^{-p-2}}}\dx\dy\ds\\
%&+C\sum_{k\in\Zx,\,l\in\Zy} \ly{kl}^2\norm{\g{kl}}_{W^{2,\infty}\trkla{\Om}}^2\itTh R\trkla{s}^{\p-1}h^\varepsilon\iO\Ihx{\tabs{\pary\Delta\h u\h}^2}\dx\dy\ds\\
%&+C\kappa^2\sum_{k\in\Zx,\,l\in\Zy} \ly{kl}^2\norm{\g{kl}}_{L^\infty\trkla{\Om}}^2\itTh R\trkla{s}^{\p-1}\iO\Ihx{\Ihy{G^{\prime\prime}\trkla{u\h}}\tabs{\pary u\h}^2}\dx\dy\ds\,.
%\end{split}
%\end{align}
In order to obtain bounds for the expected value of stochastic integral, we rewrite
\begin{align}
\begin{split}
II=&\sum_{k\in\Zx,\,l\in\Zy}\itTh\p R\trkla{s}^{\p-1} \iO\Ihy{\Ihxloc{\parx\trkla{\lx{kl}\gt{kl} u\h} p\h}}\dx\dy\dbx{kl}\\
&+\sum_{k\in\Zx,\,l\in\Zy}\itTh\p R\trkla{s}^{\p-1} \iO\Ihx{\Ihyloc{\pary\trkla{\ly{kl}\gt{kl} u\h} p\h}}\dx\dy\dby{kl}\\
&+\kappa\sum_{k\in\Zx,\,l\in\Zy}\itTh\p R\trkla{s}^{\p-1} \iO\Ihy{\Ihxloc{\parx\trkla{\lx{kl}\gt{kl} u\h} G^\prime\trkla{u\h}}}\dx\dy\dbx{kl}\\
&+\kappa\sum_{k\in\Zx,\,l\in\Zy}\itTh\p R\trkla{s}^{\p-1} \iO\Ihx{\Ihyloc{\pary\trkla{\ly{kl}\gt{kl} u\h} G^\prime\trkla{u\h}}}\dx\dy\dby{kl}\\
=:&\,II_a+II_b+II_c+II_d
\end{split}
\end{align}
and apply the Burkholder-Davis-Gundy inequality.
For this reason, we introduce the Hilbert-Schmidt operators 
\begin{subequations}
\begin{align}
\mathfrak{T}_1\trkla{s}\trkla{\omega}&:= \chiTh R\trkla{s}^{\p-1}\sum_{k\in\Zx,\,l\in\Zy}\iO\Ihy{\Ihxloc{\parx\trkla{\skla{\g{kl},\omega}_{L^2} u\h\gt{kl}} p\h}}\dx\dy\,,\\
\mathfrak{T}_3\trkla{s}\trkla{\omega}&:=\chiTh \kappa R\trkla{s}^{\p-1}\sum_{k\in\Zx,\,l\in\Zy}\iO\Ihy{\Ihxloc{\parx\trkla{\skla{\g{kl},\omega}_{L^2} u\h\gt{kl}} G^\prime\trkla{u\h}}}\dx\dy\,,
\end{align}
mapping $Q^{1/2}_xL^2\trkla{\Om}$ onto $\mathds{R}$  and the Hilbert-Schmidt operators
\begin{align}
\mathfrak{T}_2\trkla{s}\trkla{\omega}&:= \chiTh R\trkla{s}^{\p-1}\sum_{k\in\Zx,\,l\in\Zy}\iO\Ihx{\Ihyloc{\pary\trkla{\skla{\g{kl},\omega}_{L^2} u\h\gt{kl}} p\h}}\dx\dy\,,\\
\mathfrak{T}_4\trkla{s}\trkla{\omega}&:=\chiTh\kappa R\trkla{s}^{\p-1}\sum_{k\in\Zx,\,l\in\Zy}\iO\Ihx{\Ihyloc{\pary\trkla{\skla{\g{kl},\omega}_{L^2} u\h\gt{kl}} G^\prime\trkla{u\h}}}\dx\dy\,
\end{align}
\end{subequations}
mapping $Q^{1/2}_yL^2\trkla{\Om}$ onto $\mathds{R}$.
Recalling \eqref{eq:energyentropy:tmp:hilbertschmidt1} and \eqref{eq:energyentropy:tmp:hilberschmidt2}, we obtain
\begin{align}
\begin{split}
&\rkla{\sum_{k,\,l\in\mathds{Z}}\abs{\mathfrak{T}_1\trkla{s}\trkla{Q^{1/2}_x \g{kl}}}^2}^{1/2} \\
&=\chiTh R\trkla{s}^{\p-1}\rkla{\sum_{k\in\Zx,\,l\in\Zy}\lx{kl}^2\rkla{\iO\Ihy{\Ihxloc{\parx\trkla{u\h\gt{kl}} p\h}}\dx\dy}^2}^{1/2}\\
&\leq \chiTh C R\trkla{s}^{\p-1}\left(\sum_{k\in\Zx,\,l\in\Zy}\lx{kl}^2\rkla{\norm{\g{kl}}_{L^\infty\trkla{\Om}}^2+h^\varepsilon\norm{\g{kl}}_{W^{3,\infty}\trkla{\Om}}^2}\right.\\
&~\times\left.\rkla{\!\norm{\Delta\h u\h}\h^2+\!\iO\Ihy{\tabs{\parx u\h}^2\Ihx{ u\h^{-p-2}}}\dx\dy\!+\! h^\varepsilon\!\!\iO\!\Ihy{\tabs{\parx\Delta\h u\h}^2}\dx\dy \!+\!R\trkla{s}}^{\!2}\right)^{\!1/2}\\
&\revy{\leq\chiTh C R\trkla{s}^{\p-1}\left(\norm{\Delta\h u\h}\h^2+ \iO\Ihy{\tabs{\parx u\h}^2\Ihx{ u\h^{-p-2}}}\dx\dy \right.}\\
&\revy{\qquad\qquad\qquad\qquad\qquad\qquad\qquad\qquad\qquad+\left.h^\varepsilon\iO\Ihy{\tabs{\parx\Delta\h u\h}^2}\dx\dy +R\trkla{s}\right)\,.}
\end{split}
\end{align}
Similarly, we have
\begin{multline}
\rkla{\sum_{k,\,l\in\mathds{Z}}\abs{\mathfrak{T}_2\trkla{s}\trkla{Q^{1/2}_y \g{kl}}}^2}^{1/2}\\
\leq \chiTh C R\trkla{s}^{\p-1}\rkla{\norm{\Delta\h u\h}\h^2+\iO\Ihx{\tabs{\pary u\h}^2\Ihy{ u\h^{-p-2}}}\dx\dy}\\
+\chiTh C R\trkla{s}^{\p-1}\rkla{h^\varepsilon\iO\Ihx{\tabs{\pary\Delta\h u\h}^2}\dx\dy +R\trkla{s}}\,.
\end{multline}
Following the computations in \eqref{eq:energyentropy:tmp:hilberschmidt3}, we obtain
\begin{align}
\rkla{\sum_{k,\,l\in\mathds{Z}}\abs{\mathfrak{T}_3\trkla{s}\trkla{Q^{1/2}_x \g{kl}}}^2}^{1/2}&\leq\chiTh C\kappa R\trkla{s}^{\p-1}\rkla{\iO\Ihy{\Ihx{G^{\prime\prime}\trkla{u\h}}\tabs{\parx u\h}^2}\dx\dy +C}\,,\\
\rkla{\sum_{k,\,l\in\mathds{Z}}\abs{\mathfrak{T}_4\trkla{s}\trkla{Q^{1/2}_y \g{kl}}}^2}^{1/2}&\leq\chiTh C\kappa R\trkla{s}^{\p-1}\rkla{\iO\Ihx{\Ihy{G^{\prime\prime}\trkla{u\h}}\tabs{\pary u\h}^2}\dx\dy +C}\,.
\end{align}
Therefore, the Burkholder-Davis-Gundy inequality yields together with Young's inequality and $R\trkla{s}\geq\alpha$
\begin{align}
\begin{split}
\mathds{E}&\ekla{\sup_{t\in\tekla{0,\Tmax}}\abs{ II_a}}\\
&\leq C\mathds{E}\left[\left(\iTmaxTh R\trkla{s}^{2\p-2}\left(\norm{\Delta\h u\h}\h^2+\iO\Ihy{\tabs{\parx u\h}^2\Ihx{ u\h^{-p-2}}}\dx\dy\right.\right.\right.\\
&\qquad+\left.\left.\left.h^\varepsilon\iO\Ihy{\tabs{\parx\Delta\h u\h}^2}\dx\dy +R\trkla{s}\right)\ds\right)^{1/2}\right]\\
&\leq \tfrac18\mathds{E}\ekla{\sup_{t\in\tekla{0,\Tmax\wedge T\h}} R\trkla{t}^\p} \\
&\quad+ C\alpha^{-1}\mathds{E}\left[\iTmaxTh R\trkla{s}^{\p-1}\left(\norm{\Delta\h u\h}\h^2+\iO\Ihy{\tabs{\parx u\h}^2\Ihx{ u\h^{-p-2}}}\dx\dy\right.\right.\\
&\qquad\qquad+\left.\left.h^\varepsilon\iO\Ihy{\tabs{\parx\Delta\h u\h}^2}\dx\dy +R\trkla{s}\right)\ds\right]\,,
\end{split}
\end{align}
\begin{align}
\begin{split}
\mathds{E}&\ekla{\sup_{t\in\tekla{0,\Tmax}}\abs{ II_b}}\leq \tfrac18\mathds{E}\ekla{\sup_{t\in\tekla{0,\Tmax\wedge T\h}} R\trkla{t}^\p} \\
&\qquad+ C\alpha^{-1}\mathds{E}\left[\iTmaxTh R\trkla{s}^{\p-1}\left(\norm{\Delta\h u\h}\h^2+\iO\Ihx{\tabs{\pary u\h}^2\Ihy{ u\h^{-p-2}}}\dx\dy\right.\right.\\
&\qquad\qquad+\left.\left.h^\varepsilon\iO\Ihx{\tabs{\pary\Delta\h u\h}^2}\dx\dy +R\trkla{s}\right)\ds\right]\,,
\end{split}
\end{align}
and
\begin{multline}
\mathds{E}\ekla{\sup_{t\in\tekla{0,\Tmax}}\tabs{II_c}+\sup_{t\in\tekla{0,\Tmax}}\tabs{II_d}}\leq \tfrac14\mathds{E}\ekla{\sup_{t\in\tekla{0,\Tmax\wedge T\h}} R\trkla{t}^{\p-1}}\\
+C\alpha^{-1}\kappa^2\mathds{E}\ekla{\iTmaxTh R\trkla{s}^{\p-1} \rkla{\iO\Ihy{\Ihx{G^{\prime\prime}\trkla{u\h}\tabs{\parx u\h}^2}}}}\\
+C\alpha^{-1}\kappa^2\mathds{E}\ekla{\iTmaxTh R\trkla{s}^{\p-1} \rkla{\iO\Ihx{\Ihy{G^{\prime\prime}\trkla{u\h}}\tabs{\pary u\h}^2}\dx\dy+C}}\,.
\end{multline}
Collecting the above results, we obtain
\begin{align}
\begin{split}
\expected{\sup_{t\in\tekla{0,\Tmax}} \tabs{II}}\leq&\,\tfrac12\expected{\sup_{t\in\tekla{0,\Tmax}} R\trkla{t}^\p} +C\alpha^{-1}\expected{\int_{0}^{T\h} R\trkla{s}^{\p-1} \norm{\Delta\h u\h}\h^2\dx\dy\ds}\\
&+C\alpha^{-1}\expected{\int_0^{T\h} R\trkla{s}^{\p-1}\iO\Ihy{\meanspX{u\h}\tabs{\parx u\h}^2}\dx\dy\ds}\\
&+C\alpha^{-1}\expected{\int_0^{T\h} R\trkla{s}^{\p-1}\iO\Ihx{\meanspY{u\h}\tabs{\pary u\h}^2}\dx\dy\ds}\\
&+C\alpha^{-1}\expected{\int_0^{t\h}R\trkla{s}^{\p-1} h^\varepsilon\iO\Ihy{\tabs{\parx\Delta\h u\h}^2}\dx\dy\ds}\\
&+C\alpha^{-1}\expected{\int_0^{T\h} R\trkla{s}^{\p-1}h^\varepsilon\iO\Ihx{\tabs{\pary\Delta\h u\h}^2}\dx\dy\ds}\\
&+C\alpha^{-1}\kappa^4\expected{\int_0^{T\h} R\trkla{s}^\p\ds}\,,
\end{split}
\end{align}
where we again used Lemma \ref{lem:oscillation} and mimicked  \eqref{eq:tmp:est:gprimeprime}.
Collecting the intermediate results established above, we obtain for $\kappa$ sufficiently large
\begin{multline}
\expected{\sup_{t\in\tekla{0,\Tmax}} R\trkla{t}^\p} + \expected{\int_0^{T\h} R\trkla{s}^{\p-1}\iO\Ihy{\meanGinvX{u\h}\tabs{\parx p\h}^2}\dx\dy\ds}\\
+\expected{\int_0^{T\h} R\trkla{s}^{\p-1}\iO\Ihx{\meanGinvY{u\h}\tabs{\pary u\h}^2}\dx\dy\ds}\\
 +\expected{\int_0^{T\h} R\trkla{s}^{\p-1}\norm{\Delta\h u\h}\h^2\ds} +\expected{\int_0^{T\h} R\trkla{s}^{\p-1}h^\varepsilon\iO\Ihy{\tabs{\parx\Delta\h u\h}^2}\dx\dy\ds}\\
 +\expected{\int_0^{T\h} R\trkla{s}^{\p-1}h^\varepsilon\iO\Ihx{\tabs{\pary\Delta\h u\h}^2}\dx\dy\ds} \\
 + \expected{\int_0^{T\h} R\trkla{s}^{\p-1}\iO\Ihy{\meanspX{u\h}\tabs{\parx u\h}^2}\dx\dy\ds}\\
 + \expected{\int_0^{T\h} R\trkla{s}^{\p-1}\iO\Ihx{\meanspY{u\h}\tabs{\pary u\h}^2}\dx\dy\ds}\\
 \leq \expected{R\trkla{0}^\p} +C\kappa^4\expected{\int_0^{T\h} R\trkla{s}^\p\ds}\,.
\end{multline}
Applying Gronwall's lemma concludes the proof.
\end{proof}
\subsection{Hölder continuity in time}
For compactness in time, the first step is to establish uniform Hölder continuity  for the stochastic integral.
In particular, we will prove the following lemma.
\begin{lemma}\label{lem:stochint}
Let $\Tmax>0$, $\p>1$, $\nu\in\trkla{0,\tfrac12}$ and let the Assumptions \ref{item:S}, \ref{item:initial}, \ref{item:potential}, \ref{item:stoch}, \ref{item:regularization}, and \ref{item:stochbasis:boundthirdderivative} hold true.
If $2\nu \p>1$ holds, for solutions $\trkla{u\h,p\h}$ to \eqref{eq:semidiscreteSTFE} the stochastic integral
\begin{align}
I\h\trkla{t}:=\sum_{i=1}^{\dim\Uhx}\sum_{j=1}^{\dim\Uhy}\sum_{k\in\Zx,\,l\in\Zy} I_{ijkl}\trkla{t}\ex{i}\trkla{\tilde{x}}\ey{j}\trkla{\tilde{y}}
\end{align}
with
\begin{align}
\begin{split}
I_{ijkl}\trkla{t}:=&\,M_{ij}^{-1}\itTh\iO \Ihy{\Ihxloc{\parx\trkla{\lx{kl} u\h\gt{kl}}\ex{i}\trkla{x}\ey{i}\trkla{y}}}\dx\dy\dbx{kl}\\
&+M_{ij}^{-1}\itTh\iO \Ihx{\Ihyloc{\pary\trkla{\lx{kl} u\h\gt{kl}}\ex{i}\trkla{x}\ey{i}\trkla{y}}}\dx\dy\dby{kl}
\end{split}
\end{align}
%\begin{align}
%\begin{split}
%I\h\trkla{t}:=&\sum_{i=1}^{\dim\Uhx}\sum_{j=1}^{\dim\Uhy}\sum_{k\in\Zx,\,l\in\Zy} M_{ij}^{-1}\itTh\iO \Ihy{\Ihxloc{\parx\trkla{\lx{kl} u\h\gt{kl}}\ex{i}\trkla{x}\ey{i}\trkla{y}}}\dx\dy\dbx{kl}\ex{i}\trkla{\tilde{x}}\ey{j}\trkla{\tilde{y}}\\
%&+\sum_{i=1}^{\dim\Uhx}\sum_{j=1}^{\dim\Uhy}\sum_{k\in\Zx,\,l\in\Zy} M_{ij}^{-1}\itTh\iO \Ihx{\Ihyloc{\pary\trkla{\lx{kl} u\h\gt{kl}}\ex{i}\trkla{x}\ey{i}\trkla{y}}}\dx\dy\dby{kl}\ex{i}\trkla{\tilde{x}}\ey{j}\trkla{\tilde{y}}
%\end{split}
%\end{align}
is contained in $L^{2\p}\trkla{\Omega;C^{\beta}\trkla{\tekla{0,\Tmax};L^2\trkla{\Om}}}$ with $\beta:=\nu-\tfrac1{2\p}$.
\end{lemma}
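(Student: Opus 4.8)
The plan is to establish Hölder regularity in time of the stochastic integral $I\h$ via the Kolmogorov continuity theorem, so the core of the argument is a moment estimate of the form
\begin{align*}
\expected{\norm{I\h\trkla{t}-I\h\trkla{r}}\h^{2\p}}\leq C\abs{t-r}^{\p}
\end{align*}
uniformly in $h$, with $C$ independent of $h$; once this is in place, Kolmogorov's theorem (in the Banach-space-valued version) yields $I\h\in C^\beta\trkla{\tekla{0,\Tmax};L^2\trkla{\Om}}$ for any $\beta<\tfrac12-\tfrac{1}{2\p}$, and the condition $2\nu\p>1$ is exactly what is needed to make $\beta=\nu-\tfrac{1}{2\p}$ a legitimate positive exponent with finite $2\p$-th moment. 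First I would use the norm equivalence \eqref{eq:normequivalence} to pass from $\norm{I\h\trkla{t}-I\h\trkla{r}}\h^2$ (or $\norm{\cdot}_{L^2\trkla{\Om}}$, which is equivalent on $\Uhxy$ up to $h$-independent constants) to a sum over $k\in\Zx$, $l\in\Zy$ of the quadratic variations of the one-dimensional It\^o integrals appearing in $I_{ijkl}$, reassembled against the finite element basis.

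The main step is then the Burkholder--Davis--Gundy inequality applied to the $L^2\trkla{\Om}$-valued martingale $t\mapsto I\h\trkla{t}-I\h\trkla{r}$: we get
\begin{align*}
\expected{\sup_{s\in\tekla{r,t}}\norm{I\h\trkla{s}-I\h\trkla{r}}\h^{2\p}}\leq C\expected{\rkla{\int_r^{t\wedge T\h}\sum_{k\in\Zx,\,l\in\Zy}\rkla{\lx{kl}^2\norm{\Psi^x_{h,kl}\trkla{s}}\h^2+\ly{kl}^2\norm{\Psi^y_{h,kl}\trkla{s}}\h^2}\ds}^{\p}}\,,
\end{align*}
where $\Psi^x_{h,kl}$, $\Psi^y_{h,kl}$ are the finite element functions representing the integrand coefficients $M_{ij}^{-1}\iO\Ihy{\Ihxloc{\parx\trkla{\lx{kl} u\h\gt{kl}}\ex{i}\ey{j}}}\dx\dy$, etc. The integrand is controlled pointwise in $s$ by $C\sum_{k,l}\rkla{\lx{kl}^2+\ly{kl}^2}\norm{\g{kl}}_{W^{1,\infty}\trkla{\Om}}^2\rkla{\norm{u\h}_{H^1\trkla{\Om}}^2+\norm{u\h}_{L^2\trkla{\Om}}^2}$ using exactly the discrete integration-by-parts and interpolation-stability manipulations already carried out for the terms $III_a$, $III_b$ in the proof of Proposition \ref{prop:energyentropyestimate} (i.e.\ \eqref{eq:Ihlocpar}, \eqref{eq:Ih=Ihloc}, Lemma \ref{lem:disc:pi}, and the bound $\norm{\dxp\parx\Ihx{\g{kl}}}_{L^\infty}\leq\norm{\g{kl}}_{W^{2,\infty}}$ from \eqref{eq:estimate:g:W2}). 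The sum over $k,l$ converges by Assumption \ref{item:stochbasis:bounds}. Since the integrand is therefore bounded by $C\,\mathcal{E}\h\trkla{u\h\trkla{s}}$ up to lower-order mass terms, H\"older's inequality in time gives the factor $\abs{t-r}^{\p-1}$ outside, leaving $\abs{t-r}\cdot\expected{\sup_{s\leq\Tmax\wedge T\h}\mathcal{E}\h\trkla{u\h\trkla{s}}^{\p}}$, and the latter expectation is finite and $h$-independent by Proposition \ref{prop:energyentropyestimate} (taking the exponent there equal to $\p$, and using $R\geq\mathcal{E}\h$). This produces the desired $\abs{t-r}^{\p}$ bound.

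Finally I would invoke the Kolmogorov--Chentsov theorem for processes with values in the separable Banach space $L^2\trkla{\Om}$ (see \cite{BrzezniakOndrejat}): from $\expected{\norm{I\h\trkla{t}-I\h\trkla{r}}\h^{2\p}}\leq C\abs{t-r}^{1+(2\p\nu-1)}$, noting $2\p\nu-1>0$, one obtains $I\h\in L^{2\p}\trkla{\Omega;C^{\beta}\trkla{\tekla{0,\Tmax};L^2\trkla{\Om}}}$ for every $\beta<\nu-\tfrac{1}{2\p}$, and a standard closed-endpoint argument (or slightly enlarging $\p$) gives the stated exponent $\beta=\nu-\tfrac{1}{2\p}$; crucially, the constant in the moment bound depends only on $\Tmax$, $\p$, the decay parameters $\lx{kl},\ly{kl}$, and the constant from Proposition \ref{prop:energyentropyestimate}, hence is independent of $h$. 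The main obstacle is the bookkeeping in the pointwise bound on the BDG integrand: one must re-derive, for the difference process, the transfer of the spatial derivative $\parx$ (resp.\ $\pary$) off the test function onto $u\h$ and $\gt{kl}$ on each element using the local interpolation identities and discrete summation by parts, and verify that no $h^{-1}$-type factors survive — this is precisely where Assumption \ref{item:S} (equidistant, quasi-uniform meshes) and the stability of the nodal interpolation operators are used, exactly as in the estimates of $III_a$ through $III_d$ above, so that the estimate reduces cleanly to $\mathcal{E}\h\trkla{u\h}$ and not to an $h$-dependent multiple of it.
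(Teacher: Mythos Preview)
Your proof is correct but follows a different route from the paper's. You argue via the Burkholder--Davis--Gundy inequality together with the Kolmogorov--Chentsov continuity theorem for Banach-space-valued processes, whereas the paper invokes Lemma~2.1 of Flandoli--Gatarek \cite{Flandoli1995}: once the integrand operators $\hat{Z}^x,\hat{Z}^y$ are shown to lie in $L^{2\p}\trkla{\Omega\times\trkla{0,\Tmax};L_2\trkla{L^2\trkla{\Om};L^2\trkla{\Om}}}$ with an $h$-independent bound, that lemma gives $I\h\in L^{2\p}\trkla{\Omega;W^{\nu,2\p}\trkla{0,\Tmax;L^2\trkla{\Om}}}$ directly, and the continuous embedding $W^{\nu,2\p}\hookrightarrow C^{\nu-1/(2\p)}$ finishes the argument. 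Both approaches rest on exactly the same pointwise-in-time Hilbert--Schmidt bound $\norm{\hat{Z}^\alpha}_{L_2}^2\leq C\norm{u\h}_{H^1}^2$; the paper obtains this from the computation \eqref{eq:energyentropy:tmp:1} (the $A_{ijkl}^2$ estimate within $III_e+III_f$), not from the $III_a$--$III_d$ blocks you cite --- those involve difference quotients of the coefficients and the discrete Laplacian, which are unnecessary here. The Flandoli--Gatarek route has the minor advantage of delivering the endpoint exponent $\beta=\nu-\tfrac{1}{2\p}$ immediately via the Sobolev embedding, whereas Kolmogorov--Chentsov naturally yields only $\beta<\tfrac12-\tfrac{1}{2\p}$ and requires the extra remark you make (enlarging $\p$) to hit the stated $\beta$. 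Your BDG approach is arguably more elementary and self-contained, avoiding the factorization lemma.
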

\begin{proof}
According to Lemma 2.1 in \cite{Flandoli1995} it suffices to show that
\begin{multline}
\hat{Z}^x\trkla{s}\trkla{\omega}:=\chiTh \sum_{i=1}^{\dim\Uhx}\sum_{j=1}^{\dim\Uhy}M_{ij}^{-1}\ex{i}\trkla{\tilde{x}}\ey{j}\trkla{\tilde{y}}\\
\times\iO\Ihy{\Ihxloc{\parx\trkla{u\h\sum_{k\in\Zx,\,l\in\Zy}\skla{\g{kl},\omega}_{L^2}\gt{kl}}\ex{i}\trkla{x}\ey{j}\trkla{y}}}\dx\dy 
\end{multline}
and
\begin{multline}
\hat{Z}^y\trkla{s}\trkla{\omega}:=\chiTh \sum_{i=1}^{\dim\Uhx}\sum_{j=1}^{\dim\Uhy}M_{ij}^{-1}\ex{i}\trkla{\tilde{x}}\ey{j}\trkla{\tilde{y}}\\
\times\iO\Ihx{\Ihyloc{\pary\trkla{u\h\sum_{k\in\Zx,\,l\in\Zy}\skla{\g{kl},\omega}_{L^2}\gt{kl}}\ex{i}\trkla{x}\ey{j}\trkla{y}}}\dx\dy 
\end{multline}
are progressively measurable and contained in $L^{2\p}\trkla{\Omega\times\trkla{0,\Tmax};L_2\trkla{L^2\trkla{\Om};L^2\trkla{\Om}}}$ with a uniform bound in $h$ to establish $I\h\in L^{2\p}\trkla{\Omega;W^{\nu,2\p}\trkla{{0,\Tmax};L^2\trkla{\Om}}}$.
Then the continuous embedding 
\begin{align}
W^{\nu,2\p}\trkla{{0,\Tmax};L^2\trkla{\Omega}}\hookrightarrow C^{\nu-\tfrac1{2\p}}\trkla{\tekla{0,\Tmax};L^2\trkla{\Om}}
\end{align}
completes the proof.
Recalling the computations from \eqref{eq:energyentropy:tmp:1} and using \ref{item:stochbasis:bounds}, we immediately obtain the bounds
\begin{align}
\norm{\hat{Z}^x}_{L_2\trkla{L^2\trkla{\Om};L^2\trkla{\Om}}}^2\leq \chiTh C\iO\Ihy{\tabs{\parx u\h}^2}\dx\dy +C\iO\Ihxy{\tabs{u\h}^2}\dx\dy\,,\\
\norm{\hat{Z}^y}_{L_2\trkla{L^2\trkla{\Om};L^2\trkla{\Om}}}^2\leq \chiTh C\iO\Ihx{\tabs{\pary u\h}^2}\dx\dy +C\iO\Ihxy{\tabs{u\h}^2}\dx\dy\,.
\end{align}\\
Progressive measurability is satisfied due to the pathwise continuity of the $u\h$ $\Prob$-almost surely.
Hence, the result follows by Proposition \ref{prop:energyentropyestimate}.
\end{proof}
In order to show compactness in time, we shall use Lemma \ref{lem:stochint} to establish the Hölder continuity of $u\h$ as a mapping from $[0,T_{max}]$ into appropriate Sobolev spaces.

\begin{lemma}\label{lem:hoelderInTime}
Let $\Tmax>0$ and let the Assumptions \ref{item:S}, \ref{item:initial}, \ref{item:potential}, \ref{item:stoch}, \ref{item:regularization}, and \ref{item:stochbasis:boundthirdderivative} hold true.
Then, for $\p$ sufficiently large, a solution $u\h$ to \eqref{eq:semidiscreteSTFE} is uniformly bounded in $L^\sigma\trkla{\Omega;C^{1/4}\trkla{\tekla{0,\Tmax};\trkla{H^1_\per\trkla{\Om}}^\prime}}$ for $\sigma<8/5$, i.e.
\begin{align}
\expected{\rkla{\sup_{t_1,t_2\in\tekla{0,\Tmax}}\frac{\norm{u\h\trkla{t_2}-u\h\trkla{t_1}}_{\trkla{H^1_\per\trkla{\Om}}^\prime}}{\tabs{t_2-t_1}^{1/4}}}^\sigma}\leq C\,.
\end{align}
\end{lemma}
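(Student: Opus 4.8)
The plan is to split the increment $u\h(t_2)-u\h(t_1)$ into a deterministic part, governed by the (regularized) flux terms on the interval $[t_1\wedge T\h,\,t_2\wedge T\h]$, and a stochastic part, which is precisely the increment $I\h(t_2)-I\h(t_1)$ of the stochastic integral studied in Lemma~\ref{lem:stochint}. For the stochastic part, Lemma~\ref{lem:stochint} already delivers $I\h\in L^{2\p}\bigl(\Omega;C^{\beta}([0,\Tmax];L^2(\Om))\bigr)$ with $\beta=\nu-\tfrac1{2\p}$ uniformly in $h$; choosing $\nu<\tfrac12$ and $\p$ large enough that $\beta>\tfrac14$, and using $L^2(\Om)\emb (H^1_\per(\Om))'$ together with $C^{\beta}\emb C^{1/4}$, gives the desired uniform $C^{1/4}$-bound in the dual space for this part, with any integrability exponent $\sigma<2\p$ that we like (in particular $\sigma<8/5$). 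So the real work is the deterministic part.

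For the deterministic part I would test \eqref{eq:semidiscreteSTFE:1} against $\psi\h=\proj{\phi}$, the $L^2$-projection (or, more conveniently here, against $\Ihxy{\phi}$ combined with the norm equivalence \eqref{eq:normequivalence}) for $\phi\in H^1_\per(\Om)$ with $\norm{\phi}_{H^1}\le 1$, so that
\begin{align*}
\skla{u\h(t_2)-u\h(t_1)-\bigl(I\h(t_2)-I\h(t_1)\bigr),\,\phi}
= -\int_{t_1\wedge T\h}^{t_2\wedge T\h}\!\!\iO \Ihy{\meanGinvX{u\h}\parx p\h\,\parx\Ihxy{\phi}}+\Ihx{\meanGinvY{u\h}\pary p\h\,\pary\Ihxy{\phi}}\dx\dy\ds.
\end{align*}
By Cauchy--Schwarz in $(x,y,s)$ and the $H^1$-stability of $\Ihxy{\cdot}$ (Lemma~\ref{lem:Ih:error} in the appendix), the right-hand side is bounded by
$|t_2-t_1|^{1/2}\bigl(\int_0^{T\h}\iO \Ihy{\meanGinvX{u\h}|\parx p\h|^2}+\Ihx{\meanGinvY{u\h}|\pary p\h|^2}\dx\dy\ds\bigr)^{1/2}\norm{\phi}_{H^1}$, using that $\meanGinvX{u\h},\meanGinvY{u\h}\ge0$ so the quadratic form splits cleanly. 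Taking the supremum over $\phi$ and over $t_1,t_2$, then the $\sigma$-th moment, the H\"older exponent $1/2>1/4$ is more than enough, and Proposition~\ref{prop:energyentropyestimate} (with $\p=1$, say, which already bounds $\expected{\int_0^{T\h}\iO \Ihy{\meanGinvX{u\h}|\parx p\h|^2}+\Ihx{\meanGinvY{u\h}|\pary p\h|^2}\dx\dy\ds}$) controls the expectation of this flux energy uniformly in $h$. Hence the deterministic contribution is uniformly bounded in $L^2\bigl(\Omega;C^{1/2}([0,\Tmax];(H^1_\per)')\bigr)$, which embeds into the required space.

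The only point requiring a little care — and the main obstacle — is reconciling the two different integrability exponents: the stochastic part naturally lives in $L^{2\p}$ (with $\p$ large) but only with H\"older exponent $\beta=\nu-\tfrac1{2\p}$, while the deterministic part lives in $L^2$ with the better exponent $\tfrac12$. One must choose $\p$ large enough that $\nu-\tfrac1{2\p}\ge\tfrac14$ for some $\nu<\tfrac12$ (e.g. $\nu=\tfrac{3}{8}$, $\p\ge 4$) and simultaneously invoke Proposition~\ref{prop:energyentropyestimate} at that same (large) $\p$ so that \emph{both} the flux-energy term and $\expected{R(0)^\p}\le C$ (via Assumption~\ref{item:initial}) are available; then the final exponent is governed by the weaker of the two contributions, namely the $L^2$-bound from the deterministic part, and indeed $\sigma<8/5<2$ suffices. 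Combining the two estimates by the triangle inequality in $C^{1/4}([0,\Tmax];(H^1_\per)')$ and taking $\sigma$-th moments with $\sigma<8/5$ completes the proof. \qed
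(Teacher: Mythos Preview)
Your overall decomposition into a deterministic flux part and the stochastic increment $I\h(t_2)-I\h(t_1)$ matches the paper's strategy, and your handling of the stochastic part via Lemma~\ref{lem:stochint} is correct. The gap is in the deterministic part.

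You assert that Cauchy--Schwarz bounds the flux contribution by
\[
|t_2-t_1|^{1/2}\Bigl(\int_0^{T\h}\!\iO \Ihy{\meanGinvX{u\h}\tabs{\parx p\h}^2}+\Ihx{\meanGinvY{u\h}\tabs{\pary p\h}^2}\dx\dy\ds\Bigr)^{1/2}\norm{\phi}_{H^1}.
\]
But the weighted Cauchy--Schwarz you invoke actually leaves, as the second factor,
\[
\Bigl(\int_{t_1\wedge T\h}^{t_2\wedge T\h}\!\iO \Ihy{\meanGinvX{u\h}\,\tabs{\parx\psi\h}^2}\dx\dy\ds\Bigr)^{1/2},
\]
and since $\meanGinvX{u\h}\sim u\h^2$ (the mobility is quadratic), extracting $|t_2-t_1|^{1/2}\norm{\phi}_{H^1}$ from this would require a uniform-in-$h$ bound on $\norm{u\h(\cdot,s)}_{L^\infty(\Om)}$. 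In two space dimensions the available $L^\infty(0,\Tmax;H^1)$ energy bound does \emph{not} give this; $H^1(\Om)\not\hookrightarrow L^\infty(\Om)$ is precisely the obstruction flagged in the introduction.

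The paper closes this gap by estimating the second factor as
\[
\Bigl(\int_{t_1\wedge T\h}^{t_2\wedge T\h}\!\iO \Ihy{\meanGinvX{u\h}\tabs{\parx\proj{\psi}}^2}\dx\dy\ds\Bigr)^{1/2}
\le C|t_2-t_1|^{3/8}\norm{u\h}_{L^8(0,\Tmax;L^\infty(\Om))}\norm{\psi}_{H^1},
\]
and then controls $\norm{u\h}_{L^8(0,\Tmax;L^\infty)}$ via the discrete Gagliardo--Nirenberg inequality (Corollary~\ref{cor:embedding}) by $\norm{\Delta\h u\h}_{L^2(L^2)}^{1/4}\norm{u\h}_{L^\infty(H^1)}^{3/4}+\norm{u\h}_{L^\infty(H^1)}$. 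After taking $\sigma$-th moments and using Young's inequality to separate this from the flux energy (which is only in $L^1(\Omega)$), the condition $\sigma/(4-2\sigma)<2$---needed so that the resulting power of $\norm{\Delta\h u\h}_{L^2(L^2)}$ stays within the $L^2(\Omega)$ bound from Proposition~\ref{prop:energyentropyestimate}---is exactly what forces $\sigma<8/5$. In your argument you only ever use $\sigma<2$, and the stated threshold $8/5$ plays no role; that is a symptom of the missing $L^\infty$-interpolation step.
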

\begin{proof}
Denoting the standard $L^2$-projection onto $\Uhxy$ by $\projop$, we obtain
\begin{multline}
\norm{u\h\trkla{t_2}-u\h\trkla{t_1}}_{\trkla{H^1_\per\trkla{\Om}}^\prime}=\sup_{0\neq\psi\in H^1_\per\trkla{\Om}}\rkla{\norm{\psi}_{H^1\trkla{\Om}}^{-1}\abs{\iO \rkla{u\h\trkla{t_2}-u\h\trkla{t_1}}\psi\dx\dy}}=\\
\leq\, \sup_{0\neq\psi\in H^1_\per\trkla{\Om}}\rkla{\norm{\psi}_{H^1\trkla{\Om}}^{-1} \abs{\iO\Ihxy{\rkla{u\h\trkla{t_2}-u\h\trkla{t_1}}\proj{\psi}}\dx\dy}}\\
+\, \sup_{0\neq\psi\in H^1_\per\trkla{\Om}}\rkla{\norm{\psi}_{H^1\trkla{\Om}}^{-1} \abs{\iO\trkla{\ids-\Ihxyop}\tgkla{\rkla{u\h\trkla{t_2}-u\h\trkla{t_1}}\proj{\psi}}\dx\dy}}\\
=:\, \sup_{0\neq\psi\in H^1_\per\trkla{\Om}}\rkla{\norm{\psi}_{H^1\trkla{\Om}}^{-1}\tabs{I}}+\sup_{0\neq\psi\in H^1_\per\trkla{\Om}}\rkla{\norm{\psi}_{H^1\trkla{\Om}}^{-1}\tabs{II}}\,.
\end{multline}
To derive an estimate for $\tabs{I}$, we start with the identity
\begin{multline}\label{eq:tmp:hoelder:weakform}
\iO\Ihxy{\trkla{u\h\trkla{t_2}-u\h\trkla{t_1}}\phi\h}\dx\dy+\int_{t_1\wedge T\h}^{t_2\wedge T\h}\iO\Ihy{\meanGinvX{u\h}\parx p\h\parx\phi\h}\dx\dy\ds\\
+\int_{t_1\wedge T\h}^{t_2\wedge T\h}\iO\Ihx{\meanGinvY{u\h}\pary p\h\pary\phi\h}\dx\dy\ds=\iO\Ihxy{\trkla{I\h\trkla{t_2}-I\h\trkla{t_1}}\phi\h}\dx\dy
\end{multline}
resulting from \eqref{eq:semidiscreteSTFE:1} for $0\leq t_1< t_2$.
Using \eqref{eq:tmp:hoelder:weakform} with $\phi\h=\proj{\psi}$ and Hölder's inequality, we deduce the estimate
\begin{align}\label{eq:hoelder:I}
\begin{split}
\abs{I}\leq&\,\rkla{\int_{t_1\wedge T\h}^{t_2\wedge T\h}\iO\Ihy{\meanGinvX{u\h}\tabs{\parx p\h}^2}\dx\dy\ds}^{1/2}\\
&\qquad\times\rkla{\int_{t_1\wedge T\h}^{t_2\wedge T\h}\iO \Ihy{\meanGinvX{u\h}\tabs{\parx \proj{\psi}}^2}\dx\dy\ds}^{1/2}\\
&+\rkla{\int_{t_1\wedge T\h}^{t_2\wedge T\h}\iO\Ihx{\meanGinvY{u\h}\tabs{\pary p\h}^2}\dx\dy\ds}^{1/2}\\
&\qquad\times\rkla{\int_{t_1\wedge T\h}^{t_2\wedge T\h}\iO \Ihx{\meanGinvY{u\h}\tabs{\pary \proj{\psi}}^2}\dx\dy\ds}^{1/2}\\
&+C\norm{I\h\trkla{t_2}-I\h\trkla{t_1}}_{L^2\trkla{\Om}}\norm{\proj{\psi}}_{L^2\trkla{\Om}}\,.
\end{split}
\end{align}
In order to derive bounds for the first term on the right-hand side, we use the estimate
\begin{align}
\begin{split}
\left(\int_{t_1\wedge T\h}^{t_2\wedge T\h}\right.&\left.\iO\Ihy{\meanGinvX{u\h}\tabs{\parx\proj{\psi}}^2}\dx\dy\ds\right)^{1/2}\\
&\leq C\rkla{\int_{t_1 \wedge T\h}^{t_2\wedge T\h}\norm{u\h}_{L^\infty\trkla{\Om}}^2\norm{\parx \proj{\psi}}_{L^2\trkla{\Om}}^2\ds}^{1/2}\\
&\leq C\rkla{\trkla{t_2-t_1}^{3/4}\norm{u\h}_{L^8\trkla{0,\Tmax;L^\infty\trkla{\Om}}}^2 }^{1/2}\norm{\psi}_{H^1\trkla{\Om}}\\
&\leq C\trkla{t_2-t_1}^{3/8}\norm{\psi}_{H^1\trkla{\Om}}\norm{\Delta\h u\h}_{L^2\trkla{0,\Tmax;L^2\trkla{\Om}}}^{1/4}\norm{u\h}_{L^\infty\trkla{0,\Tmax;H^1\trkla{\Om}}}^{3/4}\\
&\qquad+C\trkla{t_2-t_1}^{3/8}\norm{\psi}_{H^1\trkla{\Om}} \norm{u\h}_{L^\infty\trkla{0,\Tmax;H^1\trkla{\Om}}}\,.
\end{split}
\end{align}
In the last step, we used the inequality
\begin{align*}
\norm{u\h}_{L^8\trkla{0,\Tmax;L^\infty\trkla{\Om}}}\!\leq\! C\norm{\Delta\h u\h}_{L^2\trkla{0,\Tmax;L^2\trkla{\Om}}}^{1/4}\norm{u\h}_{L^\infty\trkla{0,\Tmax;H^1\trkla{\Om}}}^{3/4} \!+C\norm{u\h}_{L^\infty\trkla{0,\Tmax;H^1{\trkla{\Om}}}},
\end{align*}
which follows from the discrete Gagliardo--Nirenberg inequality proven in Corollary \ref{cor:embedding}.
Using similar computations for the second term on the right-hand side of \eqref{eq:hoelder:I} and estimating the remaining term with help of Lemma \ref{lem:stochint}, we obtain
\begin{align}
\begin{split}
&\expected{\sup_{0\neq\psi\in H^1_\per\trkla{\Om}}\rkla{\norm{\psi}_{H^1\trkla{\Om}}^{-1}\tabs{I}}^\sigma}\\
&\,\leq C\trkla{t_2-t_1}^{3\sigma/8}\mathds{E}\left[ \rkla{\int_{t_1\wedge T\h}^{t_2\wedge T\h}\iO\Ihy{\meanGinvX{u\h}\tabs{\parx p\h}^2}\dx\dy\ds}^{\sigma/2}\right.\\
&\qquad\left.\times\rkla{\norm{\Delta\h u\h}_{L^2\trkla{0,\Tmax;L^2\trkla{\Om}}}^{\sigma/4}\norm{u\h}_{L^\infty\trkla{0,\Tmax;H^1\trkla{\Om}}}^{3\sigma/4}+\norm{u\h}_{L^\infty\trkla{0,\Tmax;H^1\trkla{\Om}}}^\sigma}\right]\\
&\quad+ C\trkla{t_2-t_1}^{3\sigma/8}\mathds{E}\left[ \rkla{\int_{t_1\wedge T\h}^{t_2\wedge T\h}\iO\Ihx{\meanGinvY{u\h}\tabs{\pary p\h}^2}\dx\dy\ds}^{\sigma/2}\right.\\
&\qquad\left.\times\rkla{\norm{\Delta\h u\h}_{L^2\trkla{0,\Tmax;L^2\trkla{\Om}}}^{\sigma/4}\norm{u\h}_{L^\infty\trkla{0,\Tmax;H^1\trkla{\Om}}}^{3\sigma/4}+\norm{u\h}_{L^\infty\trkla{0,\Tmax;H^1\trkla{\Om}}}^\sigma}\right]\\
&\quad+C\trkla{t_2-t_1}^{\sigma\beta}\,.
\end{split}
\end{align}
Applying Young's inequality, we end up with
\begin{align}
\begin{split}
&\expected{\sup_{0\neq\psi\in H^1_\per\trkla{\Om}}\rkla{\norm{\psi}_{H^1\trkla{\Om}}^{-1}\tabs{I}}^\sigma}\\
&\,\leq C\trkla{t_2-t_1}^{3\sigma/8}\expected{\int_{t_1\wedge T\h}^{t_2\wedge T\h}\iO\Ihy{\meanGinvX{u\h}\tabs{\parx p\h}^2}\dx\dy\ds}\\
&\quad+C\trkla{t_2-t_1}^{3\sigma/8}\expected{\int_{t_1\wedge T\h}^{t_2\wedge T\h}\iO\Ihx{\meanGinvY{u\h}\tabs{\pary p\h}^2}\dx\dy\ds}\\
&\quad+C\trkla{t_2-t_1}^{3\sigma/8}\expected{\norm{\Delta\h u\h}_{L^2\trkla{0,\Tmax;L^2\trkla{\Om}}}^{\sigma/\trkla{4-2\sigma}}\norm{u\h}_{L^\infty\trkla{0,\Tmax;H^1\trkla{\Om}}}^{3\sigma/\trkla{4-2\sigma}}+\norm{u\h}_{L^\infty\trkla{0,\Tmax;H^1\trkla{\Om}}}^{2/\trkla{2-\sigma}}}\\
&\quad+C\trkla{t_2-t_1}^{\sigma\beta}\,.
\end{split}
\end{align}
As we assumed $\sigma<8/5$, we have $\sigma/\trkla{4-2\sigma}<2$.
Therefore, we may apply Young's inequality once again to obtain
\begin{multline}
\norm{\Delta\h u\h}_{L^2\trkla{0,\Tmax;L^2\trkla{\Om}}}^{\sigma/\trkla{4-2\sigma}}\norm{u\h}_{L^\infty\trkla{0,\Tmax;H^1\trkla{\Om}}}^{3\sigma/\trkla{4-2\sigma}}\\
\leq C\norm{\Delta\h u\h}_{L^2\trkla{0,\Tmax;L^2\trkla{\Om}}}^2 +C \norm{u\h}_{L^\infty\trkla{0,\Tmax;H^1\trkla{\Om}}}^{6\sigma/\trkla{8-5\sigma}}\,.
\end{multline}
In view of Proposition \ref{prop:energyentropyestimate}, we conclude
\begin{align}
\expected{\sup_{0\neq\psi\in H^1_\per\trkla{\Om}}\rkla{\norm{\psi}_{H^1\trkla{\Om}}^{-1}\tabs{I}}^\sigma}\leq C\trkla{t_2-t_1}^{3\sigma/8}+C\trkla{t_2-t_1}^{\sigma\beta}
\end{align}
for $\p$ large enough.

%Recalling the discrete Gagliardo--Nirenberg inequality from Corollary \ref{cor:embedding}\todo{mehr Zwischenschritte}
%\begin{align}
%\norm{u\h}_{L^8\trkla{0,T;L^\infty\trkla{\Om}}}\leq C\norm{\Delta\h u\h}_{L^2\trkla{0,T;L^2\trkla{\Om}}}^{1/4}\norm{u\h}_{L^\infty\trkla{0,T;H^1\trkla{\Om}}}^{3/4} +C\norm{u\h}_{L^\infty\trkla{0,T;H^1{\trkla{\Om}}}}\,,
%\end{align}
%we compute\todo{Exponenten von $(t_2-t_1)$ prüfen}
%\begin{align}
%\begin{split}
%\expected{\sup_{0\neq\psi\in H^1_\per\trkla{\Om}}\rkla{\norm{\psi}_{H^1\trkla{\Om}}^{-1}\tabs{I}}}\leq&\, C\trkla{t_2-t_1}^{1/2}\expected{\int_{t_1\wedge T\h}^{t_2\wedge T\h}\iO\Ihy{\meanGinvX{u\h}\tabs{\parx p\h}^2}\dx\dy\ds }\\
%&+C\trkla{t_2-t_1}^{1/2}\expected{\int_{t_1\wedge T\h}^{t_2\wedge T\h}\iO\Ihx{\meanGinvY{u\h}\tabs{\pary p\h}^2}\dx\dy\ds }\\
%&+C\trkla{t_2-t_1}\expected{\int_{t_1\wedge T\h}^{t_2\wedge T\h} \rkla{R\trkla{s}+R\trkla{s}^3\norm{\Delta\h u\h}_{L^2\trkla{\Om}}^2\ds}}\\
%&+C\trkla{t_2-t_1}^{\beta}\,.
%\end{split}
%\end{align}
Using \eqref{eq:Ihxy:lp}, an inverse estimate, and the stability properties of $\projop$, we obtain
\begin{align}
\tabs{II}\leq C h\norm{u\h\trkla{t_2}-u\h\trkla{t_1}}_{L^2\trkla{\Om}}\norm{\psi}_{H^1\trkla{\Om}}\,.
\end{align}
To obtain an estimate for $\norm{u\h\trkla{t_2}-u\h\trkla{t_1}}_{L^2\trkla{\Om}}$, we again start with the identity \eqref{eq:tmp:hoelder:weakform} and choose $\phi\h=\trkla{u\h\trkla{t_2}-u\h\trkla{t_1}}$.
This provides the estimate
\begin{align}
\begin{split}
\norm{u\h\trkla{t_2}-u\h\trkla{t_1}}\h^2\leq& \abs{\int_{t_1\wedge T\h}^{t_2\wedge T\h}\iO\Ihy{\meanGinvX{u\h}\parx p\h\parx\trkla{u\h\trkla{t_2}-u\h\trkla{t_1}}}\dx\dy\ds}\\
&+\abs{\int_{t_1\wedge T\h}^{t_2\wedge T\h}\iO\Ihx{\meanGinvY{u\h}\pary p\h\pary\trkla{u\h\trkla{t_2}-u\h\trkla{t_1}}}\dx\dy\ds}\\
&+\norm{I\h\trkla{t_2}-I\h\trkla{t_1}}_{L^2\trkla{\Om}}\norm{u\h\trkla{t_2}-u\h\trkla{t_1}}_{L^2\trkla{\Om}}\\
=:&\, N_1+N_2+N_3\,.
\end{split}
\end{align}
Using Hölder's inequality and Young's inequality, we compute
\begin{align}
\begin{split}
N_1\leq& \int_{t_1\wedge T\h}^{t_2\wedge T\h} \rkla{\iO\Ihy{\abs{\meanGinvX{u\h}\parx p\h}^{4/3}}\dx\dy}^{3/4}\\
&\qquad\times\rkla{\iO\Ihy{\tabs{\parx\trkla{u\trkla{t_2}-u\h\trkla{t_1}}}^4}\dx\dy}^{1/4}\ds\\
\leq&\,C\trkla{t_2-t_1}^{1/2}\int_{t_1\wedge T\h}^{t_2\wedge T\h}\rkla{\iO\Ihy{\abs{\meanGinvX{u\h}\parx p\h}^{4/3}}\dx\dy}^{3/2}\ds\\
&+C\trkla{t_2-t_1}^{-1/2}\int_{t_1\wedge T\h}^{t_2\wedge T\h}\rkla{\iO\Ihy{\tabs{\parx\trkla{u\h\trkla{t_2}-u\h\trkla{t_1}}}^4}\dx\dy}^{1/2}\ds\\
=:&\,\trkla{t_2-t_1}^{1/2}N_{1_a}+ \trkla{t_2-t_1}^{-1/2}N_{1_b}\,.
\end{split}
\end{align}
For the first term, we obtain from Hölder's inequality
\begin{align}
\begin{split}
N_{1_a}\leq&\,C \int_{t_1\wedge T\h}^{t_2\wedge T\h}\rkla{\iO\Ihy{\meanGinvX{u\h}\tabs{\parx p\h}^2}\dx\dy}\rkla{\iO\Ihy{\abs{\meanGinvX{u\h}}^2}\dx\dy}^{1/2}\ds\\
%\leq&\sup_{s\in\trkla{t\wedge T\h,\trkla{t+\tau}\wedge T\h}}\rkla{\iO\Ihy{\abs{\meanGinvX{u\h}}^2}\dx\dy}^{1/2}\\
%&\qquad\times\int_{t\wedge T\h}^{\trkla{t+\tau}\wedge T\h} \iO \Ihy{\meanGinvX{u\h}\tabs{\parx p\h}^2}\dx\dy\ds\\
\leq &\,C \int_{t_1\wedge T\h}^{t_2\wedge T\h} R\trkla{s}\iO\Ihy{\meanGinvX{u\h}\tabs{\parx p\h}^2}\dx\dy\ds\,,
\end{split}
\end{align}
where we used
\begin{multline}
\rkla{\iO\Ihy{\abs{\meanGinvX{u\h}}^2}\dx\dy}^{1/2}%\leq\rkla{\iO\Ihxy{u\h^4}\dx\dy}^{1/2}\\
\leq C\rkla{\iO \tabs{u\h}^4\dx\dy}^{1/2}\leq C\norm{u\h}_{H^1\trkla{\Om}}^2\leq CR\trkla{s}\,.
\end{multline}
Concerning $N_{1_b}$, we obtain from a discrete version of the Gagliardo--Nirenberg inequality (cf. Corollary \ref{cor:embedding})
\begin{align}
\begin{split}
N_{1_b}\leq&\,C\trkla{t_2-t_1}\norm{u\h\trkla{t_2}-u\h\trkla{t_1}}_{W^{1,4}\trkla{\Om}}^2\\
\leq&\,C\trkla{t_2-t_1} \norm{\Delta\h u\h\trkla{t_2}-\Delta\h u\h\trkla{t_1}}\h\norm{u\h\trkla{t_2}-u\h\trkla{t_1}}_{H^1\trkla{\Om}} \\
&+C\trkla{t_2-t_1}\norm{u\h\trkla{t_2}-u\h\trkla{t_1}}_{H^1\trkla{\Om}}^2\,.
\end{split}
\end{align}
Estimates for $N_2$ can be derived in a similar manner.
To derive bounds for $N_3$, we use the results from Lemma \ref{lem:stochint} and obtain
\begin{align}
\begin{split}\label{eq:nikolskii:tmp:1}
\expected{N_3}\leq&\, \delta\expected{\norm{u\h\trkla{t_2}-u\h\trkla{t_1}}\h^2}+C_\delta\expected{\norm{I\h\trkla{t_2}-I\h\trkla{t_1}}_{L^2\trkla{\Om}}^2}\\
\leq&\delta\expected{\norm{u\h\trkla{t_2}-u\h\trkla{t_1}}\h^2} + C_\delta \trkla{t_2-t_1}^{2\beta}\,.
\end{split}
\end{align}
as the first term on the right-hand side of \eqref{eq:nikolskii:tmp:1} can be absorbed for $\delta$ small enough, we obtain
\begin{align}
\begin{split}
&\!\!\!\!\expected{h^\sigma \norm{u\h\trkla{t_2}-u\h\trkla{t_1}}_{L^2\trkla{\Om}}^\sigma}\\
&\leq C\trkla{t_2-t_1}^{\sigma/4} h^\sigma\expected{\int_{t_1\wedge T\h}^{t_2\wedge T\h} R\trkla{s}\iO\Ihy{\meanGinvX{u\h}\tabs{\parx p\h}^2}\dx\dy\ds}\\
&\quad+C\trkla{t_2-t_1}^{\sigma/4} h^\sigma\expected{\int_{t_1\wedge T\h}^{t_2\wedge T\h} R\trkla{s}\iO\Ihx{\meanGinvY{u\h}\tabs{\pary p\h}^2}\dx\dy\ds}\\
&\quad + C \trkla{t_2-t_1}^{\sigma/4}h^\sigma\expected{\norm{\Delta\h u\h\trkla{t_2}-\Delta\h u\h\trkla{t_1}}\h^{\sigma/2} \norm{u\h\trkla{t_2}-u\h\trkla{t_1}}_{H^1\trkla{\Om}}^{\sigma/2}} \\
&\quad+C\trkla{t_2-t_1}^{\sigma/4}h^\sigma\expected{\norm{u\h\trkla{t_2}-u\h\trkla{t_1}}^\sigma_{H^1\trkla{\Om}}} +Ch^\sigma \trkla{t_2-t_1}^{\sigma/4}+Ch^\sigma\trkla{t_2-t_1}^{\sigma\beta}\\
&\leq  C h^\sigma\trkla{t_2-t_1}^{\sigma/4}+ C\trkla{t_2-t_1}^{\sigma/4}\expected{\sup_{s\in\tekla{0,\Tmax}}\norm{u\h\trkla{s}}^\sigma_{H^1\trkla{\Om}}} +Ch^\sigma\trkla{t_2-t_1}^{\sigma\beta}\,,
\end{split}
\end{align}
where we used \eqref{eq:dqLap}.
Choosing $\beta\geq1/4$ (cf.~Lemma \ref{lem:stochint}) completes the proof.
\end{proof}
%\begin{corollary}\todo{Wird das gebraucht oder kann das weg?}
%Let the assumptions of Lemma \ref{lem:hoelderInTime} hold true. Then $u\h$ is uniformly bounded in $L^{\sigma}\trkla{\Omega;W^{\tilde{\beta},q}\trkla{0,\Tmax;\trkla{H^1_\per\trkla{\Om}}^\prime}}$ for $\tilde{\beta}\in\trkla{0,1/4}$ and $q\in\tekla{1,\infty}$.
%\end{corollary}
%\begin{proof}
%The result follows directly from Lemma \ref{lem:hoelderInTime} and the embeddings
%\begin{align}
%C^{s}\trkla{\overline{I};E}\emb\mathcal{N}^{s,q}\trkla{I;E}\emb W^{r,p}\trkla{I;E}
%\end{align}
%for all $r<s$ and $q\in\tekla{1,\infty}$ (cf. \cite{Kufner77}).
%\end{proof}

\section{Passage to the limit}\label{sec:limit}
\subsection{Compactness}
As $u\h$ is only strictly positive for $h>0$, we lack $h$-independent bounds on the pressure $p\h$.
Therefore, we consider the fluxes
\begin{align}\label{eq:def:jh}
J\h^x:=\Ihy{\sqrt{\meanGinvX{u\h}}\parx p\h}\,,&&J\h^y:=\Ihx{\sqrt{\meanGinvY{u\h}}\pary p\h}\,,
\end{align}
which are uniformly bounded in $L^2\trkla{\Omega;L^2\trkla{0,\Tmax;L^2\trkla{\Om}}}$.
Note that solutions $\trkla{u\h,p\h}$ to \eqref{eq:semidiscreteSTFE} may be equivalently characterized by $\trkla{u\h,\Delta\h u\h, J\h^x, J\h^y}$.
In the following, we consider these objects in the spaces
\begin{subequations}
\begin{align}
\Xu&:=C\trkla{\tekla{0,\Tmax};L^{q}\trkla{\Om}}\,,\qquad q<\infty\,,\\
\Xddu&:=\trkla{L^2\trkla{0,\Tmax;L^2\trkla{\Om}}}_{weak}\,,\\
\XJx&:=\trkla{L^2\trkla{0,\Tmax;L^2\trkla{\Om}}}_{weak}\,,\\
\XJy&:=\trkla{L^2\trkla{0,\Tmax;L^2\trkla{\Om}}}_{weak}\,.
\end{align}
\end{subequations}

\begin{lemma}\label{lem:tightness}
Let $\Tmax>0$ be arbitrary but fixed. Let $\trkla{u\h,\,\Delta\h u\h,\,J\h^x,\,J\h^y}\h$ be a sequence of discrete solutions to \eqref{eq:semidiscreteSTFE}.
Then the families of laws $\trkla{\mu_{u\h}}\h$, $\trkla{\mu_{\Delta\h u\h}}\h$, $\trkla{\mu_{J\h^x}}\h$, and $\trkla{\mu_{J\h^y}}\h$ are tight.
\end{lemma}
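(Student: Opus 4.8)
The plan is to verify tightness of each family of laws separately by exhibiting, for every $\delta>0$, a compact set in the respective path space that carries all but $\delta$ of the mass, uniformly in $h$. For the weak-topology spaces $\Xddu$, $\XJx$, $\XJy$, this is the easiest part: bounded sets of $L^2\trkla{0,\Tmax;L^2\trkla{\Om}}$ are relatively compact in the weak topology, so it suffices to note that Proposition~\ref{prop:energyentropyestimate} (with $\p=1$, say) together with the definitions \eqref{eq:def:jh} and the fact that $\meanGinvX{u\h}$, $\meanGinvY{u\h}$ reproduce $u\h^2$ up to norm-equivalence constants yields an $h$-independent bound
\begin{align*}
\expected{\norm{\Delta\h u\h}_{L^2\trkla{0,\Tmax;L^2\trkla{\Om}}}^2}+\expected{\norm{J\h^x}_{L^2\trkla{0,\Tmax;L^2\trkla{\Om}}}^2}+\expected{\norm{J\h^y}_{L^2\trkla{0,\Tmax;L^2\trkla{\Om}}}^2}\leq C\,.
\end{align*}
Then Chebyshev's inequality applied to the $L^2$-norm gives balls $B_R$ in $L^2\trkla{0,\Tmax;L^2\trkla{\Om}}$ with $\mu\trkla{B_R}\geq 1-\delta$ for $R$ large; these balls are weakly compact (reflexivity plus separability make the weak topology on bounded sets metrizable), which is exactly tightness in $\Xddu$, $\XJx$, $\XJy$.

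For $\Xu=C\trkla{\tekla{0,\Tmax};L^q\trkla{\Om}}$ the argument is the standard Aubin--Lions--Simon route adapted to the stochastic setting. I would take the compact embedding
\begin{align*}
L^\infty\trkla{0,\Tmax;H^1_\per\trkla{\Om}}\cap C^{1/4}\trkla{\tekla{0,\Tmax};\trkla{H^1_\per\trkla{\Om}}^\prime}\compemb C\trkla{\tekla{0,\Tmax};L^q\trkla{\Om}}
\end{align*}
for any $q<\infty$, valid in two space dimensions because $H^1\comp L^q$ compactly for all finite $q$ and one interpolates the uniform-in-time $H^1$ bound against the Hölder-in-time bound into $\trkla{H^1_\per\trkla{\Om}}^\prime$ (a Simon-type compactness lemma). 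The two ingredients feeding this embedding are, respectively, the $L^q\trkla{\Omega;L^\infty\trkla{0,\Tmax;H^1_\per\trkla{\Om}}}$ bound contained in Proposition~\ref{prop:energyentropyestimate} (the $\sup_t R\trkla{t}^\p$ term controls $\norm{u\h}_{H^1}$ uniformly, since $R$ dominates $\mathcal{E}\h$ which dominates the $H^1$-seminorm, and mass conservation from Remark~\ref{rem:initialdata} together with Assumption~\ref{item:initial} controls the mean) and the $L^\sigma\trkla{\Omega;C^{1/4}\trkla{\tekla{0,\Tmax};\trkla{H^1_\per\trkla{\Om}}^\prime}}$ bound established in Lemma~\ref{lem:hoelderInTime}. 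Given these two moment bounds, for each $\delta>0$ I would choose $R_1,R_2$ so that the set
\begin{align*}
K_{R_1,R_2}:=\gkla{v\,:\,\norm{v}_{L^\infty\trkla{0,\Tmax;H^1_\per\trkla{\Om}}}\leq R_1,\ \norm{v}_{C^{1/4}\trkla{\tekla{0,\Tmax};\trkla{H^1_\per\trkla{\Om}}^\prime}}\leq R_2}
\end{align*}
has $\prob{u\h\in K_{R_1,R_2}}\geq 1-\delta$ uniformly in $h$ (Chebyshev again, using the two moment bounds with $\sigma<8/5$ and $\p$ large), and $K_{R_1,R_2}$ is precompact in $\Xu$ by the above embedding. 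This is tightness of $\trkla{\mu_{u\h}}\h$.

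The main obstacle is really the $\Xu$ part, and within it the genuinely two-dimensional point: one must make sure that the compactness lemma one invokes does not require more spatial regularity than the uniform $H^1$-in-time-$L^\infty$ bound provides, and that $H^1\trkla{\Om}\compemb L^q\trkla{\Om}$ is legitimately used for all finite $q$ (which is true on a bounded two-dimensional domain, even though $H^1\not\emb L^\infty$). One also has to be slightly careful that the relevant path spaces are genuinely separable metric (or at least that Jakubowski's framework applies) so that ``relatively compact with probability $\geq 1-\delta$'' is the correct notion of tightness here; for $C\trkla{\tekla{0,\Tmax};L^q\trkla{\Om}}$ this is classical, and for the weak-$L^2$ spaces it is exactly the setting in which Jakubowski's theorem is designed to operate, as recalled in Section~\ref{sec:introduction}. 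No additional estimates beyond Proposition~\ref{prop:energyentropyestimate} and Lemma~\ref{lem:hoelderInTime} are needed; the proof is a bookkeeping exercise combining those two with Chebyshev and the compact embedding.
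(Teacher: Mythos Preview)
Your proposal is correct and follows essentially the same approach as the paper's own proof: the paper also invokes Proposition~\ref{prop:energyentropyestimate} and Lemma~\ref{lem:hoelderInTime} together with Simon's compactness theorem for the $\Xu$-part, and uses weak compactness of closed balls in $L^2\trkla{0,\Tmax;L^2\trkla{\Om}}$ plus Markov's inequality for $\Delta\h u\h$, $J\h^x$, $J\h^y$. Your additional remarks on the two-dimensional embedding $H^1\compemb L^q$ and on the applicability of Jakubowski's framework are accurate elaborations but not extra ingredients.
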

\begin{proof}
From Proposition \ref{prop:energyentropyestimate} and Lemma \ref{lem:hoelderInTime}, we obtain that $\trkla{u\h}\h$ is uniformly bounded in $L^2\trkla{\Omega;L^\infty\trkla{0,\Tmax;H^1\trkla{\Om}}}\cap L^{\sigma}\trkla{\Omega;C^{1/4}\trkla{\tekla{0,\Tmax};\trkla{H^1_\per\trkla{\Om}}^\prime}}$ for $\sigma<8/5$.
Due to the well-known compactness theorem by Simon (cf. \cite{Simon1987}), the ball $\overline{B}_R$ in $L^\infty\trkla{0,\Tmax;H^1_\per\trkla{\Om}}\cap C^{1/4}\trkla{\tekla{0,\Tmax};\trkla{H^1_\per\trkla{\Om}}^\prime}$ is a compact subset of $C\trkla{\tekla{0,\Tmax};L^{q}\trkla{\Om}}$.
Furthermore, we have for any $R>0$
\begin{align}
\begin{split}
\mu_{u\h}\rkla{\Xu\textbackslash\overline{B}_R}
&\,=\prob{\norm{u\h}^{\sigma}_{L^\infty\trkla{0,\Tmax;H^1\trkla{\Om}}}+\norm{u\h}_{C^{1/4}\trkla{\tekla{0,\Tmax};\trkla{H^1_\per\trkla{\Om}}^\prime}}^{\sigma}>R^{\sigma}}\\
&\,\leq R^{-\sigma}\expected{\norm{u\h}^2_{L^\infty\trkla{0,\Tmax;H^1\trkla{\Om}}}+C+\norm{u\h}_{C^{1/4}\trkla{\tekla{0,\Tmax};\trkla{H^1_\per\trkla{\Om}}^\prime}}^{\sigma}}\,,
\end{split}
\end{align}
which shows the tightness of $\trkla{\mu_{u\h}}\h$.
As closed balls in $L^2\trkla{0,\Tmax;L^2\trkla{\Om}}$ are compact in the weak topology, the tightness of $\trkla{\mu_{\Delta\h u\h}}\h$, $\trkla{\mu_{J\h^x}}\h$ and $\trkla{\mu_{J\h^y}}\h$ is a direct consequence of Markov's inequality and the bound obtained in Proposition \ref{prop:energyentropyestimate}.
\end{proof}
Following the lines of \cite{FischerGrun2018}, we introduce the Polish space
\begin{align}
\XW:=\trkla{C\trkla{\tekla{0,\Tmax};L^2\trkla{\Om}}}^2
\end{align}
as an additional path space.
Let $\mu_{\bs{W}}:=\trkla{\mu_{W^x},\mu_{W^y}}^T$ be the law of 
\begin{align}
\bs{W}=\trkla{\sum_{k,l\in\mathds{Z}}\lx{kl}\g{kl}\bx{kl},\sum_{k,l\in\mathds{Z}}\ly{kl}\g{kl}\by{kl}}^T\,.
\end{align}
As $\XW$ is a Polish space, $\mu_{\bs{W}}$ is a Radon measure and therefore regular from the interior, i.e.
\begin{align}
\mu_{\bs{W}}\rkla{\trkla{C\trkla{\tekla{0,\Tmax};L^2\trkla{\Om}}}^2}=\sup\gkla{\mu_{\bs{W}}\trkla{K}\,:\,K\subset \trkla{C\trkla{\tekla{0,\Tmax};L^2\trkla{\Om}}}^2\text{~compact}}\,.
\end{align}
To deal with the initial data, we introduce the space $\XuInit:= H^1_{\per}\trkla{\Om}$.
Together with the tightness results of Lemma \ref{lem:tightness}, we obtain the following result.
\begin{lemma}
On the path space $\X:=\Xu\times\Xddu\times\XJx\times\XJy\times\XW\times\XuInit$ the joint laws $\mu\h$ defined by
\begin{multline}
\mu\h\trkla{A\times B\times C\times D\times E\times F}:=\\\prob{\tgkla{u\h\in A}\cap\tgkla{\Delta\h u\h\in B}\cap\tgkla{J\h^x\in C}\cap\tgkla{J\h^y\in D}\cap\tgkla{\bs{W}\in E}\cap\tgkla{u^0\in F}}\,,
\end{multline}
for $h\in(0,1]$ are tight.
\end{lemma}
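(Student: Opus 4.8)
The plan is to deduce tightness of the joint laws $\mu\h$ on the product space $\X$ from the tightness of the marginal laws established in Lemma \ref{lem:tightness} together with the obvious tightness of the two remaining components, $\bs{W}$ and $u^0$. The key elementary fact is that a finite product of tight families of measures is again tight: if $\trkla{\nu_h^{(i)}}\h$ is tight on a Polish (or more generally completely regular Souslin) space $X_i$ for $i=1,\dots,N$, then for any $\delta>0$ one may pick compact sets $K_i\subset X_i$ with $\nu_h^{(i)}\trkla{X_i\setminus K_i}<\delta/N$ uniformly in $h$, and then $K_1\times\cdots\times K_N$ is compact in $X_1\times\cdots\times X_N$ with $\mu_h\trkla{\X\setminus\prod_i K_i}\leq\sum_i\nu_h^{(i)}\trkla{X_i\setminus K_i}<\delta$, because the complement of a product of sets is contained in the union of the cylinder complements. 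This works verbatim for the spaces $\Xu$, $\Xddu$, $\XJx$, $\XJy$ appearing here; note that $\Xddu$, $\XJx$, $\XJy$ carry the weak topology of $L^2\trkla{0,\Tmax;L^2\trkla{\Om}}$, which is not metrizable on all of $L^2$ but is metrizable on bounded sets, and closed balls are weakly compact, so the argument of Lemma \ref{lem:tightness} already produces the required compact sets.

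First I would treat the $\XW$-component: since $\bs W$ is a fixed random variable (its law $\mu_{\bs W}$ does not depend on $h$), the family $\tgkla{\mu_{\bs W}}$ is trivially tight --- $\XW=\trkla{C\trkla{\tekla{0,\Tmax};L^2\trkla{\Om}}}^2$ is Polish, $\mu_{\bs W}$ is Radon, hence inner regular, so for any $\delta>0$ there is a compact $K_{\bs W}\subset\XW$ with $\mu_{\bs W}\trkla{\XW\setminus K_{\bs W}}<\delta$. This is precisely the inner regularity statement already recorded in the excerpt just before the lemma. Second I would treat $\XuInit:=H^1_\per\trkla{\Om}$: the law of $u^0$ is $\Lambda$, again $h$-independent, and by Assumption \ref{item:initial} it is a probability measure supported on a set on which $\mathcal E\h\trkla{\Ihxy{\cdot}}$ and the mass are bounded; more to the point, $\Lambda$ lives on $H^2_\per\trkla{\Om}$ and a single Radon (indeed tight, since the space is Polish) probability measure on $H^1_\per\trkla{\Om}$ is automatically tight. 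So pick $K_{u^0}\subset H^1_\per\trkla{\Om}$ compact with $\Lambda\trkla{H^1_\per\trkla{\Om}\setminus K_{u^0}}<\delta$.

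Then I would combine: given $\delta>0$, by Lemma \ref{lem:tightness} choose compact sets $K_u\subset\Xu$, $K_{\Delta u}\subset\Xddu$, $K_{J^x}\subset\XJx$, $K_{J^y}\subset\XJy$ with each marginal of $\X\setminus K_\bullet$ of measure less than $\delta/6$ uniformly in $h$, together with $K_{\bs W}$ and $K_{u^0}$ as above (also with measure $<\delta/6$). The set $K:=K_u\times K_{\Delta u}\times K_{J^x}\times K_{J^y}\times K_{\bs W}\times K_{u^0}$ is compact in $\X$ by Tychonoff, and
\begin{align*}
\mu\h\trkla{\X\setminus K}\leq&\,\mu_{u\h}\trkla{\Xu\setminus K_u}+\mu_{\Delta\h u\h}\trkla{\Xddu\setminus K_{\Delta u}}+\mu_{J\h^x}\trkla{\XJx\setminus K_{J^x}}\\
&+\mu_{J\h^y}\trkla{\XJy\setminus K_{J^y}}+\mu_{\bs W}\trkla{\XW\setminus K_{\bs W}}+\Lambda\trkla{\XuInit\setminus K_{u^0}}<\delta\,,
\end{align*}
uniformly in $h\in(0,1]$, where the inequality uses that $\X\setminus K$ is contained in the union of the six cylinder sets $\pi_i^{-1}\trkla{X_i\setminus K_\bullet}$ and that each $\mu\h\circ\pi_i^{-1}$ is the corresponding marginal. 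This establishes tightness of $\tgkla{\mu\h}_{h\in(0,1]}$.

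I do not expect a genuine obstacle here; the one point requiring a little care is the use of the weak-$L^2$ topology on three of the factors, where ``compact'' means weakly sequentially compact closed balls and one should note that on such balls the weak topology is metrizable, so that the Jakubowski--Skorokhod machinery invoked later applies --- but this is exactly the setting already used in Lemma \ref{lem:tightness}, so nothing new is needed. The only mild subtlety worth flagging explicitly in the write-up is that the product-of-tight-families argument requires only finitely many factors, which is the case.
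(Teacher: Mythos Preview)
Your proposal is correct and follows exactly the approach the paper takes: the paper simply states that the lemma follows from the tightness of the first four marginals established in Lemma~\ref{lem:tightness} together with the inner regularity of $\mu_{\bs W}$ on the Polish space $\XW$ (recorded in the paragraph preceding the lemma) and the trivial tightness of the $h$-independent law $\Lambda$ on $\XuInit$. You have merely written out explicitly the product-of-compacts/union-bound argument that the paper leaves implicit.
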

Using Jakubowski's theorem (cf.~\cite{Jakubowski1998}) which is a generalization of Skorokhod's theorem (cf.~\cite{Skorokhod1956}), we obtain the following result.

\begin{proposition}\label{prop:convergence1}
Let $\trkla{u\h,\,\Delta\h u\h,\, J\h^x,\,J\h^y}$ be solutions to \eqref{eq:semidiscreteSTFE} in the sense of Lemma \ref{lem:existence} defined on the same stochastic basis $\trkla{\Omega,\mathcal{F},\trkla{\mathcal{F}_t}_{t\geq0},\Prob}$ with respect to the Wiener process $\bs{W}$.
Then there exists a subsequence which we again denote by $\trkla{u\h, \Delta\h u\h,\, J\h^x, J\h^y}$ such that there are a stochastic basis $\trkla{\tilde{\Omega},\tilde{\mathcal{F}},\tilde{\Prob}}$, a sequence of random variables
\begin{subequations}
\begin{align}
\tilde{u}\h\,:&\,\tilde{\Omega}\rightarrow C\trkla{\tekla{0,\Tmax};L^{q}\trkla{\Om}}\quad\trkla{q<\infty}\,,\\
\widetilde{\Delta\h u\h}\,:&\,\tilde{\Omega}\rightarrow L^2\trkla{0,\Tmax;L^2\trkla{\Om}}\,,\label{eq:widetildedeltau}\\
\tilde{J}\h^x\,:&\,\tilde{\Omega}\rightarrow L^2\trkla{0,\Tmax;L^2\trkla{\Om}}\,,\\
\tilde{J}\h^y\,:&\,\tilde{\Omega}\rightarrow L^2\trkla{0,\Tmax;L^2\trkla{\Om}}\,,\\
\tilde{u}\h^0\,:&\,\tilde{\Omega}\rightarrow H^1_\per\trkla{\Om}\,,
\end{align}
\end{subequations}
a sequence of $\trkla{L^2\trkla{\Om}}^2$-valued processes $\tilde{\bs{W}}\h$ on $\tilde{\Omega}$, random variables
\begin{subequations}
\begin{align}
\tilde{u}&\,\in L^2\trkla{\tilde{\Omega};C\trkla{\tekla{0,\Tmax};L^{q}\trkla{\Om}}}\quad\trkla{q<\infty}\,,\\
\widetilde{\Delta u}&\,\in L^2\trkla{\tilde{\Omega};L^2\trkla{0,\Tmax;L^2\trkla{\Om}}}\,,\\
\tilde{J}^x&\,\in L^2\trkla{\tilde{\Omega};L^2\trkla{0,\Tmax;L^2\trkla{\Om}}}\,,\\
\tilde{J}^y&\,\in L^2\trkla{\tilde{\Omega};L^2\trkla{0,\Tmax;L^2\trkla{\Om}}}\,,\\
\tilde{u}^0&\,\in L^2\trkla{\tilde{\Omega};H^2_{\per}\trkla{\Om}}\,,
\end{align}
\end{subequations}
and an $L^2\trkla{\Om}$-valued process $\tilde{W}$ on $\tilde{\Omega}$ which satisfy the following properties:
\begin{itemize}
\item[i)] The law of $\trkla{\tilde{u}\h,\widetilde{\Delta\h u\h},\tilde{J}\h^x,\tilde{J}\h^y, \tilde{W}\h,\tilde{u}\h^0}$ on $\Xu\times\Xddu\times\XJx\times\XJy\times\XW\times\XuInit$ under $\tilde{\Prob}$ coincides for any $h$ with the law of $\trkla{u\h,\Delta\h u\h,J\h^x,J\h^y,W,u\h^0}$ under $\Prob$.
\item[ii)] The sequence $\trkla{\tilde{u}\h,\widetilde{\Delta\h u\h},\tilde{J}\h^x,\tilde{J}\h^y, \tilde{W}\h,\tilde{u}\h^0}$ converges $\tilde{\Prob}$-almost surely towards\linebreak $\trkla{\tilde{u},\widetilde{\Delta u},\tilde{J}^x,\tilde{J}^y,\tilde{W},\tilde{u}^0}$ in the topology of $\X$.
\end{itemize}
\end{proposition}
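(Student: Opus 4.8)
The plan is to apply Jakubowski's generalization of the Skorokhod representation theorem (cf.~\cite{Jakubowski1998}) to the family of joint laws $\mu_h$ on the path space $\X$, whose tightness was just established. Since $\X$ is a product of Polish spaces ($\Xu$, $\XW$, $\XuInit$) and of the spaces $\Xddu,\XJx,\XJy$, the latter being separable Banach spaces equipped with their weak topologies, $\X$ is a so-called quasi-Polish space: it admits a countable family of continuous real-valued functions separating points. This is precisely the setting required by Jakubowski's theorem. First I would verify this structural hypothesis explicitly: for the weak-topology factors, the separating sequence is furnished by the action of a countable dense subset of the dual $L^2\trkla{0,\Tmax;L^2\trkla{\Om}}$, while for $\Xu$, $\XW$, and $\XuInit$ one uses that Polish spaces trivially satisfy the condition. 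Passing to a subsequence (not relabelled) along which all four marginal families are tight simultaneously — which is possible because a finite product of tight families is tight — one obtains tightness of $\mu_h$ on $\X$.

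Next I would invoke Jakubowski's theorem directly: it produces a new probability space $\trkla{\tilde{\Omega},\tilde{\mathcal{F}},\tilde{\Prob}}$, a (further) subsequence, and a sequence of $\X$-valued random variables
\[
\trkla{\tilde{u}\h,\widetilde{\Delta\h u\h},\tilde{J}\h^x,\tilde{J}\h^y,\tilde{\bs{W}}\h,\tilde{u}\h^0}
\]
together with a limit $\trkla{\tilde{u},\widetilde{\Delta u},\tilde{J}^x,\tilde{J}^y,\tilde{\bs{W}},\tilde{u}^0}$ such that the law of the $h$-th tuple under $\tilde{\Prob}$ equals $\mu_h$ for every $h$ (this is property~i)), and such that the tuples converge $\tilde{\Prob}$-a.s.~in the topology of $\X$ (property~ii)). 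The component-wise reading of a.s.~convergence in $\X$ is exactly the list of convergences asserted: strong convergence $\tilde{u}\h\to\tilde{u}$ in $C\trkla{\tekla{0,\Tmax};L^q\trkla{\Om}}$, weak convergence of $\widetilde{\Delta\h u\h}$, $\tilde{J}\h^x$, $\tilde{J}\h^y$ in $L^2\trkla{0,\Tmax;L^2\trkla{\Om}}$, convergence of $\tilde{\bs{W}}\h$ in $\XW$, and convergence of $\tilde{u}\h^0$ in $H^1_\per\trkla{\Om}$.

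What then requires genuine argument — and is the main obstacle here — is to confirm that the limit objects enjoy the stated integrability: $\tilde{u}\in L^2\trkla{\tilde{\Omega};C\trkla{\tekla{0,\Tmax};L^q\trkla{\Om}}}$, $\widetilde{\Delta u},\tilde{J}^x,\tilde{J}^y\in L^2\trkla{\tilde{\Omega};L^2\trkla{0,\Tmax;L^2\trkla{\Om}}}$, and in particular $\tilde{u}^0\in L^2\trkla{\tilde{\Omega};H^2_\per\trkla{\Om}}$ — the latter being a gain of regularity over the path-space factor $\XuInit=H^1_\per\trkla{\Om}$, so it is \emph{not} immediate from property~ii). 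The mechanism is equality of laws combined with the uniform moment bounds: by property~i) each $\tilde{u}\h$ has the same law as $u\h$, hence inherits the bound $\expectedt{\norm{\tilde{u}\h}_{L^\infty\trkla{0,\Tmax;H^1\trkla{\Om}}}^2}\leq C$ from Proposition~\ref{prop:energyentropyestimate}, and analogous bounds hold for $\widetilde{\Delta\h u\h}$, $\tilde{J}\h^x$, $\tilde{J}\h^y$; since $u\h^0=\Ihxy{u^0}$ with $u^0$ distributed according to $\Lambda$ supported in $H^2_\per\trkla{\Om}$ with the $H^2$-bound built into Assumption~\ref{item:initial}, $\tilde{u}\h^0$ carries a uniform $L^2\trkla{\tilde{\Omega};H^2_\per\trkla{\Om}}$ bound as well. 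One then applies Fatou's lemma together with lower semicontinuity of the relevant norms along the a.s.-convergent subsequence (strong convergence for $\tilde u$, weak lower semicontinuity of the $L^2$-norms for the weak-topology factors, and weak compactness plus identification of the weak limit in $H^2$ for $\tilde u^0$) to transfer these uniform bounds to the limit. A minor technical point to handle carefully is that the processes $\tilde{\bs{W}}\h$ and $\tilde{\bs{W}}$ are only specified here as $\trkla{L^2\trkla{\Om}}^2$-valued paths; their identification as genuine $Q$-Wiener processes adapted to an appropriate filtration on $\tilde\Omega$, and the passage to the limit in the stochastic integral, are deferred to the subsequent analysis and are not claimed in this proposition.
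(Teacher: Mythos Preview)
Your approach is correct and coincides with the paper's: the proposition is a direct application of Jakubowski's theorem to the tight family of joint laws $\mu_h$ established in the preceding lemma, and the paper itself gives no further argument beyond citing \cite{Jakubowski1998}.

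One correction is needed in your treatment of the initial-data component. You write $u_h^0=\Ihxy{u^0}$ and claim this carries a uniform $L^2\trkla{\tilde{\Omega};H^2_\per\trkla{\Om}}$ bound; but the interpolant $\Ihxy{u^0}$ is a piecewise bilinear finite element function and is \emph{not} in $H^2_\per\trkla{\Om}$, so no such bound is available. The resolution is that the initial-data slot in the joint law $\mu_h$ is occupied by the original random variable $u^0$ itself (see the definition of $\mu_h$ in the lemma immediately preceding the proposition, where the last factor reads $\tgkla{u^0\in F}$, not $\tgkla{u_h^0\in F}$). Thus all $\tilde{u}_h^0$ share the same law $\Lambda$, which by Assumption~\ref{item:initial} is supported on $H^2_\per\trkla{\Om}$; tightness of this component is trivial (it is a single measure), and the $H^2$-regularity of the limit $\tilde{u}^0$ follows directly from equality of laws rather than from any weak-compactness argument.
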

\begin{remark}
In particular, one may use the interval $\tekla{0,1}$ for $\tilde{\Omega}$, its standard Borel $\sigma$-algebra for $\tilde{\mathcal{F}}$, and the Lebesgue measure for $\tilde{\Prob}$ (cf.~\cite{Jakubowski1998}). 
\end{remark}

Similarly to $T\h$, we introduce the random stopping times 
\begin{align}
\tilde{T}\h:=\Tmax\wedge \inf\tgkla{t\geq0\,:\,\mathcal{E}\h\trkla{\tilde{u}\h\trkla{t}}\geq \Emax}\,.
\end{align}
\begin{lemma}\label{lem:stoppingtime}
Along a subsequence, the convergence $\lim_{h\searrow0} \tilde{T}\h=\Tmax$ holds $\tilde{\Prob}$-almost surely.
\end{lemma}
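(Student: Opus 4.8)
The plan is to show that the probability of the "bad event" $\{\tilde{T}\h < \Tmax\}$ tends to zero fast enough along a subsequence to apply Borel--Cantelli. By the definition of $\tilde{T}\h$ and the almost-sure continuity in time of $t\mapsto\mathcal{E}\h(\tilde u\h(t))$ (inherited from the pathwise continuity of $\tilde u\h$ with values in $\Uhxy$, plus the fact that $\tilde u\h$ has the same law as $u\h$, which stays strictly positive up to $\Tmax$), the event $\{\tilde T\h<\Tmax\}$ is contained in $\{\sup_{t\in[0,\Tmax\wedge\tilde T\h]}\mathcal E\h(\tilde u\h(t))\geq\Emax\}$. Since $\tilde u\h$ and $u\h$ agree in law, and the energy-entropy functional and the stopping-time construction are measurable functionals of the path, this latter probability equals $\prob{\sup_{t\in[0,\Tmax\wedge T\h]}\mathcal E\h(u\h(t))\geq\Emax}$. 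Recalling $R(t)=R_{\alpha,\kappa,h}(u\h(t))=\alpha+\mathcal E\h(u\h(t))+\kappa\mathcal S\h(u\h(t))$ and that $\mathcal S\h\geq0$ up to an additive constant controlled by mass (via Assumption \ref{item:initial} and the norm equivalence \eqref{eq:normequivalence}), we have $\mathcal E\h(u\h(t))\leq R(t)+C$, so it suffices to bound $\prob{\sup_{t\in[0,\Tmax\wedge T\h]}R(t)\geq \Emax - C}$.

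Next I would invoke Proposition \ref{prop:energyentropyestimate} with an exponent $\p\geq1$ chosen large (but fixed), which gives $\expected{\sup_{t\in[0,\Tmax]}R(t)^\p}\leq C$ with $C$ independent of $h$. By Markov's inequality,
\begin{align*}
\prob{\sup_{t\in[0,\Tmax\wedge T\h]}R(t)\geq\Emax-C}\leq\frac{\expected{\sup_{t\in[0,\Tmax]}R(t)^\p}}{(\Emax-C)^\p}\leq \frac{C}{(\hat C h^{-\rho/(2+p)}-C)^\p}\,.
\end{align*}
Since $\Emax=\hat C h^{-\rho/(2+p)}\to\infty$ as $h\searrow0$, the right-hand side behaves like $C h^{\p\rho/(2+p)}$, which tends to zero. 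Choosing a subsequence $h_n\searrow0$ (for instance $h_n=2^{-n}$, or more safely a subsequence along which $\sum_n h_n^{\p\rho/(2+p)}<\infty$; with $\p$ large this is automatic for $h_n=2^{-n}$), we get $\sum_n\prob{\tilde T\h_n<\Tmax}<\infty$. The Borel--Cantelli lemma then yields that $\tilde\Prob$-almost surely only finitely many of the events $\{\tilde T\h_n<\Tmax\}$ occur, i.e.~$\tilde T\h_n=\Tmax$ for all $n$ large enough, so $\lim_{n\to\infty}\tilde T\h_n=\Tmax$ almost surely.

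The main obstacle — though it is more a bookkeeping point than a genuine difficulty — is to make the reduction from $\tilde u\h$ back to $u\h$ rigorous: one must check that $\tilde T\h$ is indeed the stopping time associated to $\tilde u\h$ in exactly the same functional way $T\h$ is associated to $u\h$, so that the equality of laws (Proposition \ref{prop:convergence1} i)) transfers the energy-entropy bound of Proposition \ref{prop:energyentropyestimate} to the tilde-process. This is where one uses that $\mathcal E\h$, as a function of the path in $C([0,\Tmax];\Uhxy)$, is continuous, that $\tilde u\h$ inherits strict positivity (hence finiteness of $\mathcal E\h(\tilde u\h)$) on $[0,\Tmax]$ in law, and that the supremum defining the stopping time is a measurable functional. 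Once this is in place the estimate above is essentially immediate, and the only care needed is to fix $\p$ large enough at the outset so that Proposition \ref{prop:energyentropyestimate} applies and the Borel--Cantelli series converges.
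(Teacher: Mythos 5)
Your proof is correct and follows essentially the same route as the paper: transfer from $\tilde T\h$ to $T\h$ by equality of laws, bound $\tilde\Prob[\tilde T\h<\Tmax]$ via Markov's inequality applied to the energy-entropy estimate of Proposition~\ref{prop:energyentropyestimate}, and extract an almost surely convergent subsequence. The paper states the Markov bound with $\p=1$ and then invokes the standard fact that convergence in probability yields a.s.\ convergence along a subsequence, which is exactly the Borel--Cantelli step you spell out (and your larger $\p$ only improves the decay rate).
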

\begin{proof}
Following the lines of \cite{FischerGrun2018}, we compute for each $\tau\in(0,\Tmax]$
\begin{align}
\probt{\tgkla{\tilde{T}\h<\tau}}=\prob{\tgkla{T\h<\tau}}\leq C h^{\rho/(2+p)}\,.
\end{align}
Hence $\tilde{T}\h\rightarrow\Tmax$ in probability for $h\searrow0$, which implies the $\tilde{\Prob}$-almost sure convergence for a subsequence.
\end{proof}
\begin{lemma}\label{lem:identification}
\begin{subequations}
Under the Assumptions \ref{item:S}, \ref{item:initial}, \ref{item:potential}, \ref{item:stoch}, \ref{item:regularization}, and \ref{item:stochbasis:boundthirdderivative}, $\widetilde{\Delta\h u\h}$ can be identified $\tilde{\Prob}$-almost surely as the discrete Laplacian of $\tilde{u}\h$, i.e.
\begin{align}\label{eq:identify:discLap}
\widetilde{\Delta\h u\h}=\Delta\h\tilde{u}\h\,.
\end{align}
Furthermore, the flux components $\tilde{J}\h^x$ and $\tilde{J}\h^y$ can be identified $\tilde{\Prob}$-almost surely as
\begin{align}
\tilde{J}\h^x=\chiTth\Ihy{\sqrt{\meanGinvX{\tilde{u}\h}}\parx\rkla{-\Delta\h\tilde{u}\h+\Ihxy{F^\prime\trkla{\tilde{u}\h}} +h^\varepsilon \Delta\h\Delta\h\tilde{u}\h}}\,,\label{eq:identify:Jhx}\\
\tilde{J}\h^y=\chiTth\Ihx{\sqrt{\meanGinvY{\tilde{u}\h}}\pary\rkla{-\Delta\h\tilde{u}\h+\Ihxy{F^\prime\trkla{\tilde{u}\h}} +h^\varepsilon \Delta\h\Delta\h\tilde{u}\h}}\label{eq:identify:Jhy}\,.
\end{align}
\end{subequations}
\end{lemma}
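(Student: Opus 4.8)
The plan is to exploit the fact that, by Proposition~\ref{prop:convergence1}(i), the tuple $(\tilde u\h,\widetilde{\Delta\h u\h},\tilde J\h^x,\tilde J\h^y,\tilde W\h,\tilde u\h^0)$ has the \emph{same law} on $\X$ as $(u\h,\Delta\h u\h,J\h^x,J\h^y,W,u\h^0)$, and that the identities \eqref{eq:identify:discLap}--\eqref{eq:identify:Jhy} are, for the original variables, simply the \emph{definitions} of $\Delta\h u\h$, $J\h^x$, $J\h^y$ together with the pressure equation \eqref{eq:semidiscreteSTFE:2}. The standard route is therefore to show that the relevant defining relations are expressible as the vanishing (a.s.) of a Borel-measurable functional of the path variables, and then to transfer this vanishing from the original probability space to $(\tilde\Omega,\tilde{\mathcal F},\tilde\Prob)$ via equality of laws. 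This is exactly the mechanism used in \cite{FischerGrun2018}, so I would follow that template.

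First I would treat \eqref{eq:identify:discLap}. For fixed $h$, the map $v\mapsto\Delta\h v$ is a bounded linear operator on the finite-dimensional space $\Uhxy$ (equivalently, a continuous map $C([0,\Tmax];L^q(\Om))\to L^2(0,\Tmax;L^2(\Om))$ when restricted to $\Uhxy$-valued paths, which is where all our processes live since laws are supported there). Hence the set
\begin{align*}
A:=\set{(v,w)\in\Xu\times\Xddu : w=\Delta\h v \text{ in } L^2(0,\Tmax;L^2(\Om))}
\end{align*}
is a Borel subset of $\Xu\times\Xddu$ (it is the preimage of $\{0\}$ under the continuous map $(v,w)\mapsto w-\Delta\h v$ composed with the weak-to-strong identity on bounded sets, noting the processes are bounded in $L^2L^2$). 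Since $\Prob[(u\h,\Delta\h u\h)\in A]=1$ by definition of $\Delta\h u\h$, equality of laws gives $\tilde\Prob[(\tilde u\h,\widetilde{\Delta\h u\h})\in A]=1$, which is \eqref{eq:identify:discLap}.

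Next, for \eqref{eq:identify:Jhx}--\eqref{eq:identify:Jhy}, I would introduce, for fixed $h$, the auxiliary discrete pressure $\tilde p\h$ defined pathwise as the solution of \eqref{eq:semidiscreteSTFE:2} with $u\h$ replaced by $\tilde u\h$ and $\chiTh$ replaced by $\chiTth$; note this is well-defined since on $\tilde\Omega$ the energy threshold stopping time $\tilde T\h$ is constructed from $\tilde u\h$ exactly as $T\h$ is from $u\h$ (here one uses \eqref{eq:identify:discLap} to make sense of $\mathcal E\h(\tilde u\h)$, and the invertibility on $\Uhxy$ of the mass matrix to solve for $\tilde p\h$), and the dependence of $\tilde p\h$ on $\tilde u\h$ is continuous (indeed the argument of Lemma~\ref{lem:existence}). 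Then I would express the relation ``$J\h^x=\Ihy{\sqrt{\meanGinvX{u\h}}\parx p\h}$ and $p\h$ solves \eqref{eq:semidiscreteSTFE:2}'' as membership of the path $(u\h,\Delta\h u\h,J\h^x,W,u\h^0)$ in an appropriate Borel set $B\subset\Xu\times\Xddu\times\XJx\times\XW\times\XuInit$ (the defining functional is a finite sum of products of continuous nonlinear maps on $\Uhxy$, hence Borel; the stopping-time factor $\chiTh$ is a Borel function of $u\h$ through the energy). Since the original variables lie in $B$ $\Prob$-almost surely, equality of laws yields the same for the tilde variables, which is precisely \eqref{eq:identify:Jhx}; \eqref{eq:identify:Jhy} is identical with $x\leftrightarrow y$.

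The main obstacle, as usual in this transfer argument, is \textbf{measurability}: one must check carefully that all the maps involved --- $v\mapsto\Delta\h v$, $v\mapsto F'(v)$ (which is only finite on strictly positive functions, but the processes are supported on $\Uhxy$-valued paths that are strictly positive below the blow-up time, and beyond it the $\chiTth$ cut-off kills the term), $v\mapsto\meanGinvX{v}$, $v\mapsto\mathcal E\h(v)$ and hence the stopping-time indicator, and the pressure solve $v\mapsto p\h$ --- are Borel as maps between the relevant Polish spaces $\Xu,\Xddu,\ldots$, where $\Xddu,\XJx,\XJy$ carry the \emph{weak} topology on bounded sets of $L^2(0,\Tmax;L^2(\Om))$. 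The cleanest way around the weak-topology subtlety is to observe (via the a priori bounds of Proposition~\ref{prop:energyentropyestimate} transferred through equality of laws) that all these processes take values in a fixed closed ball of $L^2(0,\Tmax;L^2(\Om))$ $\tilde\Prob$-a.s., on which the weak topology is metrizable and the identity to the strong topology is Borel; alternatively one tests the identities against a countable dense family of smooth test functions and time intervals, reducing everything to Borel functionals of the a.s.\ continuous-in-time, finite-element-valued paths. Once measurability is secured, the conclusion is immediate from Proposition~\ref{prop:convergence1}(i). I would also remark that \eqref{eq:identify:discLap} is needed \emph{before} \eqref{eq:identify:Jhx}--\eqref{eq:identify:Jhy} can even be stated, since the right-hand sides there involve $\Delta\h\tilde u\h$ rather than $\widetilde{\Delta\h u\h}$.
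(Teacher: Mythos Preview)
Your proposal is correct and follows essentially the same strategy as the paper: transfer the defining identities through equality of laws (Proposition~\ref{prop:convergence1}(i)) by exhibiting the relevant maps as measurable (indeed continuous) functions of $u\h$. The paper's execution is slightly more streamlined: rather than working with Borel preimages and worrying about the weak topology on $\Xddu,\XJx,\XJy$, it simply tests the flux identity against an arbitrary $\phi\in C^\infty([0,\Tmax];C^\infty_\per(\overline{\Om}))$ and observes that
\[
\expectedt{\abs{\int_0^{\Tmax}\!\!\iO\tilde J\h^x\phi - \int_0^{\Tmax}\chiTth\Ihy{\sqrt{\meanGinvX{\tilde u\h}}\parx\tilde p\h}\phi}}
\]
equals the corresponding expression on the original space (which vanishes), using that for fixed $h$ the stopping time and all nonlinear terms are \emph{continuous} functions of $u\h\in C([0,\Tmax];C(\overline{\Om}))$ by inverse estimates and Lemma~\ref{lem:oscillation}. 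This bypasses your weak-topology measurability discussion entirely, since integration against $\phi$ is weakly continuous.
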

\begin{proof}

As $\Delta\h u\h$ depends continuously on $u\h$ (cf.~\eqref{eq:dqLap}), \eqref{eq:identify:discLap} follows by equality of laws.\\
As for every fixed $h>0$, the functions $u\h$ and $\tilde{u}\h$ are almost surely in $C\trkla{\tekla{0,\Tmax};C\trkla{\overline{\Om}}}$, the stopping times $T\h\trkla{\omega}$ and $\tilde{T}\h\trkla{\tilde{\omega}}$ are also continuous functions of $u\h$ and $\tilde{u}\h$, respectively.
By inverse estimates (cf. Theorem 4.5.11 in \cite{BrennerScott}) and the oscillation lemma \ref{lem:oscillation}, the same holds true for the terms on the right-hand side of \eqref{eq:identify:Jhx} and \eqref{eq:identify:Jhy}.
In particular, the expectation
\begin{multline}
\tilde{\mathds{E}}\left[\left|\int_0^{\Tmax}\iO \tilde{J}\h^x\phi\dx\dy\dt\right.\right.\\
-\left.\left.\int_0^{\Tmax}\chiTth\Ihy{\sqrt{\meanGinvX{\tilde{u}\h}}  \parx\trkla{-\Delta\h\tilde{u}\h+\Ihxy{F^\prime\trkla{\tilde{u}\h}}+h^\varepsilon\Delta\h\Delta\h\tilde{u}\h}}\right|\right]
\end{multline}
coincides with 
\begin{multline}
\tilde{\mathds{E}}\left[\left|\int_0^{\Tmax}\iO {J}\h^x\phi\dx\dy\dt\right.\right.\\
-\left.\left.\int_0^{\Tmax}\chiTh\Ihy{\sqrt{\meanGinvX{{u}\h}}  \parx\trkla{-\Delta\h {u}\h+\Ihxy{F^\prime\trkla{{u}\h}}+h^\varepsilon\Delta\h\Delta\h {u}\h}}\right|\right]
\end{multline}
for arbitrary $\phi\in C^\infty\trkla{\tekla{0,\Tmax};\trkla{C_\per^\infty\trkla{\overline{\Om}}}}$.
As the latter one is equal to zero, this gives the claim w.r.t.~$\tilde{J}\h^x$.
The argumentation for $\tilde{J}\h^y$ is the same.

\end{proof}
\begin{corollary}\label{cor:tildedeltau}
Let the assumptions of Proposition \ref{prop:convergence1} and Lemma \ref{lem:identification} hold true.
Then the limit function $\widetilde{\Delta u}$ introduced in \eqref{eq:widetildedeltau} can be identified with the Laplacian of $\tilde{u}$ $\tilde{\Prob}$-almost surely.
\end{corollary}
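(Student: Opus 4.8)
The plan is to identify $\widetilde{\Delta u}$ by testing the defining relation \eqref{eq:defDiscLapl} of the discrete Laplacian against nodal interpolants of smooth functions and then passing to the limit $h\searrow0$. Fix $\phi\in C_\per^\infty\trkla{\overline{\Om}}$ and $\zeta\in C_c^\infty\trkla{0,\Tmax}$. Since $\widetilde{\Delta\h u\h}=\Delta\h\tilde{u}\h$ $\tilde{\Prob}$-almost surely by Lemma \ref{lem:identification} (which in particular says $\tilde{u}\h\in\Uhxy$), the $\tilde{\Prob}$-a.s.\ weak convergence $\widetilde{\Delta\h u\h}\weak\widetilde{\Delta u}$ in $\Xddu$ from Proposition \ref{prop:convergence1} gives, applied to the test element $\zeta\phi\in L^2\trkla{0,\Tmax;L^2\trkla{\Om}}$,
\begin{align*}
\int_0^{\Tmax}\zeta\trkla{t}\iO\widetilde{\Delta u}\,\phi\dx\dy\dt=\lim_{h\searrow0}\int_0^{\Tmax}\zeta\trkla{t}\iO\Delta\h\tilde{u}\h\,\phi\dx\dy\dt\qquad\tilde{\Prob}\text{-a.s.}
\end{align*}
so everything reduces to computing the right-hand side.

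For that I would rewrite $\iO\Delta\h\tilde{u}\h\,\phi\dx\dy$ as follows: split $\phi=\Ihxy{\phi}+\trkla{\phi-\Ihxy{\phi}}$, use \eqref{eq:defDiscLapl} with $\psi\h=\Ihxy{\phi}\in\Uhxy$, undo the quadrature operators $\Ihxyop,\Ihxop,\Ihyop$, replace $\nabla\Ihxy{\phi}$ by $\nabla\phi$, and integrate by parts against the smooth periodic $\phi$. This produces
\begin{align*}
\iO\Delta\h\tilde{u}\h\,\phi\dx\dy=\iO\tilde{u}\h\,\Delta\phi\dx\dy+\mathcal{R}\h\,,
\end{align*}
where $\mathcal{R}\h$ is a finite sum of interpolation/quadrature remainders, each bounded by Lemma \ref{lem:Ih:error} through a strictly positive power of $h$ times one of the quantities $\norm{\nabla\tilde{u}\h}\h$, $\norm{\Delta\h\tilde{u}\h}\h$, $h^{\varepsilon/2}\norm{\nabla\Delta\h\tilde{u}\h}\h$ (the last from inverse estimates applied to $\Delta\h\tilde{u}\h\in\Uhxy$). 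Because $\zeta$ is compactly supported in $\trkla{0,\Tmax}$ and $\tilde{T}\h\to\Tmax$ $\tilde{\Prob}$-almost surely (Lemma \ref{lem:stoppingtime}), for small $h$ these remainder integrals involve only times $t<\tilde{T}\h$; there, Proposition \ref{prop:energyentropyestimate} together with the equality of laws of Proposition \ref{prop:convergence1}(i) bounds the listed norms in $L^2\trkla{\tilde{\Omega}}$ uniformly in $h$, hence — along a further subsequence, via Fatou — $\tilde{\Prob}$-almost surely uniformly in $h$, so that $\mathcal{R}\h\to0$ $\tilde{\Prob}$-a.s. The main term converges by the strong convergence $\tilde{u}\h\to\tilde{u}$ in $C\trkla{\tekla{0,\Tmax};L^q\trkla{\Om}}$, namely $\int_0^{\Tmax}\zeta\iO\tilde{u}\h\Delta\phi\dx\dy\dt\to\int_0^{\Tmax}\zeta\iO\tilde{u}\,\Delta\phi\dx\dy\dt$. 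Combining the three displays yields $\int_0^{\Tmax}\zeta\iO\widetilde{\Delta u}\,\phi=\int_0^{\Tmax}\zeta\iO\tilde{u}\,\Delta\phi$ $\tilde{\Prob}$-a.s.

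Letting $\phi$ and $\zeta$ range over countable dense families, the identity holds $\tilde{\Prob}$-almost surely for all admissible $\phi,\zeta$ simultaneously, i.e.\ $\widetilde{\Delta u}=\Delta\tilde{u}$ in $\mathcal{D}'\trkla{\Om\times\trkla{0,\Tmax}}$. Since both $\widetilde{\Delta u}$ and $\tilde{u}$ lie in $L^2\trkla{\tilde{\Omega};L^2\trkla{0,\Tmax;L^2\trkla{\Om}}}$ by Proposition \ref{prop:convergence1}, elliptic regularity on the torus (Fourier series) upgrades this to $\tilde{u}\in L^2\trkla{\tilde{\Omega};L^2\trkla{0,\Tmax;H^2_\per\trkla{\Om}}}$ with $\widetilde{\Delta u}=\Delta\tilde{u}$ as $L^2$-functions, which is the assertion of the corollary.

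The hard part is the bookkeeping in the second step: the curvature regularization $\tfrac12 h^\varepsilon\iO\Ihxy{\tabs{\Delta\h u\h}^2}$ controls $\norm{\nabla\Delta\h\tilde{u}\h}\h$ only up to a factor $h^{-\varepsilon/2}$, so one must check that every remainder in $\mathcal{R}\h$ still carries a strictly positive net power of $h$ once inverse estimates are applied — this is precisely where $\varepsilon<2$ from \ref{item:regularization} is used. A secondary, purely technical nuisance is that the Jakubowski–Skorokhod construction of Proposition \ref{prop:convergence1} records only $C\trkla{\tekla{0,\Tmax};L^q}$-convergence of $\tilde{u}\h$ and \emph{weak} $L^2$-convergence of $\widetilde{\Delta\h u\h}$, not weak $L^2\trkla{0,\Tmax;H^1}$-convergence of $\tilde{u}\h$; this is harmless, because the limit $\lim_h\int_0^{\Tmax}\zeta\iO\Delta\h\tilde{u}\h\phi$ already exists and equals $\int_0^{\Tmax}\zeta\iO\widetilde{\Delta u}\phi$, hence may be evaluated along any convenient subsequence along which the $h$-uniform a priori bounds have been turned into pathwise bounds.
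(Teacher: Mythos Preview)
Your argument is correct, but it is considerably more elaborate than the paper's. The paper exploits that on the equidistant mesh the discrete Laplacian coincides with a second-order difference quotient, \eqref{eq:dqLap}, so that for any periodic $\phi\in C_\per^\infty(\overline{\Om})$ one may apply discrete summation by parts directly in the $L^2$-pairing (a change of variables gives $\int_\Om(\dxp f)g=-\int_\Om f(\dxm g)$) and obtain
\[
\iO\Delta\h\tilde{u}\h\,\phi\dx\dy=\iO\tilde{u}\h\,(\dxm\dxp\phi+\dym\dyp\phi)\dx\dy.
\]
A Taylor expansion then gives $\dxm\dxp\phi+\dym\dyp\phi\to\Delta\phi$ in $L^\infty(\Om)$, and the strong convergence $\tilde{u}\h\to\tilde{u}$ in $C([0,\Tmax];L^q)$ finishes the job. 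No interpolation errors, no quadrature remainders, no inverse estimates.

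Your route through the variational definition \eqref{eq:defDiscLapl} and the decomposition $\phi=\Ihxy{\phi}+(\phi-\Ihxy{\phi})$ also works, but the detour through stopping times and the $\varepsilon<2$ condition is unnecessary here: the only a priori information actually needed in your remainder $\mathcal{R}\h$ is a $\tilde{\Prob}$-a.s.\ bound on $\norm{\Delta\h\tilde{u}\h}_{L^2(0,\Tmax;L^2(\Om))}$, and that follows already from the $\tilde{\Prob}$-a.s.\ weak convergence of $\widetilde{\Delta\h u\h}$ in Proposition~\ref{prop:convergence1} (a weakly convergent sequence is bounded). Terms involving $\norm{\nabla\Delta\h\tilde{u}\h}$ can be handled by an inverse estimate against $\norm{\Delta\h\tilde{u}\h}$ rather than via the $h^\varepsilon$-regularization, so the restriction on $\varepsilon$ plays no role at this point.
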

\begin{proof}
Choosing $\phi\in C_\per^\infty\trkla{\overline{\Om}}$, a Taylor expansion provides $\dxm\dxp\phi+\dym\dyp\phi\rightarrow \Delta\phi$ in $L^\infty\trkla{\Om}$.
Therefore, shifting the discrete Laplacian onto the test function before passing to the limit provides the desired result.
\end{proof}
%Having identified the flux components $\tilde{J}\h^x$ and $\tilde{J}\h^y$, 
We proceed by showing that $\tilde{\bs{W}}$ and $\tilde{\bs{W}}\h$ are $Q$-Wiener processes adapted to suitably defined filtrations $\trkla{\tilde{\mathcal{F}}_t}_{t\geq0}$ and $\trkla{\tilde{\mathcal{F}}_{h,t}}_{t\geq0}$, respectively.
We define $\trkla{\tilde{\mathcal{F}}_t}_{t\geq0}$ to be the $\tilde{\Prob}$-augmented canonical filtration associated with $\trkla{\tilde{u},\tilde{\bs{W}},\tilde{u}^0}$, i.e.
\begin{align}
\tilde{\mathcal{F}}_t:=\sigma\trkla{\sigma\trkla{r_t\tilde{u},r_t\tilde{\bs{W}}}\cup\tgkla{N\in\tilde{\mathcal{F}}\,:\,\tilde{\Prob}\trkla{N}=0}\cup \sigma\trkla{\tilde{u}^0}}\,.
\end{align}
Here, $r_t$ is the restriction of a function defined on $\tekla{0,\Tmax}$ to the interval $\tekla{0,t}$ with $t\in\tekla{0,\Tmax}$.
Analogously, we introduce the filtrations $\trkla{\tilde{\mathcal{F}}_{h,t}}_{t\geq0}$ as the $\tilde{\Prob}$-augmented canonical filtration associated with $\trkla{\tilde{u}\h,\tilde{\bs{W}}\h,\tilde{u}\h^0}$
\begin{align}
\tilde{\mathcal{F}}_{h,t}:=\sigma\trkla{\sigma\trkla{r_t\tilde{u}\h,r_t\tilde{\bs{W}}\h}\cup\tgkla{N\in\tilde{\mathcal{F}}\,:\,\tilde{\Prob}\trkla{N}=0}\cup \sigma\trkla{\tilde{u}\h^0}}\,.
\end{align}

\begin{lemma}\label{lem:QWiener}
The processes $\tilde{\bs{W}}\h$ and $\tilde{\bs{W}}$ are Q-Wiener processes adapted to the filtrations $\trkla{\tilde{\mathcal{F}}_{h,t}}_{t\geq0}$ and $\trkla{\tilde{\mathcal{F}}_{t}}_{t\geq0}$. They can be written as
\begin{align}
\tilde{\bs{W}}\h\trkla{t}&=\sum_{\alpha\in\tgkla{x,\,y}}\sum_{k,\,l\in\Z}\lalpha{kl}\g{kl}\bthalpha{kl}\kbalpha&\text{and}&&\tilde{\bs{W}}\trkla{t}&=\sum_{\alpha\in\tgkla{x,\,y}}\sum_{k,\,l\in\Z}\lalpha{kl}\g{kl}\btalpha{kl}\kbalpha\,.
\end{align}
Here, $\trkla{\bthalpha{kl}}_{\alpha\in\tgkla{x,\,y},\,k,\,l\in\Z}$ and  $\trkla{\btalpha{kl}}_{\alpha\in\tgkla{x,\,y},\,k,\,l\in\Z}$ are families of independently and identically distributed Brownian motions with respect to $\trkla{\tilde{\mathcal{F}}_{h,t}}_{t\geq0}$ and $\trkla{\tilde{\mathcal{F}}_t}_{t\geq0}$.
\end{lemma}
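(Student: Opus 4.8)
The statement is a standard "identification of the limiting Wiener process" lemma in the Skorokhod--Jakubowski framework, and I would prove it by the now-classical martingale characterization of $Q$-Wiener processes (cf.\ the arguments in \cite{BrzezniakOndrejat, HofmanovaSeidler, FischerGrun2018}). The first step is to record what is already known by equality of laws from Proposition~\ref{prop:convergence1}(i): since the law of $\tilde{\bs W}\h$ on $\XW$ coincides with the law of $\bs W$, and $\bs W$ admits the decomposition $\bs W = \sum_{\alpha\in\{x,y\}}\sum_{k,l\in\Z}\lalpha{kl}\g{kl}\balpha{kl}\kbalpha$ with i.i.d.\ Brownian motions $\balpha{kl}$, one gets the corresponding decomposition $\tilde{\bs W}\h = \sum_{\alpha\in\{x,y\}}\sum_{k,l\in\Z}\lalpha{kl}\g{kl}\bthalpha{kl}\kbalpha$ with processes $\bthalpha{kl}$ having the same joint law as the $\balpha{kl}$; in particular each $\bthalpha{kl}$ is a real Brownian motion and the family is independent. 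This already gives the structural part of the claim for the approximating level.

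**Main steps for $\tilde{\bs W}$.** For the limit process, I would argue as follows. First, by the $\tilde{\Prob}$-a.s.\ convergence $\tilde{\bs W}\h\to\tilde{\bs W}$ in $\XW = (C([0,\Tmax];L^2(\Om)))^2$ and uniform integrability (obtained from moment bounds on the Brownian motions, which survive the passage to the limit by Fatou), one transfers the martingale/quadratic-variation identities from $\tilde{\bs W}\h$ to $\tilde{\bs W}$. Concretely, fix $t>s$ and a bounded continuous functional $\gamma$ on the path space up to time $s$; one shows
\begin{align*}
\expectedt{\gamma\rkla{r_s\tilde u, r_s\tilde{\bs W}}\skla{\tilde{\bs W}(t)-\tilde{\bs W}(s),\,\phi}_{L^2}}&=0,\\
\expectedt{\gamma\rkla{r_s\tilde u, r_s\tilde{\bs W}}\rkla{\skla{\tilde{\bs W}(t),\phi}_{L^2}\skla{\tilde{\bs W}(t),\psi}_{L^2}-\skla{\tilde{\bs W}(s),\phi}_{L^2}\skla{\tilde{\bs W}(s),\psi}_{L^2}-(t-s)\skla{Q\phi,\psi}_{L^2}}}&=0
\end{align*}
for all $\phi,\psi$ in a suitable dense set, by passing to the limit in the identical expressions for $\tilde{\bs W}\h$ (which hold because $\tilde{\bs W}\h$ has the same law as $\bs W$, hence is a martingale in its own filtration with the right covariance up to the truncation in the index sets $\Zx,\Zy$; the truncation only removes modes and does not spoil these identities). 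By the martingale representation / L\'evy characterization for Hilbert-space-valued processes (see \cite{BrzezniakOndrejat}), $\tilde{\bs W}$ is then a $Q$-Wiener process adapted to $(\tilde{\mathcal F}_t)_{t\ge 0}$, and expanding in the basis $\g{kl}\kbalpha$ gives the stated series representation with i.i.d.\ Brownian motions $\btalpha{kl} := \lalpha{kl}^{-1}\skla{\tilde{\bs W}_\alpha,\g{kl}}_{L^2}$ (for $\lalpha{kl}\neq 0$). Adaptedness to the canonical filtrations $(\tilde{\mathcal F}_{h,t})$ and $(\tilde{\mathcal F}_t)$ is immediate from the definitions of these filtrations as the augmented filtrations generated by $(\tilde u\h,\tilde{\bs W}\h,\tilde u\h^0)$ and $(\tilde u,\tilde{\bs W},\tilde u^0)$ respectively, together with the fact that increments $\tilde{\bs W}(t)-\tilde{\bs W}(s)$ are independent of $\tilde{\mathcal F}_s$ — which follows again from equality of laws (the corresponding independence holds for $\bs W$ and the original filtration, and the functional form of the canonical filtration is preserved under the Jakubowski map).

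**Expected main obstacle.** The genuinely delicate point is the bookkeeping of the two different index truncations: the approximate noise driving the scheme is $\bs W_{Q,h}$ with sums over $k\in\Zx$, $l\in\Zy$ only, whereas $\tilde{\bs W}\h$ (and $\bs W$) are the \emph{full} $Q$-Wiener processes. One must be careful that the object whose law is transported is the full process $\bs W$ (this is what appears in the joint law in Proposition~\ref{prop:convergence1}), so that $\tilde{\bs W}\h$ is automatically a genuine $Q$-Wiener process for every $h$, and only in the reconstruction of the equation does the truncation to $\Zx\times\Zy$ re-enter. The second minor technical issue is justifying the limit passage in the quadratic-variation identity: one needs $\skla{\tilde{\bs W}\h(t),\phi}_{L^2}\to\skla{\tilde{\bs W}(t),\phi}_{L^2}$ in $L^2(\tilde\Omega)$, which follows from a.s.\ convergence in $C([0,\Tmax];L^2(\Om))$ plus the uniform bound $\expectedt{\sup_{t}\tnorm{\tilde{\bs W}\h(t)}_{L^2}^{2+\delta}}\le C$ coming from the Burkholder--Davis--Gundy inequality and Assumption~\ref{item:stochbasis:bounds}; this is routine but should be stated. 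I expect no further surprises, as the argument is by now standard and essentially identical to the one in \cite{FischerGrun2018} adapted to the tensor-product setting.
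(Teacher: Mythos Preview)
Your proposal is correct and follows precisely the standard martingale-characterization route that the paper intends: the paper does not give its own proof but simply writes ``For a proof we refer to \cite{FischerGrun2018},'' and your sketch reproduces exactly that argument (equality of laws for $\tilde{\bs W}\h$, passage to the limit in the martingale and quadratic-variation identities, L\'evy characterization for the limit), including the correct observation that it is the full $Q$-Wiener process $\bs W$ whose law is transported rather than the truncated $\bs W_{Q,h}$.
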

For a proof we refer to \cite{FischerGrun2018}.
Combining the results of Proposition \ref{prop:convergence1} and Lemma \ref{lem:identification} with the discrete Gagliardo--Nirenberg inequality allows us to establish improved convergence results.
\begin{lemma}
Let $\tilde{u}\h$ and $\Delta\h\tilde{u}\h$ be the random variables identified in Proposition \ref{prop:convergence1}. 
Furthermore, let the Assumptions \ref{item:S}, \ref{item:initial}, \ref{item:potential}, \ref{item:stoch}, \ref{item:regularization}, and \ref{item:stochbasis:boundthirdderivative} hold true and let $q\in\trkla{1,\infty}$. Then $\tilde{u}\h$ converges strongly along a subsequence towards $\tilde{u}$ in $L^2\trkla{0,\Tmax;W^{1,q}\trkla{\Om}}$ $\tilde{\Prob}$-almost surely.
\end{lemma}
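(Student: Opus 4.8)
The plan is to upgrade the convergences already secured in Proposition~\ref{prop:convergence1} --- namely $\tilde u\h\to\tilde u$ in $\Xu=C([0,\Tmax];L^q(\Om))$ for all $q<\infty$, together with the uniform bound $\Delta\h\tilde u\h\weak\widetilde{\Delta u}=\Delta\tilde u$ weakly in $L^2(0,\Tmax;L^2(\Om))$ --- to strong convergence of $\tilde u\h$ in $L^2(0,\Tmax;W^{1,q}(\Om))$. The key tool is the discrete Gagliardo--Nirenberg inequality from Corollary~\ref{cor:embedding}, which allows one to interpolate a first-order Sobolev norm between a low norm (an $L^q$-norm, where we have strong convergence) and the discrete Laplacian in $L^2$ (where we have uniform bounds). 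Concretely, I would apply it to the difference $\tilde u\h-\Ritz{\tilde u}$ (or, alternatively, $\tilde u\h$ compared to a suitable interpolant of $\tilde u$), exploiting that $\norm{\Delta\h\Ritz{\tilde u}}_{L^2}$ is controlled and $\norm{\tilde u\h - \Ritz{\tilde u}}_{L^q}\to 0$ by strong $\Xu$-convergence combined with standard Ritz-projection error estimates for the $H^2$-regular limit.

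First I would record the $\tilde\Prob$-almost sure uniform (in $h$) bound
$\norm{\tilde u\h}_{L^\infty(0,\Tmax;H^1(\Om))}+\norm{\Delta\h\tilde u\h}_{L^2(0,\Tmax;L^2(\Om))}\le C(\omega)<\infty$,
which follows from Proposition~\ref{prop:energyentropyestimate} via equality of laws (Proposition~\ref{prop:convergence1}~i)) together with Lemma~\ref{lem:stoppingtime}, since $\tilde T\h\to\Tmax$ $\tilde\Prob$-almost surely and the energy-entropy estimate controls $\mathcal E\h(\tilde u\h)$ below the threshold. Next, for fixed $\omega$ and a.e.\ $t$, I would write, using Corollary~\ref{cor:embedding} applied to $w\h:=\tilde u\h(t)-\proj{\tilde u(t)}$ (with $\projop$ the $L^2$-projection onto $\Uhxy$), an estimate of the schematic form
\begin{align*}
\norm{w\h}_{W^{1,q}(\Om)}\le C\,\norm{\Delta\h w\h}_{L^2(\Om)}^{\theta}\norm{w\h}_{L^{\bar q}(\Om)}^{1-\theta}+C\,\norm{w\h}_{L^{\bar q}(\Om)}
\end{align*}
for a suitable exponent $\bar q$ and interpolation parameter $\theta\in(0,1)$. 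The first factor is bounded uniformly in $h$ (using $\norm{\Delta\h w\h}_{L^2}\le \norm{\Delta\h\tilde u\h}_{L^2}+\norm{\Delta\h\proj{\tilde u(t)}}_{L^2}$ and stability of $\projop$), while the second factor tends to zero: $\norm{\tilde u\h-\proj{\tilde u}}_{L^{\bar q}}\le\norm{\tilde u\h-\tilde u}_{L^{\bar q}}+\norm{\tilde u-\proj{\tilde u}}_{L^{\bar q}}$, with the first term going to zero by $\Xu$-convergence and the second by approximation properties of $\projop$ on the $H^2$-regular (hence $W^{1,\bar q}$-regular) limit $\tilde u$. Raising to the power $2$, integrating in $t$ over $[0,\Tmax]$, and invoking dominated convergence (the integrand is bounded by the $L^\infty(0,\Tmax)$-in-$t$, $L^2$-in-$\Om$ bound on $\Delta\h\tilde u\h$, which is $\tilde\Prob$-a.s.\ finite) yields $\norm{\tilde u\h-\proj{\tilde u}}_{L^2(0,\Tmax;W^{1,q}(\Om))}\to 0$ along a subsequence. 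Finally, $\proj{\tilde u}\to\tilde u$ in $L^2(0,\Tmax;W^{1,q}(\Om))$ by standard projection estimates, and the triangle inequality closes the argument; passing to a subsequence is needed only to turn the in-probability/in-expectation decay into $\tilde\Prob$-almost sure convergence, but here, working pathwise, a single further diagonal subsequence suffices.

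The main obstacle I anticipate is the careful bookkeeping of exponents in the discrete Gagliardo--Nirenberg inequality: one must check that Corollary~\ref{cor:embedding} actually delivers an interpolation inequality with a positive power $\theta<1$ on $\norm{\Delta\h w\h}_{L^2}$ and the complementary power on a norm in which strong convergence is available, uniformly in $h$, in two space dimensions. A secondary technical point is that the natural "low" norm in which $\tilde u\h$ converges strongly is $L^q(\Om)$ rather than, say, $H^{-1}$, so one should confirm the precise form of the discrete Gagliardo--Nirenberg estimate admits such a low-order term (possibly after first passing through an $L^2(\Om)$-norm and using $q>2$); if the inequality is stated as $\norm{\nabla w\h}_{L^q}\lesssim\norm{\Delta\h w\h}_{L^2}^{\theta}\norm{w\h}_{L^2}^{1-\theta}+\norm{w\h}_{L^2}$, this causes no difficulty since $\tilde u\h\to\tilde u$ in $C([0,\Tmax];L^2(\Om))$ a fortiori. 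Aside from this, the proof is routine: no new stochastic ingredient enters, and everything is pathwise in $\omega$.
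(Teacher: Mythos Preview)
Your plan is on the right track conceptually, but it hinges on a form of the discrete Gagliardo--Nirenberg inequality that the paper does not supply. Corollary~\ref{cor:embedding} reads
\[
\norm{\phi\h}_{W^{1,q}(\Om)}\le C\norm{\Delta\h\phi\h}_{L^2(\Om)}^{\nu}\norm{\phi\h}_{H^1(\Om)}^{1-\nu}+C\norm{\phi\h}_{H^1(\Om)},
\]
so the low-order norm is $H^1$, not $L^2$ or $L^{\bar q}$. Applying this to $w\h$ therefore requires $\norm{w\h}_{H^1}\to 0$, which is precisely the claim for $q=2$; the circularity you flag at the end is real and not resolved by your sketch. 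A separate slip: you invoke an $L^\infty(0,\Tmax;L^2(\Om))$ bound on $\Delta\h\tilde u\h$ for dominated convergence, but Proposition~\ref{prop:energyentropyestimate} only yields $L^2(0,\Tmax;L^2(\Om))$.

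The paper closes the gap by \emph{first} proving strong $L^2(0,\Tmax;H^1)$-convergence via a direct energy argument, and only then bootstrapping. With $w\h:=\tilde u\h-\Ritz{\tilde u}$ and the non-lumped discrete Laplacian $A\h$ defined by $\iO A\h\phi\h\,\psi\h=\iO\nabla\phi\h\cdot\nabla\psi\h$, one has
\[
\norm{\nabla w\h}_{L^2(0,\Tmax;L^2(\Om))}^2=\int_0^{\Tmax}\!\!\iO A\h w\h\,w\h\le\norm{A\h w\h}_{L^2(0,\Tmax;L^2(\Om))}\norm{w\h}_{L^2(0,\Tmax;L^2(\Om))}.
\]
The Ritz projection (rather than $\projop$) is chosen precisely because $A\h\Ritz{\tilde u}$ is, by the defining property of $\Ritzop$, the $L^2$-projection of $-\Delta\tilde u$ onto $\Uhxy\cap H^1_*(\Om)$ and hence bounded by $\norm{\Delta\tilde u}_{L^2}$; together with Lemma~\ref{lem:AhDiscLap} for $A\h\tilde u\h$, this gives $\norm{A\h w\h}_{L^2(0,\Tmax;L^2)}\le C(\tilde\omega)$ $\tilde\Prob$-a.s. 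The factor $\norm{w\h}_{L^2(0,\Tmax;L^2)}$ tends to zero by the $\Xu$-convergence from Proposition~\ref{prop:convergence1} and the approximation properties of $\Ritzop$. Only once $H^1$-convergence is in hand does the paper pass to $W^{1,q}$, via the H\"older split
\[
\norm{\tilde u\h-\tilde u}_{L^2(W^{1,q})}\le C\norm{\tilde u\h-\tilde u}_{L^2(H^1)}^{1/q}\norm{\tilde u\h-\tilde u}_{L^2(W^{1,2q-2})}^{(q-1)/q},
\]
where the first factor now tends to zero and the second is bounded by Corollary~\ref{cor:embedding} applied separately to $\tilde u\h$ and $\tilde u$.
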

\begin{proof}
We follow the lines of Lemma 5.1 in \cite{Metzger2020}.
Using Hölder's inequality, we compute
\begin{align}
\norm{\tilde{u}\h-\tilde{u}}_{L^2\trkla{0,\Tmax;W^{1,q}\trkla{\Om}}}\leq C\norm{\tilde{u}\h-\tilde{u}}_{L^2\trkla{0,\Tmax;H^1\trkla{\Om}}}^{1/q}\norm{\tilde{u}\h-\tilde{u}}_{L^2\trkla{0,\Tmax;W^{1,2q-2}\trkla{\Om}}}^{\trkla{q-1}/q}\,.
\end{align}
Due to the discrete Gagliardo--Nirenberg inequality (cf.~Lemma \ref{lem:embedding}), we have\, $\tilde{u}\in L^2\trkla{0,\Tmax;W_\per^{1,q}\trkla{\Om}}$ $\tilde{\Prob}$-almost surely.
As $\tilde{u}\h$ is $\tilde{\Prob}$-almost surely in $\Uhxy\cap L^\infty\trkla{0,\Tmax;H^1_\per\trkla{\Om}}$ with $\Delta\h\tilde{u}\h\in L^2\trkla{0,\Tmax;L^2\trkla{\Om}}$, we may use the discrete Gagliardo--Nirenberg inequality (cf.~Corollary \ref{cor:embedding}) to show that $\tilde{u}$ is also $\tilde{\Prob}$-almost surely in $L^2\trkla{0,\Tmax;W_\per^{1,q}\trkla{\Om}}$.
Therefore, it suffices to show that $\tilde{u}\h$ converges strongly towards $\tilde{u}$ in $L^2\trkla{0,\Tmax;H^1\trkla{\Om}}$.
%We define the Ritz projection operator $\Ritzop\,:\,H^1_\per\trkla{\Om}\rightarrow\Uhxy$ on a periodic domain $\Om$ via
%\begin{align}
%\iO \nabla\Ritz{u}\cdot\nabla v\h\dx\dy=\iO\nabla u\cdot\nabla v\h\dx\dy\qquad\qquad \text{for all~}v\h\in\Uhxy
%\end{align}
%with the additional constraint $\iO\Ritz{u}\dx\dy=\iO u\dx\dy$.
Using the triangle inequality, we derive
\begin{align}
\norm{\tilde{u}\h-\tilde{u}}_{L^2\trkla{0,\Tmax;H^1\trkla{\Om}}}\leq \norm{\tilde{u}\h-\Ritz{\tilde{u}}}_{L^2\trkla{0,\Tmax;H^1\trkla{\Om}}}+\norm{\Ritz{\tilde{u}}-\tilde{u}}_{L^2\trkla{0,\Tmax;H^1\trkla{\Om}}}\,,
\end{align}
where $\Ritzop$ is the Ritz projection operator defined in \eqref{eq:def:Ritz}.
As $\Ritz{\tilde{u}}$ converges strongly towards $\tilde{u}$ in $H^1\trkla{\Om}$ and since $\tilde{u}\h$ is $L^\infty\trkla{H^1}$-regular, it only remains to show that the first term on the right-hand side vanishes.
We define $A\h\,:\,\Uhxy\rightarrow \Uhxy\cap H^1_*\trkla{\Om}$ via $\iO\trkla{A\h\phi\h}\psi\h\dx\dy=\iO\nabla\phi\h\cdot\nabla\psi\h\dx\dy$ for all $\phi\h,\psi\h\in\Uhxy$ and compute
\begin{multline}
\norm{\nabla\tilde{u}\h-\nabla\Ritz{\tilde{u}}}_{L^2\trkla{0,T;L^2\trkla{\Om}}}^2\leq\abs{\int_0^{\Tmax}\iO\rkla{A\h\trkla{\tilde{u}\h-\Ritz{\tilde{u}}}}\cdot\rkla{\tilde{u}\h-\Ritz{\tilde{u}}}\dx\dy}\\
\leq\norm{A\h\trkla{\tilde{u}\h-\Ritz{\tilde{u}}}}_{L^2\trkla{0,\Tmax;L^2\trkla{\Om}}}\norm{\tilde{u}\h-\Ritz{\tilde{u}}}_{L^2\trkla{0,\Tmax;L^2\trkla{\Om}}}\\
\leq C\norm{\tilde{u}\h-\Ritz{\tilde{u}}}_{L^2\trkla{0,\Tmax;L^2\trkla{\Om}}}\,.
\end{multline}
Together with the strong convergence of $\tilde{u}\h$ in $L^2\trkla{0,\Tmax;L^2\trkla{\Om}}$ $\tilde{\Prob}$-almost surely, which we have from Proposition \ref{prop:convergence1}, and the strong convergence of $\Ritz{\tilde{u}}$ towards $\tilde{u}$, we complete the proof.
\end{proof}

\subsection{Convergence of the deterministic terms}
In this section, we identify the limit functions $\tilde{J}\h^x$ and $\tilde{J}\h^y$ introduced in Proposition \ref{prop:convergence1} and use the a priori estimates from Proposition \ref{prop:energyentropyestimate} to establish additional (weak) convergence properties. 
In order to identify the limit of the fluxes, we consider the discrete pressure
\begin{align*}
\Uhxy\ni\tilde{p}\h:=-\chiTth\Delta\h\tilde{u}\h+ \chiTth \Ihxy{F^\prime\trkla{\tilde{u}\h}}+\chiTth h^\varepsilon\Delta\h\Delta\h\tilde{u}\h
\end{align*}
$\tilde{\Prob}$-almost everywhere.
In addition, we introduce the sets
\begin{subequations}
\begin{align}
S_\delta:=&\,\gkla{\trkla{\tilde{\omega},t,\trkla{x,y}}\in\tilde{\Omega}\times\trkla{0,\Tmax}\times\Om~:~\tilde{u}\trkla{\omega,t,x,y}>\delta}\,,\\
S_\delta\trkla{\tilde{\omega},t}:=&\gkla{\trkla{x,y}\in\Om~:~\tilde{u}\trkla{\tilde{\omega},t,\trkla{x,y}}>\delta}\,,\\
\begin{split}
S_\delta^{\Qh}:=&\left\{\trkla{\tilde{\omega},t,\trkla{x,y}}\in\tilde{\Omega}\times\trkla{0,\Tmax}\times\Om~:~\exists \Q\in\Qh~\text{s.t.}~\trkla{x,y}\in\Q~\right.\\
&\qquad\qquad\qquad\qquad\qquad\qquad\qquad\qquad\qquad\left.\text{and}~\restr{\tilde{u}\h\trkla{\tilde{\omega},t,\cdot}}{\Q}>\delta\right\}\,,
\end{split}\\
S_\delta^{\Qh}\trkla{\tilde{\omega},t}:=&\gkla{\trkla{x,y}\in\Om~:~\exists \Q\in\Qh~\text{s.t.}~\trkla{x,y}\in\Q~\text{and}~\restr{\tilde{u}\h\trkla{\tilde{\omega},t,\cdot}}{\Q}>\delta}\,.\label{eq:SdeltaQhot}
\end{align}
\end{subequations}
On these superlevel sets, we will be able to identify the limit functions of $\tilde{p}\h$, $\tilde{J}^x\h$, and $\tilde{J}\h^y$.
In particular, the following lemma holds true.
\begin{lemma}\label{lem:convergence2}
Let $\tilde{u}\h$ and $\tilde{u}$ be the random variables identified in Proposition \ref{prop:convergence1}. 
Furthermore, let the Assumptions \ref{item:S}, \ref{item:initial}, \ref{item:potential}, \ref{item:stoch}, \ref{item:regularization}, and \ref{item:stochbasis:boundthirdderivative} hold true.
Then, there exists a subsequence, again denoted by $\tilde{u}\h$ such that for all $q<\infty$ and $\overline{q}<\tfrac{2q}{q-2}$ the following convergence properties hold true:
\begin{subequations}
\begin{align}
\tilde{u}\h&\rightarrow\tilde{u}&&\text{in~}L^q\trkla{\tilde{\Omega};C\trkla{\tekla{0,\Tmax};L^q\trkla{\Om}}}\,,\label{eq:conv:strong1}\\
\tilde{u}\h&\rightarrow\tilde{u}&&\text{in~}L^{\overline{q}}\trkla{\tilde{\Omega};L^2\trkla{0,\Tmax;W^{1,q}\trkla{\Om}}}\,,\label{eq:conv:strong2}\\
\tilde{u}\h &\weakstar \tilde{u}&&\revy{\text{in~}L^{2\p}_{\text{weak-}\trkla{*}}\trkla{\tilde{\Omega};L^\infty\trkla{0,\Tmax;H^1_\per\trkla{\Om}}}\,,}\label{eq:conv:u:weakstar}\\
\Delta\h\tilde{u}\h&\weak\Delta\tilde{u}&&\text{in~}L^2\trkla{\tilde{\Omega};L^2\trkla{0,\Tmax;L^2\trkla{\Om}}}\,,\label{eq:conv:weak:discLap}\\
\Ihy{\sqrt{\meanGinvX{\tilde{u}\h}}}&\rightarrow \tilde{u}&&\text{in~} L^{q}\trkla{\tilde{\Omega};L^\infty\trkla{{0,\Tmax};L^{q}\trkla{\Om}}}\label{eq:conv:mobx}\,,\\
\Ihx{\sqrt{\meanGinvY{\tilde{u}\h}}}&\rightarrow \tilde{u}&&\text{in~} L^{q}\trkla{\tilde{\Omega};L^\infty\trkla{{0,\Tmax};L^{q}\trkla{\Om}}}\,,\label{eq:conv:moby}
%\tilde{u}\h&\weak\tilde{u}&&\text{in~} L^2\trkla{\tilde{\Omega};C^{1/4}\trkla{\tekla{0,\Tmax};\trkla{H^1_\per\trkla{\Om}}^\prime}}\,,
%h^\varepsilon \parx\Delta\h\tilde{u}\h&\rightarrow 0&&\text{in~} L^2\trkla{\tilde{\Omega};L^2\trkla{0,\Tmax;L^2\trkla{\Om}}}\,,\label{eq:conv:thirdx}\\
%h^\varepsilon \pary\Delta\h\tilde{u}\h&\rightarrow 0&&\text{in~} L^2\trkla{\tilde{\Omega};L^2\trkla{0,\Tmax;L^2\trkla{\Om}}}\,,\label{eq:conv:thirdy}
\end{align}
\end{subequations}
\revy{where we have used the notation as in Chapter 0.3 of \cite{FeireislNovotny2009} to denote the dual space of $L^{2\p/\trkla{2\p-1}}\trkla{\tilde{\Omega};L^1\trkla{0,\Tmax;\trkla{H^1_\per\trkla{\Om}}^\prime}}$.}\\
In addition, we have
\begin{subequations}
\begin{align}
\charac{S_{\delta/4}^{\Qh}}\charac{S_\delta}\parx\tilde{p}\h&\weak \charac{S_\delta}\parx\tilde{p}&&\text{in~}L^2\trkla{\tilde{\Omega};L^2\trkla{0,\Tmax;L^2\trkla{\Om}}}\,,\label{eq:conv:px}\\
\charac{S_{\delta/4}^{\Qh}}\charac{S_\delta}\pary\tilde{p}\h&\weak \charac{S_\delta}\pary\tilde{p}&&\text{in~}L^2\trkla{\tilde{\Omega};L^2\trkla{0,\Tmax;L^2\trkla{\Om}}}\,,\label{eq:conv:py}
\end{align}
\end{subequations}
with $\tilde{p}=-\Delta \tilde{u}+F^\prime\trkla{\tilde{u}}\text{~on~}S_\delta$ for all $\delta>0$ and
\begin{subequations}
\begin{align}
\tilde{J}\h^x&\weak \tilde{J}^x&&\text{in~}L^2\trkla{\tilde{\Omega};L^2\trkla{0,\Tmax;L^2\trkla{\Om}}}\,,\label{eq:conv:Jx}\\
\tilde{J}\h^y&\weak \tilde{J}^y&&\text{in~}L^2\trkla{\tilde{\Omega};L^2\trkla{0,\Tmax;L^2\trkla{\Om}}}\label{eq:conv:Jy}\,,
\end{align}
\end{subequations}
where $\tilde{J}^x$ and $\tilde{J}^y$ are the limit functions introduced in Proposition \ref{prop:convergence1}.
For every $\delta>0$, we are able to identify these limit functions on the superlevel sets $S_\delta$ as $\tilde{J}^x\!= \tilde{u}\parx\trkla{-\Delta\tilde{u}+F^\prime\trkla{\tilde{u}}}$ and $\tilde{J}^y= \tilde{u}\pary\trkla{-\Delta\tilde{u}+F^\prime\trkla{\tilde{u}}}$.
\end{lemma}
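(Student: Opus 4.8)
The plan is to combine the $\tilde{\Prob}$-almost sure strong convergences from Proposition~\ref{prop:convergence1} and the preceding lemma with the uniform a priori bounds from Proposition~\ref{prop:energyentropyestimate}, upgrading pathwise convergence to convergence in the Bochner spaces by Vitali's theorem, and then identifying weak limits via the structure of the discrete pressure. First I would establish the strong convergences \eqref{eq:conv:strong1}--\eqref{eq:conv:strong2}: we already have $\tilde{u}\h\to\tilde{u}$ $\tilde{\Prob}$-a.s.\ in $\Xu$ and in $L^2\trkla{0,\Tmax;W^{1,q}\trkla{\Om}}$ from the previous lemmata, so it remains to show uniform integrability in $\omega$ of the relevant norms to an exponent strictly larger than the target; this follows from Proposition~\ref{prop:energyentropyestimate} (which gives $\expectedt{\sup_t\mathcal{E}\h\trkla{\tilde{u}\h}^\p}\leq C$ for $\p$ arbitrarily large, hence uniform $L^{q}\trkla{\tilde{\Omega}}$-bounds on $\norm{\tilde{u}\h}_{C\trkla{L^q}}$ and $L^{\overline q}$-bounds on $\norm{\tilde{u}\h}_{L^2\trkla{W^{1,q}}}$ via the discrete Gagliardo--Nirenberg inequality of Corollary~\ref{cor:embedding}) together with equality of laws from part~i) of Proposition~\ref{prop:convergence1}. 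Vitali's convergence theorem then yields \eqref{eq:conv:strong1}--\eqref{eq:conv:strong2}. The weak-star convergence \eqref{eq:conv:u:weakstar} and the weak convergence \eqref{eq:conv:weak:discLap} follow from the uniform bounds in $L^{2\p}\trkla{\tilde{\Omega};L^\infty\trkla{0,\Tmax;H^1_\per\trkla{\Om}}}$ and $L^2\trkla{\tilde{\Omega};L^2\trkla{0,\Tmax;L^2\trkla{\Om}}}$ respectively (Proposition~\ref{prop:energyentropyestimate} plus equality of laws), by reflexivity / sequential Banach--Alaoglu, the limit of $\Delta\h\tilde{u}\h$ being $\Delta\tilde{u}$ by Corollary~\ref{cor:tildedeltau}.

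Next I would treat the mobility terms \eqref{eq:conv:mobx}--\eqref{eq:conv:moby}. Since $m\trkla{u}=u^2$ and $G''\trkla{u}=u^{-2}$, we have $\meanGinvX{\tilde{u}\h}=\ekla{G''\trkla{\tilde{u}\h}}^{-1}_x$, which by the mean-value/oscillation control of Lemma~\ref{lem:oscillation} is comparable on each element to $\tilde{u}\h^2$; hence $\Ihy{\sqrt{\meanGinvX{\tilde{u}\h}}}$ is, up to interpolation error controlled by $h\norm{\partial_x\tilde{u}\h}$, close to $\tilde{u}\h$. The pathwise $W^{1,q}$-convergence of $\tilde{u}\h$ then gives the pathwise convergence $\Ihy{\sqrt{\meanGinvX{\tilde{u}\h}}}\to\tilde{u}$ in $L^\infty\trkla{0,\Tmax;L^q\trkla{\Om}}$, and another Vitali argument (uniform integrability again from Proposition~\ref{prop:energyentropyestimate}) upgrades this to \eqref{eq:conv:mobx}; \eqref{eq:conv:moby} is symmetric. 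The weak convergences \eqref{eq:conv:Jx}--\eqref{eq:conv:Jy} are then immediate from the uniform $L^2\trkla{\tilde{\Omega};L^2\trkla{0,\Tmax;L^2\trkla{\Om}}}$-bound on $\tilde{J}\h^x,\tilde{J}\h^y$ (definition \eqref{eq:def:jh} and Proposition~\ref{prop:energyentropyestimate}), the limits being the objects $\tilde{J}^x,\tilde{J}^y$ already produced in Proposition~\ref{prop:convergence1}.

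The core of the lemma is the identification of limits on the superlevel sets, i.e.\ \eqref{eq:conv:px}--\eqref{eq:conv:py} and the formulas for $\tilde{p}$, $\tilde{J}^x$, $\tilde{J}^y$ on $S_\delta$. On $S_\delta^{\Qh}\trkla{\tilde{\omega},t}$ the discrete solution is bounded below by $\delta$ on whole elements, so the singular terms $\Ihxy{F'\trkla{\tilde{u}\h}}$ and $\meanFX{\tilde{u}\h}$ are bounded there (Assumption~\ref{item:potential}), and the regularization term $h^\varepsilon\Delta\h\Delta\h\tilde{u}\h$, tested against $\charac{S_{\delta/4}^{\Qh}}$ and a smooth function, vanishes in the limit because $h^\varepsilon\norm{\Delta\h u\h}\h^2$ and $h^\varepsilon\norm{\partial_x\Delta\h u\h}_{L^2}^2$ are $L^1\trkla{\tilde\Omega\times(0,\Tmax)}$-bounded (Proposition~\ref{prop:energyentropyestimate}) while carrying a prefactor $h^\varepsilon\to 0$. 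Writing $\parx\tilde{p}\h=\chiTth\parx\trkla{-\Delta\h\tilde{u}\h+\Ihxy{F'\trkla{\tilde{u}\h}}+h^\varepsilon\Delta\h\Delta\h\tilde{u}\h}$ from \eqref{eq:identify:Jhx}, one passes to the limit term by term: $\parx\Delta\h\tilde{u}\h\weak\parx\Delta\tilde{u}$ weakly (after shifting $\Delta\h$ onto a test function, as in Corollary~\ref{cor:tildedeltau}), $\parx\Ihxy{F'\trkla{\tilde{u}\h}}\to\parx F'\trkla{\tilde{u}\h}\to\parx F'\trkla{\tilde{u}}=F''\trkla{\tilde u}\partial_x\tilde u$ strongly on $S_\delta$ using the uniform lower bound $\delta$, the chain rule \eqref{eq:chainrule}, the $W^{1,q}$-convergence of $\tilde{u}\h$, and interpolation-error estimates (Appendix~\ref{sec:appendix}). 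Finally, since $\charac{S_\delta}\le\charac{S_{\delta/4}^{\Qh}}$ holds eventually in $h$ on the convergence subsequence (because $\tilde{u}\h\to\tilde{u}$ uniformly in $L^q$ forces $\tilde{u}\h>\delta/4$ on whole elements wherever $\tilde{u}>\delta$, for $h$ small — this is exactly where the exhaustion/Egorov-type argument enters), one gets \eqref{eq:conv:px}--\eqref{eq:conv:py}, and multiplying by $\Ihy{\sqrt{\meanGinvX{\tilde{u}\h}}}\to\tilde{u}$ gives $\charac{S_\delta}\tilde{J}^x=\charac{S_\delta}\tilde{u}\parx\trkla{-\Delta\tilde{u}+F'\trkla{\tilde{u}}}$; the $y$-statements are symmetric, and letting $\delta\searrow 0$ (with a further subsequence) exhausts $\tekla{\tilde{u}>0}$. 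The main obstacle I anticipate is precisely this last identification step: one must carefully reconcile the "element-wise" superlevel set $S_\delta^{\Qh}$ with the pointwise set $S_\delta$ (the oscillation Lemma~\ref{lem:oscillation} and uniform convergence are the tools), and one must show the singular nonlinearity $F'$ and the degenerate mobility factor do not interact badly in the product passing to the limit — both weak-times-strong products, which is why the strong $W^{1,q}$-convergence and the separation from zero on $S_\delta$ are indispensable, and why the vanishing of the $h^\varepsilon$-regularization must be quantified rather than merely asserted.
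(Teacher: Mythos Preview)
Your proposal follows the same architecture as the paper's proof: Vitali upgrades of the almost-sure convergences using the moment bounds of Proposition~\ref{prop:energyentropyestimate}, weak compactness for \eqref{eq:conv:u:weakstar}, \eqref{eq:conv:weak:discLap}, \eqref{eq:conv:Jx}--\eqref{eq:conv:Jy}, and a term-by-term identification of the pressure gradient on superlevel sets via an Egorov-type localization combined with a weak-times-strong product.

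One point, however, needs correction. The inclusion $\charac{S_\delta}\le\charac{S_{\delta/4}^{\Qh}}$ cannot be deduced from convergence ``uniformly in $L^q$'': $L^q$-closeness in space gives no pointwise lower bound, so it does not force $\tilde u\h>\delta/4$ on whole elements where $\tilde u>\delta$. The paper instead uses that the already-established $L^2\trkla{0,\Tmax;W^{1,q}_\per\trkla{\Om}}$-convergence yields, via Sobolev embedding, $\tilde u\h\trkla{\tilde\omega,t,\cdot}\to\tilde u\trkla{\tilde\omega,t,\cdot}$ in $C^\gamma_\per\trkla{\overline\Om}$ for almost every $\trkla{\tilde\omega,t}\in\tilde\Omega\times\trkla{0,\Tmax}$. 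Egorov's theorem for Bochner-integrable functions then produces, for each $\iota>0$, a set $\mathfrak E_\iota\subset\tilde\Omega\times\trkla{0,\Tmax}$ of measure less than $\iota$ outside of which this $C^\gamma$-convergence is \emph{uniform in $\trkla{\tilde\omega,t}$}; it is on $\mathfrak E_\iota^c$ that one has $\operatorname{supp}\tilde\zeta\trkla{\tilde\omega,t}\subset S_\delta\trkla{\tilde\omega,t}\subset S_{\delta/4}^{\Qh}\trkla{\tilde\omega,t}$ for $h$ small, and the contribution over $\mathfrak E_\iota$ is bounded by $C\iota^{1/2}$ via H\"older and the uniform $L^2$-bounds on $\parx\tilde p\h$ and $\tilde J\h^x$. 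One then lets $\iota\searrow 0$. You have the ingredients for this --- the $W^{1,q}$-convergence and the Egorov keyword both appear in your plan --- but the mechanism you wrote down for the set inclusion is not the one that actually works.
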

\begin{proof}
By Proposition \ref{prop:convergence1}, we have in particular $\tilde{u}\h\rightarrow\tilde{u}$. 
Choosing $\p$ sufficiently large in Proposition \ref{prop:energyentropyestimate} and combining Proposition \ref{prop:convergence1} with Vitali's convergence theorem and with the bounds on $\tilde{u}\h$ and on $\Delta\h\tilde{u}\h$ (see Proposition~\ref{prop:energyentropyestimate}),
%From Proposition \ref{prop:convergence1}, the bounds in Proposition \ref{prop:energyentropyestimate}, and Vitali's convergence theorem,
we obtain the strong convergence of $\tilde{u}\h$ towards $\tilde{u}$ in $L^q\trkla{\tilde{\Omega};C\trkla{\tekla{0,\Tmax};L^q\trkla{\Omega}}}$, which gives \eqref{eq:conv:strong1}.
To establish \eqref{eq:conv:strong2}, we use the continuous Gagliardo--Nirenberg inequality and Lemma \ref{lem:embedding} to show that
\begin{align}
\begin{split}
\norm{\tilde{u}\h-\hat{u}}_{L^2\trkla{0,\Tmax;W^{1,q}\trkla{\Om}}}^{\hat{q}} \leq&\, C\norm{\Delta\h \tilde{u}\h}_{L^2\trkla{0,\Tmax;L^2\trkla{\Om}}}^{\hat{q}\trkla{q-2}/q}\norm{\tilde{u}\h}_{L^\infty\trkla{0,\Tmax;H^1\trkla{\Om}}}^{\hat{q}-\hat{q}\trkla{q-2}/q} \\
&+C\norm{ \tilde{u}}_{L^2\trkla{0,\Tmax;H^2\trkla{\Om}}}^{\hat{q}\trkla{q-2}/q}\norm{\tilde{u}}_{L^\infty\trkla{0,\Tmax;H^1\trkla{\Om}}}^{\hat{q}-\hat{q}\trkla{q-2}/q}\\
&+ C\norm{\tilde{u}\h}_{L^2\trkla{0,\Tmax;H^1\trkla{\Om}}}^{\hat{q}}+ C\norm{\tilde{u}}_{L^2\trkla{0,\Tmax;H^1\trkla{\Om}}}^{\hat{q}}
\end{split}
\end{align}
for all $\overline{q}<\hat{q}<\tfrac{2q}{q-2}$.
As this choice of $\hat{q}$ in particular implies that $\hat{q}\trkla{q-2}/q<2$, we may use Hölder's inequality to show that $\expectedt{\norm{\tilde{u}\h-\tilde{u}}_{L^2\trkla{0,\Tmax;W^{1,q}\trkla{\Om}}}^{\hat{q}}}$ is uniformly bounded.
As we already established the $\tilde{\Prob}$-almost sure convergence in Proposition \ref{prop:convergence1}, an application of Vitali's convergence theorem provides the result.\\
From the bounds on $\tilde{u}\h$ stated in Proposition \ref{prop:energyentropyestimate}, we obtain the weak convergence $\tilde{u}\h\weakstar \mathfrak{u}$ in \revy{$L^{2\p}_{\text{weak-}\trkla{*}}\trkla{\tilde{\Omega};L^\infty\trkla{0,\Tmax;H^1_\per\trkla{\Om}}}$} along a subsequence.
As the strong convergence of $\tilde{u}\h$ towards $\tilde{u}$ is already established in \eqref{eq:conv:strong1}, we are able to identify $\mathfrak{u}$ and $\tilde{u}$.\\
To establish the weak convergence expressed in \eqref{eq:conv:weak:discLap}, we again start with the uniform $L^2\trkla{\tilde{\Omega};L^2\trkla{0,\Tmax;L^2\trkla{\Om}}}$-bounds on $\Delta\h\tilde{u}\h$ stated in Proposition \ref{prop:convergence1}.
These bounds provide the existence of a subsequence converging weakly in $L^2\trkla{\tilde{\Omega};L^2\trkla{0,\Tmax;L^2\trkla{\Om}}}$ towards some limit function $\mathfrak{v}$.
%This provides in particular
%\begin{align}
%\int_0^{\Tmax}\iO\Delta\h \tilde{u}\h\phi\dx\dy\dt\weak\int_0^{\Tmax}\iO\mathfrak{v}\phi\dx\dy\dt
%\end{align}
%in $L^r\trkla{\tilde{\Omega}}$ for $r\leq 2$ and $\phi\in L^2\trkla{0,\Tmax;L^2\trkla{\Om}}$. 
As we also have $\Delta\h\tilde{u}\h\weak \Delta \tilde{u}$ $\tilde{\Prob}$-almost surely (cf.~Proposition \ref{prop:convergence1} and Corollary \ref{cor:tildedeltau}), combining the aforementioned uniform bounds and Vitali's convergence theorem provides
\begin{align}
\int_0^{\Tmax}\iO\Delta\h\tilde{u}\h\phi\dx\dy\dt\rightarrow\int_0^{\Tmax}\iO\Delta\tilde{u}\phi\dx\dy\dt
\end{align}
strongly in $L^r\trkla{\tilde{\Omega}}$ for $r<2$.
Therefore, we have $\mathfrak{v}=\Delta\h \tilde{u}$.\\

To establish \eqref{eq:conv:mobx}, it suffices to show $\expectedt{\norm{\Ihy{\sqrt{\meanGinvX{\tilde{u}\h}}}-\tilde{u}\h}_{L^\infty\trkla{0,\Tmax;L^q\trkla{\Om}}}^q}$ vanishes for $h\searrow0$.
Using Hölder's inequality, we obtain 
\begin{multline}
\expectedt{\sup_{t\in\tekla{0,\Tmax}}\iO\abs{\Ihy{\sqrt{\meanGinvX{\tilde{u}\h}}}-\tilde{u}\h}^q\dx\dy}\\
\leq \rkla{\expectedt{\sup_{t\in\tekla{0,\Tmax}}\iO\abs{\Ihy{\sqrt{\meanGinvX{\tilde{u}\h}}}-\tilde{u}\h}^2\dx\dy}}^{1/2}\\
\times\rkla{\expectedt{\sup_{t\in\tekla{0,\Tmax}}\iO\abs{\Ihy{\sqrt{\meanGinvX{\tilde{u}\h}}}-\tilde{u}\h}^{2q-2}\dx\dy}}^{1/2}\,.
\end{multline}
In the following, we will show that the first factor converges towards zero while the second factor remains bounded.
Since $\restr{\meanGinvX{\tilde{u}\h}}{\Kx}\trkla{y}\in\tekla{\min_{x\in\Kx}\tilde{u}\h^2\trkla{x,y},\max_{x\in\Kx}\tilde{u}\h^2\trkla{x,y}}$, we compute on each $Q\in\Qh$
\begin{multline}
\int_{\Q}\abs{\Ihy{\sqrt{\meanGinvX{\tilde{u}\h}}}-\tilde{u}\h}^2\dx\dy\leq \int_{\Q}\Ihy{\tabs{\max_{\Kx}\tilde{u}\h-\min_{\Kx}\tilde{u}\h}^2}\dx\dy\\
\leq h^2\int_{\Q}\Ihy{\tabs{\parx\tilde{u}\h}^2}\dx\dy\,,
\end{multline}
which provides the first result.
To show that the second integral remains bounded, we use that on each element $\Kx=\rkla{i\hx,\trkla{i+1}\hx}$ we have $\restr{\meanGinvX{\tilde{u}\h}}{\Kx}\trkla{y}=\tilde{u}\h\trkla{i\hx,y}\cdot\tilde{u}\h\trkla{\trkla{i+1}\hx,y}$.
Applying the inequality of arithmetic and geometric means and Jensen's inequality, we obtain on each $\Q:= \trkla{i\hx,\trkla{i+1}\hx}\times\Ky$
\begin{multline}
\int_{\Q}\abs{\Ihy{\sqrt{\meanGinvX{\tilde{u}\h}}}}^{2q-2}\dx\dy\leq \int_{\Q} \abs{\Ihy{\sqrt{\tilde{u}\h\trkla{i\hx,y}\tilde{u}\h\trkla{\trkla{i+1}\hx,y}}}}^{2q-2}\dx\dy\\
\leq\int_{Q}\abs{\tfrac12\Ihy{\tilde{u}\h\trkla{i\hx,y}+\tilde{u}\h\trkla{\trkla{i+1}\hx,y}}}^{2q-q}\dx\dy\\
\leq \int_{\Q}\tfrac12 \Ihy{\tilde{u}\h^{2q-2}\trkla{i\hx,y}+\tilde{u}\h^{2q-2}\trkla{\trkla{i+1}\hx,y}}\dx\dy=\int_{\Q}\Ihxy{\tilde{u}\h^{2q-2}}\dx\dy\,.
\end{multline}
Summing over all $\Q\in\Qh$ and applying \eqref{eq:normequivalence}, we obtain the result for $\p$ large enough.
The convergence expressed in \eqref{eq:conv:moby} can be shown with analogous computations.

To address \eqref{eq:conv:px}, we combine the bounds in Proposition \ref{prop:convergence1} with the estimate
\begin{align}\label{eq:tmp:bound:p}
\iO\charac{S_{\delta/4}^{\Qh}}\charac{S_\delta}\tabs{\parx\tilde{p}\h}^2\dx\dy%\leq \delta^{-2}C\iO\Ihy{\tilde{u}\h^2\tabs{\parx\tilde{p}\h}^2}\dx\dy\\
\leq\delta^{-2}C\iO\Ihy{\meanGinvX{\tilde{u}\h}\tabs{\parx\tilde{p}\h}^2}\dx\dy\,,
\end{align}
which indicates that $\charac{S_{\delta/4}^{\Qh}}\charac{S_\delta}\parx\tilde{p}\h$ is uniformly bounded in $L^2\trkla{\tilde{\Omega};L^2\trkla{0,\Tmax;L^2\trkla{\Om}}}$.
Therefore, there exists a subsequence converging towards a limit function $\eta_\delta$. 
In the following, we have to show that $\eta_\delta=\parx\tilde{p}$ on $S_\delta$.\\
We approximate the characteristic function $\charac{S_\delta}$ by a family of functions ${\tilde{\chi}_N\trkla{\omega,t}}_{N\in\mathds{N}}\,:\,\tilde{\Omega}\times\tekla{0,\Tmax}\rightarrow C^\infty_\per\trkla{\Om}$ satisfying
\begin{align}
\begin{split}
\tilde{\chi}_N\trkla{\omega,t,\trkla{x,y}}=&\,~0\qquad\text{for~} \trkla{x,y}\in S_\delta\trkla{\omega,t}^c\,,\\
\tilde{\chi}_N\trkla{\omega,t,\trkla{x,y}}=&\left\{\begin{matrix}
1&\text{if~}\dist\trkla{\trkla{x,y},\partial S_\delta\trkla{\omega,t}}\geq \tfrac1N\,,\\
0&\text{if~}\dist\trkla{\trkla{x,y},\partial S_\delta\trkla{\omega,t}}\leq\tfrac1{N+1}\,.
\end{matrix}\right.
\end{split}
\end{align}
To identify $\eta_\delta$ on $S_\delta$ with $\parx\tilde{p}$, we will show that
\begin{align}
\expectedt{\int_0^{\Tmax}\iO\eta_\delta\tilde{\phi}\tilde{\chi}_N\dx\dy\dt}=-\expectedt{\int_0^{\Tmax}\iO\tilde{p}\parx\trkla{\tilde{\phi}\tilde{\chi}_N}\dx\dy\dt}\,
\end{align}
for sufficiently regular test functions $\tilde{\phi}$.
In the following, we will show that this equation is valid even in the slightly more general case when $\tilde{\phi}\tilde{\chi}$ is replaced by the test function $\tilde{\zeta}\in L^\infty\trkla{\tilde{\Omega};C^\infty\trkla{\tekla{0,\Tmax};C^\infty_\per\trkla{\overline{\Om}}}}$ with $\operatorname{supp} \tilde{\zeta}\trkla{\omega,t}\subset S_\delta\trkla{\omega,t}$.

As $\tilde{u}\h$ converges strongly in $W^{1,q}_\per\trkla{\Om}$ ($q<\infty$) $\tilde{\Prob}$-almost surely for almost all $t\in\rkla{0,\Tmax}$ and hence almost everywhere in $\tilde{\Omega}\times\trkla{0,\Tmax}$, we have $\tilde{u}\h\rightarrow \tilde{u}$ in $C^{\gamma}_\per\trkla{\overline{\Om}}$ with $\gamma<1$ almost everywhere in $\tilde{\Omega}\times\rkla{0,\Tmax}$.
This allows us to apply an appropriate version of  Egorov's theorem (cf. Theorem 42 in \cite{Dinculeanu2000}) and to deduce for all $\iota>0$ the existence of a subset $\mathfrak{E}_\iota\subset\tilde{\Omega}\times\rkla{0,\Tmax}$ with measure smaller than $\iota$ such that $\tilde{u}\h$ converges uniformly in $\trkla{\tilde{\Omega}\times\trkla{0,\Tmax}}\setminus\mathfrak{E}_\iota=:\mathfrak{E}_\iota^c$.
Therefore, we compute
\begin{multline}
\expectedt{\int_0^{\Tmax}\iO\charac{S_{\delta/4}^{\Qh}}\charac{S_\delta}\parx \tilde{p}\h \tilde{\zeta}\dx\dy\dt}=\int_{\tilde{\Omega}}\int_0^{\Tmax}\iO\charac{S_{\delta/4}^{\Qh}}\parx \tilde{p}\h \tilde{\zeta}\dx\dy\dt\, \mathrm{d}\probt{\tilde{\omega}}\\
=\int_{\mathfrak{E}_\iota^c} \iO \charac{S_{\delta/4}^{\Qh}}\parx \tilde{p}\h \tilde{\zeta}\dx\dy\dt\, \mathrm{d}\probt{\tilde{\omega}} +\int_{\mathfrak{E}_\iota} \iO \charac{S_{\delta/4}^{\Qh}}\parx \tilde{p}\h \tilde{\zeta}\dx\dy\dt\, \mathrm{d}\probt{\tilde{\omega}}=: A+B\,.
\end{multline} 
Applying Hölder's inequality, we immediately obtain $\tabs{B}\leq C\iota^{1/2}$.
As $\tilde{u}\h$ converges uniformly towards $\tilde{u}$ in $\mathfrak{E}_\iota^c$, we have $\operatorname{supp} \tilde{\zeta}\trkla{\omega,t}\subset S_\delta\trkla{\omega,t}\subset S_{\delta/4}^{\Qh}\trkla{\omega,t}$ for $h$ sufficiently small.
Using the definition of $\tilde{p}\h$, we obtain
\begin{align}
\begin{split}
A=&\,-\int_{\mathfrak{E}_\iota^c} \iO \tilde{p}\h \parx\tilde{\zeta}\dx\dy\dt\, \mathrm{d}\probt{\tilde{\omega}}\\
=&\, \int_{\mathfrak{E}_\iota^c} \iO\chiTth \Delta\h\tilde{u}\h \parx\tilde{\zeta}\dx\dy\dt\, \mathrm{d}\probt{\tilde{\omega}} -\int_{\mathfrak{E}_\iota^c} \iO \chiTth \Ihxy{F^\prime\trkla{\tilde{u}\h}} \parx\tilde{\zeta}\dx\dy\dt\, \mathrm{d}\probt{\tilde{\omega}}\\
&-h^\varepsilon\int_{\mathfrak{E}_\iota^c} \iO \chiTth \Delta\h\Delta\h\tilde{u}\h \parx\tilde{\zeta}\dx\dy\dt\, \mathrm{d}\probt{\tilde{\omega}}\\
=:&\,A_1+A_2+A_3\,.
\end{split}
\end{align}
The convergence of $A_1$ towards $ \int_{\mathfrak{E}_\iota^c} \iO \Delta\tilde{u} \parx\tilde{\zeta}\dx\dy\dt\, \mathrm{d}\probt{\tilde{\omega}}$ is a direct consequence of the weak convergence \eqref{eq:conv:weak:discLap}.
To obtain the convergence of $A_2$, we use
\begin{align*}
A_2\!=\!-\!\int_{\mathfrak{E}_\iota^c}\! \iO\!\chiTth F^\prime\trkla{\tilde{u}\h} \parx\tilde{\zeta}\dx\dy\dt\, \mathrm{d}\probt{\tilde{\omega}} \!+\!\!\int_{\mathfrak{E}_\iota^c}\! \iO\!\chiTth \trkla{\ids-\Ihxyop}\tgkla{F^\prime\trkla{\tilde{u}\h}} \parx\tilde{\zeta}\dx\dy\dt\, \mathrm{d}\probt{\tilde{\omega}}\,.
\end{align*}
As $\operatorname{supp} \tilde{\zeta}\trkla{\omega,t}\subset S_{\delta/4}^{\Qh}\trkla{\omega,t}$, the second term vanishes for $h\searrow0$ due to Lemma \ref{lem:errorF} and the first term converges towards $-\int_{\mathfrak{E}_\iota^c} \iO F^\prime\trkla{\tilde{u}} \parx\tilde{\zeta}\dx\dy\dt\, \mathrm{d}\probt{\tilde{\omega}}$ due to Vitali's convergence theorem.
The treatment of $A_3$ is more delicate, as the bounds on $h^\varepsilon\Delta\h\Delta\h \tilde{u}\h$ are not obvious. 
We begin by splitting $A_3$ into
\begin{align}
\begin{split}
A_3=&\,-h^\varepsilon\int_{\mathfrak{E}_\iota^c} \iO\chiTth\Ihxy{ \Delta\h\Delta\h\tilde{u}\h \Ihxy{\parx\tilde{\zeta}}}\dx\dy\dt\, \mathrm{d}\probt{\tilde{\omega}} \\
&\,-h^\varepsilon\int_{\mathfrak{E}_\iota^c} \iO\chiTth\trkla{\ids-\Ihxyop}\gkla{ \Delta\h\Delta\h\tilde{u}\h \Ihxy{\parx\tilde{\zeta}}}\dx\dy\dt\, \mathrm{d}\probt{\tilde{\omega}}\\
&\,-h^\varepsilon\int_{\mathfrak{E}_\iota^c} \iO \chiTth\Delta\h\Delta\h\tilde{u}\h \trkla{\ids-\Ihxyop}\tgkla{\parx\tilde{\zeta}}\dx\dy\dt\, \mathrm{d}\probt{\tilde{\omega}}=:A_{3_a}+A_{3_b}+A_{3_c}\,.
\end{split}
\end{align}
Recalling \eqref{eq:defDiscLapl} and applying Hölder's inequality shows that $A_{3_a}$ vanishes, since
\begin{align}
\begin{split}
\tabs{A_{3_a}}\leq&\, h^{\varepsilon/2}\norm{h^{\varepsilon/2}\parx\Delta\h\tilde{u}\h}_{L^2\trkla{\tilde{\Omega};L^2\trkla{0,\Tmax;L^2\trkla{\Om}}}}\norm{\parx\Ihxy{\parx\tilde{\zeta}}}_{L^2\trkla{\tilde{\Omega};L^2\trkla{0,\Tmax;L^2\trkla{\Om}}}}\\
&+\,h^{\varepsilon/2}\norm{h^{\varepsilon/2}\pary\Delta\h\tilde{u}\h}_{L^2\trkla{\tilde{\Omega};L^2\trkla{0,\Tmax;L^2\trkla{\Om}}}}\norm{\pary\Ihxy{\parx\tilde{\zeta}}}_{L^2\trkla{\tilde{\Omega};L^2\trkla{0,\Tmax;L^2\trkla{\Om}}}}\\
\leq&\, h^{\varepsilon/2} C\,.
\end{split}
\end{align}
Due to \eqref{eq:dqLap} we have $h\norm{h^{\varepsilon/2}\Delta\h\Delta\h\tilde{u}\h}_{L^2\trkla{\tilde{\Omega};L^2\trkla{0,\Tmax;L^2\trkla{\Om}}}}\leq C$.
Therefore, standard estimates for $\Ihxyop$, which can be found e.g. in Theorem 4.4.20 in \cite{BrennerScott}, provide
\begin{align}
\begin{split}
\tabs{A_{3_b}}\leq&\, C h^{\varepsilon/2} h\norm{h^{\varepsilon/2}\Delta\h\Delta\h\tilde{u}\h}_{L^2\trkla{\tilde{\Omega};L^2\trkla{0,\Tmax;L^2\trkla{\Om}}}}\norm{\nabla \Ihxy{\parx\tilde{\zeta}}}_{L^2\trkla{\tilde{\Omega};L^2\trkla{0,\Tmax;L^2\trkla{\Om}}}}\\
\leq&\, C h^{\varepsilon/2}\,.
\end{split}
\end{align}
Similar considerations based on Lemma \ref{lem:Ih:error} provide
\begin{align}
\begin{split}
\tabs{A_{3_c}}\leq &\,h^{\varepsilon/2}\norm{h^{\varepsilon/2}\Delta\h\Delta\h\tilde{u}\h}_{L^2\trkla{\tilde{\Omega};L^2\trkla{0,\Tmax;L^2\trkla{\Om}}}}\norm{\trkla{\ids-\Ihxyop}\tgkla{\parx\tilde{\zeta}}}_{L^2\trkla{\tilde{\Omega};L^2\trkla{0,\Tmax;L^2\trkla{\Om}}}}\\
\leq&\, C h^{1+\varepsilon/2} \norm{\parx\tilde{\zeta}}_{L^2\trkla{\tilde{\Omega};L^2\trkla{0,\Tmax;H^2\trkla{\Om}}}}\,.
\end{split}
\end{align}
Collecting the results above, we have
\begin{align}
\begin{split}
A\rightarrow&\, \int_{\mathfrak{E}_\iota^c} \iO \Delta\tilde{u} \parx\tilde{\zeta}\dx\dy\dt\, \mathrm{d}\probt{\tilde{\omega}} -\int_{\mathfrak{E}_\iota^c} \iO F^\prime\trkla{\tilde{u}} \parx\tilde{\zeta}\dx\dy\dt\, \mathrm{d}\probt{\tilde{\omega}}\\
=&\,-\int_{\tilde{\Omega}}\int_0^{\Tmax} \iO \tilde{p}\parx\tilde{\zeta}\dx\dy\dt\, \mathrm{d}\probt{\tilde{\omega}} -\int_{\mathfrak{E}_\iota} \iO \Delta\tilde{u} \parx\tilde{\zeta}\dx\dy\dt\, \mathrm{d}\probt{\tilde{\omega}}\\
& +\int_{\mathfrak{E}_\iota} \iO F^\prime\trkla{\tilde{u}} \parx\tilde{\zeta}\dx\dy\dt\, \mathrm{d}\probt{\tilde{\omega}}\,.
\end{split}
\end{align}
As $\tilde{u}>\delta$ in $\operatorname{supp}\parx\tilde{\zeta}$, the last two integrals can be bounded by $C\iota^{1/2}$.
Therefore, we may identify $\eta_\delta$ with $\charac{S_\delta}\parx\tilde{p}$, which provides \eqref{eq:conv:px}.
The convergence expressed in \eqref{eq:conv:py} can be proven by similar computations.\\
The weak convergence expressed in \eqref{eq:conv:Jx} and \eqref{eq:conv:Jy} can be established analogously to \eqref{eq:conv:weak:discLap}.
Therefore, it remains to show that the fluxes $\tilde{J}^x$ and $\tilde{J}^y$ coincide with $\tilde{u}\parx \tilde{p}$ and $\tilde{u}\pary \tilde{p}$, respectively.
Reusing the ideas of the proof of \eqref{eq:conv:px}, we choose $\tilde{\zeta}\in L^\infty\trkla{\tilde{\Omega};C^\infty\trkla{\tekla{0,\Tmax};C_\per^\infty\trkla{\overline{\Om}}}}$ with $\operatorname{supp}\tilde{\zeta}\trkla{\omega,t}\subset S_\delta\trkla{\omega,t}$ and compute
\begin{align}
\begin{split} 
&\expectedt{\int_0^{\Tmax}\iO \tilde{J}\h^y\tilde{\zeta}\dx\dy\dt}=\expectedt{\int_0^{\Tmax}\iO \Ihy{\sqrt{\meanGinvX{\tilde{u}\h}}\parx \tilde{p}\h}\tilde{\zeta}\dx\dy\dt}\\
&\qquad=\expectedt{\int_0^{\Tmax}\iO \charac{S_{\delta/4}^{\Qh}}\charac{S_\delta} \Ihy{\sqrt{\meanGinvX{\tilde{u}\h}}\parx \tilde{p}\h}\tilde{\zeta}\dx\dy\dt}\\
&\qquad\qquad+\expectedt{\int_0^{\Tmax}\iO \trkla{1-\charac{S_{\delta/4}^{\Qh}}}\charac{S_\delta} \Ihy{\sqrt{\meanGinvX{\tilde{u}\h}}\parx \tilde{p}\h}\tilde{\zeta}\dx\dy\dt}\\
&\qquad=\expectedt{\int_0^{\Tmax}\iO \charac{S_{\delta/4}^{\Qh}}\charac{S_\delta} \Ihy{\sqrt{\meanGinvX{\tilde{u}\h}}}\parx \tilde{p}\h\tilde{\zeta}\dx\dy\dt}\\
&\qquad\qquad-\expectedt{\int_0^{\Tmax}\iO \charac{S_{\delta/4}^{\Qh}}\charac{S_\delta} \trkla{\ids-\Ihyop}\gkla{\Ihy{\sqrt{\meanGinvX{\tilde{u}\h}}}\parx \tilde{p}\h}\tilde{\zeta}\dx\dy\dt}\\
&\qquad\qquad+\expectedt{\int_0^{\Tmax}\iO \trkla{1-\charac{S_{\delta/4}^{\Qh}}}\charac{S_\delta} \Ihy{\sqrt{\meanGinvX{\tilde{u}\h}}\parx \tilde{p}\h}\tilde{\zeta}\dx\dy\dt}\\
&\qquad=:B_1+B_2+B_3\,.
\end{split}
\end{align}
The convergence 
\begin{align}
B_1\rightarrow\expectedt{\int_0^{\Tmax}\iO\charac{S_\delta}\tilde{u}\parx\tilde{p}\tilde{\zeta}\dx\dy\dt}=\expectedt{\int_0^{\Tmax}\iO\tilde{u}\parx\tilde{p}\tilde{\zeta}\dx\dy\dt}
\end{align}
follows directly from \eqref{eq:conv:mobx} and \eqref{eq:conv:px}.
Combining \eqref{eq:Ihy:lp:pointwise} (cf.~Lemma \ref{lem:Ih:error}) with standard inverse estimates (cf.~Theorem 4.5.11 in \cite{BrennerScott}) and \eqref{eq:tmp:bound:p} provides the estimate
\begin{align}
\begin{split}
B_2\leq&\,Ch\norm{\pary\Ihy{\sqrt{\meanGinvX{\tilde{u}\h}}}}_{L^2\trkla{\tilde{\Omega};L^2\trkla{0,\Tmax;L^2\trkla{\Om}}}}\norm{\parx\tilde{p}\h}_{L^2\trkla{S_\delta}}\\
\leq&\,C\delta^{-1}\rkla{\norm{\Ihy{\sqrt{\meanGinvX{\tilde{u}\h}}}-\tilde{u\h}}_{L^2\trkla{\tilde{\Omega};L^2\trkla{0,\Tmax;L^2\trkla{\Om}}}}\!\!\!+\!h\norm{\pary\tilde{u}\h}_{L^2\trkla{\tilde{\Omega};L^2\trkla{0,\Tmax;L^2\trkla{\Om}}}}}.
\end{split}
\end{align}
Therefore, $B_2$ vanishes for $h\searrow0$ due to \eqref{eq:conv:mobx}, and \eqref{eq:conv:strong1}.
Applying Hölder's inequality and the bounds established in Proposition \ref{prop:convergence1}, we obtain
\begin{align}
B_3\leq C\norm{\rkla{\charac{S_\delta}-\charac{S_{\delta/4}^{\Qh}}}\tilde{\zeta}}_{L^2\trkla{\tilde{\Omega};L^2\trkla{0,\Tmax;L^2\trkla{\Om}}}}\,.
\end{align}
Similar to the arguments used in the proof of \eqref{eq:conv:px}, we may use Egorov's theorem to obtain the existence of a subset $\mathfrak{E}_\iota\subset\tilde{\Omega}\times\rkla{0,\Tmax}$ with measure smaller than $\iota$ such that $\tilde{u}\h$ converges uniformly in $C^{0,\gamma}_\per\trkla{\overline{\Om}}$ in $\trkla{\tilde{\Omega}\times\trkla{0,\Tmax}}\setminus\mathfrak{E}_\iota=:\mathfrak{E}_\iota^c$.
As we have $\charac{S_\delta}\subset\charac{S_{\delta/4}^{\Qh}}$ in $\mathfrak{E}_\iota^c$ for $h$ small enough and $\operatorname{supp}\tilde{\zeta}\subset S_\delta$, we obtain $B_3\leq C\iota^{1/2}$ for all $\iota>0$.
As we also have $\tilde{J}\h^x\weak\tilde{J}^x$ in $L^2\trkla{\tilde{\Omega};L^2\trkla{0,\Tmax;L^2\trkla{\Om}}}$, we obtain $\tilde{J}^x=\tilde{u}\parx\trkla{-\Delta\tilde{u}+F^\prime\trkla{\tilde{u}}}$ on $S_\delta$.
The identification of $\tilde{J}^y$ on $S_\delta$ follows by similar arguments.
\end{proof}

\subsection{Convergence of the stochastic integral}
We consider for arbitrary but fixed $v\in C^\infty_\per\trkla{\overline{\Om}}$ the operator $\marth\,:\,\Omega\times\tekla{0,\Tmax}\rightarrow\R$ defined by
\begin{align}
\begin{split}
\marth\trkla{t}:=&\,\iO\Ihxy{\trkla{u\h\trkla{t}-u\h\trkla{0}} v}\dx\dy \\
&+\itTh\iO\Ihy{\sqrt{\meanGinvX{u\h}}J\h^x\parx\Ihxy{v}}\dx\dy\dtau\\
&+\itTh\iO\Ihx{\sqrt{\meanGinvY{u\h}}J\h^y\pary\Ihxy{v}}\dx\dy\dtau\\
=&\,\sum_{k\in\Zx}\sum_{l\in\Zy}\itTh\iO\Ihy{\Ihxloc{\parx\trkla{u\h\gt{kl}}\Ihxy{v}}}\dx\dy\dbx{kl}\\
&+\sum_{k\in\Zx}\sum_{l\in\Zy}\itTh\iO\Ihx{\Ihyloc{\pary\trkla{u\h\gt{kl}}\Ihxy{v}}}\dx\dy\dby{kl}\,.
\end{split}
\end{align}
\begin{remark}
In contrast to \cite{FischerGrun2018}, we define $\marth$ using functions $v$ in $C_\per^\infty\trkla{\overline{\Om}}$ instead of $H^2_\per\trkla{\Om}$. 
This seems to be necessary, as the limit process in Lemma \ref{lem:mart1} requires convergence and stability properties of the projection of $v$ that are not provided by the $L^2$-projection.
Hence, the requirements on the regularity of $v$ are also higher.
\end{remark}
By the optional stopping theorem, $\marth$ is a real valued martingale, i.e. we have
\begin{align}
\expected{\trkla{\marth\trkla{t}-\marth\trkla{s}}\Psi\trkla{r_s u\h,\, r_s \bs{W}}}=0\,
\end{align}
for all $0\leq s\leq t\leq \Tmax$ and for all $\tekla{0,1}$-valued functions $\Psi$ defined on $\Xu$.
Here, $r_s$ denotes the restriction of a function on $\tekla{0,\Tmax}$ onto $\tekla{0,s}$.
\begin{lemma}\label{lem:quadraticvariation}
For the quadratic variation of $\marth$, we have
\begin{align}
\begin{split}
\qvar{\marth}_t=&\,\sum_{k\in\Zx}\sum_{l\in\Zy}\itTh\lx{kl}^2\rkla{\iO\Ihy{\Ihxloc{\parx\trkla{u\h\gt{kl}}\Ihxy{v}}}\dx\dy}^2\ds\\
&+\sum_{k\in\Zx}\sum_{l\in\Zy}\itTh\ly{kl}^2\rkla{\iO\Ihx{\Ihyloc{\pary\trkla{u\h\gt{kl}}\Ihxy{v}}}\dx\dy}^2\ds\\
\leq&\,C\norm{v}_{H^2\trkla{\Om}}^2\itTh\norm{u\h\trkla{s}}_{H^1\trkla{\Om}}^2\ds\,.
\end{split}
\end{align}
\end{lemma}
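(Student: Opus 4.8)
The quantity $\marth$ is, by construction, a sum of \Ito~integrals against the mutually independent Brownian motions $\trkla{\bx{kl}}_{k\in\Zx,\,l\in\Zy}$ and $\trkla{\by{kl}}_{k\in\Zx,\,l\in\Zy}$, namely $\marth\trkla{t}=\sum_{k,l}\itTh a_{kl}\trkla{s}\dbx{kl}+\sum_{k,l}\itTh b_{kl}\trkla{s}\dby{kl}$ with the scalar integrands
\begin{align*}
a_{kl}\trkla{s}&:=\chiTh\lx{kl}\iO\Ihy{\Ihxloc{\parx\trkla{u\h\gt{kl}}\Ihxy{v}}}\dx\dy\,,\\
b_{kl}\trkla{s}&:=\chiTh\ly{kl}\iO\Ihx{\Ihyloc{\pary\trkla{u\h\gt{kl}}\Ihxy{v}}}\dx\dy\,.
\end{align*}
First I would record that these integrands are progressively measurable (the paths $s\mapsto u\h\trkla{s}$ are $\Prob$-almost surely continuous and $T\h$ is a stopping time) and square-integrable on $\Omega\times\trkla{0,\Tmax}$ by the a priori bound of Proposition~\ref{prop:energyentropyestimate}; this legitimises all the stochastic integrals. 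Since the driving Brownian motions are independent, all cross-variations vanish, and the standard identity $\qvar{\int_0^\cdot a\dbx{kl}}_t=\int_0^t a^2\ds$ for \Ito~integrals yields at once the claimed formula for $\qvar{\marth}_t$.

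For the asserted upper bound I would argue exactly along the lines of the estimate of $A_{ijkl}^2$ in~\eqref{eq:energyentropy:tmp:1}, now with the finite element function $\Ihxy{v}$ in place of the nodal basis function $\ex{i}\trkla{x}\ey{j}\trkla{y}$. Writing $\parx\trkla{u\h\gt{kl}}=\gt{kl}\,\parx u\h+u\h\,\parx\gt{kl}$ elementwise, splitting the integral accordingly, and using the $L^1$-, $L^2$- and $L^\infty$-stability of the (local) nodal interpolation operators together with the norm equivalence~\eqref{eq:normequivalence} and Cauchy--Schwarz, one obtains for each $k,l$ the pointwise-in-time estimate
\begin{align*}
\abs{a_{kl}\trkla{s}}\leq C\lx{kl}\,\norm{\gt{kl}}_{W^{1,\infty}\trkla{\Om}}\,\norm{v}_{L^2\trkla{\Om}}\,\norm{u\h\trkla{s}}_{H^1\trkla{\Om}}\,,
\end{align*}
and analogously for $b_{kl}$. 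Invoking the stability of the nodal interpolant as in~\eqref{eq:estimate:g:W2}, we have $\norm{\gt{kl}}_{W^{1,\infty}\trkla{\Om}}=\norm{\Ihxy{\g{kl}}}_{W^{1,\infty}\trkla{\Om}}\leq\norm{\g{kl}}_{W^{2,\infty}\trkla{\Om}}$, so that after squaring and summing over $k\in\Zx$, $l\in\Zy$, Assumption~\ref{item:stochbasis:bounds} (the colouring of the noise) controls $\sum_{k,l}\trkla{\lx{kl}^2+\ly{kl}^2}\norm{\g{kl}}_{W^{2,\infty}\trkla{\Om}}^2$ by a constant. Together with $\norm{v}_{L^2\trkla{\Om}}\leq\norm{v}_{H^2\trkla{\Om}}$ and $\chiTh\leq1$, integrating over $s\in\tekla{0,t\wedge T\h}$ produces the bound $\qvar{\marth}_t\leq C\norm{v}_{H^2\trkla{\Om}}^2\itTh\norm{u\h\trkla{s}}_{H^1\trkla{\Om}}^2\ds$.

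The only genuinely delicate point is the pointwise estimate for $\abs{a_{kl}}$ and $\abs{b_{kl}}$: the nested interpolations $\Ihy{\Ihxloc{\cdot}}$ and $\Ihx{\Ihyloc{\cdot}}$ must be unravelled with the identities~\eqref{eq:Ihlocprop} and the finite element norm equivalences, so that the Leibniz rule really does produce the clean $W^{1,\infty}$--$H^1$--$L^2$ Hölder splitting. Since this is precisely the manipulation already carried out in~\eqref{eq:energyentropy:tmp:1}, no new ideas are required and everything else is routine.
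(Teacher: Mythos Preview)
Your proposal is correct and follows essentially the same path as the paper's proof. The paper phrases the identity for $\qvar{\marth}_t$ in the Hilbert--Schmidt framework (defining an operator $R\trkla{u\h,v}:\trkla{L^2\trkla{\Om}}^2\to\R$ and invoking Lemma~2.4.3 of \cite{PrevotRoeckner}), whereas you obtain it directly from independence of the Brownian motions and the elementary \Ito~isometry; both give the same formula. For the bound, both arguments split $\parx\trkla{u\h\gt{kl}}$ by the Leibniz rule via~\eqref{eq:Ihlocprop}, apply Cauchy--Schwarz, and sum using Assumption~\ref{item:stochbasis:bounds}. One cosmetic point: the integrand contains $\Ihxy{v}$, not $v$, so your intermediate bound should read $\norm{\Ihxy{v}}_{L^2\trkla{\Om}}$; the paper closes this with the standard error estimate $\norm{\Ihxy{v}}_{L^2\trkla{\Om}}\leq C\norm{v}_{H^2\trkla{\Om}}$, which is exactly why the $H^2$-norm of $v$ appears in the statement.
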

\begin{proof}
We consider the mapping $R\trkla{u\h,v}\,:\,\Omega\times\tekla{0,\Tmax}\times \trkla{L^2\trkla{\Om}}^2 \rightarrow\R$ defined by
\begin{align}
\begin{split}
\trkla{\omega,t,\trkla{z_x,z_y}}\mapsto&\, \chiTh\trkla{t,\omega}\iO\Ihy{\Ihxloc{\parx \trkla{u\h\sum_{k\in\Zx}\sum_{l\in\Zy}\skla{\g{kl},z_x}_{L^2}\gt{kl}}\Ihxy{v}}}\dx\dy\\
&+\chiTh\trkla{t,\omega}\iO\Ihx{\Ihyloc{\pary \trkla{u\h\sum_{k\in\Zx}\sum_{l\in\Zy}\skla{\g{kl},z_y}_{L^2}\gt{kl}}\Ihxy{v}}}\dx\dy\,.
\end{split}
\end{align}
We obtain for the Hilbert-Schmidt norm
\begin{multline}
\norm{R\trkla{u\h,v}\trkla{t,\omega}}_{L_2\trkla{Q^{1/2}\trkla{L^2\trkla{\Om}}^2;\R}}^2\\
=\chiTh\trkla{t,\omega}\sum_{k\in\Zx}\sum_{l\in\Zy}\lx{kl}^2\rkla{\iO\Ihy{\Ihxloc{\parx\trkla{u\h\gt{kl}}\Ihxy{v}}}\dx\dy}^2\\
+\chiTh\trkla{t,\omega}\sum_{k\in\Zx}\sum_{l\in\Zy}\lx{kl}^2\rkla{\iO\Ihx{\Ihyloc{\pary\trkla{u\h\gt{kl}}\Ihxy{v}}}\dx\dy}^2\\
=: \chiTh\trkla{t,\omega}\sum_{k\in\Zx}\sum_{l\in\Zy}\lx{kl}^2 A^2+ \chiTh\trkla{t,\omega}\sum_{k\in\Zx}\sum_{l\in\Zy}\ly{kl}^2B^2\,.
\end{multline}
To estimate the first term, we use \eqref{eq:Ihlocpar} and \eqref{eq:Ih=Ihloc} and compute
\begin{align}
\begin{split}
A^2\leq&\,2\rkla{\iO\Ihy{\parx u\h\Ihx{\gt{kl}\Ihxy{v}}}\dx\dy}^2+2\rkla{\iO\Ihy{\parx\gt{kl}\Ihx{u\h\Ihxy{v}}}\dx\dy}^2\\
\leq& \,C\norm{\gt{kl}}_{L^\infty\trkla{\Om}}^2\norm{\parx u\h}_{L^2\trkla{\Om}}^2\norm{\Ihxy{v}}_{L^2\trkla{\Om}}^2+\norm{\parx\gt{kl}}_{L^\infty\trkla{\Om}}^2\norm{ u\h}_{L^2\trkla{\Om}}^2\norm{\Ihxy{v}}_{L^2\trkla{\Om}}^2\,.
\end{split}
\end{align}
Using \ref{item:stochbasis:bounds}, the standard error estimates for $\Ihxyop$, and a similar estimate for $B^2$, we conclude
\begin{align}
\norm{R\trkla{u\h,v}\trkla{t,\omega}}_{L_2\trkla{Q^{1/2}\trkla{L^2\trkla{\Om}}^2;\R}}^2\leq C \chiTh\trkla{t,\omega}\norm{u\h}_{H^1\trkla{\Om}}^2\norm{v}_{H^2\trkla{\Om}}^2\,.
\end{align}
Applying Lemma 2.4.3 in \cite{PrevotRoeckner} concludes the proof.
\end{proof}
In the next lemma, we will study cross variation of $\marth$ with the processes
\begin{align}
\balpha{kl}\trkla{t}=\iO\int_0^t\trkla{\lalpha{kl}}^{-1}\g{kl}\kbalpha\cdot\operatorname{d}\bs{W}\dx\dy
\end{align}
for $k,l\in\Z$ and $\alpha\in\tgkla{x,y}$.
\begin{lemma}
For $k,l\in\Z$ and $\alpha\in\tgkla{x,y}$, the cross variation $\crossvar{\marth}{\balpha{kl}}_t$ is given by 
\begin{subequations}
\begin{align}
\crossvar{\marth}{\bx{kl}}_t&=\left\{\begin{array}{cl}
\lx{kl}\itTh\iO\Ihy{\Ihxloc{\parx\trkla{u\h\gt{kl}}\Ihxy{v}}}\dx\dy\ds & \text{if~}k\in\Zx,\,l\in\Zy\,,\\
0&\text{else}\,,
\end{array}\right.\label{eq:crossvar:x}\\
\crossvar{\marth}{\by{kl}}_t&=\left\{\begin{array}{cl}
\ly{kl}\itTh\iO\Ihx{\Ihyloc{\pary\trkla{u\h\gt{kl}}\Ihxy{v}}}\dx\dy\ds & \text{if~}k\in\Zx,\,l\in\Zy\,,\\
0&\text{else}\,.
\end{array}\right.\label{eq:crossvar:y}
\end{align}
\end{subequations}
\end{lemma}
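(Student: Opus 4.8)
The plan is to compute the cross variation $\crossvar{\marth}{\balpha{kl}}_t$ directly from the two representations of the relevant processes as stochastic integrals against the family of independent Brownian motions $\set{\bx{kl},\by{kl}}$. First I would note that $\balpha{kl}(t)$ is itself, by \ref{item:stochbasis:Q}, simply the scalar Brownian motion obtained by projecting $\bs W$ onto the mode $\lalpha{kl}\g{kl}\kbalpha$; equivalently it has differential $\dbalpha{kl}$, and the $\bx{kl}$, $\by{kl}$ form a family of pairwise independent (hence with vanishing mutual cross variation) standard Brownian motions. On the other hand, $\marth$ has already been exhibited as the sum of \Ito~integrals
\begin{align*}
\marth(t)=\sum_{k\in\Zx}\sum_{l\in\Zy}\itTh\iO\Ihy{\Ihxloc{\parx\trkla{u\h\gt{kl}}\Ihxy{v}}}\dx\dy\dbx{kl}
+\sum_{k\in\Zx}\sum_{l\in\Zy}\itTh\iO\Ihx{\Ihyloc{\pary\trkla{u\h\gt{kl}}\Ihxy{v}}}\dx\dy\dby{kl}\,.
\end{align*}
Then I would invoke the bilinearity of the cross variation together with the rule $\crossvar{\int H\,\mathrm d\beta_m}{\beta_n}_t=\delta_{mn}\int H\,\mathrm ds$ for a family $(\beta_n)$ of independent Brownian motions. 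Picking out the single term in the double sum that matches the index $(k,l)$ and the component $\alpha$ immediately yields the stated formulas, with the case distinction reflecting that the approximating noise $\bs W_{Q,h}$ only contains modes with $k\in\Zx$, $l\in\Zy$, so all other indices contribute zero.

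The key intermediate steps are therefore: (i) record that $\itTh$ is a legitimate stopped \Ito~integral, using the progressive measurability and the integrability of the integrand — the latter following from the bound on the Hilbert--Schmidt norm $\norm{R(u\h,v)(t,\omega)}^2\le C\chiTh\norm{u\h}_{H^1(\Om)}^2\norm v_{H^2(\Om)}^2$ established in the proof of Lemma \ref{lem:quadraticvariation}, combined with Proposition \ref{prop:energyentropyestimate}; (ii) identify $\balpha{kl}$ with the $(k,l)$-th coordinate Brownian motion of the expansion in \ref{item:stochbasis:Q}, so that $\crossvar{\bx{kl}}{\bx{mn}}_t=\delta_{km}\delta_{ln}t$, $\crossvar{\by{kl}}{\by{mn}}_t=\delta_{km}\delta_{ln}t$, and $\crossvar{\bx{kl}}{\by{mn}}_t=0$; (iii) apply the standard formula for the cross variation of a stochastic integral with a Brownian motion, exploiting that the stopping time $T\h$ simply contributes the factor $\chiTh$, i.e. restricts the time integral to $\itTh$.

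I do not expect a genuine obstacle here — this lemma is the counterpart of the quadratic variation computation in Lemma \ref{lem:quadraticvariation} and rests on the same ingredients. The one point requiring a little care is the bookkeeping of the index set: one must verify that when $k\notin\Zx$ or $l\notin\Zy$ the corresponding Brownian motion $\bx{kl}$ (resp. $\by{kl}$) does not appear in the integral representation of $\marth$ at all, so that the cross variation vanishes; this is exactly where the truncation to $\Zx\times\Zy$ in the definition of $\marth$ and of $\bs W_{Q,h}$ enters. A second minor point is to make sure the interchange of the (finite, since $\Zx$, $\Zy$ are finite for fixed $h$) sum with the cross-variation bracket is justified, which is immediate for finite sums. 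Once these are in place, reading off \eqref{eq:crossvar:x} and \eqref{eq:crossvar:y} is purely formal.
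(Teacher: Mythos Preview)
Your argument is correct: once $\marth$ is written as a (finite) sum of \Ito~integrals against the independent Brownian motions $\bx{kl}$, $\by{kl}$ with $k\in\Zx$, $l\in\Zy$, the cross variation with a single $\balpha{kl}$ follows immediately from bilinearity together with $\crossvar{\int H\,\mathrm d\beta_m}{\beta_n}_t=\delta_{mn}\int_0^t H\,\mathrm ds$. The case distinction is exactly the observation that modes with $k\notin\Zx$ or $l\notin\Zy$ are absent from the integral representation of $\marth$.

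The paper, however, does not argue this way. Instead it introduces the auxiliary operators $S^x_\pm(u\h,v)\colon Q_x^{1/2}L^2(\Om)\to\R$, given (for fixed $k,l$) by
\[
z\mapsto\chiTh\sum_{\overline k\in\Zx}\sum_{\overline l\in\Zy}\Bigl(\iO\Ihy{\Ihxloc{\parx(u\h\skla{\g{\overline k\overline l},z}_{L^2})\Ihxy{v}}}\dx\dy\pm(\lx{kl})^{-1}\iO\g{kl}z\dx\dy\Bigr),
\]
computes their Hilbert--Schmidt norms (so that Lemma~2.4.3 in Pr\'ev\^ot--R\"ockner yields the quadratic variations of the associated real-valued martingales), and then recovers the cross variation via the polarization identity
\[
\crossvar{\marth}{\bx{kl}}_t=\tfrac14\bigl(\qvar{S_+^x(u\h,v)}_t-\qvar{S_-^x(u\h,v)}_t\bigr),
\]
using $\iO\g{kl}\g{\overline k\overline l}\dx\dy=\delta_{k\overline k}\delta_{l\overline l}$ to isolate the single surviving term. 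Your route is shorter and relies only on the elementary one-dimensional cross-variation rule for independent Brownian motions; the paper's route stays entirely within the Hilbert--Schmidt framework used for Lemma~\ref{lem:quadraticvariation} and thus reuses that machinery verbatim rather than invoking a separate fact about scalar Brownian motions. Either approach is perfectly adequate here.
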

\begin{proof}
The proof follows the lines of Lemma 5.12 in \cite{FischerGrun2018}.
We will only prove \eqref{eq:crossvar:x}, as \eqref{eq:crossvar:y} follows by similar computations.\\
To compute the cross variation with $\bx{kl}$, we consider for given $k$, $l$ the mappings $S^x_\pm\,:\,\Omega\times\tekla{0,\Tmax}\times Q_x^{1/2}L^2\trkla{\Omega}\rightarrow \R$ which are defined as
\begin{align}
z\mapsto\chiTh\sum_{\overline{k}\in \Zx}\sum_{\overline{l}\in\Zy}\rkla{\iO\Ihy{\Ihxloc{\parx\trkla{u\h \skla{\g{\overline{k}\overline{l}},z}_{L^2}}\Ihxy{v}}}\dx\dy \pm\trkla{\lx{kl}}^{-1}\iO\g{kl}z\dx\dy}\,.
\end{align}
Computing the Hilbert-Schmidt norm of $S_\pm^x$, we obtain
\begin{align}
\begin{split}
\norm{S_\pm^x\trkla{u\h,v}}_{L_2\trkla{Q_x^{1/2}L^2\trkla{\Om};\R}}=\chiTh\sum_{\overline{k}\in\Zx}\sum_{\overline{l}\in\Zy}\lx{\overline{k}\overline{l}}\rkla{\iO\Ihy{\Ihxloc{\parx\trkla{u\h\gt{\overline{kl}}}\Ihxy{v}}}\dx\dy}^2\\
\pm\chiTh2\sum_{\overline{k}\in\Zx}\sum_{\overline{l}\in\Zy}\lx{\overline{kl}}\iO\Ihy{\Ihxloc{\parx\trkla{u\h\gt{\overline{kl}}}\Ihxy{v}}}\dx\dy\tfrac{\lx{\overline{kl}}}{\lx{kl}}\iO\g{kl}\g{\overline{kl}}\dx\dy\\
+\chiTh\sum_{\overline{k},\overline{l}\in\Z}\rkla{\tfrac{\lx{\overline{kl}}}{\lx{kl}}\iO\g{kl}\g{\overline{kl}}\dx\dy}^2\,.
\end{split}
\end{align}
Using $\crossvar{\marth}{\bx{kl}}_t=\tfrac14\trkla{\qvar{S^x_+\trkla{u\h,v}}_t-\qvar{S_-^x\trkla{u\h,v}}_t}$ and recalling the identity \linebreak$\iO\g{kl}\g{\overline{kl}}\dx\dy=\delta_{k\overline{k}}\delta_{l\overline{l}}$, we deduce \eqref{eq:crossvar:x}.
\end{proof}
In addition to $\marth$, the processes
\begin{multline}
\marth^2-\sum_{k\in\Zx}\sum_{l\in\Zy}\int_0^{\trkla{\cdot}\wedge T\h}\lx{kl}^2\rkla{\iO\Ihy{\Ihxloc{\parx\trkla{u\h\gt{kl}}\Ihxy{v}}}\dx\dy}^2\ds\\
-\sum_{k\in\Zx}\sum_{l\in\Zy}\int_0^{\trkla{\cdot}\wedge T\h}\ly{kl}^2\rkla{\iO\Ihx{\Ihyloc{\pary\trkla{u\h\gt{kl}}\Ihxy{v}}}\dx\dy}^2\ds
\end{multline}
and
\begin{align}
\begin{array}{cc}
\marth\bx{kl}-\lx{kl}\int_0^{\trkla{\cdot}\wedge T\h}\iO\Ihy{\Ihxloc{\parx\trkla{u\h\gt{kl}}\Ihxy{v}}}\dx\dy\ds&\text{if~}k\in\Zx,\,l\in\Zy\,,\\
\marth\bx{kl}&\text{else}\,,
\end{array}\\
\begin{array}{cc}
\marth\by{kl}-\ly{kl}\int_0^{\trkla{\cdot}\wedge T\h}\iO\Ihx{\Ihyloc{\pary\trkla{u\h\gt{kl}}\Ihxy{v}}}\dx\dy\ds&\text{if~}k\in\Zx,\,l\in\Zy\,,\\
\marth\by{kl}&\text{else}\,
\end{array}
\end{align}
are also a martingales.\\
By equality of laws, we deduce that the following processes are also $\trkla{\tilde{\mathcal{F}}_{h,t}}$-martingales:
\begin{subequations}
\begin{align}
\begin{split}\label{eq:mt1}
\martht\trkla{t}:=&\,\iO\Ihxy{\trkla{\tilde{u}\h\trkla{t}-\tilde{u}\h\trkla{0}}\Ihxy{v}}\dx\dy\\
&+\itTht\iO\Ihy{\sqrt{\meanGinvX{\tilde{u}\h}}\tilde{J}\h^x\parx\Ihxy{v}}\dx\dy\dtau\\
&+\itTht\iO\Ihx{\sqrt{\meanGinvY{\tilde{u}\h}}\tilde{J}\h^y\pary\Ihxy{v}}\dx\dy\dtau\,,
\end{split}\\
\begin{split}\label{eq:mt2}
\martht^2\trkla{t}-&\,\sum_{k\in\Zx}\sum_{l\in\Zy}\lx{kl}^2\itTht\rkla{\iO\Ihy{\Ihxloc{\parx\trkla{\tilde{u}\h\gt{kl}}\Ihxy{v}}}\dx\dy}^{2}\ds\\
&\,-\sum_{k\in\Zx}\sum_{l\in\Zy}\ly{kl}^2\itTht\rkla{\iO\Ihx{\Ihyloc{\pary\trkla{\tilde{u}\h\gt{kl}}\Ihxy{v}}}\dx\dy}^{2}\ds\,,
\end{split}
\end{align}
\begin{align}
\begin{array}{cc}
\martht\bthx{kl}-\lx{kl}\int_0^{\trkla{\cdot}\wedge \tilde{T}\h}\iO\Ihy{\Ihxloc{\parx\trkla{\tilde{u}\h\gt{kl}}\Ihxy{v}}}\dx\dy\ds&\text{if~}k\in\Zx,\,l\in\Zy\,,\\
\martht\bthx{kl}&\text{else}\,,
\end{array}\\
\begin{array}{cc}
\martht\bthy{kl}-\ly{kl}\int_0^{\trkla{\cdot}\wedge \tilde{T}\h}\iO\Ihx{\Ihyloc{\pary\trkla{\tilde{u}\h\gt{kl}}\Ihxy{v}}}\dx\dy\ds&\text{if~}k\in\Zx,\,l\in\Zy\,,\\
\martht\bthy{kl}&\text{else}\,.
\end{array}
\end{align}
\end{subequations}
Here, we used
\begin{align}
\bthalpha{kl}\trkla{t}=\iO\int_0^t\trkla{\lalpha{kl}}^{-1}\g{kl}\kbalpha\cdot\operatorname{d}\tilde{\bs{W}}\h\dx\dy
\end{align}
for $k,l\in\Z$ and $\alpha\in\tgkla{x,y}$.
Furthermore, the quadratic variation of $\martht$ and the cross variations with $\bthalpha{kl}$ are given as
\begin{align}
\begin{split}\label{eq:qvarmth}
\qvar{\martht}_t&=\,\sum_{k\in\Zx}\sum_{l\in\Zy}\itTht\lx{kl}^2\rkla{\iO\Ihy{\Ihxloc{\parx\trkla{\tilde{u}\h\gt{kl}}\Ihxy{v}}}\dx\dy}^2\ds\\
&\quad+\sum_{k\in\Zx}\sum_{l\in\Zy}\itTht\ly{kl}^2\rkla{\iO\Ihx{\Ihyloc{\pary\trkla{\tilde{u}\h\gt{kl}}\Ihxy{v}}}\dx\dy}^2\ds
\end{split}\\
\crossvar{\martht}{\bthx{kl}}_t\!&=\left\{\!\!\begin{array}{cl}
\lx{kl}\itTht\!\!\iO\Ihy{\Ihxloc{\parx\trkla{\tilde{u}\h\gt{kl}}\Ihxy{v}}}\dx\dy\ds & \text{if~}k\!\in\!\Zx,\,l\!\in\!\Zy\,,\\
0&\text{else}\,,
\end{array}\right.\\
\crossvar{\martht}{\bthy{kl}}_t\!&=\left\{\!\!\begin{array}{cl}
\ly{kl}\itTht\!\!\iO\Ihx{\Ihyloc{\pary\trkla{\tilde{u}\h\gt{kl}}\Ihxy{v}}}\dx\dy\ds & \text{if~}k\!\in\!\Zx,\,l\!\in\!\Zy\,,\\
0&\text{else}\,.
\end{array}\right.
\end{align}
\begin{lemma}\label{lem:mart1}
Let the Assumptions \ref{item:S}, \ref{item:initial}, \ref{item:potential}, \ref{item:stoch}, \ref{item:regularization}, and \ref{item:stochbasis:boundthirdderivative} hold true.
Then, for all $\tekla{0,1}$-valued continuous functions $\Psi$ defined on $\Xu\times\XW$, we have
\begin{multline}
\tilde{\mathds{E}}\left[\left(\iO\trkla{\tilde{u}\trkla{t}-\tilde{u}\trkla{s}}v\dx\dy+\int_s^t\iO\tilde{u} J^x\parx v\dx\dy\dtau \right.\right.\\
+\left.\left.\int_s^t\iO\tilde{u} J^y\pary v\dx\dy\dtau\right)\Psi\trkla{r_s\tilde{u},r_s\tilde{\bs{W}}}\right]=0
\end{multline}
for all $0\leq s\leq t\leq\Tmax$.
\end{lemma}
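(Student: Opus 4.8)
The plan is to pass to the limit $h\searrow0$ in the martingale identity for $\martht$. Since $\martht$ is an $\trkla{\tilde{\mathcal{F}}_{h,t}}$-martingale — as recorded above via equality of laws — we have $\expectedt{\trkla{\martht\trkla{t}-\martht\trkla{s}}\Psi\trkla{r_s\tilde{u}\h,r_s\tilde{\bs{W}}\h}}=0$ for all $0\le s\le t\le\Tmax$ and all $\tekla{0,1}$-valued continuous $\Psi$ on $\Xu\times\XW$. First I would show that, along the subsequence furnished by Proposition~\ref{prop:convergence1} and Lemma~\ref{lem:convergence2}, every factor of this identity converges $\tilde{\Prob}$-almost surely to the corresponding object built from $\tilde{u},\tilde{J}^x,\tilde{J}^y,\tilde{\bs{W}}$, and that the quantities involved are uniformly integrable, so that the identity passes to the limit and becomes the claimed one.

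For the ``initial'' contribution I would use the strong convergence $\tilde{u}\h\to\tilde{u}$ in $\Xu=C\trkla{\tekla{0,\Tmax};L^q\trkla{\Om}}$ of \eqref{eq:conv:strong1} together with $\tilde{u}\h^0\to\tilde{u}^0$ in $H^1_\per\trkla{\Om}$ to identify $\tilde{u}\trkla{0}=\tilde{u}^0$, and then invoke the nodal interpolation error estimates of Lemma~\ref{lem:Ih:error} and the norm equivalence \eqref{eq:normequivalence} to obtain $\iO\Ihxy{\trkla{\tilde{u}\h\trkla{t}-\tilde{u}\h\trkla{s}}\Ihxy{v}}\dx\dy\to\iO\trkla{\tilde{u}\trkla{t}-\tilde{u}\trkla{s}}v\dx\dy$ $\tilde{\Prob}$-a.s.\ for each fixed $s\le t$. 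Continuity and boundedness of $\Psi$ together with $\trkla{\tilde{u}\h,\tilde{\bs{W}}\h}\to\trkla{\tilde{u},\tilde{\bs{W}}}$ $\tilde{\Prob}$-a.s.\ in $\Xu\times\XW$ give $\Psi\trkla{r_s\tilde{u}\h,r_s\tilde{\bs{W}}\h}\to\Psi\trkla{r_s\tilde{u},r_s\tilde{\bs{W}}}$.

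The heart of the argument is the flux term (I would treat the $x$-flux in \eqref{eq:mt1}; the $y$-flux is identical). By Lemma~\ref{lem:stoppingtime}, $\tilde{T}\h\to\Tmax$ $\tilde{\Prob}$-a.s., so $\chiTth\to1$ a.e.\ on $\tilde{\Omega}\times\trkla{0,t}$ and, thanks to the bounds of Proposition~\ref{prop:energyentropyestimate}, the cut-off may be removed in the limit. Using Lemma~\ref{lem:Ih:error} and the a priori bounds I would replace $\Ihy{\sqrt{\meanGinvX{\tilde{u}\h}}\,\tilde{J}\h^x\,\parx\Ihxy{v}}$ by $\sqrt{\meanGinvX{\tilde{u}\h}}\,\tilde{J}\h^x\,\parx v$ up to an $h$-error, so that what remains is to pass to the limit in $\expectedt{\int_0^{\Tmax}\iO\tilde{J}\h^x\bigl(\sqrt{\meanGinvX{\tilde{u}\h}}\,\parx v\,\charac{s\wedge\tilde{T}\h,\,t\wedge\tilde{T}\h}\,\Psi\trkla{r_s\tilde{u}\h,r_s\tilde{\bs{W}}\h}\bigr)\dx\dy\dtau}$. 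Here I would argue that the bracketed factor converges \emph{strongly} in $L^2\trkla{\tilde{\Omega}\times\trkla{0,\Tmax}\times\Om}$: by \eqref{eq:conv:mobx} (the outer nodal interpolation being removable via Lemma~\ref{lem:Ih:error} and the oscillation estimate of Lemma~\ref{lem:oscillation}) $\sqrt{\meanGinvX{\tilde{u}\h}}\to\tilde{u}$ strongly, while $\parx v$ is fixed and $\charac{s,t}\Psi$ is bounded by $1$ and converges $\tilde{\Prob}$-a.s., so that dominated convergence together with the uniform $L^q\trkla{\tilde{\Omega}\times\trkla{0,\Tmax}\times\Om}$-bound on $\sqrt{\meanGinvX{\tilde{u}\h}}$ identifies the strong limit as $\tilde{u}\,\parx v\,\charac{s,t}\,\Psi\trkla{r_s\tilde{u},r_s\tilde{\bs{W}}}$. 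Since $\tilde{J}\h^x\weak\tilde{J}^x$ weakly in $L^2\trkla{\tilde{\Omega}\times\trkla{0,\Tmax}\times\Om}$ by \eqref{eq:conv:Jx}, the pairing converges to $\expectedt{\int_s^t\iO\tilde{u}\,\tilde{J}^x\,\parx v\dx\dy\dtau\,\Psi\trkla{r_s\tilde{u},r_s\tilde{\bs{W}}}}$; note that $\tilde{u}\,\tilde{J}^x$ is meaningful even though $\tilde{J}^x$ was identified only on the superlevel sets $S_\delta$, because on $\tgkla{\tilde{u}=0}$ the mobility factor $\sqrt{\meanGinvX{\tilde{u}\h}}\sim\tilde{u}\h$ is small and the contribution there vanishes in the limit.

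Finally, to turn these $\tilde{\Prob}$-a.s.\ convergences into convergence of the expectations, I would choose $\p$ large in Proposition~\ref{prop:energyentropyestimate}, obtaining bounds in $L^r\trkla{\tilde{\Omega}}$ with some $r>1$ for $\martht\trkla{t}$, for the flux integrals, and — using $\abs{\Psi}\le1$ — for every product entering the identity, so that Vitali's convergence theorem applies and yields $\expectedt{\bigl(\iO\trkla{\tilde{u}\trkla{t}-\tilde{u}\trkla{s}}v\dx\dy+\int_s^t\iO\tilde{u}\,\tilde{J}^x\,\parx v\dx\dy\dtau+\int_s^t\iO\tilde{u}\,\tilde{J}^y\,\pary v\dx\dy\dtau\bigr)\Psi\trkla{r_s\tilde{u},r_s\tilde{\bs{W}}}}=0$, which is the assertion (with $J^x=\tilde{J}^x$, $J^y=\tilde{J}^y$). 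I expect the flux term to be the main obstacle: it couples the \emph{weakly} convergent $\trkla{\tilde{J}\h^x}$ with the \emph{strongly} convergent but only $L^q$-bounded mobility factor $\sqrt{\meanGinvX{\tilde{u}\h}}$, both wrapped inside nodal interpolation operators and multiplied by the random cut-offs $\chiTth$; the delicate points will be (i) stripping off the interpolation operators at the cost of $O(h)$-errors absorbed by the a priori estimates, (ii) upgrading \eqref{eq:conv:mobx} to strong $L^2$-convergence of the full test function, and (iii) making sense of $\tilde{u}\,\tilde{J}^x$ globally although $\tilde{J}^x$ is controlled only away from $\tgkla{\tilde{u}=0}$.
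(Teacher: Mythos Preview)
Your proposal is correct and follows essentially the same route as the paper: pass to the limit in $\expectedt{\trkla{\martht\trkla{t}-\martht\trkla{s}}\Psi\trkla{r_s\tilde{u}\h,r_s\tilde{\bs{W}}\h}}=0$ by stripping the nodal interpolation operators via Lemma~\ref{lem:Ih:error}, removing the stopping-time cut-off via Lemma~\ref{lem:stoppingtime}, pairing weak convergence of $\tilde{J}\h^x$ with strong convergence of the mobility factor from \eqref{eq:conv:mobx}, and closing with Vitali's theorem.

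Two minor technical deviations are worth noting. First, the paper does \emph{not} strip the outer $\Ihyop$ from the mobility: it decomposes the flux integral as $A+B+C$ with $A=\int\iO\Ihy{\sqrt{\meanGinvX{\tilde{u}\h}}}\tilde{J}\h^x\parx\Ihxy{v}$ and uses \eqref{eq:conv:mobx} directly, so your appeal to Lemma~\ref{lem:oscillation} for this purpose is unnecessary. Second, the paper establishes the convergence of the flux term $\tilde{\Prob}$-almost surely (using the pathwise weak convergence $\tilde{J}\h^x\weak\tilde{J}^x$ in $L^2\trkla{0,\Tmax;L^2\trkla{\Om}}$ from Proposition~\ref{prop:convergence1}) and \emph{then} invokes Vitali, whereas you propose to pair the weak convergence \eqref{eq:conv:Jx} in $L^2\trkla{\tilde{\Omega}\times\trkla{0,\Tmax}\times\Om}$ directly with strong $L^2$-convergence of the full test-function factor; both are valid, yours is arguably more streamlined. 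Your aside that ``$\tilde{u}\,\tilde{J}^x$ is meaningful even though $\tilde{J}^x$ was identified only on $S_\delta$'' is a non-issue here: $\tilde{J}^x\in L^2$ globally as a weak limit, and the identification on superlevel sets plays no role in this lemma.
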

\begin{proof}
To establish the claim, we pass to the limit in the identity
\begin{align}
\expectedt{\rkla{\martht\trkla{t}-\martht\trkla{s}}\Psi\trkla{r_s\tilde{u}\h,r_s\tilde{\bs{W}}\h}}=0\,.
\end{align}
Due to Lemma \ref{lem:stoppingtime}, we have $\chiTth=1$ on $\tekla{s,t}$ for $h$ small enough depending on $\tilde{\omega}\in\tilde{\Omega}$.
We start with the decomposition
\begin{multline}
\iO\Ihxy{\trkla{\tilde{u}\h\trkla{t}-\tilde{u}\h\trkla{s}}v}\dx\dy\\
=\iO\trkla{\tilde{u}\h\trkla{t}-\tilde{u}\h\trkla{s}}\Ihxy{v}\dx\dy-\iO\trkla{\ids-\Ihxyop}\tgkla{\trkla{\tilde{u}\h\trkla{t}-\tilde{u}\h\trkla{s}}\Ihxy{v}}\dx\dy\,.
\end{multline}
Due to the strong convergence of $\tilde{u}\h$ in $C\trkla{\tekla{0,\Tmax};L^q\trkla{\Om}}$ $\tilde{\Prob}$-almost surely and the convergence properties of $\Ihxyop$ (cf.~Theorem 4.4.20 in \cite{BrennerScott}), the first term on the right-hand side converges $\tilde{\Prob}$-almost surely towards $\iO\trkla{\tilde{u}\trkla{t}-\tilde{u}\trkla{s}}v\dx\dy$, while the second term vanishes due to Lemma \ref{lem:Ih:error}, as the following computation shows.
\begin{align}
\begin{split}
\abs{\iO\!\trkla{\ids-\Ihxyop}\tgkla{\trkla{\tilde{u}\h\trkla{t}-\tilde{u}\h\trkla{s}}\Ihxy{v}}\dx\dy}&\leq Ch\norm{\tilde{u}\h\trkla{t}-\tilde{u}\h\trkla{s}}_{L^1\trkla{\Om}}\norm{\nabla\Ihxy{v}}_{L^\infty\trkla{\Om}}\\
&\leq Ch \iO\tilde{u}\h^0\dx\dy \norm{\nabla v}_{L^\infty\trkla{\Om}}\leq Ch\,.
\end{split}
\end{align}
Here, we used a standard inverse estimate (cf.~Theorem 4.5.11 in \cite{BrennerScott}) and the non-negativity of $\tilde{u}\h$.
In order to deal with the remaining terms, we use the decomposition
\begin{align}
\begin{split}
\int_{t_1}^{t_2}\iO&\Ihy{\sqrt{\meanGinvX{\tilde{u}\h}}\tilde{J}\h^x\parx\Ihxy{v}}\dx\dy\dt \\
=&\int_{t_1}^{t_2}\iO \Ihy{\sqrt{\meanGinvX{\tilde{u}\h}}}\tilde{J}\h^x\parx\Ihxy{v}\dx\dy\dt\\
&-\int_{t_1}^{t_2}\iO\trkla{\ids-\Ihyop}\gkla{\tilde{J}\h^x\Ihy{\sqrt{\meanGinvX{\tilde{u}\h}}\parx\Ihxy{v}}}\dx\dy\dt\\
&-\int_{t_1}^{t_2}\iO \tilde{J}\h^x\trkla{\ids-\Ihyop}\gkla{\Ihy{\sqrt{\meanGinvX{\tilde{u}\h}}}\parx\Ihxy{v}}\dx\dy\dt\\
=&\, A+B+C\,.
\end{split}
\end{align}
Applying \eqref{eq:Ihy:lp:pointwise} from Lemma \ref{lem:Ih:error} pointwise in $x$ together with an inverse estimate (cf.~Theorem 4.5.11 \cite{BrennerScott}) and applying Hölder's inequality, we obtain
\begin{align}
\begin{split}
\tabs{B}\leq &\, Ch\int_{t_1}^{t_2}\norm{\tilde{J}\h^x}_{L^2\trkla{\Om}}\norm{\pary\Ihy{\sqrt{\meanGinvX{\tilde{u}\h}}\parx\Ihxy{v}}}_{L^2\trkla{\Om}}\dt\,.
\end{split}
\end{align}
Applying \eqref{eq:Ihy:dylp:pointwise} from Lemma \ref{lem:Ih:error} and standard inverse estimates (cf.~Theorem 4.5.11 in \cite{BrennerScott}), we deduce
\begin{align}
\begin{split}
&\norm{\pary\Ihy{\sqrt{\meanGinvX{\tilde{u}\h}}\parx\Ihxy{v}}}_{L^2\trkla{\Om}}\\
&\leq \norm{\pary\rkla{\Ihy{\sqrt{\meanGinvX{\tilde{u}\h}}}\parx\Ihxy{v}}}_{L^2\trkla{\Om}} \\
&\qquad+\norm{\pary\trkla{\ids-\Ihyop}\gkla{\Ihy{\sqrt{\meanGinvX{\tilde{u}\h}}}\parx\Ihxy{v}}}_{L^2\trkla{\Om}}\\
&\leq \norm{\pary\rkla{\Ihy{\sqrt{\meanGinvX{\tilde{u}\h}}}}\parx\Ihxy{v}}_{L^2\trkla{\Om}}+\norm{\Ihy{\sqrt{\meanGinvX{\tilde{u}\h}}}\pary\parx\Ihxy{v}}_{L^2\trkla{\Om}}\\
&\qquad+C\norm{\Ihy{\sqrt{\meanGinvX{\tilde{u}\h}}}}_{L^2\trkla{\Om}}\norm{\pary\parx\Ihxy{v}}_{L^\infty\trkla{\Om}}\,.
\end{split}
\end{align}
As $v$ was chosen to be sufficiently regular to control $\norm{\pary\parx\Ihxy{v}}_{L^\infty\trkla{\Om}}$, the only problematic term is the first one on the right-hand side.
In view of \eqref{eq:conv:mobx}, we use the regularity of $v$ and apply an inverse estimate (cf.~Theorem 4.5.11 in \cite{BrennerScott}) to obtain
\begin{multline}
\norm{\pary\rkla{\Ihy{\sqrt{\meanGinvX{\tilde{u}\h}}}}\parx\Ihxy{v}}_{L^2\trkla{\Om}}\\
\leq C\norm{\pary\rkla{\Ihy{\sqrt{\meanGinvX{\tilde{u}\h}}}-\tilde{u}\h}}_{L^2\trkla{\Om}} +C\norm{\pary \tilde{u}\h}_{L^2\trkla{\Om}}\\
\leq Ch^{-1}\norm{\Ihy{\sqrt{\meanGinvX{\tilde{u}\h}}}-\tilde{u}\h}_{L^2\trkla{\Om}} +C\norm{\pary\tilde{u}\h}_{L^2\trkla{\Om}}\,.
\end{multline}
In conclusion, by Hölder's inequality, we have
\begin{align}
\begin{split}
\tabs{B}\leq&\, C h \norm{\tilde{J}\h^x}_{L^2\trkla{0,\Tmax;L^2\trkla{\Om}}}\rkla{\norm{\Ihy{\sqrt{\meanGinvX{\tilde{u}\h}}}}_{L^2\trkla{0,\Tmax;L^2\trkla{\Om}}}\!\!\!+\norm{\pary\tilde{u}\h}_{L^2\trkla{0,\Tmax;L^2\trkla{\Om}}}}\\
&+C\norm{\tilde{J}\h^x}_{L^2\trkla{0,\Tmax;L^2\trkla{\Om}}}\norm{\Ihy{\sqrt{\meanGinvX{\tilde{u}\h}}}-\tilde{u}\h}_{L^2\trkla{0,\Tmax;L^2\trkla{\Om}}}\,.
\end{split}
\end{align}
Therefore, in view of Lemma \ref{lem:convergence2}, $B$ vanishes in $L^q\trkla{\tilde{\Omega}}$ for any $q<\infty$.
In the same spirit, we use Hölder's inequality and \eqref{eq:Ihy:lp:pointwise} in Lemma \ref{lem:Ih:error} to prove that $C$ vanishes in $L^q\trkla{\tilde{\Omega}}$.
\begin{align}
\begin{split}
\tabs{C}\leq &\,\int_{t_1}^{t_2}\norm{\tilde{J}\h^x}_{L^2\trkla{\Om}}\norm{\trkla{\ids-\Ihyop}\gkla{\Ihy{\sqrt{\meanGinvX{\tilde{u}\h}}}\parx\Ihxy{v}}}_{L^2\trkla{\Om}}\dt\\
\leq&\, Ch\int_{t_1}^{t_2}\norm{\tilde{J}\h^x}_{L^2\trkla{\Om}}\norm{\Ihy{\sqrt{\meanGinvX{\tilde{u}\h}}}}_{L^2\trkla{\Om}}\norm{\pary\parx\Ihxy{v}}_{L^\infty\trkla{\Om}}\dt\\
\leq&\,Ch\,.
\end{split}
\end{align}
Therefore, it remains to analyze the convergence properties of $A$.
As $\tilde{J}\h^x$ converges weakly towards $\tilde{J}^x$ $\tilde{\Prob}$-almost surely in $L^2\trkla{0,\Tmax;L^2\trkla{\Om}}$, we may use the strong convergence of $\Ihy{\sqrt{\meanGinvX{\tilde{u}\h}}}$ in $L^\infty\trkla{0,\Tmax;L^q\trkla{\Om}}$ with $q<\infty$ (cf.~\eqref{eq:conv:mobx} in Lemma \ref{lem:convergence2}) and $\parx\Ihxy{v}$ in $L^\infty\trkla{\Om}$ to conclude that
\begin{align}
A\rightarrow \int_{t_1}^{t_2}\iO \tilde{u} \tilde{J}^x\parx v\dx\dy\dt&&\tilde{\Prob}\text{-almost surely.}
\end{align}
Analogous arguments provide the $\tilde{\Prob}$-almost sure convergence 
\begin{align}
\int_{t_1}^{t_2}\iO\Ihy{\sqrt{\meanGinvX{\tilde{u}\h}}\tilde{J}\h^x\parx\Ihxy{v}}\dx\dy\dt\rightarrow \int_{t_1}^{t_2}\iO \tilde{u} \tilde{J}^y\pary v\dx\dy\dt\,.
\end{align}
As the $\Psi$-term is continuous, uniformly bounded, and converges $\tilde{\Prob}$-almost surely, it remains to control higher moments to conclude the proof by applying Vitali's convergence theorem.

As $\tilde{u}\h$ is uniformly bounded in $L^q\trkla{\tilde{\Omega};C\trkla{\tekla{0,\Tmax};L^q\trkla{\Om}}}$ for arbitrary $q<\infty$ and the error terms $B$ and $C$, which where introduced by the nodal interpolation operators, vanish also in $L^q\trkla{\tilde{\Omega}}$, it remains to establish the integrability of appropriate higher moments of 
\begin{subequations}\label{eq:tmp:integrals}
\begin{align}
&&\int_{t_1}^{t_2}\iO \Ihy{\sqrt{\meanGinvX{\tilde{u}\h}}}\tilde{J}\h^x\parx\Ihxy{v}\dx\dy\dt\label{eq:tmp:integrals:a}\\
\text{~and~}&&\int_{t_1}^{t_2}\iO \Ihx{\sqrt{\meanGinvY{\tilde{u}\h}}}\tilde{J}\h^y\pary\Ihxy{v}\dx\dy\dt\,.\label{eq:tmp:integrals:b}
\end{align}
\end{subequations}
The desired integrability of higher moments of the first integral follows from the estimate
\begin{multline}
\abs{\int_{t_1}^{t_2}\iO \Ihy{\sqrt{\meanGinvX{\tilde{u}\h}}}\tilde{J}\h^x\parx\Ihxy{v}\dx\dy\dt}\\
\leq C\norm{\tilde{J}^x\h}_{L^2\trkla{0,\Tmax;L^2\trkla{\Om}}}\norm{\Ihy{\sqrt{\meanGinvX{\tilde{u}\h}}}}_{L^\infty\trkla{0,\Tmax;L^q\trkla{\Om}}}\norm{\parx\Ihxy{v}}_{L^\infty\trkla{\Om}}\,.
\end{multline}
Together with the bounds from Proposition \ref{prop:convergence1} and Lemma \ref{lem:convergence2}, we obtain the uniform integrability of a $q$-moment for $q>1$.
A similar argument provides the uniform integrability of the second term in \eqref{eq:tmp:integrals:b}, which concludes the proof.
\end{proof}
\begin{lemma}\label{lem:mart2}
Let the Assumptions \ref{item:S}, \ref{item:initial}, \ref{item:potential}, \ref{item:stoch}, \ref{item:regularization},  \ref{item:stochbasis:boundthirdderivative}, and \ref{item:stochbasis:convergence} hold true.
Then for all $\tekla{0,1}$-valued continuous functions $\Psi$ defined on $\Xu\times\XW$, we have
\begin{multline}
\tilde{\mathds{E}}\left[\left(\martt^2\trkla{t}-\martt^2\trkla{s} - \int_s^t\sum_{k,l\in\Z}\lx{kl}^2\rkla{\iO\tilde{u}\g{kl}\parx v\dx\dy}^2\dtau \right.\right.\\
\left.\left.-\int_s^t\sum_{k,l\in\Z}\ly{kl}^2\rkla{\iO\tilde{u}\g{kl}\pary v\dx\dy}^2\dtau\right)\Psi\trkla{r_s\tilde{u},r_s\tilde{\bs{W}}}\right]=0
\end{multline}
for all $0\leq s\leq t\leq\Tmax$, where
\begin{align}
\begin{split}
\martt\trkla{t}:=&\,\iO\trkla{\tilde{u}\trkla{t}-\tilde{u}\trkla{0}}v\dx\dy+\int_0^t\iO\tilde{u}\tilde{J}^x\parx v\dx\dy\dtau+\int_0^t\iO\tilde{u}\tilde{J}^y\pary v\dx\dy\dtau\,.
\end{split}
\end{align}
\end{lemma}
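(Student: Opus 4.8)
The plan is to imitate the argument of Lemma~\ref{lem:mart1} and to pass to the limit $h\searrow0$ in the identity expressing that the process appearing in \eqref{eq:mt2} is an $\trkla{\tilde{\mathcal{F}}_{h,t}}$-martingale, that is, in
\begin{align*}
\expectedt{\trkla{\martht^2\trkla{t}-\qvar{\martht}_t-\martht^2\trkla{s}+\qvar{\martht}_s}\Psi\trkla{r_s\tilde{u}\h,\,r_s\tilde{\bs{W}}\h}}=0\,,
\end{align*}
where $\qvar{\martht}_t$ is given explicitly by \eqref{eq:qvarmth}. By Lemma~\ref{lem:stoppingtime} we have $\chiTth\equiv1$ on $\tekla{s,t}$ for $h$ small enough (depending on $\tilde{\omega}$), so the stopping-time truncations disappear in the limit. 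It therefore remains to identify the $\tilde{\Prob}$-almost sure limits of $\martht^2\trkla{t}$, of $\qvar{\martht}_t$ and of $\Psi\trkla{r_s\tilde{u}\h,r_s\tilde{\bs{W}}\h}$, and then to upgrade these almost sure convergences to convergence of the expectations via Vitali's convergence theorem, for which suitable uniform integrability has to be provided.

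For the first term, the a.s.~convergence $\martht\trkla{t}\to\martt\trkla{t}$ has already been established term by term in the proof of Lemma~\ref{lem:mart1}, whence $\martht^2\trkla{t}\to\martt^2\trkla{t}$ $\tilde{\Prob}$-almost surely. The required uniform integrability is obtained exactly as there: $\tilde{u}\h$ is bounded in $L^q\trkla{\tilde{\Omega};C\trkla{\tekla{0,\Tmax};L^q\trkla{\Om}}}$ for every $q<\infty$, the interpolation-error terms introduced by the nodal operators vanish in $L^q\trkla{\tilde{\Omega}}$, and the flux and mobility contributions are controlled in $L^q\trkla{\tilde{\Omega}}$ by Proposition~\ref{prop:energyentropyestimate} and Lemma~\ref{lem:convergence2} provided $\p$ is chosen large enough; hence $\martht\trkla{t}$ is bounded in $L^{2q}\trkla{\tilde{\Omega}}$ for some $q>1$ and $\martht^2\trkla{t}$ is uniformly integrable. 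Together with the $\tilde{\Prob}$-almost sure convergence $\trkla{r_s\tilde{u}\h,r_s\tilde{\bs{W}}\h}\to\trkla{r_s\tilde{u},r_s\tilde{\bs{W}}}$ (Proposition~\ref{prop:convergence1}) and the continuity and $\tekla{0,1}$-valuedness of $\Psi$, Vitali's theorem yields $\expectedt{\martht^2\trkla{t}\,\Psi\trkla{r_s\tilde{u}\h,r_s\tilde{\bs{W}}\h}}\to\expectedt{\martt^2\trkla{t}\,\Psi\trkla{r_s\tilde{u},r_s\tilde{\bs{W}}}}$, and likewise at time $s$.

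For the quadratic-variation term, two passages to the limit must be carried out simultaneously: inside the square of the inner spatial integral, and in the series over $k\in\Zx$, $l\in\Zy$. For fixed $k,l$, using the strong convergences of Lemma~\ref{lem:convergence2} (in particular $\tilde{u}\h\to\tilde{u}$ in $C\trkla{\tekla{0,\Tmax};L^q\trkla{\Om}}$ and, along a further subsequence, at a.e.~$\tau$ in $W^{1,q}_\per\trkla{\Om}$), together with \eqref{eq:chainrule}, \eqref{eq:Ihlocpar}, \eqref{eq:Ih=Ihloc}, a discrete integration by parts (Lemma~\ref{lem:disc:pi}) and the interpolation error bounds of Lemma~\ref{lem:Ih:error} --- exactly as in the estimate of $\qvar{\marth}$ in the proof of Lemma~\ref{lem:quadraticvariation} --- one checks that
\begin{align*}
\iO\Ihy{\Ihxloc{\parx\trkla{\tilde{u}\h\gt{kl}}\Ihxy{v}}}\dx\dy\;\longrightarrow\;-\iO\tilde{u}\,\g{kl}\parx v\dx\dy
\end{align*}
for a.e.~$\tau$ and $\tilde{\Prob}$-almost surely, with the $h$-uniform bound $C\norm{\g{kl}}_{W^{1,\infty}\trkla{\Om}}\norm{v}_{W^{2,\infty}\trkla{\Om}}\norm{\tilde{u}\h\trkla{\tau}}_{H^1\trkla{\Om}}$; as this quantity enters squared, its limit is $\rkla{\iO\tilde{u}\,\g{kl}\parx v\dx\dy}^2$, and dominated convergence handles the $\tau$-integration; the $\pary$-contributions are treated analogously. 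For the series, Lemma~\ref{lem:quadraticvariation} transferred to $\tilde{u}\h$ by equality of laws gives the $h$-uniform bound $\qvar{\martht}_t\leq C\norm{v}_{H^2\trkla{\Om}}^2\itTht\norm{\tilde{u}\h\trkla{\tau}}_{H^1\trkla{\Om}}^2\dtau$, the $\trkla{k,l}$-summands being dominated by the sequence $C\lx{kl}^2\norm{\g{kl}}_{W^{1,\infty}\trkla{\Om}}^2\norm{v}_{H^2\trkla{\Om}}^2\itTht\norm{\tilde{u}\h\trkla{\tau}}_{H^1\trkla{\Om}}^2\dtau$, which is summable in $\trkla{k,l}$ by \ref{item:stochbasis:bounds}, while \ref{item:stochbasis:convergence} guarantees $\bigcup_{h}\trkla{\Zx\times\Zy}=\Z^2$. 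Dominated convergence for series then yields
\begin{align*}
\qvar{\martht}_t\;\longrightarrow\;\int_0^t\sum_{k,l\in\Z}\lx{kl}^2\rkla{\iO\tilde{u}\,\g{kl}\parx v\dx\dy}^2\dtau+\int_0^t\sum_{k,l\in\Z}\ly{kl}^2\rkla{\iO\tilde{u}\,\g{kl}\pary v\dx\dy}^2\dtau
\end{align*}
$\tilde{\Prob}$-almost surely, and the uniform integrability of $\qvar{\martht}_t$ follows once more from Lemma~\ref{lem:quadraticvariation} combined with the $L^{2\p}\trkla{\tilde{\Omega}}$-control of $\norm{\tilde{u}\h}_{L^\infty\trkla{0,\Tmax;H^1\trkla{\Om}}}$ furnished by Proposition~\ref{prop:energyentropyestimate} for $\p$ large, so that Vitali's theorem applies. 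Passing to the limit in the displayed martingale identity then gives the assertion.

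The main obstacle is precisely this quadratic passage to the limit: one must interchange $\lim_{h\searrow0}$ with the square of the inner integral and --- more delicately --- with the series defining $\qvar{\martht}_t$, which is finite for each fixed $h$ but exhausts $\Z^2$ only in the limit. This is exactly what the $h$-uniform bound of Lemma~\ref{lem:quadraticvariation}, the exhaustion property~\ref{item:stochbasis:convergence} and the decay property~\ref{item:stochbasis:bounds} are designed to control; apart from this, all the ingredients are the same as in the proof of Lemma~\ref{lem:mart1}.
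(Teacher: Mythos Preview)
Your overall strategy is correct and matches the paper's, but there is a genuine gap in the uniform-integrability step for $\martht^2$. You argue that ``the flux and mobility contributions are controlled in $L^q(\tilde{\Omega})$ by Proposition~\ref{prop:energyentropyestimate} and Lemma~\ref{lem:convergence2} provided $\p$ is chosen large enough; hence $\martht(t)$ is bounded in $L^{2q}(\tilde{\Omega})$ for some $q>1$''. This does not work: Proposition~\ref{prop:energyentropyestimate} only yields $\tilde{J}\h^x,\tilde{J}\h^y\in L^2\trkla{\tilde{\Omega};L^2\trkla{0,\Tmax;L^2\trkla{\Om}}}$, \emph{independently of} $\p$ (the weight $R^{\p-1}$ sits inside the expectation, it does not raise the stochastic moment of the flux). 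Consequently, the estimate at the end of the proof of Lemma~\ref{lem:mart1} gives $\martht(t)$ only in $L^q\trkla{\tilde{\Omega}}$ for $q<2$, which is insufficient to make $\martht^2(t)$ uniformly integrable.

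The paper resolves this by abandoning the deterministic representation of $\martht$ altogether at this point and switching to its \emph{stochastic-integral} representation. Combining the martingale moment inequality $\expectedt{\tabs{\martht(t)}^{2q}}\leq C_q\,\expectedt{\qvar{\martht}_t^q}$ with the bound $\qvar{\martht}_t\leq C\norm{v}_{H^2}^2\int_0^t\norm{\tilde{u}\h}_{H^1}^2\ds$ from Lemma~\ref{lem:quadraticvariation} (transferred to $\tilde{u}\h$ by equality of laws) yields $\expectedt{\tabs{\martht(t)}^{2q}}\leq C$ for \emph{every} $q$, since $\sup_t\mathcal{E}\h(\tilde{u}\h)$ has all moments by Proposition~\ref{prop:energyentropyestimate}. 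This is precisely the missing ingredient; once you have it, your treatment of the quadratic-variation term and the application of Vitali's theorem go through as you describe, and are essentially the same as in the paper.
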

\begin{proof}
We will prove this by passing to the limit in the martingale \eqref{eq:mt2}.
Recalling the arguments from the proof of Lemma \ref{lem:mart1}, we obtain that
\begin{multline}
\martht^2\trkla{t}:=\left(\iO\Ihxy{\trkla{\tilde{u}\h\trkla{t}-\tilde{u}\h\trkla{0}}v}\dx\dy\right.\\
+\itTht\iO\Ihy{\sqrt{\meanGinvX{\tilde{u}\h}}\tilde{J}\h^x\parx\Ihxy{v}}\dx\dy\dtau\\
\left.+\itTht\iO\Ihx{\sqrt{\meanGinvY{\tilde{u}\h}} \tilde{J}\h^y\pary\Ihxy{v}}\dx\dy\dtau\right)^2
\end{multline}
converges along a subsequence $\tilde{\Prob}$-almost surely towards
\begin{multline}
\martt^2\trkla{t}=\left(\iO\trkla{\tilde{u}\trkla{t}-\tilde{u}\trkla{0}}v\dx\dy\right.\\+\int_0^t\iO\tilde{u}\tilde{J}^x\parx v\dx\dy\dtau \left.+\int_0^t\iO\tilde{u}\tilde{J}^y\pary v\dx\dy\dtau\right)^2\,.
\end{multline}
In order to deduce the convergence of the corresponding expected values, we need to establish higher regularity of $\martht$.
Starting from the representation 
\begin{align}
\begin{split}
\martht\trkla{t}=&\,-\sum_{k\in\Zx}\sum_{l\in\Zy}\lx{kl}\itTht\iO\Ihy{\Ihxloc{\parx\trkla{\tilde{u}\h\gt{kl}}\Ihxy{v}}}\dx\dy\dbthx{kl}\\
&\,-\sum_{k\in\Zx}\sum_{l\in\Zy}\ly{kl}\itTht\iO\Ihx{\Ihyloc{\pary\trkla{\tilde{u}\h\gt{kl}}\Ihxy{v}}}\dx\dy\dbthy{kl}
\end{split}
\end{align}
and combining the martingale moment inequality 
\begin{align}
\expectedt{\tabs{\martht\trkla{t}}^{2q}}\leq C_q\expectedt{\qvar{\martht}_t^q} &&\text{for any~}q>0
\end{align}
(see e.g. Proposition 3.26 in Chapter 3 of \cite{KaratzasShreve2004}) with Lemma \ref{lem:quadraticvariation} formulated for $\martht$, we obtain
\begin{align}
\begin{split}\label{eq:tmp:mht2:uniform}
\expectedt{\tabs{\martht\trkla{t}}^{2q}}\leq C\expectedt{\qvar{\martht}_t^q}\leq &\,C\norm{v}_{H^2\trkla{\Om}}^{2q}\expectedt{\rkla{\itTht\norm{\tilde{u}\h\trkla{s}}_{H^1\trkla{\Om}}^2\ds}^q}\\
\leq&\,C\norm{v}_{H^2\trkla{\Om}}^{2q}\Tmax^q\expectedt{\sup_{s\in\tekla{0,\Tmax}}\mathcal{E}\h\trkla{\tilde{u}\h}^q+C}\,.
\end{split}
\end{align}
It remains to pass to the limit in the remaining integrals in \eqref{eq:mt2}, i.e.  we  have to analyze the convergence behavior of $\qvar{\martht}_t$ for $h\searrow0$ (cf.~\eqref{eq:qvarmth}).
The first step is to show that we may neglect the interpolation operators when passing to the limit.
Applying \eqref{eq:Ihlocpar} and \eqref{eq:Ih=Ihloc}, we may rewrite the first component of $\martht$ using
\begin{multline}\label{eq:tmp:qvar1}
\iO\Ihy{\Ihxloc{\parx\trkla{\tilde{u}\h\gt{kl}}\Ihxy{v}}}\dx\dy\\
=\iO\Ihy{\parx\tilde{u}\h\Ihx{\gt{kl}v}}\dx\dy+\iO\Ihy{\parx\gt{kl}\Ihx{\tilde{u}\h v}}\dx\dy\\
=\iO\parx\tilde{u}\h\gt{kl}\Ihxy{v}\dx\dy+\iO\parx\gt{kl}\tilde{u}\h\Ihxy{v}\dx\dy\\
-\iO\trkla{\ids-\Ihyop}\tgkla{\parx\tilde{u}\h\Ihxy{\gt{kl}v}}\dx\dy-\iO\parx\tilde{u}\h\trkla{\ids-\Ihxyop}\tgkla{\gt{kl}\Ihxy{v}}\dx\dy \\
-\iO\trkla{\ids-\Ihyop}\tgkla{\parx\gt{kl}\Ihxy{\tilde{u}\h v}}\dx\dy-\iO\parx\gt{kl}\trkla{\ids-\Ihxyop}\tgkla{\tilde{u}\h\Ihxy{v}}\dx\dy\\
=:\iO\parx\trkla{\tilde{u}\h\gt{kl}}\Ihxy{v}\dx\dy + A_{kl}+B_{kl}+C_{kl}+D_{kl}\,,
\end{multline}
where we used that $\gt{kl}\in\Uhxy$.
Combining \eqref{eq:tmp:qvar1} with the binomial theorem and \eqref{eq:qvarmth}, we get
%Therefore, we have
\begin{multline}\label{eq:tmp:difference}
\left|\sum_{k\in\Zx}\sum_{l\in\Zy}\lx{kl}^2\itTht\rkla{\iO\Ihy{\Ihxloc{\parx\trkla{\tilde{u}\h\gt{kl}}\Ihxy{v}}}\dx\dy}^2\dtau\right.\\
-\left.\sum_{k\in\Zx}\sum_{l\in\Zy}\lx{kl}^2\itTht\rkla{\iO\parx\trkla{\tilde{u}\h\gt{kl}}\Ihxy{v}\dx\dy}^2\dtau\right|\\
\leq C\sqrt{\qvar{\martht}_t}\rkla{\sum_{k\in\Zx}\sum_{l\in\Zy}\lx{kl}^2\itTht A_{kl}^2+B_{kl}^2+C_{kl}^2+D_{kl}^2\dtau}^{1/2}\\
+C\sum_{k\in\Zx}\sum_{l\in\Zy}\lx{kl}^2\itTht A_{kl}^2+B_{kl}^2+C_{kl}^2+D_{kl}^2\dtau\,=:\trkla{\#}\,.
\end{multline}
Using Lemma \ref{lem:Ih:error} and standard inverse estimates (cf.~Theorem 4.5.11 in \cite{BrennerScott}), we obtain the estimates
\begin{align}
A_{kl}\leq&\, Ch\norm{\parx\tilde{u}\h}_{L^2\trkla{\Om}}\norm{\pary\Ihxy{\gt{kl}v}}_{L^2\trkla{\Om}}\leq Ch\norm{\parx\tilde{u}\h}_{L^2\trkla{\Om}}\norm{\pary\trkla{\gt{kl}v}}_{L^\infty\trkla{\Om}}\,,\\
B_{kl}\leq&\,Ch \norm{\parx\tilde{u}\h}_{L^2\trkla{\Om}}\norm{\gt{kl}}_{L^4\trkla{\Om}}\norm{\nabla\Ihxy{v}}_{L^4\trkla{\Om}}\,,\\
\begin{split}
C_{kl}\leq&\,Ch\norm{\parx\pary\gt{kl}}_{L^\infty\trkla{\Om}}\norm{\Ihxy{\tilde{u}\h v}}_{L^1\trkla{\Om}}\\
\leq&\, Ch\norm{\parx\pary\g{kl}}_{L^\infty\trkla{\Om}}\norm{\tilde{u}\h}_{L^2\trkla{\Om}}\norm{\Ihxy{v}}_{L^2\trkla{\Om}}\,,\end{split}\\
D_{kl}\leq&\,C h\norm{\parx\gt{kl}}_{L^\infty\trkla{\Om}}\norm{\tilde{u}\h}_{L^2\trkla{\Om}}\norm{\nabla\Ihxy{v}}_{L^2\trkla{\Om}}\,.
\end{align}
Recalling Assumption \ref{item:stochbasis:bounds}, \eqref{eq:normequivalence}, and the regularity of $v$, we obtain
\begin{align}
\trkla{\#}\leq  Ch^{1/2}\sqrt{\qvar{\martht}_t}\rkla{\itTht \norm{\tilde{u}\h}_{H^1\trkla{\Om}}^2\dtau}^{1/2} +Ch \itTht \norm{\tilde{u}\h}_{H^1\trkla{\Om}}^2\dtau\,.
\end{align}
Therefore, the $\p$-th moment of the left-hand side of \eqref{eq:tmp:difference} vanishes, which in particular provides convergence $\tilde{\Prob}$-almost surely after restricting ourselves to appropriate subsequences.
It remains to discuss the convergence properties of 
\begin{align}
\sum_{k\in\Zx}\sum_{l\in\Zy}\lx{kl}\itTht\rkla{\iO\parx\trkla{\tilde{u}\h\gt{kl}}\Ihxy{v}\dx\dy}^2\dtau\,.
\end{align}
After integrating by parts, we may use the standard error estimates for the nodal interpolation operator (cf.~Theorem 4.4.20 in \cite{BrennerScott}) and Assumption \ref{item:stochbasis:bounds} to obtain the strong convergence of $\lx{kl}\gt{kl}$ towards $\lx{kl}\g{kl}$ in $L^\infty\trkla{\Om}$ and the strong convergence of $\parx\Ihxy{v}$ towards $\parx v$ in $L^\infty\trkla{\Om}$.
Together with \eqref{eq:conv:strong1}, this provides the $\tilde{\Prob}$-almost sure convergence.
Similar considerations provide the convergence of 
\begin{align*}
\sum_{k\in\Zx}\sum_{l\in\Zy}\lx{kl}\itTht\rkla{\iO\Ihx{\Ihyloc{\pary\trkla{\tilde{u}\h\gt{kl}}\Ihxy{v}}}\dx\dy}^2\dtau
\end{align*}
$\tilde{\Prob}$-almost surely.
As we already established the higher integrability of $\qvar{\martht}_t$ in \eqref{eq:tmp:mht2:uniform}, we may conclude by applying Vitali's theorem.
\end{proof}
In the same spirit, we get the following result.
\begin{lemma}
Let the Assumptions \ref{item:S}, \ref{item:initial}, \ref{item:potential}, \ref{item:stoch}, \ref{item:regularization},  \ref{item:stochbasis:boundthirdderivative}, and \ref{item:stochbasis:convergence} hold true.
Then for all $\tekla{0,1}$-valued continuous functions $\Psi$ defined on $\Xu\times\XW$, we have
\begin{align}
\expectedt{\rkla{\martt\trkla{t}\btalpha{kl}\trkla{t} -\martt\trkla{s}\btalpha{kl}\trkla{s} -\lalpha{kl}\int_s^t\iO\para{\alpha}\trkla{\tilde{u}\g{kl}}v\dx\dy\dtau} \Psi\trkla{r_s\tilde{u},r_s\tilde{\bs{W}}}}=0
\end{align} 
for all $k,l\in\Z$, $\alpha\in\tgkla{x,y}$, and all $s\leq t\in\tekla{0,\Tmax}$.
\end{lemma}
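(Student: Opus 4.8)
The plan is to mimic the proof of Lemma~\ref{lem:mart2}, passing to the limit $h\searrow0$ in the cross-variation martingale listed immediately before Lemma~\ref{lem:mart1}. Fix $k,l\in\Z$ and $\alpha\in\tgkla{x,y}$. By Assumption~\ref{item:stochbasis:convergence} there is $h_0>0$ with $k\in\Zx$ and $l\in\Zy$ for all $h\leq h_0$, so for such $h$ the process
\[
\martht\bthalpha{kl}-\lalpha{kl}\int_0^{\trkla{\cdot}\wedge\tilde{T}\h}\iO\Ihy{\Ihxloc{\parx\trkla{\tilde{u}\h\gt{kl}}\Ihxy{v}}}\dx\dy\ds
\]
(displayed for $\alpha=x$; the case $\alpha=y$ uses the transposed interpolation operators) is an $\trkla{\tilde{\mathcal{F}}_{h,t}}$-martingale, and hence
\[
\expectedt{\rkla{\martht\trkla{t}\bthalpha{kl}\trkla{t}-\martht\trkla{s}\bthalpha{kl}\trkla{s}-\lalpha{kl}\int_{s\wedge\tilde{T}\h}^{t\wedge\tilde{T}\h}\iO\Ihy{\Ihxloc{\parx\trkla{\tilde{u}\h\gt{kl}}\Ihxy{v}}}\dx\dy\ds}\Psi\trkla{r_s\tilde{u}\h,r_s\tilde{\bs{W}}\h}}=0
\]
for all $0\leq s\leq t\leq\Tmax$. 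I would then identify the limit of every factor.

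By Lemma~\ref{lem:stoppingtime}, for $h$ small (depending on $\tilde{\omega}$) we have $\chiTth\equiv1$ on $\tekla{s,t}$, so the stopped integration limits may be replaced by $s$ and $t$. The proof of Lemma~\ref{lem:mart1} already yields $\martht\trkla{t}\to\martt\trkla{t}$ $\tilde{\Prob}$-a.s.\ along the chosen subsequence. Since $\bthalpha{kl}\trkla{t}=\trkla{\lalpha{kl}}^{-1}\skla{\g{kl},\tilde{W}_{h,\alpha}\trkla{t}}_{L^2}$ is a continuous linear functional of $\tilde{\bs{W}}\h\in\XW$ (here $\tilde{W}_{h,\alpha}$ denotes the $\alpha$-component of $\tilde{\bs{W}}\h$), the $\tilde{\Prob}$-a.s.\ convergence $\tilde{\bs{W}}\h\to\tilde{\bs{W}}$ gives $\bthalpha{kl}\trkla{t}\to\btalpha{kl}\trkla{t}$ $\tilde{\Prob}$-a.s. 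For the cross-variation term I would reuse the interpolation-removal identity \eqref{eq:tmp:qvar1} from the proof of Lemma~\ref{lem:mart2}: the integrand $\iO\Ihy{\Ihxloc{\parx\trkla{\tilde{u}\h\gt{kl}}\Ihxy{v}}}\dx\dy$ equals $\iO\parx\trkla{\tilde{u}\h\gt{kl}}v\dx\dy$ plus four interpolation errors of order $h$ which, by Lemma~\ref{lem:Ih:error}, inverse estimates, Assumption~\ref{item:stochbasis:bounds}, and the a priori bound on $\norm{\tilde{u}\h}_{H^1\trkla{\Om}}$ from Proposition~\ref{prop:energyentropyestimate}, vanish in $L^q\trkla{\tilde{\Omega}}$ for every $q<\infty$; the strong convergences $\gt{kl}\to\g{kl}$ and $\Ihxy{v}\to v$ in $L^\infty\trkla{\Om}$ together with \eqref{eq:conv:strong1}--\eqref{eq:conv:strong2} then give $\tilde{\Prob}$-a.s.\ convergence of the remaining integral to $\lalpha{kl}\int_s^t\iO\parx\trkla{\tilde{u}\g{kl}}v\dx\dy\dtau$ (and analogously for $\alpha=y$ with $\pary$).

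To upgrade these $\tilde{\Prob}$-a.s.\ convergences to convergence of the expectation I would invoke Vitali's theorem, which requires uniform integrability of the integrand. The factor $\Psi$ is bounded by $1$; $\martht\trkla{t}$ has $\tilde{\Prob}$-moments of every order bounded uniformly in $h$ by \eqref{eq:tmp:mht2:uniform}; and $\bthalpha{kl}$, being a Brownian motion by Lemma~\ref{lem:QWiener} (equivalently $\tabs{\bthalpha{kl}\trkla{t}}\leq C\norm{\tilde{\bs{W}}\h}_{\XW}$, whose law is that of $\norm{\bs{W}}_{\XW}$), has moments of every order bounded uniformly in $h$; hence $\martht\trkla{t}\bthalpha{kl}\trkla{t}$ is uniformly integrable by the Cauchy--Schwarz inequality. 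The cross-variation term is bounded by $C\norm{v}_{H^2\trkla{\Om}}\int_0^{\Tmax}\norm{\tilde{u}\h}_{H^1\trkla{\Om}}\dtau$, whose higher moments are controlled by $\expectedt{\sup_{t\in\tekla{0,\Tmax}}\mathcal{E}\h\trkla{\tilde{u}\h}^q}$, again uniform in $h$ by Proposition~\ref{prop:energyentropyestimate}. I expect the main obstacle to be the bookkeeping in the cross-variation integral: checking that each interpolation error genuinely vanishes in the appropriate $L^q\trkla{\tilde{\Omega}}$-sense while simultaneously dealing with the $h$-dependent stopping time and with the case distinction $k\in\Zx,\,l\in\Zy$ versus not (which, by Assumption~\ref{item:stochbasis:convergence}, is relevant only for large $h$) --- but this reproduces the corresponding step in the proof of Lemma~\ref{lem:mart2}, so no essentially new difficulty should arise.
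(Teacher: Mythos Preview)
Your proposal is correct and follows exactly the approach the paper indicates: the paper states this lemma with the sole comment ``In the same spirit, we get the following result,'' referring to the proof of Lemma~\ref{lem:mart2}. Your outline---passing to the limit in the discrete cross-variation martingale identity, removing the interpolation operators via \eqref{eq:tmp:qvar1}, using the $\tilde{\Prob}$-a.s.\ convergence of $\martht$, $\bthalpha{kl}$, and the cross-variation integrand, and concluding with Vitali's theorem via the moment bounds \eqref{eq:tmp:mht2:uniform} and Proposition~\ref{prop:energyentropyestimate}---is precisely what the paper intends.
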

%\begin{proof}
%\todo{Beweisen}
%\end{proof}
\begin{lemma}\label{lem:identification:stochint}
Let the Assumptions \ref{item:S}, \ref{item:initial}, \ref{item:potential}, \ref{item:stoch}, \ref{item:regularization},  \ref{item:stochbasis:boundthirdderivative}, and \ref{item:stochbasis:convergence} hold true.
Then, we have
\begin{align}
\martt\trkla{t}=\sum_{k,l\in\Z}\lx{kl}\int_0^t\iO\parx\trkla{\tilde{u}\g{kl}}v\dx\dy\dbtx{kl}+\sum_{k,l\in\Z}\ly{kl}\int_0^t\iO\pary\trkla{\tilde{u}\g{kl}}v\dx\dy\dbty{kl}\,.
\end{align}
\end{lemma}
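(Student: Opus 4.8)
The statement identifies the continuous martingale $\martt$ with an explicit It\^o integral. The standard tool here is the martingale representation theorem in the form used for stochastic PDEs (see e.g.\ \cite{HofmanovaSeidler} or the presentation in \cite{FischerGrun2018}): if a continuous square-integrable martingale $M$ with respect to a filtration carrying a cylindrical/$Q$-Wiener process $\tilde{\bs W}$ has quadratic variation and cross variations with the components $\btalpha{kl}$ of the noise that match those of a candidate stochastic integral $\int_0^t \Xi\dtilde{\bs W}$, then $M$ coincides with that stochastic integral. The plan is therefore to assemble the three ingredients already produced in the preceding lemmas: first, that $\martt$ is an $\trkla{\tilde{\mathcal F}_t}$-martingale (this follows from Lemma~\ref{lem:mart1}, testing against $\Psi\trkla{r_s\tilde u, r_s\tilde{\bs W}}$); second, that its quadratic variation is $\qvar{\martt}_t=\int_0^t\sum_{k,l\in\Z}\lx{kl}^2\trkla{\iO\tilde u\g{kl}\parx v\dx\dy}^2 +\ly{kl}^2\trkla{\iO\tilde u\g{kl}\pary v\dx\dy}^2\dtau$, which is exactly the content of Lemma~\ref{lem:mart2}; and third, that the cross variation of $\martt$ with each $\btalpha{kl}$ is $\crossvar{\martt}{\btx{kl}}_t=\lx{kl}\int_0^t\iO\parx\trkla{\tilde u\g{kl}}v\dx\dy\dtau$ and analogously for $\by{kl}$, which is the content of the unnumbered lemma immediately preceding this one.

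Given these, I would proceed as follows. Define the candidate process $N\trkla{t}:=\sum_{k,l\in\Z}\lx{kl}\int_0^t\iO\parx\trkla{\tilde u\g{kl}}v\dx\dy\dbtx{kl}+\sum_{k,l\in\Z}\ly{kl}\int_0^t\iO\pary\trkla{\tilde u\g{kl}}v\dx\dy\dbty{kl}$. One must first check that $N$ is well defined as an It\^o integral, i.e.\ that the integrand is progressively measurable (clear from the adaptedness and path regularity of $\tilde u$ established in Proposition~\ref{prop:convergence1}) and square-integrable in the appropriate Hilbert--Schmidt sense: here one uses $\iO\para{\alpha}\trkla{\tilde u\g{kl}}v\dx\dy = -\iO \tilde u\g{kl}\para{\alpha}v\dx\dy$ after integration by parts (legitimate since $v\in C_\per^\infty\trkla{\overline\Om}$), together with $\sum_{k,l}\rkla{\lx{kl}^2+\ly{kl}^2}\norm{\g{kl}}_{W^{2,\infty}}^2\le C$ from \ref{item:stochbasis:bounds} and the uniform bound $\tilde{\mathds E}\ekla{\sup_t\norm{\tilde u\trkla{t}}_{H^1}^{2\p}}\le C$ coming from Proposition~\ref{prop:energyentropyestimate} via the equality of laws and lower semicontinuity of norms. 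Then compute $\qvar{N}_t$ and $\crossvar{N}{\btalpha{kl}}_t$ directly from the It\^o isometry: these coincide with the expressions for $\qvar{\martt}_t$ and $\crossvar{\martt}{\btalpha{kl}}_t$ recorded above. Hence $\martt-N$ is a continuous martingale with vanishing quadratic variation and vanishing cross variation with every $\btalpha{kl}$; since the $\btalpha{kl}$ generate the filtration together with the initial data, $\martt-N$ has trivial quadratic variation, so $\martt-N\equiv \martt\trkla{0}-N\trkla{0}=0$, which is the claim.

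Most of the work has already been front-loaded into the preceding lemmas, so the remaining argument is the clean bookkeeping of the martingale representation theorem. The one point that genuinely requires care — and which I expect to be the main obstacle — is the passage from ``$\martt-N$ has zero quadratic variation and zero cross variations with the $\btalpha{kl}$'' to ``$\martt=N$'': this is not automatic for a general filtration, and one must invoke that $\trkla{\tilde{\mathcal F}_t}$ is precisely the augmented canonical filtration generated by $\trkla{\tilde u,\tilde{\bs W},\tilde u^0}$ so that the $Q$-Wiener process $\tilde{\bs W}$ has the representation property on it. The cleanest route is to apply the abstract representation result (e.g.\ the version in \cite{BrzezniakOndrejat} or as used in \cite{FischerGrun2018}) which states that a real martingale whose cross variations with all $\btalpha{kl}$ are the stochastic-integral brackets of a fixed integrand is that stochastic integral; alternatively one checks directly, via It\^o's formula applied to $\trkla{\martt-N}^2$ and using that $\trkla{\martt-N}\btalpha{kl}$ minus the appropriate bracket correction is a martingale (the content of the last two lemmas combined), that $\tilde{\mathds E}\trkla{\martt\trkla{t}-N\trkla{t}}^2=0$. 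I would write out this final comparison carefully, since it is where all the preparatory bracket computations are consumed, and otherwise refer to \cite{FischerGrun2018} and \cite{HofmanovaSeidler} for the representation theorem itself.
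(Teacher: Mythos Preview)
Your proposal is correct and matches the paper's approach: both arguments compute $\qvar{\martt}$, $\qvar{N}$, and $\crossvar{\martt}{N}$ (the latter via the cross-variation formula for stochastic integrals together with the known brackets $\crossvar{\martt}{\btalpha{kl}}$), observe that all three coincide, and conclude $\qvar{\martt-N}\equiv 0$. Your worry about needing a martingale representation property of the filtration is unnecessary: once $\martt-N$ is a continuous square-integrable martingale with vanishing quadratic variation, it is $\tilde{\Prob}$-a.s.\ constant regardless of the filtration, and since both processes start at zero the claim follows --- this is exactly the ``alternative'' direct route you mention, and it is what the paper does.
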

\begin{proof}
As a martingale with vanishing quadratic variation is almost surely constant, it is sufficient to show that  
\begin{multline}
0=\qvar{\martt\trkla{\cdot}}_T+\qvar{\sum_{\alpha\in\tgkla{x,y}}\sum_{k,l\in\Z}\int_0^{\trkla{\cdot}}\lalpha{kl}\iO\para{\alpha}\trkla{\tilde{u}\g{kl}}v\dx\dy\dbtalpha{kl}}_T\\
-2\crossvar{\martt\trkla{\cdot}}{\sum_{\alpha\in\tgkla{x,y}}\sum_{k,l\in\Z}\int_0^{\trkla{\cdot}}\lalpha{kl}\iO\para{\alpha}\trkla{\tilde{u}\g{kl}}v\dx\dy\dbtalpha{kl}}_T\,.
\end{multline}
To compute the last term on the right-hand side, we use the cross variation formula, which can be found e.g. in Lemma 2.16 in Section 3.2 of \cite{KaratzasShreve2004}, to obtain
\begin{multline}
\crossvar{\martt\trkla{\cdot}}{\sum_{\alpha\in\tgkla{x,y}}\sum_{k,l\in\Z}\int_0^{\trkla{\cdot}}\lalpha{kl}\iO\para{\alpha}\trkla{\tilde{u}\g{kl}}v\dx\dy\dbtalpha{kl}}_T\\
=\sum_{\alpha\in\tgkla{x,y}}\sum_{k,l\in\Z}\int_0^T\lalpha{kl}\iO\para{\alpha}\trkla{\tilde{u}\g{kl}}v\dx\dy\operatorname{d}\!\crossvar{\martt\trkla{\cdot}}{\btalpha{kl}\trkla{\cdot}}_s\,.
\end{multline}
Following the arguments in \cite{FischerGrun2018}, it is possible to show that the process $s\mapsto \crossvar{\martt\trkla{\cdot}}{\btalpha{kl}}_s$ is absolutely continuous $\tilde{\Prob}$-almost surely and consequently
\begin{align}
\operatorname{d}\crossvar{\martt\trkla{\cdot}}{\btalpha{kl}}_s=\lalpha{kl}\iO\para{\alpha}\trkla{\tilde{u}\trkla{s}\g{kl}}v\dx\dy\ds\,.
\end{align}
Using the identities
\begin{multline}
\qvar{\martt\trkla{\cdot}}_T=\sum_{\alpha\in\tgkla{x,y}}\sum_{k,l\in\Z}\lalpha{kl}\int_0^T\rkla{\iO\para{\alpha}\trkla{\tilde{u}\g{kl}}v\dx\dy}^2\ds\\
=\qvar{\sum_{\alpha\in\tgkla{x,y}}\sum_{k,l\in\Z}\lalpha{kl}\int_0^{\trkla{\cdot}}\iO\para{\alpha}\trkla{\tilde{u}\g{kl}}v\dx\dy\dbtalpha{kl}}_T\,,
\end{multline}
we conclude
\begin{align}
\qvar{\martt\trkla{\cdot}-\sum_{\alpha\in\tgkla{x,y}}\sum_{k,l\in\Z}\lalpha{kl}\int_0^{\trkla{\cdot}}\iO\para{\alpha}\trkla{\tilde{u}\g{kl}}v\dx\dy\dbtalpha{kl}}_T\equiv0\,.
\end{align}
\end{proof}
Having established the previous results, we are now in the position to prove Theorem \ref{th:existence}.
\begin{proof}[Proof of Theorem \ref{th:existence}]
From Proposition \ref{prop:convergence1}, Corollary \ref{cor:tildedeltau}, and Lemma \ref{lem:QWiener}, we infer the existence of a stochastic basis $\trkla{\tilde{\Omega},\tilde{\mathcal{F}},\trkla{\tilde{\mathcal{F}}_t}_{t\geq0},\tilde{\Prob}}$, of a Wiener process 
\begin{align}
\tilde{\bs{W}}\trkla{t}=\sum_{\alpha\in\tgkla{x,\,y}}\sum_{k,\,l\in\Z}\lalpha{kl}\g{kl}\btalpha{kl}\kbalpha\,,
\end{align}
and of random variables
\begin{subequations}
\begin{align}
\label{eq:5101}
\begin{split}
\tilde{u}\in&\, L^q\trkla{\tilde{\Om};L^\infty\trkla{0,\Tmax;H^1_\per\trkla{\Om}}}\cap L^2\trkla{\tilde{\Omega};L^2\trkla{0,\Tmax;H^2_\per\trkla{\Om}}}\\
&\qquad\qquad\cap L^\sigma\trkla{\tilde{\Omega};C^{1/4}\trkla{\tekla{0,\Tmax};\trkla{H^1_\per\trkla{\Om}}^\prime}}\,,
\end{split}\\
\tilde{J}^x\in&\,L^2\trkla{\tilde{\Omega};L^2\trkla{0,\Tmax;L^2\trkla{\Om}}}\,,\\
\tilde{J}^y\in&\,L^2\trkla{\tilde{\Omega};L^2\trkla{0,\Tmax;L^2\trkla{\Om}}}\
\end{align}
\end{subequations}
with $q<\infty$ and $\sigma<8/5$.
As shown in Lemma \ref{lem:convergence2}, these random variables satisfy
\begin{subequations}
\begin{align}
\tilde{J}^x=&\,\tilde{u}\parx\trkla{-\Delta\tilde{u}+F^\prime\trkla{\tilde{u}}}&&\tilde{\Prob}\text{-almost surely in~} \tekla{\tilde{u}>0}\,,\\
\tilde{J}^y=&\,\tilde{u}\pary\trkla{-\Delta\tilde{u}+F^\prime\trkla{\tilde{u}}}&&\tilde{\Prob}\text{-almost surely in~} \tekla{\tilde{u}>0}\,.
\end{align}
\end{subequations}
Furthermore, we have $\Lambda=\tilde{\Prob}\circ\trkla{\tilde{u}^0}^{-1}$ by construction.
Lemma \ref{lem:mart1} implies that
\begin{align}
\begin{split}
\martt\trkla{t}=&\,\iO\trkla{\tilde{u}\trkla{t}-\tilde{u}^0}v\dx\dy+\int_0^t\iO\tilde{u}\tilde{J}^x\parx v\dx\dy\ds+\int_0^t\iO\tilde{u}\tilde{J}^y\pary v\dx\dy\ds
\end{split}
\end{align}
is an $\trkla{\tilde{\mathcal{F}}_t}_{t\geq0}$-martingale, and by Lemma \ref{lem:identification:stochint}, we obtain
\begin{multline}
\iO\trkla{\tilde{u}\trkla{t}-\tilde{u}^0}v\dx\dy+\int_0^t\iO\tilde{u}\tilde{J}^x\parx v\dx\dy\ds+\int_0^t\iO\tilde{u}\tilde{J}^y\pary v\dx\dy\ds\\
=\sum_{k,l\in\Z}\lx{kl}\int_0^t\iO\parx\trkla{\tilde{u}\g{kl}}v\dx\dy\dbtx{kl}+\sum_{k,l\in\Z}\ly{kl}\int_0^t\iO\pary\trkla{\tilde{u}\g{kl}}v\dx\dy\dbty{kl}\,.
\end{multline}
It remains to establish the energy estimate \eqref{eq:gg100}.
Starting from Proposition \ref{prop:energyentropyestimate}, using Fatou's lemma and the definition of the fluxes \eqref{eq:def:jh}, we find
\begin{multline}\label{eq:gg101}
\expectedt{\liminf_{h\searrow0}\rkla{\sup_{t\in\tekla{0,\Tmax}} \rkla{\tfrac12\iO\Ihy{\tabs{\parx \tilde{u}\h}^2}\!+\Ihx{\tabs{\pary\tilde{u}\h}^2}\dx\dy +\!\!\iO\Ihxy{F\trkla{\tilde{u}\h}}\dx\dy}^\p }}\\
+\expectedt{\liminf_{h\searrow0}\int_0^{\Tmax}\iO\tabs{\tilde{J}\h^x}^2+\tabs{\tilde{J}\h^y}^2\dx\dy\dt}\\
\leq \liminf_{h\searrow0}\expectedt{\sup_{t\in\tekla{0,\Tmax}}\mathcal{E}\h\trkla{\tilde{u}\h}^\p+\int_0^{\Tmax}\iO\Ihy{\tabs{\tilde{J}\h^x}^2} +\Ihx{\tabs{\tilde{J}\h^y}^2}\dx\dy\dt}\\
\leq C\trkla{\p,\,u^0,\,\Tmax}\,,
\end{multline}
where we used $\tfrac12 h^\varepsilon\iO\Ihxy{\tabs{\Delta\h\tilde{u}\h}^2}\dx\dy\geq0$.
By the norm equivalence \eqref{eq:normequivalence}, we obtain
\begin{multline}
\expectedt{\liminf_{h\searrow0}\rkla{\sup_{t\in\tekla{0,\Tmax}}\rkla{\tfrac12\iO\tabs{\nabla\tilde{u}\h}^2\dx\dy+\iO\Ihxy{F\trkla{\tilde{u}\h}}\dx\dy}^\p }}\\
+\expectedt{\liminf_{h\searrow0}\int_0^{\Tmax}\iO\tabs{\tilde{J}\h^x}^2+\tabs{\tilde{J}\h^y}^2\dx\dy\dt}\leq C\trkla{\p,\,u^0,\,\Tmax}\,.
\end{multline}
Similarly as in the proof of Theorem 3.2 in \cite{FischerGrun2018}, we may use the lower semi-continuity in appropriate topologies to find that
\begin{align}
\expected{\sup_{t\in\tekla{0,\Tmax}}\rkla{\tfrac12\iO\tabs{\nabla\tilde{u}\h}^2\dx\dy}^\p +\int_0^{\Tmax}\iO\tabs{\tilde{J}\h^x}^2+\tabs{\tilde{J}\h^y}^2\dx\dy\dt}\leq C\trkla{\p,\,u^0,\,\Tmax}\,.
\end{align}
So let us focus on the remaining term $\iO\Ihxy{F\trkla{\tilde{u}\h}}\dx\dy$.\\

\underline{\textbf{Step 1:}} There is a positive constant $C$ such that
\begin{align}
\iO F\trkla{\tilde{u}\h}\dx\dy\leq\iO\Ihxy{F\trkla{\tilde{u}\h}}\dx\dy +C\,.\label{eq:gg102}
\end{align}
Indeed, with the notation of Assumption \ref{item:potential}, we find $F$ to be convex for $s\in\tekla{0,\hat{u}}$ where $\hat{u}$ is given by 
\begin{align}
\hat{u}=\sqrt[p+2]{\frac{\tilde{c}_1}{\tilde{c}_2}}\,\label{eq:gg103}
\end{align}
with the constants $\tilde{c}_1$ and $\tilde{c}_2$ introduced in \ref{item:potential}.
Introducing $\hat{\delta}:=\hat{u}/C_{\text{osc}}$ with $C_{\text{osc}}$ being the constant in \eqref{eq:oscillation} and using the definition for $S_{\hat{\delta}}^{\Qh}$ from \eqref{eq:SdeltaQhot}, we find that
\begin{align}
\max_{\trkla{x,y}\in\Om\setminus S_{\hat{\delta}}^{\Qh}} \tilde{u}\h\leq \hat{u}\,.
\end{align}
Therefore, we have
\begin{align}
F\trkla{\tilde{u}\h}\leq\Ihxy{F\trkla{\tilde{u}\h}}&&\text{on~}\Om\setminus S_{\hat{\delta}}^{\Qh}\label{eq:gg104}
\end{align}
due to the convexity of $F$ on $\trkla{0,\hat{u}}$.

Using \ref{item:potential} once more, we find
\begin{align}
0\leq F\trkla{\tilde{u}\h}\leq C\trkla{\hat{\delta}^{-p}+1}&&\text{on~} S_{\hat{\delta}}^{\Qh}\,.\label{eq:gg105}
\end{align}
Combining \eqref{eq:gg104} and \eqref{eq:gg105}, \eqref{eq:gg102} is established.\\

\underline{\textbf{Step 2:}} Estimate on $\expectedt{\esssup_{t\in\tekla{0,\Tmax}}\rkla{\iO F\trkla{\tilde{u}}\dx\dy}^\p}$.\\

From \eqref{eq:conv:strong1} and \eqref{eq:conv:strong2} together with Sobolev's embedding result, we infer $\tilde{\Prob}$-a.s.~that for almost all $t\in\tekla{0,\Tmax}$
\begin{align}
\tilde{u}\h\trkla{\tilde{\omega},t,\cdot}\rightarrow\tilde{u}\trkla{\tilde{\omega},t,\cdot}&&\text{in~} C^\gamma\trkla{\overline{\Om}}\,.
\end{align}
Hence, $F\trkla{\tilde{u}\h\trkla{\tilde{\omega},t,\cdot}}\rightarrow F\trkla{\tilde{u}\trkla{\tilde{\omega},t,\cdot}}$ pointwise which implies together with Fatou's lemma
\begin{align}
\esssup_{t\in\tekla{0,\Tmax}}\iO F\trkla{\tilde{u}\trkla{\tilde{\omega},t,\cdot}}\dx\dy\leq \liminf_{h\searrow0} \sup_{t\in\tekla{0,\Tmax}} \iO F\trkla{\tilde{u}\h\trkla{\tilde{\omega},t,\cdot}}\dx\dy.\,\label{eq:step2}
\end{align}
As both sides of inequality \eqref{eq:step2} are non-negative, we can take the $\p$-th power on both sides.
Taking the expectation concludes this step.\\

\underline{\textbf{Step 3:}} Estimate on $\expectedt{\sup_{t\in\tekla{0,\Tmax}}\rkla{\iO F\trkla{\tilde{u}}\dx\dy}^\p}$.\\

We fix $t_0\in\tekla{0,\Tmax}$ arbitrarily and choose $\tilde{\omega}\in\tilde{\Omega}$ such that $\tilde{u}\trkla{\tilde{\omega},\cdot,\cdot}\in C\trkla{\tekla{0,\Tmax};L^q\trkla{\Om}}$ and $M\trkla{\tilde{\omega}}:=\esssup_{t\in\tekla{0,\Tmax}}\iO F\trkla{\tilde{u}}\dx\dy<\infty$.
The latter limitations are satisfied by almost all $\tilde{\omega}\in\tilde{\Omega}$ due to Proposition \ref{prop:convergence1} and Step 2.\\
We will now show that $\tilde{u}\trkla{\tilde{\omega},\,t_0,\,\cdot}$ vanishes only on a set of measure zero.
Therefore, we take a sequence $\trkla{s_n}_{n\in\N}$ in $\tekla{0,\Tmax}$ which satisfies $s_n\rightarrow t_0$ for $n\nearrow\infty$, 
\begin{align}
\iO F\trkla{\tilde{u}\trkla{\tilde{\omega},\,s_n,\,x,\,y}}\dx\dy\leq M\trkla{\tilde{\omega}}\,,
\end{align}
and $\tilde{u}\trkla{\tilde{\omega},\,s_n,\,\cdot}\rightarrow \tilde{u}\trkla{\tilde{\omega},\,t_0,\,\cdot}$ pointwise almost everywhere.
Assuming that the set $\mathcal{N}$, where $\tilde{u}\trkla{\tilde{\omega},\,t_0,\,\cdot}$ vanishes, has positive measure, we obtain by Egorov's theorem the existence of a set $\mathcal{N}^\delta\subset\mathcal{N}\subset\Om$ such that $\mu\trkla{\mathcal{N}^\delta}>\trkla{1-\delta}\mu\trkla{\mathcal{N}}$ for $\delta\in\trkla{0,1}$ such that $\tilde{u}\trkla{\tilde{\omega},\,s_n,\,\cdot}\rightarrow\tilde{u}\trkla{\tilde{\omega},\,t_0,\,\cdot}$ uniformly in $\mathcal{N}^\delta$.
This provides 
\begin{align}
\int_{\mathcal{N}^\delta} F\trkla{\tilde{u}\trkla{\tilde{\omega},s_n,x,y}}\dx\dy\stackrel{n\rightarrow\infty}{\longrightarrow}+\infty\,.
\end{align}
However, at the same time we have
\begin{align}
\int_{\mathcal{N}^\delta} F\trkla{\tilde{u}\trkla{\tilde{\omega},s_n,x,y}}\dx\dy\leq \iO F\trkla{\tilde{u}\trkla{\tilde{\omega},s_n,x,y}}\dx\dy \leq M\trkla{\tilde{\omega}}\,.
\end{align}
This contradiction provides $\mu\trkla{\mathcal{N}}=0$ and $F\trkla{\tilde{u}\trkla{\tilde{\omega},s_n,\cdot}}\rightarrow F\trkla{\tilde{u}\trkla{\tilde{\omega},t_0,\cdot}}$ pointwise almost everywhere.
Hence, by applying Fatou's lemma, we obtain
\begin{multline}
\iO F\trkla{\tilde{u}\trkla{\tilde{\omega},t_0,x,y}}\dx\dy\leq \liminf_{n\rightarrow \infty}\iO F\trkla{\tilde{u}\trkla{\tilde{\omega},s_n,x,y}}\dx\dy\\
\leq M\trkla{\tilde{\omega}} = \esssup_{t\in\tekla{0,\Tmax}} \iO F\trkla{\tilde{u}\trkla{\tilde{\omega},t,x,y}}\dx\dy\,.
\end{multline}
Again, taking the $\p$-th power and the expectation concludes this step.

Together with \eqref{eq:gg101} and \eqref{eq:gg102}, the energy inequality \eqref{eq:gg100} is established.
Finally, we may combine \eqref{eq:5101}  and  \eqref{eq:gg100} to deduce the positivity properties of $\tilde u$ claimed in the theorem. 
\end{proof}
\begin{remark}
As the estimate \eqref{eq:gg100} is only of a qualitative character we did not strive for an optimal result.
In fact, it is --- being based on \eqref{eq:gg105} --- a rather coarse estimate.
If more information is available on $F$, for instance number and height of local maxima, much better estimates are available --- based on appropriate convexity arguments.
\end{remark}

\section{Conclusion}\label{sec:conclusion}
We have proven the existence of martingale solutions to stochastic thin-film equations with conservative linear multiplicative noise in two space dimensions. As our result covers driving noise both in the \Ito- and in the Stratonovich-sense, we expect it to be a starting point to construct solutions for solely surface tension driven thin-film evolution (i.e. $F\equiv 0$) subject to compactly supported initial data.\footnote{In the case of \Ito-noise, no formal a priori-estimates are known for the case of compactly supported initial data.} This raises questions about the impact of noise on the evolution of the solution's support (``finite speed of propagation'' and ``waiting time phenomena''). It is well-known in the (deterministic) theory of thin-film equations that analytical concepts in two space dimensions carry over to higher dimensions while the argumentation in dimension $d=1$ takes advantage of the Sobolev embedding $H^1\hookrightarrow C^{1/2}$ and is therefore much less involved. Hence, a generalization to $d=3$ (with the perspective of applications to models for phase separation) is feasible as well. Finally, the implementation of numerical schemes related to the finite element approach presented here will provide further insight into the impact of noise on thin-film evolution.

\begin{appendix}
%\numberwithin{equation}{section}
\section{Auxiliary results}\label{sec:appendix}
\begin{lemma}\label{lem:Ih:error}
Let $\Qh$ satisfy \ref{item:S} and let $\ids$ be the identity operator.
Furthermore, let $p\in[1,\infty)$, $q,r\in\tekla{1,\infty}$, $q^*:=\frac{q}{q-1}$, and $r^*:=\frac{r}{r-1}$.
Then the estimates 
\begin{subequations}
\begin{align}
\norm{\trkla{\ids-\Ihxop}\tgkla{f^x\h g^x\h}}_{L^p\trkla{\Ox}}&\leq C\hx^2 \norm{\parx f^x\h}_{L^{pq}\trkla{\Ox}}\norm{\parx g^x\h}_{L^{pq^*}\trkla{\Ox}\label{eq:Ihx:lp:pointwise}}\\
\norm{\parx\trkla{\ids-\Ihxop}\tgkla{f^x\h g^x\h}}_{L^p\trkla{\Ox}}&\leq C\hx \norm{\parx f^x\h}_{L^{pq}\trkla{\Ox}}\norm{\parx g^x\h}_{L^{pq^*}\trkla{\Ox}\label{eq:Ihx:dxlp:pointwise}}\\
\norm{\trkla{\ids-\Ihyop}\tgkla{f^y\h g^y\h}}_{L^p\trkla{\Oy}}&\leq C\hy^2 \norm{\pary f^y\h}_{L^{pq}\trkla{\Oy}}\norm{\pary g^y\h}_{L^{pq^*}\trkla{\Oy}}\label{eq:Ihy:lp:pointwise}\\
\norm{\pary\trkla{\ids-\Ihyop}\tgkla{f^y\h g^y\h}}_{L^p\trkla{\Oy}}&\leq C\hy \norm{\pary f^y\h}_{L^{pq}\trkla{\Oy}}\norm{\pary g^y\h}_{L^{pq^*}\trkla{\Oy}}\label{eq:Ihy:dylp:pointwise}
\end{align}
hold true for all $f^x\h,g^x\h\in\Uhx$ and $f^y\h,g^y\h\in\Uhy$. In addition the estimates
\begin{align}
\begin{split}
\norm{\trkla{\ids-\Ihxyop}\tgkla{f\h g\h}}_{L^p\trkla{\Om}}&\leq C \hx^2 \norm{\parx f\h}_{L^{pq}\trkla{\Om}}\norm{\parx g\h}_{L^{pq^*}\trkla{\Om}}\\
&\qquad+C \hy^2\norm{\pary f\h}_{L^{pr}\trkla{\Om}}\norm{\pary g\h}_{L^{pr^*}\trkla{\Om}}\,,\label{eq:Ihxy:lp}
\end{split}\\
\norm{\trkla{\ids-\Ihxop}\tgkla{f\h g\h}}_{L^p\trkla{\Om}}&\leq C\hx^2 \norm{\parx f\h}_{L^{pq}\trkla{\Om}}\norm{\parx g\h}_{L^{pq^*}\trkla{\Om}}\,,\label{eq:Ihx:lp}\\
\norm{\Ihy{\trkla{\ids-\Ihxop}\tgkla{f\h g\h}}}_{L^p\trkla{\Om}}&\leq C\hx^2 \norm{\parx f\h}_{L^{pq}\trkla{\Om}}\norm{\parx g\h}_{L^{pq^*}\trkla{\Om}}\,,\\
\norm{\parx\trkla{\ids-\Ihxop}\tgkla{f\h g\h}}_{L^p\trkla{\Om}}&\leq C\hx \norm{\parx f\h}_{L^{pq}\trkla{\Om}}\norm{\parx g\h}_{L^{pq^*}\trkla{\Om}}\,,\\
\norm{\trkla{\ids-\Ihyop}\tgkla{f\h g\h}}_{L^p\trkla{\Om}}&\leq C\hy^2 \norm{\pary f\h}_{L^{pr}\trkla{\Om}}\norm{\pary g\h}_{L^{pr^*}\trkla{\Om}}\,,\\
\norm{\Ihx{\trkla{\ids-\Ihyop}\tgkla{f\h g\h}}}_{L^p\trkla{\Om}}&\leq C\hy^2 \norm{\pary f\h}_{L^{pr}\trkla{\Om}}\norm{\pary g\h}_{L^{pr^*}\trkla{\Om}}\,,\\
\norm{\pary\trkla{\ids-\Ihyop}\tgkla{f\h g\h}}_{L^p\trkla{\Om}}&\leq C\hy \norm{\pary f\h}_{L^{pr}\trkla{\Om}}\norm{\pary g\h}_{L^{pr^*}\trkla{\Om}}\label{eq:Ihy:wp}
\end{align}
\end{subequations}
hold true for all $f\h,\,g\h\in\Uhxy$.
\end{lemma}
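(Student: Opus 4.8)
The plan is to reduce everything to the one-dimensional interpolation error estimates \eqref{eq:Ihx:lp:pointwise}--\eqref{eq:Ihy:dylp:pointwise}, which themselves are proven by a standard scaling/Bramble--Hilbert argument on the reference element. First I would establish the one-dimensional estimates: on a single interval $\Kx$ of length $\hx$, the function $\restr{\trkla{\ids-\Ihxop}\tgkla{f^x\h g^x\h}}{\Kx}$ is the interpolation error of the quadratic polynomial $f^x\h g^x\h$; since $f^x\h$ and $g^x\h$ are affine on $\Kx$, we have $\partial_x^2\trkla{f^x\h g^x\h}=2\,\parx f^x\h\,\parx g^x\h$ (constant on $\Kx$), so the classical estimate $\norm{\trkla{\ids-\Ihxop}w}_{L^p\trkla{\Kx}}\leq C\hx^2\norm{\partial_x^2 w}_{L^p\trkla{\Kx}}$ applied element-by-element, followed by a discrete Hölder inequality in $p=pq\cdot\tfrac1q$ summation over the elements, yields \eqref{eq:Ihx:lp:pointwise}. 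The derivative version \eqref{eq:Ihx:dxlp:pointwise} follows the same way with $\norm{\parx\trkla{\ids-\Ihxop}w}_{L^p\trkla{\Kx}}\leq C\hx\norm{\partial_x^2 w}_{L^p\trkla{\Kx}}$, and \eqref{eq:Ihy:lp:pointwise}--\eqref{eq:Ihy:dylp:pointwise} are verbatim the same with $x\leftrightarrow y$.

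\textbf{Passage to the tensor-product estimates.} For the two-dimensional estimates \eqref{eq:Ihxy:lp}--\eqref{eq:Ihy:wp}, I would exploit $\Ihxyop=\Ihxop\Ihyop=\Ihyop\Ihxop$ and write the telescoping identity
\begin{align*}
\ids-\Ihxyop=\trkla{\ids-\Ihxop}+\Ihxop\trkla{\ids-\Ihyop}.
\end{align*}
Applying \eqref{eq:Ihx:lp:pointwise} pointwise in $y$ (integrating the $p$-th power over $\Oy$ afterwards and using Fubini together with Hölder in $y$ to split the exponents $pq,pq^*$) controls the first summand by $C\hx^2\norm{\parx f\h}_{L^{pq}\trkla{\Om}}\norm{\parx g\h}_{L^{pq^*}\trkla{\Om}}$; the second summand is handled by first using the $L^p$-stability of $\Ihxop$ (on an equidistant mesh $\Ihxop$ is $L^p$-stable uniformly in $h$, cf. the norm-equivalence spirit of \eqref{eq:normequivalence}), then \eqref{eq:Ihy:lp:pointwise} pointwise in $x$, giving the $\hy^2$-term with the $pr,pr^*$ exponents. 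Estimate \eqref{eq:Ihx:lp} is just the first summand in isolation (noting $\trkla{\ids-\Ihxop}$ acts only in $x$, so one integrates \eqref{eq:Ihx:lp:pointwise} in $y$ directly); the estimate for $\norm{\Ihy{\trkla{\ids-\Ihxop}\tgkla{f\h g\h}}}_{L^p}$ adds one application of $L^p$-stability of $\Ihyop$; $\norm{\parx\trkla{\ids-\Ihxop}\tgkla{f\h g\h}}_{L^p}$ uses \eqref{eq:Ihx:dxlp:pointwise} in place of \eqref{eq:Ihx:lp:pointwise}; and the three $y$-versions are the mirror images using \eqref{eq:Ihy:lp:pointwise}, \eqref{eq:Ihy:dylp:pointwise} and $L^p$-stability of $\Ihxop$.

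\textbf{Main obstacle.} The only genuinely delicate point is the exponent bookkeeping when passing from the pointwise-in-one-variable estimates to the genuinely two-dimensional $L^p\trkla{\Om}$ norms: after applying, say, \eqref{eq:Ihx:lp:pointwise} for fixed $y$ one has $\norm{\parx f\h\trkla{\cdot,y}}_{L^{pq}\trkla{\Ox}}\norm{\parx g\h\trkla{\cdot,y}}_{L^{pq^*}\trkla{\Ox}}$, whose $p$-th power must be integrated in $y$; a further Hölder inequality in $y$ with a suitably chosen pair of conjugate exponents is needed to decouple this into $\norm{\parx f\h}_{L^{pq}\trkla{\Om}}\norm{\parx g\h}_{L^{pq^*}\trkla{\Om}}$, and one must check the admissible range $q,r\in\tekla{1,\infty}$ is preserved — this is where the two independent parameters $q$ and $r$ in the statement originate. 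Everything else is a routine reference-element computation plus uniform $L^p$-stability of the nodal interpolants on equidistant meshes, which I would either cite from the appendix of \cite{Metzger2020} or prove in one line by the scaling argument. No step poses a conceptual difficulty; the proof is essentially clerical once the one-dimensional building blocks are in place.
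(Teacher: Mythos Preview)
Your proposal is correct and follows essentially the same route as the paper: the one-dimensional estimates \eqref{eq:Ihx:lp:pointwise}--\eqref{eq:Ihy:dylp:pointwise} are cited from \cite{Metzger2020}, and the tensor-product estimates are obtained by the telescoping $\ids-\Ihxyop=\trkla{\ids-\Ihyop}+\Ihyop\trkla{\ids-\Ihxop}$ (the paper uses this order, you use the $x\leftrightarrow y$ mirror), applying the one-dimensional bounds in the one variable, and then Hölder in the other. The only cosmetic difference is that where you invoke ``uniform $L^p$-stability of $\Ihxop$'', the paper makes this explicit via Jensen's inequality $\tabs{\Ihyop\{w\}}^p\leq\Ihyop\{\tabs{w}^p\}$ combined with the norm equivalence \eqref{eq:normequivalence}.
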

\begin{proof}
The estimates \eqref{eq:Ihx:lp:pointwise}--\eqref{eq:Ihy:dylp:pointwise} are proven in Lemma 2.1 in \cite{Metzger2020}.
In order to prove \eqref{eq:Ihxy:lp}, we reduce the problem to the one dimensional setting using
\begin{align}
\norm{\trkla{\ids-\Ihxyop}\tgkla{f\h g\h}}_{L^p\trkla{\Om}}^p \leq \norm{\trkla{\ids-\Ihyop}\tgkla{f\h g\h}}_{L^p\trkla{\Om}}^p +\norm{\Ihy{\trkla{\ids-\Ihxop}\tgkla{f\h g\h}}}_{L^p\trkla{\Om}}^p=:A+B
\end{align}
and apply \eqref{eq:Ihx:lp:pointwise}--\eqref{eq:Ihy:dylp:pointwise}.
This provides
\begin{align}
A\leq C\hy^{2p}\int_{\Ox}\norm{\pary f\h}_{L^{pq}\trkla{\Oy}}^p\norm{\pary g\h}_{L^{pq^*}\trkla{\Oy}}^p\dx\leq C\hy^{2p}\norm{\pary f\h}_{L^{pq}\trkla{\Om}}^p\norm{\pary g\h}_{L^{pq^*}\trkla{\Om}}^p\,.
\end{align}
Using Jensen's inequality, \eqref{eq:normequivalence}, and Hölder's inequality, we obtain for $r\in\trkla{1,\infty}$
\begin{align}
\begin{split}
B\leq&\,\int_{\Oy}\!\!\Ihy{\int_{\Ox}\abs{\trkla{\ids-\Ihxop}\tgkla{f\h g\h}}^p\!\dx}\dy\leq C\hx^{2p}\!\int_{\Oy}\!\!\Ihy{\norm{\parx f\h}_{L^{pr}\trkla{\Ox}}^p\norm{\parx g\h}_{L^{pr^*}\trkla{\Ox}}^p}\dy\\
\leq&\, C\hx^{2p}\rkla{\iO\Ihy{\tabs{\parx f\h}^{pr}}\dx\dy}^{1/r} \rkla{\iO\Ihy{\tabs{\parx g\h}^{pr^*}}\dx\dy}^{1/r^*} \\
\leq& C\hx^{2p}\norm{\parx f\h}_{L^{pr}\trkla{\Om}}^p\norm{\parx g\h}_{L^{pr^*}\trkla{\Om}}^p\,.
\end{split}
\end{align}
The corresponding estimate for $r\in\tgkla{1,\infty}$ is straightforward.
The estimates \eqref{eq:Ihx:lp}--\eqref{eq:Ihy:wp} can be derived from \eqref{eq:Ihx:lp:pointwise}--\eqref{eq:Ihy:dylp:pointwise} in a similar manner.
\end{proof}
\begin{lemma}\label{lem:embedding}
Let Assumption \ref{item:S} hold true and let the operator $A\h\,:\,\Uhxy\rightarrow\Uhxy\cap H^1_*\trkla{\Om}$ be defined by
\begin{align}
\iO \trkla{A\h\phi\h} \psi\h \dx\dy = \iO\nabla\phi\h\cdot\nabla\psi\h\dx\dy\qquad\qquad\forall\psi\h\in\Uhxy\,.
\end{align}
Then there exists a constant $C<0$, such that for all $\phi\h\in\Uhxy$ the estimates
\begin{subequations}
\begin{align}
\norm{\phi\h}_{L^\infty\trkla{\Om}}&\leq C \norm{A\h\phi\h}_{L^2\trkla{\Om}}^{\kappa}\norm{\phi\h}_{H^1\trkla{\Om}}^{1-\kappa} + C\norm{\phi\h}_{H^1\trkla{\Om}}\,,\\
\norm{\phi\h}_{W^{1,p}\trkla{\Om}}&\leq C \norm{A\h\phi\h}_{L^2\trkla{\Om}}^{\nu}\norm{\phi\h}_{H^1\trkla{\Om}}^{1-\nu} +C\norm{\phi\h}_{H^1\trkla{\Om}}\,, 
\end{align}
hold true with $\kappa=\tfrac12$, $\nu=\tfrac{3p-6}{2p}$ and $p\in\tekla{2,6}$ for $d=3$, and $\kappa=\tfrac14$, $\nu=\tfrac{p-2}{p}$ and $p\in[2,\infty)$ for $d=2$.
\end{subequations}
\end{lemma}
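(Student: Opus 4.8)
\textbf{Proof proposal for Lemma \ref{lem:embedding}.}

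The plan is to reduce these discrete Gagliardo--Nirenberg-type inequalities to their continuous counterparts by means of the Ritz-type projection implicitly encoded in $A\h$, combined with standard inverse estimates on the finite element space $\Uhxy$. The key observation is that $A\h$ plays the role of a discrete Laplacian in $L^2$: for $\phi\h\in\Uhxy$ with $\iO\phi\h\dx\dy=0$, one has $\norm{\nabla\phi\h}_{L^2\trkla{\Om}}^2=\iO\trkla{A\h\phi\h}\phi\h\dx\dy\leq\norm{A\h\phi\h}_{L^2\trkla{\Om}}\norm{\phi\h}_{L^2\trkla{\Om}}$, so $A\h\phi\h$ controls the $H^2$-like quantity that appears on the right-hand sides.

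First I would introduce the function $w:=\ellipticproj\phi\h\in H^2_\per\trkla{\Om}$ solving $-\Delta w = A\h\phi\h$ on the torus with $\iO w\dx\dy=\iO\phi\h\dx\dy$; elliptic regularity on the rectangle with periodic boundary conditions gives $\norm{w}_{H^2\trkla{\Om}}\leq C\trkla{\norm{A\h\phi\h}_{L^2\trkla{\Om}}+\norm{\phi\h}_{L^2\trkla{\Om}}}$. Next I would estimate $\norm{\phi\h-w}$: testing the defining relation of $A\h$ against $\psi\h\in\Uhxy$ and using $-\Delta w=A\h\phi\h$ shows that $\phi\h$ is precisely the Ritz projection of $w$ onto $\Uhxy$, whence the standard finite element error estimate $\norm{\phi\h-w}_{H^1\trkla{\Om}}\leq Ch\norm{w}_{H^2\trkla{\Om}}$ and, by an inverse estimate, $\norm{\phi\h-w}_{L^\infty\trkla{\Om}}+\norm{\phi\h-w}_{W^{1,p}\trkla{\Om}}\leq Ch^{1-d/2}\norm{w}_{H^2\trkla{\Om}}$ in the relevant range of $p$. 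Then I would apply the continuous Gagliardo--Nirenberg inequality to $w$ itself: $\norm{w}_{L^\infty\trkla{\Om}}\leq C\norm{D^2 w}_{L^2}^{\kappa}\norm{w}_{H^1}^{1-\kappa}+C\norm{w}_{H^1}$ with $\kappa=d/4$, and analogously $\norm{w}_{W^{1,p}}\leq C\norm{D^2 w}_{L^2}^{\nu}\norm{w}_{H^1}^{1-\nu}+C\norm{w}_{H^1}$ with the stated exponents $\nu$. Combining the triangle inequality $\norm{\phi\h}_{\bullet}\leq\norm{w}_{\bullet}+\norm{\phi\h-w}_{\bullet}$ with $\norm{w}_{H^1\trkla{\Om}}\leq C\norm{\phi\h}_{H^1\trkla{\Om}}$ (again via the error estimate) and absorbing the $h$-powers finishes the argument; the mean-value subtraction needed to handle the kernel of $A\h$ is harmless because adding back the constant only changes $\norm{\phi\h}_{H^1}$-type quantities.

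The main obstacle I anticipate is the careful bookkeeping of the $h$-powers produced by the inverse estimates against the interpolation/projection error: one must check that in dimension $d=2,3$ and for $p$ in the indicated ranges the exponent $1-d/2$ (resp.\ the $W^{1,p}$-inverse-estimate exponent) is nonnegative, so that the error term $\norm{\phi\h-w}_{\bullet}$ is genuinely controlled by $\norm{w}_{H^2\trkla{\Om}}$ without an inverse power of $h$, and then that this term can be absorbed into the $\norm{\phi\h}_{H^1\trkla{\Om}}$ summand on the right-hand side. For $d=3$ and $p\leq 6$ and $d=2$ and any finite $p$ this holds, which is exactly why the statement restricts to these ranges. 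A secondary technical point is justifying elliptic $H^2$-regularity and the $H^1$-error estimate for the Ritz projection on the periodic rectangle with the anisotropic tensor-product mesh of Assumption \ref{item:S}; this is classical, but I would cite it explicitly (e.g.\ the references already used in the paper for nodal interpolation and Ritz projections) rather than reprove it.
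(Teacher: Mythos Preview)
Your overall strategy --- lift $\phi\h$ to the continuous solution $w$ of $-\Delta w = A\h\phi\h$, observe that $\phi\h=\Ritz{w}$, apply continuous Gagliardo--Nirenberg to $w$, and control the difference $\phi\h-w$ via finite element error plus inverse estimates --- is exactly the route the paper takes (it cites Lemma~A.1 in \cite{Metzger2020}, whose ingredients are listed as ``error estimates for second order problems, inverse estimates, and the Cl\'ement interpolation operator'').

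There is, however, a genuine gap in your execution. You write ``by an inverse estimate, $\norm{\phi\h-w}_{L^\infty}+\norm{\phi\h-w}_{W^{1,p}}\leq Ch^{1-d/2}\norm{w}_{H^2}$'', but inverse estimates apply only to finite element functions, and $\phi\h-w$ is not one. This is precisely why the paper's proof invokes a third ingredient you omit: a Cl\'ement-type quasi-interpolant $\Clemop$ (replaced on quadrilaterals by the operator of \cite{Bernardi1998}). The correct decomposition is
\[
\norm{\phi\h-w}_{W^{1,p}}\leq \norm{\phi\h-\Clem{w}}_{W^{1,p}}+\norm{\Clem{w}-w}_{W^{1,p}},
\]
where the first term is now discrete and admits the inverse estimate $\norm{\phi\h-\Clem{w}}_{W^{1,p}}\leq Ch^{d/p-d/2}\norm{\phi\h-\Clem{w}}_{H^1}\leq Ch^{1+d/p-d/2}\norm{w}_{H^2}$, and the second is handled by the approximation/stability properties of $\Clemop$. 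The resulting exponent is $1+d/p-d/2$, not your $1-d/2$; this also fixes the numerical inconsistency in your last paragraph (for $d=3$ your exponent $1-d/2=-1/2$ is \emph{not} nonnegative, whereas $1+3/p-3/2\geq 0$ iff $p\leq 6$, recovering exactly the stated range). The $L^\infty$ case works analogously with exponent $2-d/2$ after using the $L^2$ error bound.
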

\begin{proof}
A similar result for continuous, piecewise linear finite element functions defined on a simplicial triangulation has been proven in Lemma A.1 in \cite{Metzger2020}.
This proof relies solely on standard error estimates for second order problems (see e.g.~Chapter 3 in \cite{Ciarlet2002}), standard inverse estimates for finite element functions (see e.g.~Theorem 4.5.11 in \cite{BrennerScott}), and the properties of the Cl\'ement interpolation operator (see e.g.~Lemma 1.127 in \cite{Ern2004}).
As the first two results remain valid for quadrilateral finite element spaces satisfying \ref{item:S} and the Cl\'ement interpolation operator, which was originally only defined for simplicial elements, can be replaced by the more general operator proposed in \cite{Bernardi1998} that satisfies the same error estimates as the Cl\'ement interpolation operator, we refer the reader to \cite{Metzger2020}.
\end{proof}
\begin{lemma}\label{lem:AhDiscLap}
Let Assumption \ref{item:S} hold true and let the operator $A\h\,:\,\Uhxy\rightarrow\Uhxy\cap H^1_*\trkla{\Om}$ be defined by
\begin{align}
\iO \trkla{A\h\phi\h} \psi\h \dx\dy = \iO\nabla\phi\h\cdot\nabla\psi\h\dx\dy\qquad\qquad\forall\psi\h\in\Uhxy\,.
\end{align}
Then there exists a constant $C<0$, such that for all $\phi\h\in\Uhxy$ the estimate
\begin{align}
\norm{A\h\phi\h}_{L^2\trkla{\Om}}\leq C\norm{\Delta\h\phi\h}_{L^2\trkla{\Om}}\,,
\end{align}
where $\Delta\h$ denotes the discrete Laplacian defined in \eqref{eq:defDiscLapl}.
\end{lemma}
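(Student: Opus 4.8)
The goal is to bound the $L^2$-norm of $A\h\phi\h$ (the finite element stiffness operator applied to $\phi\h$) by the $L^2$-norm of the discrete Laplacian $\Delta\h\phi\h$ from \eqref{eq:defDiscLapl}. The natural strategy is to test the defining relation for $A\h$ against a convenient test function and to interpret the right-hand side in terms of the mass-lumped bilinear form that defines $\Delta\h$. First I would recall that, by \eqref{eq:defDiscLapl}, for all $\psi\h\in\Uhxy$ one has
\begin{align*}
\iO\Ihy{\parx\phi\h\parx\psi\h}\dx\dy+\iO\Ihx{\pary\phi\h\pary\psi\h}\dx\dy=\iO\Ihxy{-\Delta\h\phi\h\,\psi\h}\dx\dy\,,
\end{align*}
whereas the operator $A\h$ satisfies $\iO(A\h\phi\h)\psi\h\dx\dy=\iO\parx\phi\h\parx\psi\h+\pary\phi\h\pary\psi\h\dx\dy$ (no interpolation). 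So the point is to compare the exact stiffness form with its mass-lumped counterpart. On equidistant tensor-product meshes these two bilinear forms are in fact known to be spectrally equivalent: the one-dimensional lumped and unlumped stiffness matrices on a uniform mesh coincide up to the usual factor, and the mass matrices are spectrally equivalent with $h$-independent constants (this is exactly the content of the norm equivalence \eqref{eq:normequivalence} for $p=2$, applied to $\parx\phi\h$, $\pary\phi\h$ and to the product structure).

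Concretely, I would proceed as follows. Choose $\psi\h=A\h\phi\h\in\Uhxy$ in the definition of $A\h$, giving $\norm{A\h\phi\h}_{L^2(\Om)}^2=\iO\parx\phi\h\parx(A\h\phi\h)+\pary\phi\h\pary(A\h\phi\h)\dx\dy$. Next, using the fact that the partitions are equidistant, rewrite the exact stiffness contributions $\iO\parx\phi\h\parx\psi\h\dx\dy$ in terms of the lumped versions $\iO\Ihy{\parx\phi\h\parx\psi\h}\dx\dy$: since $\parx\phi\h$ and $\parx\psi\h$ are piecewise constant in $x$, the only difference is in the $y$-integration of products of hat functions, and the lumped/unlumped $y$-mass matrices are spectrally equivalent, so $\iO\parx\phi\h\parx\psi\h\dx\dy\leq C\iO\Ihy{\parx\phi\h\parx\psi\h}\dx\dy$ and symmetrically for the $y$-derivative term, with $C$ independent of $h$. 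This reduces the estimate to the lumped bilinear form, which by \eqref{eq:defDiscLapl} equals $\iO\Ihxy{-\Delta\h\phi\h\,\psi\h}\dx\dy$ — up to, again, replacing the exact form by the lumped one on the $\psi\h=A\h\phi\h$ side, where one more application of the spectral equivalence of mass matrices is needed. Then Cauchy–Schwarz together with \eqref{eq:normequivalence} gives
\begin{align*}
\norm{A\h\phi\h}_{L^2(\Om)}^2\leq C\iO\Ihxy{\abs{\Delta\h\phi\h}\,\abs{A\h\phi\h}}\dx\dy\leq C\norm{\Delta\h\phi\h}_{L^2(\Om)}\norm{A\h\phi\h}_{L^2(\Om)}\,,
\end{align*}
and dividing by $\norm{A\h\phi\h}_{L^2(\Om)}$ yields the claim.

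\textbf{Main obstacle.} The only genuinely non-trivial point is the passage between the exact and mass-lumped bilinear forms, i.e.\ justifying the $h$-independent spectral equivalence
\begin{align*}
c\iO\Ihy{\parx\phi\h\parx\psi\h}\dx\dy\leq\iO\parx\phi\h\parx\psi\h\dx\dy\leq C\iO\Ihy{\parx\phi\h\parx\psi\h}\dx\dy
\end{align*}
for $\psi\h=A\h\phi\h$ — note that $\parx\phi\h\parx\psi\h$ need not be nonnegative, so one cannot quote \eqref{eq:normequivalence} directly and must instead argue via the eigenvalue bounds of the tensor-product mass matrix (equivalently, a standard quadrature/lumping error estimate for piecewise (bi)linear elements, which is exact up to constants on uniform meshes). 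Everything else is bookkeeping with \eqref{eq:defDiscLapl}, the Cauchy–Schwarz inequality, and \eqref{eq:normequivalence}. Alternatively, and perhaps more cleanly, one can avoid the lumping comparison on the $\psi\h$-side by testing \eqref{eq:defDiscLapl} with $\psi\h=A\h\phi\h$ and \emph{also} using the definition of $A\h$ with a lumped test function, then combining the two — but the spectral-equivalence step cannot be circumvented entirely.
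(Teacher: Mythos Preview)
Your overall skeleton --- test the definition of $A\h$ with $\psi\h=A\h\phi\h$, rewrite the stiffness form in terms of $\Delta\h$, then apply Cauchy--Schwarz and the norm equivalence \eqref{eq:normequivalence} --- is exactly right and coincides with the paper's strategy. The difference is in how the stiffness form gets linked to $\Delta\h$, and this is precisely where your argument has a gap.

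You try to pass from $\iO\parx\phi\h\parx\psi\h\dx\dy$ to $\iO\Ihy{\parx\phi\h\parx\psi\h}\dx\dy$ (lumping in the \emph{transverse} direction $y$) in order to match the form appearing in \eqref{eq:defDiscLapl}. You correctly flag that the integrand $\parx\phi\h\parx(A\h\phi\h)$ is sign-indefinite, so \eqref{eq:normequivalence} does not apply. But the fix you propose --- ``eigenvalue bounds of the tensor-product mass matrix'' --- does not rescue this step either: spectral equivalence of the lumped and consistent mass matrices yields $c\,f^TM_Lf\leq f^TMf\leq C\,f^TM_Lf$ for \emph{quadratic} forms, not $f^TMg\leq C\,f^TM_Lg$ for bilinear forms with distinct arguments. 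Indeed, for the diagonal lumped matrix one has $e_i^TM_Le_j=0$ for $i\neq j$ while $e_i^TMe_j\neq0$, so no such inequality can hold in general. Your ``main obstacle'' is therefore a genuine obstruction on the route you chose.

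The paper circumvents it entirely by lumping in the \emph{same} direction as the derivative rather than the transverse one. The point is that the one-dimensional discrete Laplacian satisfies, as an \emph{exact identity} for every fixed $y$,
\[
\int_{\Ox}\parx\phi\h\,\parx\psi\h\dx=-\int_{\Ox}\Ihx{\Delta\h^x\phi\h\,\psi\h}\dx\,,
\]
because on a uniform periodic mesh the unlumped one-dimensional stiffness form and the lumped form with $\Delta\h^x$ coincide (summation by parts). Integrating this identity over $\Oy$ --- with no lumping in $y$ at all --- gives
\[
\norm{A\h\phi\h}_{L^2(\Om)}^2=-\iO\Ihx{\Delta\h^x\phi\h\,A\h\phi\h}\dx\dy-\iO\Ihy{\Delta\h^y\phi\h\,A\h\phi\h}\dx\dy\,.
\]
Now Cauchy--Schwarz in the (positive-definite) partially lumped inner products, together with \eqref{eq:normequivalence}, yields
\[
\norm{A\h\phi\h}_{L^2(\Om)}^2\leq C\bigl(\norm{\Delta\h^x\phi\h}_{L^2(\Om)}+\norm{\Delta\h^y\phi\h}_{L^2(\Om)}\bigr)\norm{A\h\phi\h}_{L^2(\Om)}\,.
\]
Finally, $\norm{\Delta\h^x\phi\h}\h^2+\norm{\Delta\h^y\phi\h}\h^2\leq\norm{\Delta\h\phi\h}\h^2$ because the cross term $\iO\Ihxy{\Delta\h^x\phi\h\,\Delta\h^y\phi\h}\dx\dy=\iO\Ihy{\tabs{\dyp\parx\phi\h}^2}\dx\dy\geq0$, and one more application of \eqref{eq:normequivalence} closes the argument. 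So the ``spectral-equivalence step'' you thought unavoidable is in fact not needed at all; the exact one-dimensional identity replaces it.
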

\begin{proof}
Noting that the definitions of $\Delta\h^x$ and $\Delta\h^y$ in \eqref{eq:defDiscLapl} imply
\begin{align}
\begin{split}
-\int_{\Ox}\Ihx{\Delta\h^x\phi\h\psi\h}\dx&=\int_{\Ox}\parx\phi\h\parx\psi\h\dx\qquad\qquad\text{pointwise for all~}y\in\Oy\,,\\
-\int_{\Oy}\Ihy{\Delta\h^y\phi\h\psi\h}\dy&=\int_{\Oy}\pary\phi\h\pary\psi\h\dy\qquad\qquad\text{pointwise for all~}x\in\Ox\,,
\end{split}
\end{align}
we compute 
\begin{align}
\begin{split}
\norm{A\h\phi\h}_{L^2\trkla{\Om}}^2&=\iO\parx\phi\h\parx A\h\phi\h\dx\dy+\iO\pary\phi\h\pary A\h\phi\h\dx\dy\\
&=\iO\Ihx{\trkla{-\Delta\h^x\phi\h} A\h\phi\h}\dx\dy+\iO\Ihy{\trkla{-\Delta\h^y\phi\h} A\h\phi\h}\dx\dy\\
&\leq C\norm{\Delta\h^x\phi\h}_{L^2\trkla{\Om}}\norm{A\h\phi\h}_{L^2\trkla{\Om}} + C\norm{\Delta\h^y\phi\h}_{L^2\trkla{\Om}}\norm{A\h\phi\h}_{L^2\trkla{\Om}}\,,
\end{split}
\end{align}
which concludes the proof.% as $\iO\Delta\h^x\phi\h\Delta\h^y\phi\h\dx\dy=\iO\tabs{\dym\dxm\phi\h}^2\dx\dy\geq0$.
\end{proof}
The following corollary is a direct consequence of Lemma \ref{lem:embedding} and Lemma \ref{lem:AhDiscLap}.
\begin{corollary}\label{cor:embedding}
Let Assumption \ref{item:S} hold true and let $\Delta\h$ be the discrete Laplacian defined in \eqref{eq:defDiscLapl}.
Then there exists a constant $C<0$, such that for all $\phi\h\in\Uhxy$ the estimates
\begin{subequations}
\begin{align}
\norm{\phi\h}_{L^\infty\trkla{\Om}}&\leq C \norm{\Delta\h\phi\h}_{L^2\trkla{\Om}}^{\kappa}\norm{\phi\h}_{H^1\trkla{\Om}}^{1-\kappa} + C\norm{\phi\h}_{H^1\trkla{\Om}}\,,\\
\norm{\phi\h}_{W^{1,p}\trkla{\Om}}&\leq C \norm{\Delta\h\phi\h}_{L^2\trkla{\Om}}^{\nu}\norm{\phi\h}_{H^1\trkla{\Om}}^{1-\nu} +C\norm{\phi\h}_{H^1\trkla{\Om}}\,, 
\end{align}
hold true with $\kappa=\tfrac12$, $\nu=\tfrac{3p-6}{2p}$ and $p\in\tekla{2,6}$ for $d=3$, and $\kappa=\tfrac14$, $\nu=\tfrac{p-2}{p}$ and $p\in[2,\infty)$ for $d=2$.
\end{subequations}
\end{corollary}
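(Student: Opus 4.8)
The plan is to obtain the corollary as a mechanical combination of the two preceding lemmas, so there is essentially no new analysis to perform. Lemma~\ref{lem:embedding} already furnishes exactly the two asserted inequalities, but with $\norm{A\h\phi\h}_{L^2\trkla{\Om}}$ in place of $\norm{\Delta\h\phi\h}_{L^2\trkla{\Om}}$, where $A\h\,:\,\Uhxy\rightarrow\Uhxy\cap H^1_*\trkla{\Om}$ is the Galerkin realization of the Laplacian defined through $\iO\trkla{A\h\phi\h}\psi\h\dx\dy=\iO\nabla\phi\h\cdot\nabla\psi\h\dx\dy$ for all $\psi\h\in\Uhxy$. Lemma~\ref{lem:AhDiscLap} in turn provides the comparison $\norm{A\h\phi\h}_{L^2\trkla{\Om}}\leq C\norm{\Delta\h\phi\h}_{L^2\trkla{\Om}}$ with a constant independent of $h$, i.e.\ the mass-lumped discrete Laplacian $\Delta\h$ from \eqref{eq:defDiscLapl} controls the exact Galerkin Laplacian $A\h$ in $L^2\trkla{\Om}$.

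First I would fix $\phi\h\in\Uhxy$ and insert the bound of Lemma~\ref{lem:AhDiscLap} into the right-hand sides of the two estimates of Lemma~\ref{lem:embedding}. Since in every admissible parameter range the exponents are strictly positive ($\kappa=\tfrac12$, $\nu=\tfrac{3p-6}{2p}$, $p\in[2,6]$ for $d=3$; $\kappa=\tfrac14$, $\nu=\tfrac{p-2}{p}$, $p\in[2,\infty)$ for $d=2$), the maps $s\mapsto s^\kappa$ and $s\mapsto s^\nu$ are nondecreasing on $[0,\infty)$, so the substitution is legitimate and only produces an extra multiplicative factor $C^\kappa$, respectively $C^\nu$, which is absorbed into the generic constant. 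This yields
\begin{align*}
\norm{\phi\h}_{L^\infty\trkla{\Om}}&\leq C\norm{\Delta\h\phi\h}_{L^2\trkla{\Om}}^{\kappa}\norm{\phi\h}_{H^1\trkla{\Om}}^{1-\kappa}+C\norm{\phi\h}_{H^1\trkla{\Om}}\,,\\
\norm{\phi\h}_{W^{1,p}\trkla{\Om}}&\leq C\norm{\Delta\h\phi\h}_{L^2\trkla{\Om}}^{\nu}\norm{\phi\h}_{H^1\trkla{\Om}}^{1-\nu}+C\norm{\phi\h}_{H^1\trkla{\Om}}\,,
\end{align*}
which is precisely the claim.

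The hard part will be close to nonexistent: the statement is a genuine corollary. The only point that deserves a moment's care is that the corollary is phrased in terms of the mass-lumped operator $\Delta\h$ of \eqref{eq:defDiscLapl} rather than the "exact" Galerkin Laplacian $A\h$ appearing in Lemma~\ref{lem:embedding}; that the former dominates the latter in $L^2\trkla{\Om}$ uniformly in $h$ is exactly the content of Lemma~\ref{lem:AhDiscLap}, whose own proof rests only on the defining identities \eqref{eq:defDiscLapl} together with the $h$-uniform equivalence of the lumped and unlumped $L^2$ inner products from \eqref{eq:normequivalence}. Granting Lemmas~\ref{lem:embedding} and~\ref{lem:AhDiscLap}, the corollary follows at once.
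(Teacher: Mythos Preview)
Your proof is correct and matches the paper's approach exactly: the paper simply states that the corollary ``is a direct consequence of Lemma~\ref{lem:embedding} and Lemma~\ref{lem:AhDiscLap}'' without further detail, and your argument spells out precisely this combination. One tiny quibble: at the endpoint $p=2$ the exponent $\nu$ vanishes rather than being strictly positive, but since $s\mapsto s^0\equiv 1$ is trivially nondecreasing the substitution step still goes through unchanged.
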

\begin{lemma}\label{lem:errorF}
Let $G\,:\R\supset I\rightarrow\R$ be of class $W^{1,\infty}$, let a partition $\Qh$ of a domain $\Om$ satisfying \ref{item:S} be given. 
Furthermore, let $\Uhxy$ be the corresponding space defined in \eqref{eq:def:Uhxy}.
Then the estimate
\begin{align}
\norm{\Ihxy{G\trkla{u\h}}-G\trkla{u\h}}_{L^2\trkla{\Om}}^2\leq C\norm{G}_{W^{1,\infty}}^2 h^2\iO\tabs{\nabla u\h}^2\dx\dy
\end{align}
holds true for arbitrary $u\h\in\Uhxy$.
\end{lemma}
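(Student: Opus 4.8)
\textbf{Proof proposal for Lemma \ref{lem:errorF}.}

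The plan is to reduce the two-dimensional estimate to the one-dimensional interpolation error for the nodal operators $\Ihxop$ and $\Ihyop$ combined with the Lipschitz continuity of $G$. First I would use the splitting analogous to the one employed in the proof of Lemma \ref{lem:Ih:error}, namely
\begin{align*}
\norm{\Ihxy{G\trkla{u\h}}-G\trkla{u\h}}_{L^2\trkla{\Om}}^2\leq \norm{\trkla{\ids-\Ihyop}\tgkla{G\trkla{u\h}}}_{L^2\trkla{\Om}}^2 + \norm{\Ihy{\trkla{\ids-\Ihxop}\tgkla{G\trkla{u\h}}}}_{L^2\trkla{\Om}}^2\,.
\end{align*}
For the first term, I fix $x\in\Ox$ and estimate the one-dimensional interpolation error of $y\mapsto G\trkla{u\h\trkla{x,y}}$ on each element $\Ky\in\Thy$: since $u\h$ is affine on $\Ky$ (in $y$, for fixed $x$ it is affine because $u\h\in\Uhxy$ is a tensor product, so its restriction to a vertical fibre through an element is affine), the composition $G\circ u\h$ satisfies $\tabs{\trkla{\ids-\Ihyop}\tgkla{G\trkla{u\h}}}\leq C\, \operatorname{osc}_{\Ky}\trkla{G\trkla{u\h}}\leq C\norm{G}_{W^{1,\infty}}\operatorname{osc}_{\Ky}\trkla{u\h}\leq C\norm{G}_{W^{1,\infty}}\hy\,\tabs{\pary u\h}$ on $\Ky$. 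Squaring, integrating over $\Ky$, summing over $\Ky\in\Thy$ and then integrating over $x\in\Ox$ yields $\norm{\trkla{\ids-\Ihyop}\tgkla{G\trkla{u\h}}}_{L^2\trkla{\Om}}^2\leq C\norm{G}_{W^{1,\infty}}^2\hy^2\iO\tabs{\pary u\h}^2\dx\dy$.

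For the second term I argue similarly: for fixed $y$, the function $x\mapsto u\h\trkla{x,y}$ is piecewise affine with respect to $\Thx$, hence so is $\trkla{\ids-\Ihxop}\tgkla{G\trkla{u\h}}$ controlled elementwise by $C\norm{G}_{W^{1,\infty}}\hx\tabs{\parx u\h}$; then applying $\Ihyop$ only reshuffles nodal values in $y$ and, by the stability of the nodal interpolation operator together with the norm equivalence \eqref{eq:normequivalence}, does not increase the $L^2$-norm beyond an $h$-independent constant. This gives $\norm{\Ihy{\trkla{\ids-\Ihxop}\tgkla{G\trkla{u\h}}}}_{L^2\trkla{\Om}}^2\leq C\norm{G}_{W^{1,\infty}}^2\hx^2\iO\tabs{\parx u\h}^2\dx\dy$. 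Adding the two contributions and using $\hx,\hy\leq\hat{C}_2 h$ from \ref{item:S} produces the claimed bound with $\tabs{\nabla u\h}^2=\tabs{\parx u\h}^2+\tabs{\pary u\h}^2$ on the right-hand side.

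The only genuinely delicate point — and the step I would treat most carefully — is justifying the elementwise estimate $\operatorname{osc}_{K}\trkla{G\trkla{u\h}}\leq \norm{G}_{W^{1,\infty}}\operatorname{osc}_{K}\trkla{u\h}$ when $u\h$ can in principle take values outside the interval $I$ on which $G\in W^{1,\infty}$; in the application $G=F^\prime$ (or $\sqrt{\meanGinvX{\cdot}}$) this is handled because discrete solutions are strictly positive and the oscillation lemma (Lemma \ref{lem:oscillation}) keeps $u\h$ in a region bounded away from the singularity, so one restricts to that regime. Apart from this, everything is a routine combination of one-dimensional Taylor/interpolation estimates, the stability of $\Ihxop,\Ihyop$, and \eqref{eq:normequivalence}; no new machinery is needed.
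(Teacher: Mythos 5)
Your directional decomposition $\ids-\Ihxyop=(\ids-\Ihyop)+\Ihyop(\ids-\Ihxop)$ is a genuinely different organization of the argument than the paper's, which instead performs a single elementwise Taylor-type expansion of both $G\trkla{u\h}$ and $\Ihxy{G\trkla{u\h}}$ on each $\Q\in\Qh$ and subtracts; your version is more modular and mirrors the splitting used in the proof of Lemma~\ref{lem:Ih:error}, at the cost of having to control $\Ihyop$ acting on a non-finite-element function in the second term. Two remarks on that second term. First, the phrase ``applying $\Ihyop$ only reshuffles nodal values \dots\ does not increase the $L^2$-norm'' appeals to an $L^2$-stability of nodal interpolation that does not hold in general, not even for Lipschitz functions; the correct route, which you have the ingredients for but do not assemble, is to \emph{not} pass through $\norm{w}_{L^2}$ for $w:=(\ids-\Ihxop)\tgkla{G\trkla{u\h}}$, but rather to use the pointwise bound $\tabs{w\trkla{x,y_j}}\leq C\norm{G}_{W^{1,\infty}}\hx\tabs{\parx u\h\trkla{x,y_j}}$ at the $y$-nodes, deduce $\tabs{\Ihy{w}}\leq C\norm{G}_{W^{1,\infty}}\hx\,\Ihy{\tabs{\parx u\h}}$ pointwise, and then observe that $\Ihy{\tabs{\parx u\h}}^2\leq\Ihy{\tabs{\parx u\h}^2}$ by convexity of the nodal interpolant, after which \eqref{eq:normequivalence} (applied in the $y$-direction to the fibrewise $\Uhy$-function $\parx u\h$) gives $\norm{\Ihy{\tabs{\parx u\h}}}_{L^2\trkla{\Om}}\leq C\norm{\parx u\h}_{L^2\trkla{\Om}}$. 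Second, the ``delicate point'' you flag about $u\h$ leaving the domain of $G$ is not actually a gap in the lemma as stated — the hypothesis $G\in W^{1,\infty}\trkla{I}$ presupposes $u\h$ takes values in $I$, and in the only place the lemma is invoked (the $A_2$-term in the proof of Lemma~\ref{lem:convergence2}) the test function is supported in $S_{\delta/4}^{\Qh}$, precisely so that $\tilde u\h$ stays in a compact subinterval of $\trkla{0,\infty}$. Aside from these points, your proof is correct, and the paper's own elementwise Taylor argument also ultimately relies on \eqref{eq:normequivalence} in the same way, so the two routes use the same ingredients in a different order.
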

\begin{proof}
We will prove that the claim holds true on every $\Q\in\Qh$.
After an appropriate translation, we may assume that $\Q$ is given by $\Q=\operatorname{co}\tgkla{\trkla{0,0},\,\trkla{\hx,0},\,\trkla{\hx,\hy},\,\trkla{0,\hy}}$.
%\begin{align}
%\Q=\operatorname{co}\tgkla{\trkla{0,0},\,\trkla{\hx,0},\,\trkla{\hx,\hy},\,\trkla{0,\hy}}\,.
%\end{align}
For arbitrary $\trkla{x,y}\in\Q$, we have
\begin{align}
\begin{split}
G\trkla{u\h\trkla{x,y}}=&\,\tfrac12G\trkla{u\h\trkla{x,0}}+\tfrac12\trkla{\pary G\trkla{u\h}}\trkla{x,\xi_1^y}\cdot y\\
&+\tfrac12G\trkla{u\h\trkla{x,\hy}}+\tfrac12\trkla{\pary G\trkla{u\h}}\trkla{x,\xi_2^y}\cdot\trkla{y-\hy}\\
=&\,\tfrac12 G\trkla{u\h\trkla{0,0}}+\tfrac12\trkla{\parx G\trkla{u\h}}\trkla{\xi_1^x,0}\cdot x+\tfrac12\trkla{\pary G\trkla{u\h}}\trkla{x,\xi_1^y}\cdot y\\
&+\tfrac12 G\trkla{u\h\trkla{\hx,\hy}}+\tfrac12\trkla{\parx G\trkla{u\h}}\trkla{\xi_2^x,\hy}\cdot\trkla{x-\hx}\\
&+\tfrac12\trkla{\pary G\trkla{u\h}}\trkla{x,\xi_2^y}\cdot\trkla{y-\hy}
\end{split}
\end{align}
with $\xi_1^x,\xi_2^x\in\tekla{0,\hx}$ and $\xi_1^y,\xi_2^y\in\tekla{0,\hy}$.
Similarly, we have
\begin{align}
\begin{split}
\trkla{\Ihxy{G\trkla{u\h}}}\trkla{x,y}=&\,\tfrac12\trkla{\Ihxy{G\trkla{u\h}}}\trkla{0,0}+\tfrac12\trkla{\parx\Ihxy{G\trkla{u\h}}}\trkla{\zeta_1^x,0}\cdot x\\
&+\tfrac12 \trkla{\pary\Ihxy{G\trkla{u\h}}}\trkla{x,\zeta_1^y}\cdot y +\tfrac12\trkla{\Ihxy{G\trkla{u\h}}}\trkla{\hx,\hy}\\
&+\tfrac12\trkla{\parx\Ihxy{G\trkla{u\h}}}\trkla{\zeta_2^x,\hy}\cdot\trkla{x-\hx}\\
&+\tfrac12\trkla{\pary\Ihxy{G\trkla{u\h}}}\trkla{x,\zeta_2^y}\cdot\trkla{y-\hy}
\end{split}
\end{align}
with $\zeta_1^x,\zeta_2^x\in\tekla{0,\hx}$ and $\zeta_1^y,\zeta_2^y\in\tekla{0,\hy}$.
Therefore, we have
\begin{align}
\begin{split}
&\tabs{G\trkla{u\h\trkla{x,y}}-\trkla{\Ihxy{G\trkla{u\h}}}\trkla{x,y}}^2\,\\
&\quad\leq C \hx^2\tabs{\trkla{\parx G\trkla{u\h}}\trkla{\xi_1^x,0}}^2 +C \hy^2\tabs{\trkla{\pary G\trkla{u\h}}\trkla{x,\xi_1^y}}^2+C\hx^2\tabs{\trkla{\parx G\trkla{u\h}}\trkla{\xi_2^x,\hy}}^2\\
&\qquad +C\hy^2\tabs{\trkla{\pary G\trkla{u\h}}\trkla{x,\xi_2^y}}^2+C\hx^2\tabs{\trkla{\parx\Ihxy{G\trkla{u\h}}}\trkla{\zeta_1^x,0}}^2\\
&\qquad +C\hy^2\tabs{\trkla{\pary\Ihxy{G\trkla{u\h}}}\trkla{x,\zeta_1^y}}^2+C\hx^2\tabs{\trkla{\parx\Ihxy{G\trkla{u\h}}}\trkla{\zeta_2^x,\hy}}^2 \\
&\qquad+C\hy^2\tabs{\trkla{\pary\Ihxy{G\trkla{u\h}}}\trkla{x,\zeta_2^y}}^2\\
&\quad\leq Ch^2\norm{G}_{W^{1,\infty}}^2\trkla{\tabs{\parx u\h\trkla{\cdot, 0}}^2+\tabs{\parx u\h\trkla{\cdot,\hy}}^2 + \tabs{\pary u\h\trkla{x,\cdot}}^2}\\
&\qquad+C h^2\norm{G}_{W^{1,\infty}}^2\trkla{\tabs{\parx u\h\trkla{\cdot,0}}^2+\tabs{\parx u\h\trkla{\cdot,\hy}}^2 +\Ihx{\tabs{\pary u\h\trkla{x,\cdot}}^2}}\,.
\end{split}
\end{align}
Integrating over $\Q$ and using the norm equivalence stated in \eqref{eq:normequivalence} provides the result.

\end{proof}
\begin{lemma}\label{lem:disc:pi}
Let Assumption \ref{item:S} hold true.
Then the identities
\begin{subequations}
\begin{align}
\begin{split}\label{eq:disc:pi:x1}
-\iOx a^x\h\trkla{x}\Ihx{b^x\h\trkla{x}\dxm c^x\h\trkla{x}}\dx =&\,\iOx a^x\h\trkla{x}\Ihx{\dxm b^x\h\trkla{x} c^x\h\trkla{x-\hx}} \dx \\
&+\iOx\dxp a^x\h\trkla{x}\Ihx{b^x\h\trkla{x}c\h^x\trkla{x}}\dx
\end{split}\\
\begin{split}\label{eq:disc:pi:x2}
\iOx a^x\h\trkla{x}\Ihx{b^x\h\trkla{x}\Delta\h^x c^x\h\trkla{x}}\dx=&\,\iOx \Delta\h^x a^x\h\trkla{x}\Ihx{b^x\h\trkla{x-\hx}c^x\h\trkla{x}}\dx\\
&+\iOx a^x\h\trkla{x+\hx}\Ihx{\Delta\h^x b^x\h\trkla{x}c^x\h\trkla{x}}\dx\\
&+2\iOx\dxp a^x\h\trkla{x}\Ihx{\dxm b^x\h\trkla{x}c\h^x\trkla{x}}\dx
\end{split}\\
\begin{split}\label{eq:disc:pi:y1}
-\iOy a^y\h\trkla{y}\Ihy{b^y\h\trkla{y}\dym c^y\h\trkla{y}}\dy=&\,\iOy a^y\h\trkla{y}\Ihy{\dym b^y\h\trkla{y} c^y\h\trkla{y-\hy}}\dy\\
& +\iOy\dyp a^y\h\trkla{y}\Ihy{b^y\h\trkla{y} c^y\h\trkla{y}}\dy
\end{split}\\
\begin{split}\label{eq:disc:pi:y2}
\iOy a^y\h\trkla{y}\Ihy{b^y\h\trkla{y}\Delta\h^yc^y\h\trkla{y}}\dy=&\,\iOy \Delta\h^y a^y\h\trkla{y}\Ihy{b^y\h\trkla{y-\hy} c^y\h\trkla{y}}\dy\\
&+\iOy a^y\h\trkla{y+\hy}\Ihy{\Delta\h^y b^y\h\trkla{y} c^y\h\trkla{y}}\dy\\
&+2\iOy\dyp a^y\h\trkla{y}\Ihy{\dym b^y\h\trkla{y}c^y\h\trkla{y}}\dy
\end{split}
\end{align}
\end{subequations}
hold true for periodic functions $a^x\h\in C_{\operatorname{per},\Thx}$, $b^x\h,c^x\h\in\Uhx$, $a^y\h\in C_{\operatorname{per},\Thy}$, and $b^y\h,c^y\h\in\Uhy$.
\end{lemma}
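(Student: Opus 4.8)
The plan is to prove each of the four identities \eqref{eq:disc:pi:x1}--\eqref{eq:disc:pi:y2} by the same discrete integration-by-parts (summation-by-parts) mechanism, so I would only carry out the argument for the $x$-identities \eqref{eq:disc:pi:x1} and \eqref{eq:disc:pi:x2}; the $y$-versions follow verbatim after interchanging the roles of $x$ and $y$ and using $\Ihyop$ in place of $\Ihxop$. First I would reduce all integrals to nodal sums: since $b^x_h, c^x_h \in \Uhx$ and the partition is equidistant with mesh size $\hx$, for any piecewise continuous $a^x_h$ the exactness of the trapezoidal rule for $\mathcal P_1$ times the local interpolation gives $\int_{\Ox} a^x_h \Ihx{b^x_h \dxm c^x_h}\dx = \hx \sum_i a^x_{h,i} b^x_{h,i}(\dxm c^x_h)_i$, where the subscript $i$ denotes evaluation at the vertex $x_i$ and $(\dxm c^x_h)_i = (c^x_{h,i}-c^x_{h,i-1})/\hx$; all sums run over $i=1,\dots,\dim\Uhx$ with the periodic identifications $x_0\equiv x_{\dim\Uhx}$, $x_{\dim\Uhx+1}\equiv x_1$ fixed in Section~\ref{sec:notation}. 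Thus every term in \eqref{eq:disc:pi:x1}--\eqref{eq:disc:pi:x2} becomes $\hx$ times a finite sum over the periodic index set, and the claimed identity becomes a purely algebraic statement about shifting summation indices.

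For \eqref{eq:disc:pi:x1}, I would write the left-hand side as $-\hx\sum_i a_i b_i (c_i - c_{i-1})/\hx = -\sum_i a_i b_i c_i + \sum_i a_i b_i c_{i-1}$. In the second sum I shift $i\mapsto i+1$ (legitimate by periodicity) to get $\sum_i a_{i+1} b_{i+1} c_i$, and then I would split $a_{i+1}b_{i+1} = a_i b_{i+1} + (a_{i+1}-a_i) b_{i+1}$; the second piece is $\hx (\dxp a)_i b_{i+1}$. Re-examining, the cleaner route is to apply the discrete Leibniz rule $\dxm(bc)_i = (\dxm b)_i c_{i-1} + b_i (\dxm c)_i$ (which is an identity for grid functions), multiply by $a_i$, sum over $i$, multiply by $\hx$, and then use the plain periodic summation-by-parts $\sum_i a_i (\dxm f)_i = -\sum_i (\dxp a)_i f_i$ on the term $\hx\sum_i a_i \dxm(b_h c_h)_i$ — noting $b_h c_h$ restricted to nodes is a grid function and $\dxm$ of it at node $i$ only sees nodal values. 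Rearranging yields exactly \eqref{eq:disc:pi:x1} after recognizing $\hx\sum_i (\dxp a)_i (b_h c_h)_i = \int_{\Ox}\dxp a^x_h \Ihx{b^x_h c^x_h}\dx$ and $\hx\sum_i a_i (\dxm b)_i c_{i-1} = \int_{\Ox} a^x_h \Ihx{\dxm b^x_h\, c^x_h(\cdot-\hx)}\dx$.

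For \eqref{eq:disc:pi:x2}, I would apply \eqref{eq:disc:pi:x1} twice, since $\Delta\h^x c^x_h = \dxp\dxm c^x_h = \dxm\dxp c^x_h$ by \eqref{eq:dqLap}. Writing $\Delta\h^x c = \dxm(\dxp c)$ and using \eqref{eq:disc:pi:x1} with $c$ replaced by $\dxp c$ moves one difference quotient off $c$; a second application (now using \eqref{eq:disc:pi:x1} in the $\dxp$ form, or equivalently its mirror identity obtained by the same index shift) moves the remaining one, producing the three terms on the right of \eqref{eq:disc:pi:x2} — the $\Delta\h^x a$ term, the $\Delta\h^x b$ term (with the shifted argument $x+\hx$, which is the bookkeeping artifact of two successive unit shifts in opposite directions), and the mixed $2\,\dxp a\,\dxm b$ cross term with factor $2$ coming from the two ways a first-order difference can land on $a$ versus $b$. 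The main obstacle — really the only delicate point — is getting the node-shift bookkeeping exactly right: which factor acquires the argument $x-\hx$ versus $x+\hx$, and verifying that the boundary terms genuinely cancel under the periodic identification rather than leaving stray contributions. I would handle this by doing the computation entirely at the level of periodic grid functions, where "summation by parts with periodic boundary conditions has no boundary term" is unambiguous, and only at the very end translate the finite sums back into the $\int \Ihxop\{\cdot\}$ notation using the exactness of the trapezoidal/nodal quadrature; assumption \ref{item:S} (equidistant mesh, so $M_{ij}=\hx\hy$ and the difference operators commute with $\Ihxop$ as in \eqref{eq:Ihlocpar}) is what makes every step reversible.
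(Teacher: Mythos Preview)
Your overall strategy---discrete Leibniz plus shift-and-sum by periodicity, then iterate for the Laplacian identity---is the right one and matches the paper. But the very first step of your argument is not valid at the stated generality: the reduction $\int_{\Ox} a^x_h\,\Ihx{b^x_h\,\dxm c^x_h}\dx = \hx\sum_i a^x_{h,i}\,b^x_{h,i}\,(\dxm c^x_h)_i$ fails for $a^x_h\in C_{\operatorname{per},\Thx}$. Such functions are only continuous on each open $K^x$ and may jump at the vertices, so the nodal values $a^x_{h,i}$ are not even defined; and even when they are (say $a^x_h$ piecewise constant, which is the case actually used in the energy estimate), the integral of $a^x_h$ against a piecewise linear function is not given by the nodal quadrature you invoke. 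Since both sides of the claimed identity are genuine integrals against $a^x_h$ (not lumped inner products), proving the nodal-sum version does not prove the lemma.

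The paper avoids this by never leaving the integral level. It uses only three facts, all valid for arbitrary $a^x_h\in C_{\operatorname{per},\Thx}$: linearity of $\Ihxop$ to write $-\hx\,\Ihx{b\,\dxm c}=\Ihx{b(\cdot)c(\cdot-\hx)}-\Ihx{b\,c}$; the substitution $x\mapsto x+\hx$ in the periodic integral $\int_{\Ox} a^x_h(x)\,\Ihx{b(\cdot)c(\cdot-\hx)}(x)\dx$; and the translation compatibility $\Ihx{f(\cdot+\hx)}(x)=\Ihx{f}(x+\hx)$ on the uniform periodic grid. This produces \eqref{eq:disc:pi:x1} directly, with the difference quotients $\dxm b$ and $\dxp a$ emerging from the add-and-subtract step exactly as in your Leibniz computation, but without ever sampling $a^x_h$ at a vertex. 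Your plan for \eqref{eq:disc:pi:x2}---apply \eqref{eq:disc:pi:x1} twice to $\Delta_h^x c=\dxm\dxp c$---is exactly what the paper does and is correct once \eqref{eq:disc:pi:x1} is in hand.
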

\begin{proof}
To show the second identity in \eqref{eq:disc:pi:x1}, we use the periodicity of the considered functions and compute
\begin{align}
\begin{split}
-\hx\iOx &\,a^x\h\trkla{x}\Ihx{b^x\h\trkla{x}\dxm c^x\h\trkla{x}}\dx\\
=&\, \iOx a\h^x\trkla{x+\hx}\Ihx{b^x\h\trkla{x+\hx}c^x\h\trkla{x}}\dx- \iOx a^x\h\trkla{x}\Ihx{b^x\h\trkla{x}c^x\h\trkla{x}}\dx\\
=&\iOx a^x\h\trkla{x+\hx}\Ihx{\trkla{b^x\h\trkla{x+\hx}-b^x\h\trkla{x}}c^x\h\trkla{x}}\dx\\
& +\iOx \trkla{a^x\h\trkla{x+\hx}-a^x\h\trkla{x}}\Ihx{b^x\h\trkla{x}c^x\h\trkla{x}}\dx\\
=&\iOx a^x\h\trkla{x}\Ihx{\trkla{b^x\h\trkla{x}-b^x\h\trkla{x-\hx}}c^x\h\trkla{x-\hx}}\dx\\
& +\iOx \trkla{a^x\h\trkla{x+\hx}-a^x\h\trkla{x}}\Ihx{b^x\h\trkla{x}c^x\h\trkla{x}}\dx\,.
\end{split}
\end{align}
We will now use \eqref{eq:disc:pi:x1} to prove \eqref{eq:disc:pi:x2}.
\begin{align}
\begin{split}
\iOx &\,a^x\h\trkla{x}\Ihx{b^x\h\trkla{x}\dxm\dxp c^x\h\trkla{x}}\dx\\
=&\,-\iOx a^x\h\trkla{x}\Ihx{\dxm b^x\h\trkla{x}\dxp c^x\h\trkla{x-\hx}}\dx -\!\iOx\dxp a^x\h\trkla{x}\Ihx{b^x\h\trkla{x}\dxp c^x\h\trkla{x}}\dx\\
=&\,-\iOx a^x\h\trkla{x}\Ihx{\dxm b^x\h\trkla{x}\dxm c^x\h\trkla{x}}\dx \\
&-\iOx\dxp a^x\h\trkla{x-\hx}\Ihx{b^x\h\trkla{x-\hx}\dxm c^x\h\trkla{x}}\dx\\
=&\iOx a^x\h\trkla{x}\Ihx{\dxm\dxm b^x\h\trkla{x}c^x\h\trkla{x-\hx}}\dx +\iOx\dxp a^x\h\trkla{x}\Ihx{\dxm b^x\h c^x\h\trkla{x}}\dx\\
&+\iOx\dxp a^x\h\trkla{x-\hx}\Ihx{\dxm b^x\h\trkla{x-\hx} c^x\h\trkla{x-\hx}}\dx \\
&+\iOx\dxp\dxp a^x\h\trkla{x-\hx}\Ihx{b^x\h\trkla{x-\hx}c^x\h\trkla{x}}\dx\\
=&\,\iOx a^x\h\trkla{x+\hx}\Ihx{\Delta\h^x b^x\h\trkla{x}c^x\h\trkla{x}}\dx +2\iOx\dxp a^x\h\trkla{x}\Ihx{\dxm b^x\h\trkla{x}c^x\h\trkla{x}}\dx\\
&+\iOx\Delta\h^x a^x\h\trkla{x}\Ihx{b^x\h\trkla{x-\hx}c^x\h\trkla{x}}\dx\,.
\end{split}
\end{align}
The identities \eqref{eq:disc:pi:y1} and \eqref{eq:disc:pi:y2} can be established in a similar manner.
\end{proof}
\end{appendix}
\section*{Acknowledgment} This work has been supported by German Research Foundation (DFG) through the project $397495103$ entitled  ``Free boundary propagation and noise: analysis and numerics of stochastic degenerate parabolic equations''.
%\bibliographystyle{amsplain}
%\bibliography{complete}
\providecommand{\bysame}{\leavevmode\hbox to3em{\hrulefill}\thinspace}
\providecommand{\MR}{\relax\ifhmode\unskip\space\fi MR }
% \MRhref is called by the amsart/book/proc definition of \MR.
\providecommand{\MRhref}[2]{%
  \href{http://www.ams.org/mathscinet-getitem?mr=#1}{#2}
}
\providecommand{\href}[2]{#2}

\end{document}